\definecolor{cbred}{RGB}{213,94,0}
\definecolor{cbgreen}{RGB}{0,158,115}
\def\cbred#1{\textcolor{cbred}{#1}}
\def\cbgreen#1{\textcolor{cbgreen}{#1}}
\newcommand{\cmark}{\cbgreen{\ding{51}}}
\newcommand{\xmark}{\cbred{\ding{55}}}
\newcommand{\stack}[2]{\stackrel{\mathclap{#1}}{#2}}
\newcommand{\bigO}[1]{\mathcal{O}\left(#1\right)}
\newcommand{\ourmethod}{\texttt{VRLM}\xspace}
\newcommand{\norm}[1]{\left\lVert#1\right\rVert}
\newcommand{\bm}[1]{\boldsymbol{#1}}
\newcommand{\va}{{\mathbf{a}}}
\newcommand{\vd}{{\mathbf{d}}}
\newcommand{\vr}{{\mathbf{r}}}
\newcommand{\vv}{{\mathbf{v}}}
\newcommand{\vx}{{\mathbf{x}}}
\newcommand{\vy}{{\mathbf{y}}}
\newcommand{\vz}{{\mathbf{z}}}
\newcommand{\vD}{{\mathbf{D}}}
\newcommand{\vI}{{\mathbf{I}}}
\newcommand{\vR}{{\mathbf{R}}}
\newcommand{\vV}{{\mathbf{V}}}
\newcommand{\vW}{{\mathbf{W}}}
\newcommand{\vX}{{\mathbf{X}}}
\newcommand{\vY}{{\mathbf{Y}}}
\newcommand{\vZ}{{\mathbf{Z}}}
\newcommand{\vlam}{{\bm{\lambda}}}
\newcommand{\vLam}{{\bm{\Lambda}}}
\newcommand{\cB}{{\mathcal{B}}}
\newcommand{\cD}{{\mathcal{D}}}
\newcommand{\cE}{{\mathcal{E}}}
\newcommand{\cG}{{\mathcal{G}}}
\newcommand{\cJ}{{\mathcal{J}}}
\newcommand{\cO}{{\mathcal{O}}}
\newcommand{\cS}{{\mathcal{S}}}
\newcommand{\cV}{{\mathcal{V}}}
\newcommand{\cX}{{\mathcal{X}}}
\newcommand{\cY}{{\mathcal{Y}}}
\newcommand{\cZ}{{\mathcal{Z}}}
\newcommand{\vareps}{\varepsilon}
\newcommand{\EE}{\mathbb{E}} 
\newcommand{\II}{\mathbb{I}} 
\newcommand{\RR}{\mathbb{R}} 
\newcommand{\vzero}{\mathbf{0}} 
\newcommand{\vone}{{\mathbf{1}}} 
\newcommand{\Prob}{{\mathrm{Prob}}} 
\newcommand{\prox}{{\mathbf{prox}}} 
\newcommand{\dom}{{\mathrm{dom}}} 
\newcommand{\Null}{{\mathrm{Null}}} 
\newcommand{\Span}{{\mathrm{Span}}}
\newcommand{\st}{\mbox{ s.t. }}
\newcommand{\ip}[2]{\left\langle #1 , #2 \right\rangle}
\DeclareMathOperator*{\argmin}{arg\,min} 
\DeclareMathOperator*{\argmax}{arg\,max} 
\newcommand{\bc}{\begin{center}}
\newcommand{\ec}{\end{center}}
\newcommand{\bdm}{\begin{displaymath}}
\newcommand{\edm}{\end{displaymath}}
\newcommand{\beq}{\begin{equation}}
\newcommand{\eeq}{\end{equation}}
\newcommand{\bfl}{\begin{flushleft}}
\newcommand{\efl}{\end{flushleft}}
\newcommand{\bt}{\begin{tabbing}}
\newcommand{\et}{\end{tabbing}}
\newcommand{\beqn}{\begin{eqnarray}}
\newcommand{\eeqn}{\end{eqnarray}}
\newcommand{\beqs}{\begin{align*}} 
\newcommand{\eeqs}{\end{align*}}  
\newtheorem{assumption}{Assumption}
\numberwithin{equation}{section}
\numberwithin{theorem}{section}
\numberwithin{lemma}{section}
\numberwithin{proposition}{section}
\numberwithin{remark}{section}
\numberwithin{definition}{section}
\numberwithin{assumption}{section}
\begin{document}

\title{Variance-reduced accelerated methods for decentralized stochastic double-regularized nonconvex strongly-concave minimax problems}

\titlerunning{Variance-reduction methods for decentralized stochastic NCSC minimax problems}

\author{Gabriel Mancino-Ball \and Yangyang Xu}

\institute{G. Mancino-Ball \at Department of Mathematical Sciences, Rensselaer Polytechnic Institute, Troy, NY\\
\email{gabriel.mancino.ball@gmail.com}\\
Y. Xu \at Department of Mathematical Sciences, Rensselaer Polytechnic Institute, Troy, NY\\
\email{xuy21@rpi.edu}}

\date{\today}

\maketitle

\begin{abstract}

	In this paper, we consider the decentralized, stochastic nonconvex strongly-concave (NCSC) minimax problem with nonsmooth regularization terms on both primal and dual variables, wherein a network of $m$ computing agents collaborate via peer-to-peer communications. We consider when the coupling function is in expectation or finite-sum form and the double regularizers are convex functions, applied separately to the primal and dual variables. Our algorithmic framework introduces a Lagrangian multiplier to eliminate the consensus constraint on the dual variable. Coupling this with variance-reduction (VR)  techniques, our proposed method, entitled \texttt{VRLM}, by a single neighbor communication per iteration, is able to achieve an $\mathcal{O}(\kappa^3\varepsilon^{-3})$ sample complexity under the general stochastic setting, with either a big-batch or small-batch VR option, where $\kappa$ is the condition number of the problem and $\varepsilon$ is the desired solution accuracy. With a big-batch VR,  we can additionally achieve $\mathcal{O}(\kappa^2\varepsilon^{-2})$ communication complexity. Under the special finite-sum setting, our method with a big-batch VR can achieve an $\mathcal{O}(n + \sqrt{n} \kappa^2\varepsilon^{-2})$ sample complexity and $\mathcal{O}(\kappa^2\varepsilon^{-2})$ communication complexity, where $n$ is the number of components in the finite sum. All complexity results match the best-known results achieved by a few existing methods for solving special cases of the problem we consider. To the best of our knowledge, this is the first work which provides convergence guarantees for NCSC minimax problems with general convex nonsmooth regularizers applied to both the primal and dual variables in the decentralized stochastic setting. Numerical experiments are conducted on two machine learning problems. Our code is downloadable from https://github.com/RPI-OPT/VRLM.

\vspace{0.3cm}

\noindent {\bf Keywords:} Stochastic optimization, decentralized optimization, minimax problems, variance reduction 
\vspace{0.3cm}

\noindent {\bf Mathematics Subject Classification:} 90C15, 90C26, 90C47, 65K05

\end{abstract}


\section{Introduction}
In this paper, we consider the following minimax-structured problem
\begin{equation}\label{eq:min-max-prob}
\min_{\vx\in\RR^{d_1}} \max_{\vy\in \RR^{d_2}} \frac{1}{m}\sum_{i=1}^m f_i(\vx, \vy) + g(\vx) - h(\vy),\text{ with }f_{i}(\vx,\vy):=\EE_{\xi\sim\cD_{i}}\left[\tilde{f}_{i}(\vx,\vy;\xi)\right],
\end{equation}
where $f_i$ is a smooth, nonconvex strongly-concave (NCSC) function governed by $\cD_{i}$ for each $i=1,\ldots,m$. In the special case where $\cD_{i}$ is a discrete uniform distribution, we recover the finite-sum problem setting, and without lose of generality, we can assume each $\cD_i$ involves the same number of scenarios, i.e., 
\begin{equation}\label{eq:finite-sum-scenario}
\textstyle f_{i}(\vx,\vy):= \frac{1}{n}\sum_{j=1}^n\tilde{f}_{i}(\vx,\vy;\xi_{ij})
\end{equation} 
Thus~\eqref{eq:min-max-prob} encompasses a broad class of problems. We assume $g$ and $h$ are closed convex functions, often serving as regularizers, hence we refer to~\eqref{eq:min-max-prob} as a \emph{double-regularized minimax} problem. Problem~\eqref{eq:min-max-prob} has received a lot of research attention recently due to its application in many machine learning settings such as adversarial training~\cite{Goodfellow2014,Mingrui2020}, distributionally robust optimization~\cite{Namkoong2016,Wenhan2021}, and reinforcement learning~\cite{Xin2021}.  It has also been used to study fairness in machine learning~\cite{Nouiehed2019} and improving generalization error~\cite{foret2021sharpnessaware}.

Motivated by scenarios where data are distributed over many different computing devices~\cite{Xin2021HSGD}, we are interested in the case where $f_i$ is owned \emph{privately} by the $i$-th one among $m$ agents that are connected over a communication network $\cG=(\cV,\cE)$. Here, $\cV=\{1,\dots,m\}$ denotes the set of agents and $\cE\subseteq\cV\times\cV$ denotes the set of feasible communication links between the agents. We assume the network $\cG$ is connected. Furthermore, in a stochastic setting, we assume that each agent~$i$ can only access local stochastic gradients, rather than exact gradients. Such a scheme is close to a real-world setting where agents may not have access to a global communication protocol (either due to privacy restrictions~\cite{Verbraeken2020} or high communication overhead~\cite{Lian2017}), nor access to the entire local gradient (e.g., when data arrives in a stream~\cite{xu2023-PSTORM}).

In order for the $m$ agents to collaboratively solve \eqref{eq:min-max-prob}, each agent~$i$ will maintain a copy of the primal-dual variable $(\vx, \vy)$, denoted as $(\vx_i, \vy_i)$. With the introduction of local variables $\{(\vx_i, \vy_i)\}_{i\in\cV}$, we can formulate \eqref{eq:min-max-prob} equivalently into the following decentralized consensus minimax problem
\begin{equation}\label{eq:min-max-prob-dec}
\min_{\vx_1,\ldots,\vx_m} \max_{\vy_1, \ldots, \vy_m} \frac{1}{m}\sum_{i=1}^m \big( f_i(\vx_i, \vy_i) + g(\vx_i) - h(\vy_i) \big), \st (\vx_i, \vy_i) = (\vx_j, \vy_j), \, \forall\, i, j\in\cV.
\end{equation}
To ensure that the consensus constraint $(\vx_i, \vy_i) = (\vx_j, \vy_j)$ is satisfied for all $i,\, j\in\cV$, agents exchange their information with their 1-hop neighbors via the communication links in $\cE$. Mathematically, this communication is represented as multiplication with a \emph{mixing matrix}, $\vW\in\RR^{m\times m}$, which serves as an inexact averaging operation among the agents.

The efficiency of a method to solve~\eqref{eq:min-max-prob-dec} will be measured by the number of samples and neighbor communications required to achieve an $\varepsilon$-accurate solution; see Definition~\ref{def:stationarity}.  We refer to these quantities as \emph{sample} and \emph{communication} complexities, respectively. In this work, we propose the \ourmethod method which serves as a framework for applying state-of-the-art variance-reduction (VR) techniques to achieve low sample \emph{and} communication complexities when solving~\eqref{eq:min-max-prob-dec}. Our framework is based on a reformulation of~\eqref{eq:min-max-prob-dec} which allows for exploitation of the strong-concavity of $f_{i}(\vx,\cdot)$. We summarize our contributions below.

\subsection{Contributions}

	Our contributions are three-fold. They are summarized as follows.
\begin{itemize}	
\item	\emph{First}, we provide one decentralized method with two VR options for solving stochastic double-regularized NCSC minimax problems in the form of \eqref{eq:min-max-prob} or the equivalent form \eqref{eq:min-max-prob-dec}. Our method is based on a reformulation of \eqref{eq:min-max-prob-dec} introduced in \cite{xu2023dgdm}, by removing the consensus constraint on the dual variable with the introduction of a Lagrangian multiplier. One VR option of our method uses large-batch sampling, while the other needs only $\cO(1)$ samples for each update per agent. To the best of our knowledge, this is the first decentralized stochastic method for solving the structured problem \eqref{eq:min-max-prob}, while a few existing methods can only be applied to certain special cases; see more details in the section of related work. Compared to most existing decentralized stochastic methods that even can only solve special cases of \eqref{eq:min-max-prob}, our method can take $\Theta(\kappa)$ times larger stepsize, where $\kappa$ is the condition number defined in \eqref{eq:def-kappa}. This can potentially lead to better empirical performance; see Remark~\ref{rm:multi-comm}. 
\item	\emph{Second}, we show a last-iterate convergence in probability result for our method with the large-batch VR option on solving finite-sum structured problems.  Also, we establish complexity results of our method with either VR option to produce an $\vareps$-stationary solution for any given $\vareps>0$; see Definition~\ref{def:stationarity}. Utilizing just a single neighbor communication per iteration, we prove that both versions of our method can achieve a sample complexity of $\bigO{\kappa^3\varepsilon^{-3}}$ in the general stochastic setting. 
	The small-batch version attains the same communication complexity, while the large-batch one only needs $\bigO{\kappa^2\varepsilon^{-2}}$ communication rounds. In addition, for the finite-sum structured problem, our method with the large-batch VR option can achieve a sample complexity of $\cO(n+\sqrt{n} \kappa^2 \vareps^{-2})$ and communication complexity of $\cO(\kappa^2\vareps^{-2})$. These complexity results are optimal in terms of the dependence on $\varepsilon$~\cite{arjevani2023lower,lu2021optimal} and match the best-known results of decentralized methods for solving special cases of \eqref{eq:min-max-prob}. 
	Moreover,	our complexity results are graph-topology independent in certain regimes; see Remarks~\ref{rm:multi-comm} and \ref{rm:storm-epsilon}. Furthermore, when specialized to the single-agent setting, i.e., $m=1$ in \eqref{eq:min-max-prob}, our method also improves over a few state-of-the-art methods for NCSC finite-sum or stochastic minimax problems. With large-batch sampling, our method achieves the same-order complexity result as SREDA~\cite{Luo2020}, which requires double loops and only applies to the special case of \eqref{eq:min-max-prob} with $g\equiv 0$ and $h=\II_\cY$ for a convex set $\cY$. If $\cO(1)$ samples are available for each update, our method achieves a lower-order complexity than Acc-MDA~\cite{huang2022accelerated}, which needs $\tilde{\cO}(\kappa^{\frac{9}{2}}\varepsilon^{-3})$ sample gradients to produce an $\vareps$-stationary solution. 
	
\item	\emph{Third}, we verify the performance of our proposed method on two machine learning problems in a real decentralized computing environment. Additionally, we make our code open source at~\href{https://github.com/RPI-OPT/VRLM}{https://github.com/RPI-OPT/VRLM}.

\end{itemize}

\subsection{Related work}\label{sec:related_works}

\begin{table*}[t]
  \centering
    \caption{Representative decentralized minimax optimization methods for \eqref{eq:min-max-prob}. The \textsc{Stoch.}/\textsc{F.S.} column indicates whether the method handles a stochastic setting or a finite-sum setting in \eqref{eq:finite-sum-scenario}; \textsc{S.C.} stands for ``single communication'' and hence an \xmark ~in this column indicates a method requires multiple communications to ensure convergence. The final two columns, \textsc{Samp. Comp.} and \textsc{Comm. Comp.}, indicate the complexity results for solving the associated problem, e.g. for \ourmethod, these results are in terms of solving~\eqref{eq:min-max-Phi}. Here, the $\bigO{\cdot}$ notation hides dependence on non-key values; some works do not make the dependence on the spectral gap or condition number clear - we use constants $a,b,c,d,e,f>0$ with subscripts $s$ and $c$ to denote whether or not these unknowns are related to the \emph{sample} or \emph{communication} complexities, respectively. $^{\P}$PRECISION is guaranteed to converge in the finite-sum setting, but the dependence upon the number of local component functions $n$ is unclear. $^{\dagger}$We assume here that $\varepsilon\le(1-\rho)^2$. $^{\ddagger}$We assume here that $1\le\kappa(1-\rho)^{2}$; in these two regimes, our results are graph-topology independent. For complete results, please see Corollaries~\ref{corollary:complexity-spider} and \ref{corollary:storm-convergence}.
    }
    {\small
    \renewcommand{\arraystretch}{1.6}
  \begin{tabular}[b]{l|c|c|c|c|c}
	\hline
	\textsc{Method} & \textsc{Stoch.}/\textsc{F.S.}  & ($g$, $h$)  & \textsc{S.C.} & \textsc{Samp. Comp.} & \textsc{Comm. Comp.}\\\hline
    GT-DA~\cite{Tsaknakis2020}& \textsc{F.S.}  & (0, $\mathbb{I}_{\cY}$) & \cmark & $\tilde{\mathcal{O}}\left(\frac{n\kappa^{a_s}}{(1-\rho)^{b_s}\varepsilon^{2}}\right)$ & $\tilde{\mathcal{O}}\left(\frac{\kappa^{a_c}}{(1-\rho)^{b_c}\varepsilon^{2}}\right)$\\\hline
    GT-SRVR~\cite{Xin2021} & \textsc{F.S.}  & (0, 0) & \cmark & $\bigO{n+\frac{\sqrt{n}\kappa^{c_s}}{(1-\rho)^{d_s}\varepsilon^{2}}}$ & $\bigO{\frac{\kappa^{c_c}}{(1-\rho)^{d_c}\varepsilon^{2}}}$ \\\hline
    PRECISION~\cite{liu2023precision}$^{\P}$ & \textsc{F.S.} & (convex, $\mathbb{I}_{\cX}$) &  \cmark  & $\bigO{\frac{\kappa^{e_s}}{(1-\rho)^{f_s}\varepsilon^{2}}}$ & $\bigO{\frac{\kappa^{e_c}}{(1-\rho)^{f_c}\varepsilon^{2}}}$ \\\hline
    \multirow{2}{*}{DREAM~\cite{Chen2022}} & \textsc{F.S.}   & \multirow{2}{*}{(0, $\mathbb{I}_{\cY}$)} &  \multirow{2}{*}{\xmark}  &  $\bigO{n+\frac{\sqrt{n}\kappa^2}{\varepsilon^{2}}}$ & $\bigO{\frac{\kappa^2}{\sqrt{1-\rho}\varepsilon^{2}}}$ \\
   \cline{2-2} \cline{5-6}
    & \textsc{Stoch.} & & & $\bigO{\frac{\kappa^3}{\varepsilon^{3}}}$ & $\bigO{\frac{\kappa^2}{\sqrt{1-\rho}\varepsilon^{2}}}$\\\hline
    \ourmethod-STORM$^{\dagger}$ & \textsc{Stoch.}  & (convex, convex) &  \cmark  &  $\bigO{\frac{\kappa^3}{\varepsilon^{3}}}$  & $\bigO{\frac{\kappa^3}{\varepsilon^{3}}}$ \\\hline
   \multirow{2}{*}{\ourmethod-SPIDER$^{\ddagger}$ } & \textsc{F.S.} & \multirow{2}{*}{(convex, convex)} & \multirow{2}{*}{ \cmark}  & $\bigO{n+\frac{\sqrt{n}\kappa^2}{\varepsilon^{2}}}$ &  $\bigO{\frac{\kappa^2}{\varepsilon^{2}}}$\\\cline{2-2} \cline{5-6}
    & \textsc{Stoch.} &  &   & $\bigO{\frac{\kappa^3}{\varepsilon^{3}}}$ &  $\bigO{\frac{\kappa^2}{\varepsilon^{2}}}$\\
	\hline
  \end{tabular}
  }
    \label{table:related_works}
\end{table*}%

A few decentralized methods have been proposed for solving minimax problems. However, most of them focus on deterministic or finite-sum structured problems. We list a few representative methods in Table~\ref{table:related_works}.

The recent D-GDMax method~\cite{xu2023dgdm} is very closely related to our proposed method, as both rely on a reformulation of \eqref{eq:min-max-prob-dec} by the introduction of a Lagrangian multiplier to facilitate convergence. A key difference is that D-GDMax requires exact gradients. Thus it is not applicable to the general stochastic problem setting that we consider. Though D-GDMax can be applied to the special finite-sum case, its complexity will be significantly higher than ours if $n$ is big and the desired accuracy $\vareps$ is small. GT-DA~\cite{Tsaknakis2020} is also a deterministic gradient method. It solves a variant of~\eqref{eq:min-max-prob-dec} by only enforcing consensus on either the $\{\vx_i\}_{i\in\cV}$ or $\{\vy_i\}_{i\in\cV}$ variables, but not both simultaneously. 

The PRECISION method in \cite{liu2023precision} is designed for solving \eqref{eq:min-max-prob-dec} under the finite-sum setting, and in addition, it assumes $h = \II_\cY$ for some convex set $\cY$. Similar to one option of our method, it utilizes the SPIDER-type~\cite{fang18spider} variance reduction. In order to produce an $\vareps$-accurate point, it needs $\cO(n+\sqrt{n}\vareps^{-2})$ samples and $\cO(\vareps^{-2})$ communications rounds without giving an explicit dependence on the problem's condition number $\kappa$; the dependence on $\vareps$ and $n$ is the best-known for the finite-sum setting. A preceding method of PRECISION, called GT-SRVR in \cite{Xin2021}, considers a more special case of \eqref{eq:min-max-prob-dec} under the finite-sum setting. It assumes $g\equiv0$ and $h\equiv 0$ and achieves the same-order complexity results by the SPIDER-type variance reduction. DSGDA~\cite{Gao2022} solves the same-structured problem as GT-SRVR and also enjoys the same-order complexity results. Different from GT-SRVR, DSGDA uses a SAGA-type acceleration technique \cite{defazio2014saga}. It does not need to take large-batch samples for each update but requires a large memory to maintain $n$ component gradients.  

Like our method, DREAM~\cite{Chen2022} can be applied to both the stochastic and finite-sum settings of \eqref{eq:min-max-prob-dec}. It employs a loopless version of the SPIDER-type VR method. However, it assumes $g\equiv0$ and $h = \II_\cY$. In addition, it performs multiple communications per update, and its convergence results rely on the multi-communication trick. This is fundamentally different from our method. With a multi-communication trick, our results can have a lower-order dependence on the spectral of the communication network; see Remark~\ref{rm:multi-comm}. However, we can have guaranteed convergence with a single communication per update, while the analysis of DREAM requires the multi-communication trick to prove convergence. The requirement of multiple communications can be too restrictive and even impractical in a real computing setting, as it needs more coordinations between agents \cite{Lian2017}. 
All the aforementioned methods require either large-batch samplings or a large number of maintained component gradients. In contrast, DM-HSGD~\cite{Wenhan2021}, by the STORM-type~\cite{cutkosky2019momentum} VR technique, only needs $\cO(1)$ samples per update per agent, except for a possible large-batch sampling at the initial step. However, there is one critical error with its convergence analysis. It turns out that Eqn.~(28) in \cite{Wenhan2021} does not hold with the given choice of $\theta$. In fact, $\theta$ must be $\Theta(1/L)$ instead of the given $\Theta(1/\mu)$, where $L$ is the smoothness constant of $\{f_i\}$ and $\mu$ the strong-concavity constant of $\{f_i(\vx, \cdot)\}$. With the correct $\theta$, it is unclear if its final claimed convergence results will hold\footnote{With $\theta=\Theta(1/L)$, the coefficient for $\mathbb{E}\norm{\bar{u}^t}$ becomes positive but it is required to be negative in the analysis for DM-HSGD.}.

All the methods we mentioned above assume strong concavity about the dual variable $\vy$ and also convexity of the regularization terms or constraint sets, if there are any. A few existing decentralized methods for minimax problems make weaker or different assumptions. For example, DPOSG~\cite{Mingrui2020} is designed to solve smooth stochastic nonconvex nonconcave decentralized minimax problems. Instead of concavity structure, the Minty VI condition is assumed by DPOSG in order to have guaranteed convergence. However, its complexity has a high-order dependence on a given accuracy, reaching $\cO(\vareps^{-12})$ to produce an $\vareps$-stationary solution. In addition, it cannot handle problems with nonsmooth regularization terms or a hard constraint. This is another fundamental difference from our method.

Though our focus is on decentralized computation, our method is also applicable in the single-agent (or non-distributed) setting, i.e., the problem \eqref{eq:min-max-prob} with $m=1$. In this case, many methods have been proposed under various settings of $f$. The GDA~\cite{lin2020gradient} method can be applied to dual-constrained smooth deterministic NCSC minimax problems, i.e., $g\equiv 0$ and  $h=\II_\cY$ in \eqref{eq:min-max-prob}. To achieve an $\varepsilon$-accurate solution, it requires $\cO(\kappa^2\varepsilon^{-2})$ gradient evaluations, which was later reduced to $\tilde{\cO}(\sqrt{\kappa}\varepsilon^{-2})$ by the Minimax-PPA~\cite{lin2020near} method. When $g\not\equiv0$ or $h\not\equiv0$, proximal-AltGDAm~\cite{chen2021accelerated} can achieve a complexity result of $\cO(\kappa^{\frac{11}{6}}\varepsilon^{-2})$. In the stochastic setting, GDA utilizes large-batch sampling and can achieve a sample complexity of $\cO(\kappa^3\varepsilon^{-4})$. SREDA~\cite{Luo2020} considers both stochastic and finite-sum structured minimax problems. Similar to one choice of our method in the single-agent setting, it adopts the SPIDER-type VR. However, different from our method, SREDA needs an inner loop to approximately solve the dual maximization problem, and thus it is a double-loop method. It achieves a sample complexity of $\tilde\cO(n + \sqrt{n}\kappa^2\varepsilon^{-2})$ for the finite-sum case and $\cO(\kappa^3\varepsilon^{-3})$ for the stochastic case. All aforementioned single-agent methods require a large batch-size: either all samples in a deterministic/finite-sum setting or as many as $\Theta(\vareps^{-2})$ in a stochastic setting where SPIDER-type VR is used. In contrast, Acc-MDA~\cite{huang2022accelerated} can achieve a complexity of $\tilde{\cO}(\kappa^{\frac{9}{2}}\varepsilon^{-3})$ by $\cO(1)$ samples per update through the STORM-type variance reduction. SAPD+ \cite{zhang2022sapd} can also have convergence guarantees by small-batch sampling and achieves a complexity of $\cO(\kappa \vareps^{-4})$. When big-batch sampling is performed, SAPD+ can have a complexity of $\cO(\kappa^2 \vareps^{-3})$ by variance reduction. However, different from Acc-MDA and our method in a single-agent setting, SAPD+ is a double-loop method. A comprehensive literature review for single-agent methods designed for solving~\eqref{eq:min-max-prob} is out of the scope of this work. The interested readers can refer to the references therein of previously mentioned works for a more thorough treatment of this subject, including whether or not each method can handle $g\not\equiv0$ or $h\not\equiv0$. For readers interested in works that handle the nonconvex concave or nonconvex nonconcave settings, see e.g.,~\cite{ostrovskii2021efficient, thekumparampil2019efficient,zhang2022sapd,   jin2020local,lin2020gradient,yang2022faster, xu2023unified, zheng2023doubly}.

\subsection{Notation}\label{sec:notation}
We denote $[m]$ as the set $\{1,\ldots, m\}$. We let $\vz:= [\vx; \vy]\in\RR^{d_1+d_2}$ and $\cZ = \dom(g)\times \dom(h)$ be the joint variable and domain. For each $i\in [m]$, let $\vz_i := [\vx_i; \vy_i]$ be a local copy of $\vz$ on agent $i$. Then we denote
\begin{subequations}\label{eq:notation-xyz-F}
\begin{align}
&\vX = \big[ \vx_1, \ldots, \vx_m\big]^\top, \ \vY = \big[ \vy_1, \ldots, \vy_m\big]^\top, \ \vZ = \big[ \vz_1, \ldots, \vz_m\big]^\top, \textstyle \bar{\vx}=\frac{1}{m}\sum_{i=1}^{m}\vx_{i},\ \vX_{\perp}=\vX-\frac{1}{m}\vone\bar{\vx}^\top, \label{eq:xyz}\\
& \nabla_\vx F(\vZ) = \big[\nabla_\vx f_1(\vz_1),\ldots, \nabla_\vx f_m(\vz_m) \big]^\top, \quad \nabla_\vy F(\vZ) = \big[\nabla_\vy f_1(\vz_1),\ldots, \nabla_\vy f_m(\vz_m) \big]^\top\label{eq:nabla-F}. 
\end{align}
\end{subequations}
We use the superscript $^{(t)}$ for the $t$-th iteration. For a set $\cX\subseteq\RR^{d}$, we denote its indicator function by $\mathbb{I}_{\cX}(\vx)$, i.e., $\mathbb{I}_{\cX}(\vx)=0$ if $\vx \in \cX$ and $\infty$ otherwise. For a closed convex function $r$, we define its proximal mapping as $\prox_{r}(\vx):=\argmin_{\vy}\{r(\vy)+\frac{1}{2}\norm{\vy-\vx}^2\}$. Finally, given a set of random samples $\cB$, we denote
\begin{equation}\label{eq:stoch-oracle-avg}
	   		\textstyle G_i(\cB):= \frac{1}{|\cB|}\sum_{\xi\in \cB} \tilde{\nabla}  f_i(\vx_i,\vy_i;\xi), \quad G_i^{(t)}(\cB):= \frac{1}{|\cB|}\sum_{\xi\in \cB} \tilde{\nabla}  f_i(\vx_i^{(t)},\vy_i^{(t)};\xi).
		\end{equation}

\subsection{Outline}
The rest of this paper proceeds as follows. In Sect.~\ref{sec:alg} we introduce our proposed method. We provide convergence results in Sect.~\ref{sec:convergence} and numerical experiments in Sect.~\ref{sec:numerical}. In Sect.~\ref{sec:conclusion} we make  concluding remarks.


\section{Proposed method}\label{sec:alg}

	The primary challenges with providing convergence guarantees for decentralized methods that solve~\eqref{eq:min-max-prob-dec} are caused by the non-linearity of the proximal mapping associated with the non-smooth terms $g$ and $h$, the nonconvexity of $\{f_i(\cdot, \vy)\}$, and the stochasticity of gradient information. 

	To address these challenges, we adopt a reformulation of \eqref{eq:min-max-prob-dec}, introduced in the recent work \cite{xu2023dgdm}, by using a Lagrangian multiplier $\{\vlam_{i}\}_{i\in\cV}$ to remove the consensus constraint on $\{\vy_{i}\}_{i\in\cV}$. Formally, we define
		\begin{equation}\label{eq:def-Phi}
		\Phi(\vX, \vLam, \vY) := \textstyle \frac{1}{m}\sum_{i=1}^m \big( f_i(\vx_i, \vy_i) - h(\vy_i)\big) - \frac{L}{2\sqrt{m}} \big\langle \vLam, (\vW- \vI) \vY\big\rangle
		\end{equation}
	such that when $\dom(h)$ has nonempty relative interior, the problem \eqref{eq:min-max-prob-dec} can be reformulated equivalently into 
		\begin{equation}\label{eq:min-max-Phi}
			\min_{\vX, \vLam}  \max_\vY \textstyle ~\Phi(\vX, \vLam, \vY)+\frac{1}{m}\sum_{i=1}^{m}g_{i}(\vx_{i}), \st \vx_i = \vx_j, \forall\, i, j \in [m].
		\end{equation}
	In addition, we incorporate VR techniques to facilitate a better estimate of the true local gradients. Specifically, we provide convergence guarantees when agents utilize either the SPIDER \cite{fang18spider, nguyen2017sarah} or STORM-type \cite{cutkosky2019momentum, xu2023-PSTORM} VR technique. By additionally using gradient tracking~\cite{lorenzo16,nedic17,songtao19,zhang20gradtrack,koloskova21} in the $\vx$-variable, we make the least restrictive assumptions on the data distribution among the agents~\cite{tang18}, namely, heterogeneous data is allowed. Combining all of the above ideas leads to our \texttt{V}ariance \texttt{R}educed \texttt{L}agrangian \texttt{M}ultiplier based method for decentralized double-regularized minimax problems, \ourmethod. Its pseudocode 
	is summarized in Algorithm~\ref{algo:}.

	\begin{algorithm}[h]
		\caption{\texttt{V}ariance \texttt{R}educed \texttt{L}agrangian \texttt{M}ultiplier based method: \ourmethod (agent view)}
		\setlength{\abovedisplayskip}{-1em} 
		\setlength{\belowdisplayskip}{0em} 
		\label{algo:}
		\SetKwInOut{Initialization}{Initialize}
		\KwIn{$\vx_{i}^{(0)}=\vx^{(0)}\in\dom(g)$,  $\vy_{i}^{(0)}=\vy^{(0)}\in\dom(h)$, $\vlam_{i}^{(0)}=\vzero$ for all $i\in[m]$; step-sizes $\eta_{\vx},\eta_{\vy},\eta_{\vLam}>0$; \texttt{VR}-tag}
		\textbf{Initial step:} set $\vv_{i}^{(0)}=\vd_{i}^{(0)}=G_{i}^{(0)}(\cB_{i}^{(0)})$ where $\lvert\cB_{i}^{(0)}\rvert=\cS_0$ for all $i\in[m]$
		
		\For{$t=1,\dots$}{
			\For{\normalfont \textbf{agents} $i\in[m]$ \textbf{in parallel}}{
				
					\If{\normalfont \texttt{VR}-tag == SPIDER}{
								\begin{equation}\label{algo:grad_update-spider}
											\vd_{i}^{(t)}= [\vd_{\vx, i}^{(t)}; \vd_{\vy, i}^{(t)}]=\begin{cases}G_i^{(t)}(\tilde{\cB}_{i}^{(t)})&\text{ if }\hbox{mod}(t,q)=0\text{, where } \lvert\tilde{\cB}_{i}^{(t)}\rvert=\cS_1\\G_i^{(t)}(\cB_{i}^{(t)})-G_i^{(t-1)}(\cB_{i}^{(t)})+\vd_{i}^{(t-1)}&\text{otherwise}\text{, where }\lvert\cB_{i}^{(t)}\rvert=\cS_2.\end{cases} 
								\end{equation}
					}
					\Else{
								\begin{equation}\label{algo:grad_update}
											\vd_{i}^{(t)}= [\vd_{\vx, i}^{(t)}; \vd_{\vy, i}^{(t)}]=G_i^{(t)}(\cB_{i}^{(t)})+(1-\beta)\left(\vd_{i}^{(t-1)}-G_i^{(t-1)}(\cB_{i}^{(t)})\right)\text{ where }\lvert\cB_{i}^{(t)}\rvert=\cS_t.
								\end{equation}
					}
					
				Let $\vv_{\vx,i}^{(t)}=\sum_{j=1}^{m}w_{ij}\left(\vv_{\vx,j}^{(t-1)}+\vd_{\vx,j}^{(t)}-\vd_{\vx,j}^{(t-1)}\right)$ and $\vv_{\vy,i}^{(t)}=\vd_{\vy,i}^{(t)}-\frac{L\sqrt{m}}{2}\left(\sum_{j=1}^{m}w_{ji}\vlam_{j}^{(t)}-\vlam_{i}^{(t)}\right)$.
				
				Update the Lagrangian multiplier by $\vlam_{i}^{(t+1)}=\vlam_{i}^{(t)}+\frac{L\eta_{\vLam}}{2\sqrt{m}}\left(\sum_{j=1}^{m}w_{ij}\vy_{j}^{(t)}-\vy_{i}^{(t)}\right)$.
				
				Let $\vx_{i}^{(t+1)}=\prox_{\eta_{\vx}g}\left(\sum_{j=1}^{m}w_{ij}\vx_{j}^{(t)}-\eta_{\vx}\vv_{\vx,i}^{(t)}\right)$ and $\vy_{i}^{(t+1)}=\prox_{\eta_{\vy}h}\left(\vy_{i}^{(t)}+\eta_{\vy}\vv_{\vy,i}^{(t)}\right)$.
			}
		}
	\end{algorithm}
	
	The updates in~\eqref{algo:grad_update-spider} utilize the SPIDER-type VR technique, which requires large-batch sampling, meaning that the number of samples at each iteration to compute a local gradient estimator depends on a desired solution accuracy $\varepsilon$. As shown in the next section and in Table~\ref{table:related_works}, the large batches can lead to an improved communication complexity result, but may lead to poor generalization on some machine learning tasks~\cite{keskar17}. Furthermore, in scenarios where the data arrives in a stream it may be impractical to wait for enough samples to compute a large-batch gradient estimator. Hence, we also allow for agents to utilize the STORM VR technique in~\eqref{algo:grad_update}. By the STORM technique, agents only need $\bigO{1}$ samples to compute a stochastic gradient estimator, except for a possible large-batch sampling at the initial step. We find in practice (see Section~\ref{sec:numerical}) that the STORM estimator outperforms the SPIDER estimator on more complex tasks. Nevertheless, we will provide convergence analysis for both methods.
	
	Algorithm~\ref{algo:} provides the updates from the viewpoint of the agents. To facilitate ease of expression and readability, we form $\vD_\vx, \vD_\vy$ and re-write the last three lines of Algorithm~\ref{algo:} in the following matrix form
			\begin{align}
			& \vD_\vx^{(t)} = [\vd_{\vx, 1}, \ldots, \vd_{\vx, m}]^\top, \quad \vD_\vy^{(t)} = [\vd_{\vy, 1}, \ldots, \vd_{\vy, m}]^\top, \\
				& \textstyle \vV_{\vx}^{(t)}=\vW\left(\vV_{\vx}^{(t-1)}+\vD_{\vx}^{(t)}-\vD_{\vx}^{(t-1)}\right),\quad \vV_{\vy}^{(t)}=\vD_{\vy}^{(t)}-\frac{L\sqrt{m}}{2}\left(\vW-\vI\right)^\top\vLam^{(t)}, \label{algo:v_update}\\
				&\textstyle \vLam^{(t+1)}=\vLam^{(t)}+\frac{L\eta_{\vLam}}{2\sqrt{m}}(\vW-\vI)\vY^{(t)}, \label{algo:lam_update}\\
				&\textstyle \vX^{(t+1)}=\prox_{\eta_{\vx}g}\left(\widetilde{\vX}^{(t)}-\eta_{\vx}\vV_{\vx}^{(t)}\right),\quad \widetilde{\vX}^{(t)}=\vW\vX^{(t)},\quad \vY^{(t+1)}=\prox_{\eta_{\vy}h}\left(\vY^{(t)}+\eta_{\vy}\vV_{\vy}^{(t)}\right) \label{algo:updates}
			\end{align}
	where the $\prox$ operator acts row-wisely on the input.

	
\section{Convergence results}\label{sec:convergence}
In this section, we give the convergence results of Algorithm~\ref{algo:} for two VR options (by setting the \texttt{VR}-tag in Algorithm~\ref{algo:}). The following definitions are used  throughout our analysis with $\Phi$ given in \eqref{eq:def-Phi}.
		\begin{align}
			&P(\vx, \vLam) := \max_\vY \Phi(\vone\vx^\top, \vY, \vLam), \quad Q(\vX, \vLam):= \max_\vY \Phi(\vX,\vY, \vLam), \quad S_\Phi(\vX,\vLam) := \argmax_\vY  \Phi(\vX, \vLam, \vY), \label{eq:def-P}\\
			 &f(\vx, \vy) := \textstyle \frac{1}{m}\sum_{i=1}^m f_i(\vx, \vy), \quad p(\vx) := \max_\vy f(\vx, \vy) - h(\vy),\quad \phi(\vx, \vLam) := P(\vx, \vLam) + g(\vx), \label{eq:def-p}\\
			&\widehat{\vY}^{(t)}:=\argmax_\vY\Phi(\vone\bar{\vx}^{(t)},\vLam^{(t)}, \vY),\quad \widetilde{\vY}^{(t)}:=\argmax_\vY\Phi(\vX^{(t)},\vLam^{(t)},  \vY),\label{eq:argmaxes}\\
			 &\vR_{\vx}^{(t)}:=\vD_{\vx}^{(t)}-\nabla_{\vx} F(\vZ^{(t)}),\quad \vR_{\vy}^{(t)}:=\vD_{\vy}^{(t)}-\nabla_{\vy} F(\vZ^{(t)}),\quad  \vR^{(t)}:=\vD^{(t)}-\nabla F(\vZ^{(t)}),\label{eq:error_defs}\\
			 &\textstyle \Gamma_t(\vY):=\frac{1}{m}\sum_{i=1}^{m}f_i(\vx_{i}^{(t)},\vy_{i})-\frac{L}{2\sqrt{m}}\big\langle\left(\vW-\vI\right)\vY, \vLam^{(t)}\big\rangle. \label{eq:gamma_function}
		\end{align}
	By Danskin's theorem~\cite{Danskin1966}, with $\vY=S_\Phi(\vone\vx^\top,\vLam) $, we have 
			\begin{equation}\label{eq:grad-P}
			\textstyle 	\nabla P(\vx,\vLam)=\left(\frac{1}{m}\sum_{i=1}^{m}\nabla_{\vx}f_{i}(\vx,\vy_{i}),\,-\frac{L}{2\sqrt{m}}(\vW-\vI)\vY\right).
			\end{equation}

	The analysis will be conducted based on the following assumptions, which are standard in the minimax and decentralized optimization literature~\cite{Xin2021,shi15}.
			\begin{assumption}\label{assump-func}
				The function $f_i(\vx, \cdot)$ is $\mu$-strongly convex with $\mu>0$, for each $i\in [m]$; there exists $\phi^{*}\in\RR$ such that $\phi(\vx,\vLam) \ge \phi^{*}$ for all $\vx,\vLam$ where $\phi$ is defined in \eqref{eq:def-p}; $\dom(h)$ has a nonempty relative interior. 
			\end{assumption}
			\begin{assumption}\label{assump-W}
			The mixing matrix $\vW\in\RR^{m\times m}$ satisfies the conditions: $\mathrm{(i)}$ $\Null(\vW-\vI) = \Span\{\vone\}$ and $\vW^\top\vone = \vone$; $\mathrm{(ii)}$ $\rho:= \|\vW-\frac{1}{m}\vone\vone^\top\|_2 < 1$; $\mathrm{(iii)}$ $\|\vW-\vI\|_2 \le 2$.
			\end{assumption}
	Under Assumption~\ref{assump-W} and by the notation in \eqref{algo:updates}, since $\vW-\vI = (\vW-\vI)(\vI - \frac{1}{m}\vone\vone^\top)$, it holds			
	\begin{equation}\label{lemma:lyapunov_function-spider:eqn6}
  	  	\textstyle 		\norm{\vX^{(t+1)}-\vX^{(t)}}_F^2\le2\norm{\vX^{(t+1)}-\widetilde{\vX}^{(t)}}_F^2+2\norm{\left(\vW-\vI\right)\vX_{\perp}^{(t)}}_F^2\le 2\norm{\vX^{(t+1)}-\widetilde{\vX}^{(t)}}_F^2+8\norm{\vX_{\perp}^{(t)}}_F^2.
  	  \end{equation}
						
		\begin{assumption}\label{assump-stoch}
For each $i\in[m]$, $f_i$ is $L$-smooth. In addition, in the stochastic case, $\tilde{\nabla} f_i(\vx,\vy;\xi)$ satisfies \textnormal{(i)} $\EE_{\xi}[\tilde{\nabla} f_i(\vx,\vy;\xi)]=\nabla f_{i}(\vx,\vy)$, \textnormal{(ii)} there exists $\sigma^2\ge 0$ such that $\EE_{\xi}\norm{\tilde{\nabla} f_i(\vx,\vy;\xi)-\nabla f_{i}(\vx,\vy)}_2^2\le\sigma^2$, and \textnormal{(iii)} for any $(\vx,\vy),(\vx',\vy')\in\RR^{d_1+d_2}$, it holds $\EE_{\xi}\norm{\tilde{\nabla} f_i(\vx,\vy;\xi)-\tilde{\nabla} f_i(\vx',\vy';\xi)}_2^2\le L^2\left(\norm{\vx-\vx'}_2^2+\norm{\vy-\vy'}_2^2\right)$. 
		\end{assumption}
Throughout this paper, we denote the condition number by 
\begin{equation}\label{eq:def-kappa}
\textstyle \kappa := \frac{L}{\mu}.
\end{equation}

To quantify the sample and communication complexities, we are interested in finding a near-stationary point of \eqref{eq:min-max-Phi}, defined below.	
	\begin{definition}\label{def:stationarity}
		For any $\varepsilon>0$, a random point $(\vX , \vLam)$ is called an $\varepsilon$-stationary point in expectation of~\eqref{eq:min-max-Phi} if there is some $\eta_\vx > 0$ such that
		\begin{equation}\label{def:stationarity:bound}
\textstyle			\EE\norm{\frac{1}{\eta_{\vx}}\left(\bar{\vx}-\prox_{\eta_{\vx}g}\left(\bar{\vx}-\eta_{\vx}\nabla_{\vx} P(\bar{\vx},\vLam)\right)\right)}_2^2+\frac{L^2}{m}\EE\norm{\vX_{\perp}}_F^2\le\varepsilon^2\text{ and }\EE\norm{\nabla_{\vLam}P(\bar{\vx},\vLam)}_F^2\le\varepsilon^2.
		\end{equation}
	\end{definition}
\begin{remark}\label{rm:stationary}
Notice that $P$ is the objective function of the primal problem of \eqref{eq:min-max-Phi}. The stationarity measure in Definition~\ref{def:stationarity} is sometimes called optimization stationarity \cite{zheng2022doubly}. Another related notion is the so-called game stationarity, which also involves the dual variable $\vy$. In addition, \cite{xu2023dgdm} shows that if $(\vX , \vLam)$ is an $\vareps$-stationary point of \eqref{eq:min-max-Phi}, then $\vX$ is an $\cO(\vareps)$-stationary point of the original decentralized formulation \eqref{eq:min-max-prob-dec} when $h$ is smooth. This claim can actually be extended to the case where the function $P(\vx, \cdot)$ defined in \eqref{eq:def-P} satisfies a quadratic-growth condition \cite{drusvyatskiy2018error} for all $\vx\in \dom(g)$. Hence, we adopt the notion in Definition~\ref{def:stationarity}.
\end{remark}

\subsection{Preparatory results}

We begin with the following preparatory propositions. The first one is directly from \cite{xu2023dgdm}.

\begin{proposition}\label{prop:P-S}
Let $P$ and $S_\Phi$ be defined in \eqref{eq:def-P}. Then with $L_P=L\sqrt{4\kappa^2 + 1}$, it holds 
\begin{subequations}\label{eq:smooth-P-Q}
\begin{align}
&\|\nabla P(\vx, \vLam) - \nabla P(\tilde\vx, \tilde\vLam)\|_F^2 \le L_P^2\left(\|\vx-\tilde\vx\|^2 + \|\vLam - \tilde\vLam\|_F^2\right),\forall\, \vx,\tilde\vx\in \dom(g); \, \forall\,\vLam, \tilde\vLam, \label{eq:smooth-P} \\
&\|S_\Phi(\vX,\vLam) - S_\Phi(\widetilde\vX,\vLam)\|_F^2 \le \kappa^2 \|\vX - \tilde \vX\|_F^2, \forall\, \vx_i, \tilde\vx_i\in\dom(g),\forall\, i;\ \forall\, \vLam, \label{eq:lip-S}\\
&\|S_\Phi(\vX,\vLam) - S_\Phi(\widetilde\vX,\tilde\vLam)\|_F^2 \le 2\kappa^2 \|\vX - \tilde \vX\|_F^2 + 2m\kappa^2\|\vLam - \tilde\vLam\|_F^2, \forall\, \vx_i, \tilde\vx_i\in\dom(g),\forall\, i;\ \forall\, \vLam, \tilde\vLam. \label{eq:lip-S2}
\end{align}
\end{subequations}
\end{proposition}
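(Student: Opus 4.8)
The plan is to first record the structure that makes $S_\Phi$ well-defined. For fixed $(\vX,\vLam)$ the map $\vY\mapsto \Phi(\vX,\vLam,\vY)$ splits as $G_{\vX,\vLam}(\vY)-R(\vY)$, where $R(\vY):=\frac1m\sum_{i=1}^m h(\vy_i)$ is convex (possibly nonsmooth) and $G_{\vX,\vLam}(\vY):=\frac1m\sum_i f_i(\vx_i,\vy_i)-\frac{L}{2\sqrt m}\langle\vLam,(\vW-\vI)\vY\rangle$ is smooth and, by the strong concavity of each $f_i(\vx_i,\cdot)$ in Assumption~\ref{assump-func} together with block separability (the $\vLam$-term being linear), $\frac{\mu}{m}$-strongly concave in $\vY$. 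Hence $S_\Phi$ is single-valued, and its value $\vY^\star$ is characterized by the inclusion $\nabla_\vY G_{\vX,\vLam}(\vY^\star)\in\partial R(\vY^\star)$.

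For the Lipschitz estimates \eqref{eq:lip-S} and \eqref{eq:lip-S2}, I would run the standard strong-monotonicity argument. Writing $\vY_1^\star=S_\Phi(\vX,\vLam)$ and $\vY_2^\star=S_\Phi(\widetilde\vX,\widetilde\vLam)$ with smooth parts $G_1,G_2$, monotonicity of $\partial R$ applied to the two inclusions gives $\langle \nabla G_1(\vY_1^\star)-\nabla G_2(\vY_2^\star),\vY_1^\star-\vY_2^\star\rangle\ge 0$. Adding and subtracting $\nabla G_2(\vY_1^\star)$ and using $\frac\mu m$-strong concavity of $G_2$ yields $\frac{\mu}{m}\|\vY_1^\star-\vY_2^\star\|_F^2\le \langle \nabla G_1(\vY_1^\star)-\nabla G_2(\vY_1^\star),\vY_1^\star-\vY_2^\star\rangle$, so Cauchy--Schwarz produces $\|\vY_1^\star-\vY_2^\star\|_F\le \frac{m}{\mu}\|\nabla G_1(\vY_1^\star)-\nabla G_2(\vY_1^\star)\|_F$. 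The gradient difference equals $\frac1m[\nabla_\vy f_i(\vx_i,\cdot)-\nabla_\vy f_i(\widetilde\vx_i,\cdot)]_i-\frac{L}{2\sqrt m}(\vW-\vI)^\top(\vLam-\widetilde\vLam)$; bounding its Frobenius norm via the $L$-smoothness of each $f_i$ and $\|\vW-\vI\|_2\le 2$ from Assumption~\ref{assump-W}, then multiplying by $(m/\mu)^2=\kappa^2 m^2/L^2$, gives exactly \eqref{eq:lip-S} when $\vLam=\widetilde\vLam$ (only the $f_i$-term survives) and \eqref{eq:lip-S2} in general, splitting the two contributions with $\|a+b\|^2\le 2\|a\|^2+2\|b\|^2$.

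For the smoothness of $P$ in \eqref{eq:smooth-P}, I would start from the Danskin formula \eqref{eq:grad-P}, which expresses $\nabla P(\vx,\vLam)$ through $\vY=S_\Phi(\vone\vx^\top,\vLam)$, and bound the $\vx$- and $\vLam$-blocks of $\nabla P(\vx,\vLam)-\nabla P(\widetilde\vx,\widetilde\vLam)$ separately. The $\vLam$-block is $-\frac{L}{2\sqrt m}(\vW-\vI)(\vY-\widetilde\vY)$, controlled by $\|\vW-\vI\|_2\le 2$ and \eqref{eq:lip-S2}; the $\vx$-block is an average of $\nabla_\vx f_i(\vx,\vy_i)-\nabla_\vx f_i(\widetilde\vx,\widetilde\vy_i)$, controlled by $L$-smoothness (Jensen to pull the average out of the square) and again \eqref{eq:lip-S2} to handle $\|\vY-\widetilde\vY\|_F^2$. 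The key bookkeeping point is $\|\vone\vx^\top-\vone\widetilde\vx^\top\|_F^2=m\|\vx-\widetilde\vx\|^2$, so the factor $m$ in \eqref{eq:lip-S2} cancels. Summing the blocks, the coefficient of $\|\vx-\widetilde\vx\|^2$ comes out to $L^2(1+4\kappa^2)$ and that of $\|\vLam-\widetilde\vLam\|_F^2$ to $4L^2\kappa^2\le L^2(1+4\kappa^2)=L_P^2$, which is precisely the claimed constant.

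I expect the main obstacle to be the correct first-order characterization of $S_\Phi$ under the nonsmooth $h$: since $h$ need not be differentiable, $\vY^\star$ solves an inclusion rather than $\nabla_\vY\Phi=0$, and the entire Lipschitz argument rests on replacing a gradient identity by monotonicity of $\partial R$. Once that reduction is in place, the remainder is routine norm bookkeeping, with the two delicate points being the tracking of the scaling constants ($1/m$, $1/\sqrt m$, $m/\mu=\kappa m/L$) and the repeated use of $\|\vW-\vI\|_2\le 2$.
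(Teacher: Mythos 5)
Your argument is correct and complete: the strong-monotonicity characterization $\nabla_\vY G(\vY^\star)\in\partial R(\vY^\star)$ handles the nonsmooth $h$ properly, and the constants check out ($(m/\mu)^2\cdot L^2/m^2=\kappa^2$ for \eqref{eq:lip-S}, $(m/\mu)^2\cdot 2L^2/m=2m\kappa^2$ for \eqref{eq:lip-S2}, and $L^2+4L^2\kappa^2=L_P^2$ for \eqref{eq:smooth-P} after the factor $m$ from $\|\vone\vx^\top-\vone\tilde\vx^\top\|_F^2=m\|\vx-\tilde\vx\|^2$ cancels). The paper itself gives no proof of this proposition --- it imports it verbatim from \cite{xu2023dgdm} --- so there is no in-paper argument to compare against; your derivation is the standard one such a reference would use and fills that gap self-containedly.
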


Based on Proposition~\ref{prop:P-S} and~\eqref{eq:argmaxes}, we have
		\begin{equation}\label{eq:y_hat-y_tilde-bound}
	\textstyle		\norm{\widehat{\vY}^{(t)}-\widetilde{\vY}^{(t)}}_F^2\le\kappa^2\norm{\bar{\vX}^{(t)}-{\vX}^{(t)}}_F^2=
	\kappa^2\norm{\vX_{\perp}^{(t)}}_F^2.
		\end{equation}

\begin{proposition}\label{prop:Q-prop}
Let $Q$ be defined in \eqref{eq:def-P}. Then with $L_Q= L\sqrt{4\kappa^2 + 1}$, it holds 
\begin{equation}\label{eq:smooth-Q}
\begin{split}
& \textstyle m\big\|\nabla_\vX Q(\vX, \vLam) - \nabla_\vX Q(\widetilde\vX, \tilde\vLam)\big\|_F^2 + \big\|\nabla_\vLam Q(\vX, \vLam) - \nabla_\vLam Q(\widetilde\vX, \tilde\vLam)\big\|_F^2 \\
\le & \textstyle L_Q^2 \left ( \frac{1}{m} \|\vX-\widetilde\vX\|_F^2 + \|\tilde\vLam-\vLam\|_F^2\right), \forall\, \vx_i, \tilde\vx_i\in\dom(g),\forall\, i;\ \forall\, \vLam, \tilde\vLam. 
\end{split}
\end{equation}
\end{proposition}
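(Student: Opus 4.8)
The plan is to mirror the derivation of \eqref{eq:grad-P} and then reduce the whole estimate to the Lipschitz bound \eqref{eq:lip-S2} on the maximizer map $S_\Phi$. First I would record the two gradients of $Q$. Because $\Phi(\vX,\vLam,\cdot)$ has a unique maximizer $\vY^\star:=S_\Phi(\vX,\vLam)$ and depends smoothly and jointly on $(\vX,\vLam)$ (the nonsmooth term $-\tfrac1m\sum_i h(\vy_i)$ does not involve $\vX$ or $\vLam$), the same Danskin-type argument that produced \eqref{eq:grad-P} gives
\[
\nabla_\vX Q(\vX,\vLam)=\tfrac1m\nabla_\vX F(\vZ^\star),\qquad \nabla_\vLam Q(\vX,\vLam)=-\tfrac{L}{2\sqrt m}(\vW-\vI)\vY^\star,
\]
where $\vZ^\star$ stacks the rows $[\vx_i;\vy_i^\star]$ with $\vy_i^\star$ the $i$-th row of $\vY^\star$. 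Writing $\widetilde\vY^\star:=S_\Phi(\widetilde\vX,\tilde\vLam)$ and $\widetilde\vZ^\star$ analogously, the goal becomes to bound the two gradient differences in terms of $\|\vX-\widetilde\vX\|_F^2$ and $\|\vLam-\tilde\vLam\|_F^2$.

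For the primal term, the weight $m$ cancels one of the two $\tfrac1m$ factors, so $m\|\nabla_\vX Q(\vX,\vLam)-\nabla_\vX Q(\widetilde\vX,\tilde\vLam)\|_F^2=\tfrac1m\|\nabla_\vX F(\vZ^\star)-\nabla_\vX F(\widetilde\vZ^\star)\|_F^2$; applying the $L$-smoothness of each $f_i$ (Assumption~\ref{assump-stoch}) row by row bounds this by $\tfrac{L^2}{m}\big(\|\vX-\widetilde\vX\|_F^2+\|\vY^\star-\widetilde\vY^\star\|_F^2\big)$. For the dual term, I would pull out $\|\vW-\vI\|_2\le2$ from Assumption~\ref{assump-W}(iii) to get $\|\nabla_\vLam Q(\vX,\vLam)-\nabla_\vLam Q(\widetilde\vX,\tilde\vLam)\|_F^2=\tfrac{L^2}{4m}\|(\vW-\vI)(\vY^\star-\widetilde\vY^\star)\|_F^2\le\tfrac{L^2}{m}\|\vY^\star-\widetilde\vY^\star\|_F^2$.

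Both estimates now hinge on a single quantity, $\|\vY^\star-\widetilde\vY^\star\|_F^2=\|S_\Phi(\vX,\vLam)-S_\Phi(\widetilde\vX,\tilde\vLam)\|_F^2$, which \eqref{eq:lip-S2} controls by $2\kappa^2\|\vX-\widetilde\vX\|_F^2+2m\kappa^2\|\vLam-\tilde\vLam\|_F^2$. Substituting this into the two bounds and adding them gives $\tfrac{L^2(1+4\kappa^2)}{m}\|\vX-\widetilde\vX\|_F^2+4L^2\kappa^2\|\vLam-\tilde\vLam\|_F^2$; since $4\kappa^2\le4\kappa^2+1$, both coefficients are at most $L_Q^2=L^2(4\kappa^2+1)$ times $\tfrac1m\|\vX-\widetilde\vX\|_F^2$ and $\|\vLam-\tilde\vLam\|_F^2$, respectively, which is exactly \eqref{eq:smooth-Q}.

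There is no deep obstacle here: the proposition is essentially the non-consensus analogue of Proposition~\ref{prop:P-S}, and the work is careful bookkeeping. The two places that need attention are (i) matching the asymmetric weights---the factor $m$ in front of the primal gradient against the $\tfrac1m$ in front of $\|\vX-\widetilde\vX\|_F^2$, so that the two $\tfrac1m$'s in $\nabla_\vX Q$ combine correctly---and (ii) checking that the dual coefficient $4\kappa^2$ is absorbed by $4\kappa^2+1$ rather than forcing a larger constant. Finally one should note that the Danskin formula for $\nabla Q$ is legitimate despite the nonsmooth regularizer, precisely because $-h$ is independent of $(\vX,\vLam)$ and $\vY^\star$ is unique, just as in the derivation of \eqref{eq:grad-P}.
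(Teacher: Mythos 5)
Your proposal is correct and follows essentially the same route as the paper: the Danskin-type formula for $\nabla Q$, the $L$-smoothness of each $f_i$ for the primal block, the bound $\|\vW-\vI\|_2\le 2$ for the dual block, and finally \eqref{eq:lip-S2} to control $\|S_\Phi(\vX,\vLam)-S_\Phi(\widetilde\vX,\tilde\vLam)\|_F^2$, with the same resulting coefficients $\tfrac{L^2(1+4\kappa^2)}{m}$ and $4L^2\kappa^2\le L_Q^2$. No gaps.
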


\begin{proof}
By the notation in \eqref{eq:notation-xyz-F}, let $\vY=S_\Phi(\vX,\vLam) $ and $\widetilde{\vY}=S_\Phi(\widetilde{\vX},\tilde{\vLam}) $. We have
$$\textstyle \nabla Q(\vX, \vLam) = \left(\frac{1}{m}\nabla F(\vZ)^\top,\ -\frac{L}{2\sqrt{m}}(\vW-\vI)\vY\right),\quad \nabla Q(\widetilde\vX, \tilde\vLam) = \left(\frac{1}{m}\nabla F(\tilde\vZ)^\top,\ -\frac{L}{2\sqrt{m}}(\vW-\vI)\widetilde\vY\right).$$
Hence, by the $L$-smoothness of each $f_i$, it follows
\begin{align*}
& \textstyle m\big\|\nabla_\vX Q(\vX, \vLam) - \nabla_\vX Q(\widetilde\vX, \tilde\vLam)\big\|_F^2 + \big\|\nabla_\vLam Q(\vX, \vLam) - \nabla_\vLam Q(\widetilde\vX, \tilde\vLam)\big\|_F^2 \nonumber\\
= &\,  \textstyle \frac{1}{m}\big\| \nabla F(\vZ) - \nabla F(\tilde\vZ)\big\|_F^2 + \frac{L^2}{4m}\big\|(\vW-\vI)(\vY -\widetilde\vY)\big\|_F^2
\le \frac{L^2}{m}\big(\|\vX-\widetilde\vX\|_F^2 + \|\vY-\widetilde\vY\|_F^2\big) + \frac{L^2}{m}\|\vY -\widetilde\vY\|_F^2.
\end{align*}
Now use \eqref{eq:lip-S2} in the inequality above to obtain the desired result.
\end{proof}

The inequality in \eqref{eq:smooth-Q} indicates the smoothness of $Q$ under a weighted norm. By \cite[Eqn.~2.12]{nesterov2013gradient}, we have that for any $\vX, \widetilde\vX$ with $\vx_i, \tilde\vx_i\in\dom(g),\forall\, i\in[m]$ and any $\vLam, \tilde\vLam$,
\begin{equation}\label{eq:ineq-Q-smooth}
\textstyle \left|Q(\widetilde\vX, \tilde\vLam) - Q(\vX, \vLam) - \big\langle \nabla Q(\vX, \vLam), (\widetilde\vX, \tilde\vLam) - (\vX, \vLam)\big\rangle\right| \le \frac{L_Q}{2} \left ( \frac{1}{m} \|\vX-\widetilde\vX\|_F^2 + \|\tilde\vLam-\vLam\|_F^2\right)
\end{equation}

	The lemma below relates the stationarity violation of~\eqref{eq:min-max-Phi} to terms that appear in our analysis. This lemma will be utilized to provide our final convergence rate results. Its proof is given in Appendix~\ref{sec:appendix:lemma:stationarity}.
	
	\begin{lemma}\label{lemma:stationarity}
Let $\{(\vX^{(t)},\vLam^{(t)},\vY^{(t)})\}$ be generated from Algorithm~\ref{algo:}.	 For any $t\ge0$, it holds that
				\begin{align}
						&\textstyle \EE\norm{\frac{1}{\eta_{\vx}}\left(\bar{\vx}^{(t)}-\prox_{\eta_{\vx}g}\big(\bar{\vx}^{(t)}-\eta_{\vx}\nabla_{\vx} P(\bar{\vx}^{(t)},\vLam^{(t)})\big)\right)}_2^2+\frac{L^2}{m}\EE\norm{\vX_{\perp}^{(t)}}_F^2\nonumber \\
						\le& \textstyle  \frac{5}{m\eta_{\vx}^2}\EE\norm{\vX^{(t+1)}-\bar{\vX}^{(t)}}_F^2+\left(\frac{2L^2(3+5\kappa^2)}{m}+\frac{5}{m\eta_{\vx}^2}\right)\EE\norm{\vX_{\perp}^{(t)}}_F^2+\frac{10L^2}{m}\EE\norm{\widetilde{\vY}^{(t)}-\vY^{(t)}}_F^2  \label{lemma:stationarity:bound} \\
							& \textstyle  +\frac{5}{m}\norm{\vR^{(t)}}_F^2+\frac{5}{m}\EE\norm{\vV_{\perp,\vx}^{(t)}}_F^2, \nonumber \\[0.1cm]
				& \textstyle  \EE\norm{\nabla_{\vLam}P(\bar{\vx}^{(t)},\vLam^{(t)})}_F^2\le\frac{2}{\eta_{\vLam}^2}\EE\norm{\vLam^{(t+1)}-\vLam^{(t)}}_F^2+\frac{4L^2\kappa^2}{m}\EE\norm{\vX_{\perp}^{(t)}}_F^2+\frac{4L^2}{m}\EE\norm{\widetilde{\vY}^{(t)}-\vY^{(t)}}_F^2, \label{lemma:stationarity:bound1}
			\end{align}
where $P$ and $\vR^{(t)}$ are defined in \eqref{eq:def-p}	and \eqref{eq:error_defs}.		
	\end{lemma}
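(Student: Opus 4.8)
The plan is to prove both inequalities as pointwise (deterministic) bounds and then take expectations; no probabilistic reasoning is required beyond that final step. Two structural facts drive everything: the gradient-tracking identity for $\vV_\vx$ and the $1$-Lipschitzness of the proximal map. I would first record that, since $\vW^\top\vone=\vone$ and $\vv_i^{(0)}=\vd_i^{(0)}$, a one-line induction on the recursion $\vV_\vx^{(t)}=\vW(\vV_\vx^{(t-1)}+\vD_\vx^{(t)}-\vD_\vx^{(t-1)})$ gives $\bar{\vv}_\vx^{(t)}:=\frac1m\vone^\top\vV_\vx^{(t)}=\frac1m\vone^\top\vD_\vx^{(t)}=\bar{\vd}_\vx^{(t)}$. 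Writing $\hat{\vx}^{(t+1)}:=\prox_{\eta_\vx g}\big(\bar{\vx}^{(t)}-\eta_\vx\nabla_\vx P(\bar{\vx}^{(t)},\vLam^{(t)})\big)$, the left-hand residual is $\frac1{\eta_\vx}(\bar{\vx}^{(t)}-\hat{\vx}^{(t+1)})$, whose squared norm I rewrite as $\frac1{m\eta_\vx^2}\sum_{i=1}^m\norm{\bar{\vx}^{(t)}-\hat{\vx}^{(t+1)}}^2$ to introduce the per-agent averaging that generates the $1/m$ factors. For each $i$ I split $\bar{\vx}^{(t)}-\hat{\vx}^{(t+1)}=(\bar{\vx}^{(t)}-\vx_i^{(t+1)})+(\vx_i^{(t+1)}-\hat{\vx}^{(t+1)})$ and bound the second difference by prox non-expansiveness, replacing it with $\norm{(\sum_j w_{ij}\vx_j^{(t)}-\bar{\vx}^{(t)})-\eta_\vx(\vv_{\vx,i}^{(t)}-\nabla_\vx P(\bar{\vx}^{(t)},\vLam^{(t)}))}$.

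The heart of the argument is a five-way split. I decompose the gradient discrepancy $\vv_{\vx,i}^{(t)}-\nabla_\vx P(\bar{\vx}^{(t)},\vLam^{(t)})$ into the consensus error $\vv_{\vx,i}^{(t)}-\bar{\vv}_\vx^{(t)}$ (the $i$-th row of $\vV_{\perp,\vx}^{(t)}$); the tracking/variance error $\bar{\vv}_\vx^{(t)}-\frac1m\sum_j\nabla_\vx f_j(\vx_j^{(t)},\vy_j^{(t)})$, which by the tracking identity equals $\frac1m\vone^\top\vR_\vx^{(t)}$; and the smoothness error $\frac1m\sum_j\big(\nabla_\vx f_j(\vx_j^{(t)},\vy_j^{(t)})-\nabla_\vx f_j(\bar{\vx}^{(t)},\widehat{\vy}_j^{(t)})\big)$, where I have used \eqref{eq:grad-P} and the definition \eqref{eq:argmaxes} of $\widehat{\vY}^{(t)}$ to write $\nabla_\vx P(\bar{\vx}^{(t)},\vLam^{(t)})=\frac1m\sum_j\nabla_\vx f_j(\bar{\vx}^{(t)},\widehat{\vy}_j^{(t)})$. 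Together with $\bar{\vx}^{(t)}-\vx_i^{(t+1)}$ and the consensus-input term $\sum_j w_{ij}\vx_j^{(t)}-\bar{\vx}^{(t)}$, this exhibits $\bar{\vx}^{(t)}-\hat{\vx}^{(t+1)}$ as a sum of five vectors, so $\norm{\sum_{k=1}^5\va_k}^2\le5\sum_{k=1}^5\norm{\va_k}^2$ produces the ubiquitous factor $5$.

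It then remains to bound each piece and collect constants. The consensus-input term uses $\vW\vX^{(t)}-\bar{\vX}^{(t)}=(\vW-\frac1m\vone\vone^\top)\vX_\perp^{(t)}$ together with $\rho\le1$ to give $\frac{5}{m\eta_\vx^2}\norm{\vX_\perp^{(t)}}_F^2$; the $\vV_{\perp,\vx}$ term gives $\frac{5}{m}\norm{\vV_{\perp,\vx}^{(t)}}_F^2$ directly; the tracking term uses Jensen ($\norm{\frac1m\vone^\top\vR_\vx^{(t)}}^2\le\frac1m\norm{\vR^{(t)}}_F^2$) to give $\frac{5}{m}\norm{\vR^{(t)}}_F^2$; and the smoothness term uses $L$-smoothness followed by $\norm{\vY^{(t)}-\widehat{\vY}^{(t)}}_F^2\le2\norm{\vY^{(t)}-\widetilde{\vY}^{(t)}}_F^2+2\kappa^2\norm{\vX_\perp^{(t)}}_F^2$ (the last piece being \eqref{eq:y_hat-y_tilde-bound}), yielding exactly $\frac{10L^2}{m}\norm{\widetilde{\vY}^{(t)}-\vY^{(t)}}_F^2$ plus $\frac{5L^2(1+2\kappa^2)}{m}\norm{\vX_\perp^{(t)}}_F^2$. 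The $\bar{\vx}^{(t)}-\vx_i^{(t+1)}$ term yields $\frac{5}{m\eta_\vx^2}\norm{\vX^{(t+1)}-\bar{\vX}^{(t)}}_F^2$. Adding the common term $\frac{L^2}{m}\norm{\vX_\perp^{(t)}}_F^2$ to both sides then reconstitutes the stated coefficient $\frac{2L^2(3+5\kappa^2)}{m}=\frac{L^2}{m}+\frac{5L^2(1+2\kappa^2)}{m}$ in front of $\norm{\vX_\perp^{(t)}}_F^2$.

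The second inequality is shorter. From \eqref{eq:grad-P} we have $\nabla_\vLam P(\bar{\vx}^{(t)},\vLam^{(t)})=-\frac{L}{2\sqrt m}(\vW-\vI)\widehat{\vY}^{(t)}$, while the multiplier update \eqref{algo:lam_update} gives $\frac{L}{2\sqrt m}(\vW-\vI)\vY^{(t)}=\frac1{\eta_\vLam}(\vLam^{(t+1)}-\vLam^{(t)})$. Writing $\widehat{\vY}^{(t)}=\vY^{(t)}+(\widehat{\vY}^{(t)}-\vY^{(t)})$, applying $\norm{\va+\vb}^2\le2\norm{\va}^2+2\norm{\vb}^2$, using $\norm{\vW-\vI}_2\le2$ from Assumption~\ref{assump-W}(iii), and splitting $\norm{\widehat{\vY}^{(t)}-\vY^{(t)}}_F^2$ through $\widetilde{\vY}^{(t)}$ with \eqref{eq:y_hat-y_tilde-bound} delivers the bound at once. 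Taking expectations of both inequalities finishes the proof. I expect the only genuine obstacle to be in the first inequality: organizing the five-term split so that the nonlinear prox can be handled per agent, so that the averaging yields precisely the $1/m$ normalization, and so that the smoothness and $\widehat{\vY}$-versus-$\widetilde{\vY}$ estimates combine into the exact constant $\frac{2L^2(3+5\kappa^2)}{m}$; everything else is routine application of Young's inequality, Jensen's inequality, and Assumption~\ref{assump-W}.
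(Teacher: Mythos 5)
Your proposal is correct and follows essentially the same route as the paper's proof in Appendix~A: the same five-term decomposition of the prox-residual (iterate difference, consensus-input error, tracked-gradient consensus error, variance/tracking error via $\bar{\vv}_\vx^{(t)}=\bar{\vd}_\vx^{(t)}$, and the smoothness term handled through $\widehat{\vY}^{(t)}$ and \eqref{eq:y_hat-y_tilde-bound}), the factor $5$ from Young's inequality, and the identical two-term split plus $\|\vW-\vI\|_2\le 2$ for the $\nabla_\vLam P$ bound. All stated constants, including $\frac{2L^2(3+5\kappa^2)}{m}$, check out.
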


\subsection{One-iteration progress inequality}\label{sec:analysis:base_inequality}

Our analysis relies on establishing a one-iteration progress inequality about $\phi$ based on the updates in lines~8-10 in Algorithm~\ref{algo:}. Its proof is deferred to Appendix~\ref{sec:appendix:inequality}.

\begin{lemma}\label{lemma:base_inequality_updated}
Let $\{(\vX^{(t)},\vLam^{(t)},\vY^{(t)})\}$ be generated from Algorithm~\ref{algo:}.	For all $t\ge0$, it holds that
		\begin{equation}\label{lemma:base_inequality_updated:bound}
			\begin{split}
				&\phi(\bar{\vx}^{(t+1)},\vLam^{(t+1)})-\phi(\bar{\vx}^{(t)},\vLam^{(t)})\\
				\le& \textstyle  -\frac{1}{2m}\left(\frac{1}{\eta_{\vx}}-L(\kappa+1+c_1)-L_P(1+c_3)\right)\norm{\vX^{(t+1)}-\bar{\vX}^{(t)}}_F^2\\
				& \textstyle  -\left(\frac{1}{\eta_{\vLam}}-\frac{L_P}{2}-Lc_2\right)\norm{\vLam^{(t+1)}-\vLam^{(t)}}_F^2-\frac{1}{2m\eta_{\vx}}\norm{\vX^{(t+1)}-\widetilde{\vX}^{(t)}}_F^2\\
				& \textstyle +\frac{1}{2m}\left(L(\kappa+1)+\frac{\kappa^2}{c_2}+\frac{\rho^2}{\eta_{\vx}}\right)\norm{\vX_{\perp}^{(t)}}_F^2+\frac{1}{2m}\left(\frac{L}{c_1}+\frac{1}{c_2}\right)\norm{\widetilde{\vY}^{(t)}-\vY^{(t)}}_F^2\\
				& \textstyle +\frac{1}{2mL_Pc_3}\sum_{i=1}^{m}\norm{\frac{1}{m}\sum_{j=1}^{m}\nabla_\vx f_j(\vx_{j}^{(t)},\vy_{j}^{(t)})-\vv_{\vx,i}^{(t)}}_2^2,
			\end{split}
		\end{equation}
		where $c_1,c_2,c_3>0$ are arbitrary constants, and $\phi, \widetilde\vX^{(t)}$ and $\widetilde\vY^{(t)}$ are defined in \eqref{eq:def-p}, \eqref{algo:updates}, and \eqref{eq:argmaxes}.
\end{lemma}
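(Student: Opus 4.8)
The plan is to build a descent inequality for $\phi(\bar{\vx}^{(t)},\vLam^{(t)})=P(\bar{\vx}^{(t)},\vLam^{(t)})+g(\bar{\vx}^{(t)})$ by separating the smooth part $P$ from the nonsmooth part $g$. For the smooth part I would invoke the joint $L_P$-smoothness of $P$ from Proposition~\ref{prop:P-S} to obtain
\begin{align*}
P(\bar{\vx}^{(t+1)},\vLam^{(t+1)})-P(\bar{\vx}^{(t)},\vLam^{(t)})\le{}& \langle\nabla_\vx P,\bar{\vx}^{(t+1)}-\bar{\vx}^{(t)}\rangle+\langle\nabla_\vLam P,\vLam^{(t+1)}-\vLam^{(t)}\rangle\\
&+\tfrac{L_P}{2}\big(\|\bar{\vx}^{(t+1)}-\bar{\vx}^{(t)}\|^2+\|\vLam^{(t+1)}-\vLam^{(t)}\|_F^2\big),
\end{align*}
with all gradients evaluated at $(\bar{\vx}^{(t)},\vLam^{(t)})$. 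For the nonsmooth part, the row-wise prox update in~\eqref{algo:updates} gives, at each agent, the standard three-point prox inequality tested against $\bar{\vx}^{(t)}$; summing over $i$, using that the rows of $\widetilde{\vX}^{(t)}=\vW\vX^{(t)}$ average to $\bar{\vx}^{(t)}$ (since $\vone^\top\vW=\vone^\top$) and the convexity bound $g(\bar{\vx}^{(t+1)})\le\tfrac1m\sum_i g(\vx_i^{(t+1)})$, yields $g(\bar{\vx}^{(t+1)})-g(\bar{\vx}^{(t)})$ bounded by $\tfrac{1}{2m\eta_\vx}\sum_i\big(\|\widetilde\vx_i^{(t)}-\bar{\vx}^{(t)}\|^2-\|\widetilde\vx_i^{(t)}-\vx_i^{(t+1)}\|^2-\|\vx_i^{(t+1)}-\bar{\vx}^{(t)}\|^2\big)+\tfrac1m\sum_i\langle\vv_{\vx,i}^{(t)},\bar{\vx}^{(t)}-\vx_i^{(t+1)}\rangle$. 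The first negative square supplies the leading $\tfrac{1}{\eta_\vx}$ coefficient, the second gives the separate $-\tfrac{1}{2m\eta_\vx}\|\vX^{(t+1)}-\widetilde{\vX}^{(t)}\|_F^2$ term, and $\widetilde{\vX}^{(t)}=\vW\vX^{(t)}$ together with $\rho=\|\vW-\tfrac1m\vone\vone^\top\|_2$ (Assumption~\ref{assump-W}) turns $\sum_i\|\widetilde\vx_i^{(t)}-\bar{\vx}^{(t)}\|^2$ into the $\tfrac{\rho^2}{\eta_\vx}\|\vX_{\perp}^{(t)}\|_F^2$ term.

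The crux is combining the two $\vx$-inner products. Since $\nabla_\vx P(\bar{\vx}^{(t)},\vLam^{(t)})$ is common to all agents, adding $\langle\nabla_\vx P,\bar{\vx}^{(t+1)}-\bar{\vx}^{(t)}\rangle$ to the prox inner product gives the aggregate $\tfrac1m\sum_i\langle\nabla_\vx P(\bar{\vx}^{(t)},\vLam^{(t)})-\vv_{\vx,i}^{(t)},\,\vx_i^{(t+1)}-\bar{\vx}^{(t)}\rangle$. I would then pivot through the exact local average gradient $\bar g:=\tfrac1m\sum_j\nabla_\vx f_j(\vx_j^{(t)},\vy_j^{(t)})$, splitting $\nabla_\vx P-\vv_{\vx,i}=(\nabla_\vx P-\bar g)+(\bar g-\vv_{\vx,i})$. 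The second piece is the gradient-tracking error (precisely the quantity that gradient tracking drives to zero, controlled in later lemmas); Young's inequality with weight $L_Pc_3$ produces exactly $\tfrac{1}{2mL_Pc_3}\sum_i\|\bar g-\vv_{\vx,i}^{(t)}\|^2$ and a $\tfrac{L_Pc_3}{2m}\|\vX^{(t+1)}-\bar{\vX}^{(t)}\|_F^2$ contribution to the primal descent. The first piece collapses to $\langle\nabla_\vx P-\bar g,\bar{\vx}^{(t+1)}-\bar{\vx}^{(t)}\rangle$; Young's with weight $Lc_1$ followed by $L$-smoothness of the $f_i$ (Assumption~\ref{assump-stoch}) bounds $\|\nabla_\vx P-\bar g\|^2$ by $\tfrac{L^2}{m}\big(\|\vX_{\perp}^{(t)}\|_F^2+\|\widehat{\vY}^{(t)}-\vY^{(t)}\|_F^2\big)$, after which~\eqref{eq:y_hat-y_tilde-bound} trades $\widehat{\vY}^{(t)}$ for $\widetilde{\vY}^{(t)}$ at cost $\kappa^2\|\vX_{\perp}^{(t)}\|_F^2$; this yields the $L(\kappa+1)$ and $\tfrac{L}{c_1}$ contributions.

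For the dual direction I would substitute $\vLam^{(t+1)}-\vLam^{(t)}=\tfrac{L\eta_\vLam}{2\sqrt m}(\vW-\vI)\vY^{(t)}$ from~\eqref{algo:lam_update} and $\nabla_\vLam P(\bar{\vx}^{(t)},\vLam^{(t)})=-\tfrac{L}{2\sqrt m}(\vW-\vI)\widehat{\vY}^{(t)}$ from~\eqref{eq:grad-P}. Completing the square extracts $-\tfrac{1}{\eta_\vLam}\|\vLam^{(t+1)}-\vLam^{(t)}\|_F^2$ and leaves the residual cross term $\big\langle\tfrac{L}{2\sqrt m}(\vW-\vI)(\vY^{(t)}-\widehat{\vY}^{(t)}),\,\vLam^{(t+1)}-\vLam^{(t)}\big\rangle$, which Young's inequality with weight $c_2$, the bound $\|\vW-\vI\|_2\le2$, and again~\eqref{eq:y_hat-y_tilde-bound} convert into the $-Lc_2$ correction to the dual coefficient together with the $\tfrac{\kappa^2}{c_2}$ and $\tfrac{1}{c_2}$ contributions; the $-\tfrac{L_P}{2}$ in the dual coefficient is supplied directly by the smoothness display. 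Collecting the primal descent, dual descent, the $\|\vX_{\perp}^{(t)}\|_F^2$, the dual-inexactness $\|\widetilde{\vY}^{(t)}-\vY^{(t)}\|_F^2$, and the tracking-error terms yields~\eqref{lemma:base_inequality_updated:bound}. I expect the main obstacle to be the bookkeeping of the second and third paragraphs: one must route the variance-reduction error (through the tracking variable $\vv_{\vx,i}^{(t)}$), the $\vx$-consensus error, and the three-way dual inexactness among $\widehat{\vY}^{(t)}$, $\widetilde{\vY}^{(t)}$, and $\vY^{(t)}$ through precisely the right pivots and Young's weights $c_1,c_2,c_3$ so that the coefficients assemble exactly as stated, without leaving uncontrolled residuals.
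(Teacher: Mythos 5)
Your proposal follows the paper's own proof essentially step for step: $L_P$-smoothness of $P$ combined with convexity of $g$ and the three-point prox inequality (Lemma~\ref{lemma:x_regularizer}), pivoting the $\vx$-inner products through the exact average gradient $\bar g:=\frac{1}{m}\sum_j\nabla_\vx f_j(\vx_j^{(t)},\vy_j^{(t)})$ with Young weight $L_Pc_3$ on the tracking error (this is exactly Lemma~\ref{lemma:inner_product}), and splitting the dual term via the $\vLam$-update and \eqref{eq:y_hat-y_tilde-bound} exactly as in \eqref{eq:base_inequality:eqn4}. The one substantive deviation is your treatment of $\nabla_\vx P(\bar{\vx}^{(t)},\vLam^{(t)})-\bar g$: a single Young's inequality with weight $Lc_1$ applied to the lumped difference produces $\frac{L(1+2\kappa^2)}{2c_1}\norm{\vX_{\perp}^{(t)}}_F^2+\frac{L}{c_1}\norm{\widetilde{\vY}^{(t)}-\vY^{(t)}}_F^2$ (after passing through $\widehat{\vY}^{(t)}$), not the stated $c_1$-free coefficient $\frac{L(\kappa+1)}{2}$ on $\norm{\vX_{\perp}^{(t)}}_F^2$ and $\frac{L}{2c_1}$ on $\norm{\widetilde{\vY}^{(t)}-\vY^{(t)}}_F^2$. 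The paper instead telescopes $\nabla_\vx f_i(\bar{\vx}^{(t)},\hat{\vy}_i^{(t)})\to\nabla_\vx f_i(\vx_i^{(t)},\hat{\vy}_i^{(t)})\to\nabla_\vx f_i(\vx_i^{(t)},\tilde{\vy}_i^{(t)})\to\nabla_\vx f_i(\vx_i^{(t)},\vy_i^{(t)})$ and applies Peter--Paul with the three separate weights $1$, $\kappa$, $c_1$ (see \eqref{eq:base_inequality:eqn3} together with \eqref{eq:lip-S}), which routes the $\kappa$-dependent penalties onto $\norm{\bar{\vx}^{(t+1)}-\bar{\vx}^{(t)}}_2^2$ --- whence the $L(\kappa+1+c_1)$ in the negative coefficient --- and keeps the $\norm{\vX_{\perp}^{(t)}}_F^2$ coefficient independent of $c_1$. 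Your variant still yields an inequality of the same shape and would support the downstream analysis after re-tuning the constants, but as written it does not reproduce \eqref{lemma:base_inequality_updated:bound} verbatim; the three-way split with separate weights is the missing piece of bookkeeping you anticipated.
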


From Lemma~\ref{lemma:base_inequality_updated}, we see that the change in $\phi(\bar{\vx},\vLam)$ is increasing in 
	\begin{equation}\label{eq:terms_to_bound}
	 \textstyle 	\norm{\vX_{\perp}^{(t)}}_F^2,\quad\norm{\widetilde{\vY}^{(t)}-\vY^{(t)}}_F^2,\quad\sum_{i=1}^{m}\norm{\frac{1}{m}\sum_{j=1}^{m}\nabla_\vx f_j(\vx_{j}^{(t)},\vy_{j}^{(t)})-\vv_{\vx,i}^{(t)}}_2^2.
	\end{equation}
Hence we need to ensure these terms can be well controlled to establish convergence. The next subsection is devoted to providing upper bounds on each term in~\eqref{eq:terms_to_bound}.

\subsection{Consensus and dual error bounds.}\label{sec:analysis:common}

The following proof can be found in Lemma C.7 of~\cite{mancino2022proximal}.

\begin{lemma}\label{lemma:consensus}
Let $\{(\vX^{(t)},\vV^{(t)})\}$ be generated from Algorithm~\ref{algo:}.	For all $t\ge0$, it holds that
		\begin{align}\label{lemma:consensus:x}
				 \textstyle 		\norm{\vX_\perp^{(t+1)}}_F^2\le\rho\norm{\vX_\perp^{(t)}}_F^2+\frac{\eta_{\vx}^2}{1-\rho}\norm{\vV_{\perp,\vx}^{(t)}}_F^2,
				\end{align}
where $\rho$ is defined in Assumption~\ref{assump-W}. 
\end{lemma}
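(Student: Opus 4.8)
The plan is to track how the consensus residual propagates through a single primal update. Write the update compactly as $\vX^{(t+1)}=\prox_{\eta_{\vx}g}(\vA^{(t)})$ with $\vA^{(t)}:=\vW\vX^{(t)}-\eta_{\vx}\vV_{\vx}^{(t)}$, and recall that $\vX_\perp^{(t+1)}=(\vI-\frac{1}{m}\vone\vone^\top)\vX^{(t+1)}$ is the residual of the orthogonal projection of $\vX^{(t+1)}$ onto the consensus subspace $\{\vone\vu^\top:\vu\}$. The whole argument hinges on the two structural facts that $\prox_{\eta_{\vx}g}$ is (row-wise) nonexpansive and, since $g$ is common to all agents, maps consensus matrices to consensus matrices, namely $\prox_{\eta_{\vx}g}(\vone\vc^\top)=\vone(\prox_{\eta_{\vx}g}(\vc))^\top$.

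First I would use the variational characterization of the projection, $\norm{\vX_\perp^{(t+1)}}_F=\min_{\vu}\norm{\vX^{(t+1)}-\vone\vu^\top}_F$, and exercise the freedom to pick a convenient $\vu$. Choosing $\vu=\prox_{\eta_{\vx}g}(\bar{\va}^{(t)})$ with $\bar{\va}^{(t)}:=\frac{1}{m}\sum_{i}\va_i^{(t)}$ the row-average of $\vA^{(t)}$, the consensus-preservation fact gives $\vone\vu^\top=\prox_{\eta_{\vx}g}(\frac{1}{m}\vone\vone^\top\vA^{(t)})$, and then nonexpansiveness yields
$$\norm{\vX_\perp^{(t+1)}}_F\le\big\|\prox_{\eta_{\vx}g}(\vA^{(t)})-\prox_{\eta_{\vx}g}(\tfrac{1}{m}\vone\vone^\top\vA^{(t)})\big\|_F\le\big\|(\vI-\tfrac{1}{m}\vone\vone^\top)\vA^{(t)}\big\|_F.$$

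Next I would unpack $\vA^{(t)}$ using Assumption~\ref{assump-W}. Since $\vW\vone=\vone$ and $\vW^\top\vone=\vone$, the averaging operator commutes with $\vW$ as $\frac{1}{m}\vone\vone^\top\vW=\vW\frac{1}{m}\vone\vone^\top=\frac{1}{m}\vone\vone^\top$, whence $(\vI-\frac{1}{m}\vone\vone^\top)\vW\vX^{(t)}=(\vW-\frac{1}{m}\vone\vone^\top)\vX_\perp^{(t)}$, with Frobenius norm at most $\rho\norm{\vX_\perp^{(t)}}_F$ by the definition of $\rho$; the remaining term projects to $\eta_{\vx}\vV_{\perp,\vx}^{(t)}$. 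A triangle inequality then gives $\norm{\vX_\perp^{(t+1)}}_F\le\rho\norm{\vX_\perp^{(t)}}_F+\eta_{\vx}\norm{\vV_{\perp,\vx}^{(t)}}_F$. Squaring and applying the Young-type bound $(a+b)^2\le\frac{1}{\theta}a^2+\frac{1}{1-\theta}b^2$ with $\theta=\rho\in(0,1)$, $a=\rho\norm{\vX_\perp^{(t)}}_F$, $b=\eta_{\vx}\norm{\vV_{\perp,\vx}^{(t)}}_F$ collapses the cross term and delivers exactly $\rho\norm{\vX_\perp^{(t)}}_F^2+\frac{\eta_{\vx}^2}{1-\rho}\norm{\vV_{\perp,\vx}^{(t)}}_F^2$.

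The main obstacle is the very first step: because $\prox_{\eta_{\vx}g}$ is nonlinear, it cannot be commuted past the averaging operator, so the naive contraction estimate does not go through. The resolution is to never commute the prox at all, but rather to exploit the optimality of the projection (the freedom in $\vu$) together with the consensus-preservation and nonexpansiveness properties; once $\norm{\vX_\perp^{(t+1)}}_F\le\norm{(\vI-\frac{1}{m}\vone\vone^\top)\vA^{(t)}}_F$ is established, the rest is routine linear algebra with $\vW$ and the Young inequality. I would also verify the bookkeeping that $(\vI-\frac{1}{m}\vone\vone^\top)\vV_{\vx}^{(t)}$ coincides with the quantity denoted $\vV_{\perp,\vx}^{(t)}$.
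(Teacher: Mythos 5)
Your proof is correct and follows the same standard route as the cited source (Lemma~C.7 of~\cite{mancino2022proximal}, to which the paper defers): use the optimality of the consensus projection to compare $\vX^{(t+1)}$ against the row-wise prox of the averaged argument, invoke nonexpansiveness and the fact that a common $g$ preserves consensus, contract the $\vW$ term via $\rho$, and finish with Young's inequality with weight $\rho$. All steps check out, including the bookkeeping that $(\vI-\frac{1}{m}\vone\vone^\top)\vV_{\vx}^{(t)}=\vV_{\perp,\vx}^{(t)}$ and that $\vW\vone=\vone$ follows from $\Null(\vW-\vI)=\Span\{\vone\}$.
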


Next, we provide an upper bound on the last term in~\eqref{eq:terms_to_bound}. Its proof is given in Appendix~\ref{sec:appendix:sec:analysis:common}.

\begin{lemma}\label{lemma:gradient_error_all}
Let $\{(\vX^{(t)},\vY^{(t)},\vV^{(t)})\}$ be generated from Algorithm~\ref{algo:} and $\vR^{(t)}$ defined in~\eqref{eq:error_defs}.	
	Then 
		\begin{align}
			&\textstyle \sum_{i=1}^{m}\EE\norm{\frac{1}{m}\sum_{j=1}^{m}\nabla_\vx f_j(\vx_{j}^{(t)},\vy_{j}^{(t)})-\vv_{\vx,i}^{(t)}}_2^2\le2\EE\norm{\vR^{(t)}}_F^2+2\EE\norm{\vV_{\perp,\vx}^{(t)}}_F^2, \forall\, t\ge 0.\label{lemma:gradient_error:average}
		\end{align}
\end{lemma}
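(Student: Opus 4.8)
The plan is to exploit the conservation property built into the gradient-tracking recursion \eqref{algo:v_update}, namely that the across-agent average of the tracking variable $\vV_\vx^{(t)}$ always coincides with the across-agent average of the local gradient estimators $\vD_\vx^{(t)}$. First I would establish this invariant: left-multiplying $\vV_\vx^{(t)}=\vW\big(\vV_\vx^{(t-1)}+\vD_\vx^{(t)}-\vD_\vx^{(t-1)}\big)$ by $\frac{1}{m}\vone^\top$ and invoking $\vone^\top\vW=\vone^\top$ from Assumption~\ref{assump-W}(i) gives
$$\textstyle \frac{1}{m}\vone^\top\vV_\vx^{(t)}=\frac{1}{m}\vone^\top\big(\vV_\vx^{(t-1)}+\vD_\vx^{(t)}-\vD_\vx^{(t-1)}\big).$$
Since the initialization sets $\vv_i^{(0)}=\vd_i^{(0)}$, we have $\frac{1}{m}\vone^\top\vV_\vx^{(0)}=\frac{1}{m}\vone^\top\vD_\vx^{(0)}$, and a one-line induction yields $\bar\vv_\vx^{(t)}:=\frac{1}{m}\sum_{i=1}^m\vv_{\vx,i}^{(t)}=\frac{1}{m}\sum_{j=1}^m\vd_{\vx,j}^{(t)}$ for all $t\ge0$.

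Next I would decompose the quantity of interest. Writing $g_\vx^{(t)}:=\frac{1}{m}\sum_{j=1}^m\nabla_\vx f_j(\vx_j^{(t)},\vy_j^{(t)})$ for the true averaged gradient, I insert and subtract $\bar\vv_\vx^{(t)}$ to split, for each agent $i$,
$$g_\vx^{(t)}-\vv_{\vx,i}^{(t)}=\big(g_\vx^{(t)}-\bar\vv_\vx^{(t)}\big)+\big(\bar\vv_\vx^{(t)}-\vv_{\vx,i}^{(t)}\big).$$
By the invariant from the first step, the first bracket equals $-\frac{1}{m}\sum_{j=1}^m\big(\vd_{\vx,j}^{(t)}-\nabla_\vx f_j(\vx_j^{(t)},\vy_j^{(t)})\big)$, which is independent of $i$, while the second bracket is exactly the consensus residual of the tracker.

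Then I would apply the elementary bound $\norm{a+b}_2^2\le2\norm{a}_2^2+2\norm{b}_2^2$ termwise and sum over $i\in[m]$. The first term contributes $2m\norm{g_\vx^{(t)}-\bar\vv_\vx^{(t)}}_2^2$, which by Jensen's inequality (convexity of $\norm{\cdot}_2^2$) is at most $2\sum_{j=1}^m\norm{\vd_{\vx,j}^{(t)}-\nabla_\vx f_j(\vx_j^{(t)},\vy_j^{(t)})}_2^2=2\norm{\vR_\vx^{(t)}}_F^2\le2\norm{\vR^{(t)}}_F^2$, using the definition of $\vR_\vx^{(t)}$ in \eqref{eq:error_defs} and $\norm{\vR_\vx^{(t)}}_F^2\le\norm{\vR^{(t)}}_F^2$. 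The second term contributes $2\sum_{i=1}^m\norm{\vv_{\vx,i}^{(t)}-\bar\vv_\vx^{(t)}}_2^2=2\norm{\vV_{\perp,\vx}^{(t)}}_F^2$ by the definition of $\vV_{\perp,\vx}^{(t)}=\vV_\vx^{(t)}-\frac{1}{m}\vone(\bar\vv_\vx^{(t)})^\top$. Taking expectations yields \eqref{lemma:gradient_error:average}.

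There is essentially no analytic difficulty here; the only step that requires care is the tracking invariant, which hinges on the column-stochasticity $\vone^\top\vW=\vone^\top$ together with the matched initialization $\vv_i^{(0)}=\vd_i^{(0)}$. Once that identity is in place, the remainder is just two uses of $\norm{a+b}_2^2\le2\norm{a}_2^2+2\norm{b}_2^2$ and the trivial bound $\norm{\vR_\vx^{(t)}}_F^2\le\norm{\vR^{(t)}}_F^2$.
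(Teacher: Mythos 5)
Your proof is correct and follows essentially the same route as the paper's: insert the average of the tracking/estimator variables (which coincide, since $\bar\vd_\vx^{(t)}=\bar\vv_\vx^{(t)}$ by the gradient-tracking invariant you verify), apply Young's and Jensen's inequalities, and bound $\norm{\vR_\vx^{(t)}}_F^2$ by $\norm{\vR^{(t)}}_F^2$. The only difference is that you spell out the induction behind the invariant, which the paper simply asserts.
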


Finally, we provide upper bounds to the dual errors. The proofs are given in Appendix~\ref{sec:appendix:sec:analysis:common}.

%
%

\begin{lemma}\label{lemma:second_y_bound}
	Let $\{(\vX^{(t)},\vLam^{(t)},\vY^{(t)})\}$ be generated from Algorithm~\ref{algo:}.
	Then provided $\eta_{\vy}\le\frac{1}{4L}$, it holds that for any $c_4,c_5>0$, 
		\begin{equation}\label{lemma:second_y_bound:bound}
			\begin{split}
				\norm{\vY^{(t+1)}-\vY^{(t)}}_F^2
				\le & ~ \textstyle	4 m \eta_\vy \big(\hat\delta_t - \hat\delta_{t+1}\big) +4\eta_{\vy}^2\norm{\vR^{(t)}}_F^2+4\eta_{\vy}\left(\frac{L^2}{2c_4}+\frac{L\sqrt{m}}{c_5}\right)\norm{\widetilde{\vY}^{(t+1)}-\vY^{(t+1)}}_F^2\\				
				& \textstyle \hspace{-1.5cm}	+4\eta_{\vy}\left(\frac{L_Q+L}{2}+\frac{c_4}{2}\right)\norm{\vX^{(t+1)}-\vX^{(t)}}_F^2+4\eta_{\vy}\left(\frac{mL_Q}{2}+\frac{c_5L\sqrt{m}}{4}\right)\norm{\vLam^{(t+1)}-\vLam^{(t)}}_F^2,
			\end{split}
		\end{equation}
	where $\widetilde{\vY}^{(t)}$ is defined in~\eqref{eq:argmaxes}, $\vR^{(t)}$ is defined in~\eqref{eq:error_defs}, $L_Q= L\sqrt{4\kappa^2+1}$, and
\begin{equation}\label{spider-delta-eqn1}
		\textstyle	\hat{\delta}_{t}:=Q(\vX^{(t)},\vLam^{(t)})-\left( \Gamma_{t}(\vY^{(t)})-\frac{1}{m}\sum_{i=1}^{m}h(\vy_{i}^{(t)}) \right),
		\end{equation}
with			
	 $Q(\vX,\vLam)$ and $\Gamma_t(\cdot)$ defined in~\eqref{eq:def-P} and~\eqref{eq:gamma_function}. 
\end{lemma}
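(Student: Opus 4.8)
The plan is to read $\hat\delta_t$ in \eqref{spider-delta-eqn1} as ($\tfrac1m$ times) the suboptimality gap of the inner maximization. Since $\Gamma_t(\vY)-\tfrac1m\sum_i h(\vy_i)=\Phi(\vX^{(t)},\vLam^{(t)},\vY)$ and $Q(\vX^{(t)},\vLam^{(t)})=\Phi(\vX^{(t)},\vLam^{(t)},\widetilde{\vY}^{(t)})$ with $\widetilde{\vY}^{(t)}=S_\Phi(\vX^{(t)},\vLam^{(t)})$, we have $\hat\delta_t=\Phi(\vX^{(t)},\vLam^{(t)},\widetilde{\vY}^{(t)})-\Phi(\vX^{(t)},\vLam^{(t)},\vY^{(t)})\ge0$. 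Writing $\Psi_t(\vY):=m\,\Phi(\vX^{(t)},\vLam^{(t)},\vY)=G_t(\vY)-\sum_i h(\vy_i)$ with smooth part $G_t(\vY)=\sum_i f_i(\vx_i^{(t)},\vy_i)-\tfrac{L\sqrt m}{2}\ip{\vLam^{(t)}}{(\vW-\vI)\vY}$, the $\vY$-update in \eqref{algo:updates} is a proximal gradient-ascent step on $\Psi_t$ whose ascent direction is exactly $\vV_{\vy}^{(t)}=\nabla G_t(\vY^{(t)})+\vR_{\vy}^{(t)}$ by \eqref{algo:v_update} and \eqref{eq:error_defs}.

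First I would establish a sufficient-ascent inequality for this prox step. Combining the optimality condition of the prox subproblem (equivalently, its $\tfrac1{\eta_\vy}$-strong concavity, evaluated at $\vY^{(t)}$) with the $L$-smoothness of $G_t$ yields
\[
\textstyle \left(\tfrac1{\eta_\vy}-\tfrac L2\right)\norm{\vY^{(t+1)}-\vY^{(t)}}_F^2\le \Psi_t(\vY^{(t+1)})-\Psi_t(\vY^{(t)})+\ip{\vR_{\vy}^{(t)}}{\vY^{(t+1)}-\vY^{(t)}}.
\]
The crux is then to upper bound the value increase $\Psi_t(\vY^{(t+1)})-\Psi_t(\vY^{(t)})$ by the telescoping quantity $m(\hat\delta_t-\hat\delta_{t+1})$ plus controllable error. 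Using the identities above together with $\Psi_{t+1}(\vY^{(t+1)})=mQ(\vX^{(t+1)},\vLam^{(t+1)})-m\hat\delta_{t+1}$,
\[
\textstyle \Psi_t(\vY^{(t+1)})-\Psi_t(\vY^{(t)})=m(\hat\delta_t-\hat\delta_{t+1})+\big[\Psi_t(\vY^{(t+1)})-\Psi_{t+1}(\vY^{(t+1)})\big]+m\big[Q(\vX^{(t+1)},\vLam^{(t+1)})-Q(\vX^{(t)},\vLam^{(t)})\big],
\]
so the two bracketed ``change-of-reference-function'' terms must be absorbed into the primal/multiplier movement terms.

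Step three bounds these two brackets. For the $Q$-difference I invoke the smoothness estimate \eqref{eq:ineq-Q-smooth} from Proposition~\ref{prop:Q-prop}; crucially, since \eqref{eq:ineq-Q-smooth} is a \emph{two-sided} (absolute-value) bound, I may expand around the \emph{later} iterate $(\vX^{(t+1)},\vLam^{(t+1)})$, so that the Danskin gradient $\nabla Q(\vX^{(t+1)},\vLam^{(t+1)})$ --- which features $\widetilde{\vY}^{(t+1)}=S_\Phi(\vX^{(t+1)},\vLam^{(t+1)})$ --- appears, together with the penalty $\tfrac{L_Q}{2}\big(\tfrac1m\norm{\vX^{(t+1)}-\vX^{(t)}}_F^2+\norm{\vLam^{(t+1)}-\vLam^{(t)}}_F^2\big)$. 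For the change-of-function bracket $\Psi_t(\vY^{(t+1)})-\Psi_{t+1}(\vY^{(t+1)})$ the nonsmooth $h$-terms cancel (same argument $\vY^{(t+1)}$), leaving $G_t(\vY^{(t+1)})-G_{t+1}(\vY^{(t+1)})$, which I linearize via $L$-smoothness of each $f_i$. Adding the two, the first-order terms combine into a $\vX$-cross term $\ip{\nabla_\vx f_i(\vx_i^{(t+1)},\widetilde y_i^{(t+1)})-\nabla_\vx f_i(\vx_i^{(t+1)},\vy_i^{(t+1)})}{\vx_i^{(t+1)}-\vx_i^{(t)}}$ and a $\vLam$-cross term $\tfrac{L\sqrt m}{2}\ip{\vLam^{(t+1)}-\vLam^{(t)}}{(\vW-\vI)(\vY^{(t+1)}-\widetilde{\vY}^{(t+1)})}$. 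Applying Young's inequality with the free constants $c_4,c_5$, bounding the gradient difference by $L\norm{\widetilde y_i^{(t+1)}-\vy_i^{(t+1)}}$ via $L$-smoothness and using $\norm{\vW-\vI}_2\le2$ from Assumption~\ref{assump-W}, produces exactly the stated coefficients $\tfrac{L_Q+L}{2}+\tfrac{c_4}{2}$ on $\norm{\vX^{(t+1)}-\vX^{(t)}}_F^2$, $\tfrac{mL_Q}{2}+\tfrac{c_5 L\sqrt m}{4}$ on $\norm{\vLam^{(t+1)}-\vLam^{(t)}}_F^2$, and $\tfrac{L^2}{2c_4}+\tfrac{L\sqrt m}{c_5}$ on $\norm{\widetilde{\vY}^{(t+1)}-\vY^{(t+1)}}_F^2$.

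Finally I would substitute this bound into the sufficient-ascent inequality, use Young's inequality on $\ip{\vR_{\vy}^{(t)}}{\vY^{(t+1)}-\vY^{(t)}}$ to peel off a multiple of $\norm{\vY^{(t+1)}-\vY^{(t)}}_F^2$, invoke $\eta_\vy\le\tfrac1{4L}$ to ensure the surviving coefficient of $\norm{\vY^{(t+1)}-\vY^{(t)}}_F^2$ is at least $\tfrac1{4\eta_\vy}$, multiply through by $4\eta_\vy$, and bound $\norm{\vR_{\vy}^{(t)}}_F\le\norm{\vR^{(t)}}_F$ to arrive at \eqref{lemma:second_y_bound:bound}. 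The main obstacle is the handling of the two change-of-reference-function brackets in step three: the dual step is taken against the frozen pair $(\vX^{(t)},\vLam^{(t)})$, while the gap $\hat\delta_{t+1}$ lives at $(\vX^{(t+1)},\vLam^{(t+1)})$, so the coupling between the $\vY$-movement and the primal/multiplier movements must be split cleanly. The key realization is that expanding the $Q$-difference around the later iterate makes the residual dual error appear precisely as $\norm{\widetilde{\vY}^{(t+1)}-\vY^{(t+1)}}_F^2$ and keeps the $\vX,\vLam$ coefficients linear in $L_Q$, rather than incurring the $\kappa^2$-type factors one would pick up from the Lipschitz continuity of the $\argmax$ map $S_\Phi$ in Proposition~\ref{prop:P-S}.
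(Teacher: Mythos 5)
Your proposal is correct and follows essentially the same route as the paper's proof: the prox-optimality/ascent inequality for the $\vY$-step, the telescoping of $\hat\delta_t$ via the change-of-reference-function bracket, the two-sided $Q$-smoothness bound \eqref{eq:ineq-Q-smooth} expanded at $(\vX^{(t+1)},\vLam^{(t+1)})$ so that $\widetilde{\vY}^{(t+1)}$ enters, and Peter--Paul with the free constants $c_4,c_5$ are exactly the ingredients of the paper's argument (equations \eqref{lemma:first_y_bound:bound}--\eqref{lemma:second_y_bound:eqn10}). The only difference is bookkeeping: the paper bounds $-\Gamma_t(\vY^{(t+1)})+\Gamma_{t+1}(\vY^{(t+1)})$ first and lets the $\hat\delta$-telescope emerge at the end, whereas you state the telescoping identity up front.
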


\begin{lemma}\label{lemma:third_y_bound}
	Let $\{(\vX^{(t)},\vLam^{(t)},\vY^{(t)})\}$ be generated from Algorithm~\ref{algo:} and $\vR^{(t)}$ defined in~\eqref{eq:error_defs}.
	Suppose $\eta_{\vy}\le\frac{1}{4L}$. Then it holds that
		\begin{equation}\label{lemma:third_y_bound:bound}
				\begin{split}
					&\norm{\widetilde{\vY}^{(t+1)}-\vY^{(t+1)}}_F^2\\
					\le& \textstyle \left(1-\eta_{\vy}\mu\right)\norm{\widetilde{\vY}^{(t)}-\vY^{(t)}}_F^2+\frac{4\eta_{\vy}}{\mu}\norm{\vR^{(t)}}_F^2+\frac{4\kappa^2}{\eta_{\vy}\mu}\norm{\vX^{(t+1)}-\vX^{(t)}}_F^2+\frac{4\kappa^2m}{\eta_{\vy}\mu}\norm{\vLam^{(t+1)}-\vLam^{(t)}}_F^2.
				\end{split}
			\end{equation}
\end{lemma}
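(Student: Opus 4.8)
The plan is to treat $\vY^{(t+1)}$ as an inexact proximal-gradient-ascent step that is chasing the moving maximizer, and to control how far it lags behind the updated maximizer $\widetilde{\vY}^{(t+1)}=S_\Phi(\vX^{(t+1)},\vLam^{(t+1)})$ (cf.\ \eqref{eq:argmaxes}). First I would insert the previous maximizer $\widetilde{\vY}^{(t)}=S_\Phi(\vX^{(t)},\vLam^{(t)})$ and apply Young's inequality with a constant $\gamma>0$ to be fixed later,
\[
\norm{\widetilde{\vY}^{(t+1)}-\vY^{(t+1)}}_F^2 \le (1+\gamma)\norm{\vY^{(t+1)}-\widetilde{\vY}^{(t)}}_F^2 + \big(1+\tfrac1\gamma\big)\norm{\widetilde{\vY}^{(t+1)}-\widetilde{\vY}^{(t)}}_F^2 .
\]
The second (drift) term is handled directly by the Lipschitz bound \eqref{eq:lip-S2} of Proposition~\ref{prop:P-S}, which gives $\norm{\widetilde{\vY}^{(t+1)}-\widetilde{\vY}^{(t)}}_F^2\le 2\kappa^2\norm{\vX^{(t+1)}-\vX^{(t)}}_F^2+2m\kappa^2\norm{\vLam^{(t+1)}-\vLam^{(t)}}_F^2$; this is the origin of the $\kappa^2$ and $m\kappa^2$ coefficients in \eqref{lemma:third_y_bound:bound}.

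The crux is the first term, the residual distance of the inexact step to the maximizer of the \emph{same} parameters $(\vX^{(t)},\vLam^{(t)})$. Here I would introduce the exact-gradient counterpart of \eqref{algo:updates}, namely $\vY_\star^{(t+1)}:=\prox_{\eta_{\vy}h}\big(\vY^{(t)}+\eta_{\vy}(\vV_{\vy}^{(t)}-\vR_{\vy}^{(t)})\big)$, so that the stochastic error enters only through $\vR_{\vy}^{(t)}$ (see \eqref{eq:error_defs}). Two facts then drive the estimate: (i) nonexpansiveness of $\prox$ gives $\norm{\vY^{(t+1)}-\vY_\star^{(t+1)}}_F\le \eta_{\vy}\norm{\vR_{\vy}^{(t)}}_F\le\eta_{\vy}\norm{\vR^{(t)}}_F$; and (ii) the exact prox-ascent map $\vY\mapsto\prox_{\eta_{\vy}h}(\vY+\eta_{\vy}\nabla_\vY\Gamma_t(\vY))$ associated with the smooth part $\Gamma_t$ of $\Phi(\vX^{(t)},\vLam^{(t)},\cdot)$ (see \eqref{eq:gamma_function}) is a $(1-\eta_{\vy}\mu)$-contraction in the Frobenius norm for $\eta_{\vy}\le\tfrac{1}{4L}<\tfrac1L$. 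This contraction holds because, by Assumptions~\ref{assump-func} and \ref{assump-stoch}, the row-wise Hessian of $\Gamma_t$ has eigenvalues in $[-L,-\mu]$, so the gradient-ascent step is a contraction with modulus $\max(|1-\eta_{\vy}L|,|1-\eta_{\vy}\mu|)=1-\eta_{\vy}\mu$, and $\prox$ is nonexpansive. Since $\widetilde{\vY}^{(t)}$ is precisely the fixed point of this map, (ii) yields $\norm{\vY_\star^{(t+1)}-\widetilde{\vY}^{(t)}}_F\le(1-\eta_{\vy}\mu)\norm{\vY^{(t)}-\widetilde{\vY}^{(t)}}_F$. Combining (i) and (ii) through one more Young split with constant $\delta>0$ gives
\[
\norm{\vY^{(t+1)}-\widetilde{\vY}^{(t)}}_F^2 \le (1+\delta)(1-\eta_{\vy}\mu)^2\norm{\vY^{(t)}-\widetilde{\vY}^{(t)}}_F^2 + \big(1+\tfrac1\delta\big)\eta_{\vy}^2\norm{\vR^{(t)}}_F^2 ,
\]
where the \emph{squared} factor $(1-\eta_{\vy}\mu)^2$ is exactly what gives enough room for the Young inflations to collapse.

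Finally I would fix $\gamma=\frac{\eta_{\vy}\mu}{2-\eta_{\vy}\mu}$ and $\delta=\frac{\eta_{\vy}\mu}{2(1-\eta_{\vy}\mu)}$. These choices make $(1+\gamma)(1+\delta)=\frac{1}{1-\eta_{\vy}\mu}$, so that $(1+\gamma)(1+\delta)(1-\eta_{\vy}\mu)^2=1-\eta_{\vy}\mu$ exactly; moreover $1+\tfrac1\gamma=\frac{2}{\eta_{\vy}\mu}$ turns the drift term into $\frac{4\kappa^2}{\eta_{\vy}\mu}\norm{\vX^{(t+1)}-\vX^{(t)}}_F^2+\frac{4\kappa^2 m}{\eta_{\vy}\mu}\norm{\vLam^{(t+1)}-\vLam^{(t)}}_F^2$, while $(1+\gamma)(1+\tfrac1\delta)\eta_{\vy}^2=\frac{2\eta_{\vy}}{\mu}\le\frac{4\eta_{\vy}}{\mu}$ gives the gradient-error coefficient, reproducing \eqref{lemma:third_y_bound:bound} verbatim. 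The main obstacle is step (ii): proving the sharp $(1-\eta_{\vy}\mu)$ Frobenius-norm contraction of the composite prox-ascent operator and verifying that $\widetilde{\vY}^{(t)}$ is its fixed point; once this is in place, the rest is only the bookkeeping of $\gamma$ and $\delta$. A secondary technical point worth checking is the scaling reconciliation between the unweighted gradient $\nabla_\vy F(\vZ^{(t)})$ appearing in $\vV_{\vy}^{(t)}$ and the $\frac1m$-weighted objective $\Phi$; this rescaling is applied uniformly to $f_i$ and $h$ and therefore leaves the fixed-point characterization of $\widetilde{\vY}^{(t)}$ unchanged.
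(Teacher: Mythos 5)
Your proposal is correct and follows essentially the same route as the paper's proof: the fixed-point/prox characterization of $\widetilde{\vY}^{(t)}$, nonexpansiveness of $\prox_{\eta_\vy h}$, the $(1-\eta_\vy\mu)$-contraction of the exact gradient-ascent step from strong concavity and smoothness, the Lipschitz bound \eqref{eq:lip-S2} for the maximizer drift, and two tuned Young splits. The only differences are cosmetic — you isolate the stochastic error via an auxiliary exact iterate $\vY_\star^{(t+1)}$ and a triangle inequality, where the paper applies a single Young split to the pre-prox arguments, and your $(1-\eta_\vy\mu)^2$ contraction factor is slightly sharper than the paper's $1-\tfrac{7}{4}\eta_\vy\mu$; both yield \eqref{lemma:third_y_bound:bound} with the same constants.
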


\subsection{Convergence results by SPIDER-type variance reduction}\label{sec:analysis:spider}

In this subsection, we set \texttt{VR}-tag = SPIDER in Algorithm~\ref{algo:}, and we consider both the general stochastic case and the special finite-sum setting. 
The proofs of all the lemmas are given in Appendix~\ref{sec:appendix:sec:analysis:spider}. We first bound the consensus error of the tracked gradient and the error of the gradient estimator.  

\begin{lemma}\label{lemma:gradient_error-spider}
Let $\{(\vX^{(t)},\vY^{(t)},\vV^{(t)})\}$ be generated from Algorithm~\ref{algo:} and $\vR^{(t)}$ defined in~\eqref{eq:error_defs}.	When \eqref{eq:finite-sum-scenario} holds, we set $\cS_0=\cS_1 = n$ and take all data samples. Define $n_{t}\in\mathbb{Z}_{+}$ as the unique integer such that $n_{t}q\le t<(n_{t}+1)q$ for all $t\ge0$. Then it holds that
		\begin{align}\label{lemma:gradient_error-spider:v_perp}
			&\EE \norm{\vV_{\perp,\vx}^{(t+1)}}_F^2\le\rho\EE\norm{\vV_{\perp,\vx}^{(t)}}_F^2+\frac{6m\Upsilon}{1-\rho}+\frac{3L^2}{1-\rho}\EE \left(\norm{\vZ^{(t+1)}-\vZ^{(t)}}_F^2 
			+\frac{2}{\cS_2}\sum_{r=n_{t}q}^{t}\norm{\vZ^{(r+1)}-\vZ^{(r)}}_F^2\right),\\
			\label{lemma:gradient_error-spider:bound}
			&\EE\norm{\vR^{(t)}}_F^2\le\frac{L^2}{\cS_2}\sum_{r=n_{t}q}^{t-1}\EE\norm{\vZ^{(r+1)}-\vZ^{(r)}}_F^2+ m \Upsilon,
		\end{align}
	where $\vZ^{(t)} = (\vX^{(t)}, \vY^{(t)})$ by the notation in \eqref{eq:xyz}, and
	\begin{equation}\label{eq:def-Upsilon}
\textstyle \Upsilon:= \frac{\sigma^2}{\cS_1}, \text{ for general distributions }\{\cD_i\};\ \Upsilon = 0, \text{ for the special finite-sum setting in }	\eqref{eq:finite-sum-scenario}.
	\end{equation}
\end{lemma}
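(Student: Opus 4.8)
The plan is to establish the two estimates in sequence, proving the gradient-estimator error bound \eqref{lemma:gradient_error-spider:bound} first and then feeding it into the tracked-gradient consensus bound \eqref{lemma:gradient_error-spider:v_perp}.

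For \eqref{lemma:gradient_error-spider:bound} I would analyze the SPIDER recursion \eqref{algo:grad_update-spider} agent-wise. Let $\cF_{t-1}$ be the natural filtration, under which $\vz_i^{(t)}$ and $\vz_i^{(t-1)}$ are measurable while the fresh minibatch $\cB_i^{(t)}$ is independent. At a restart iteration (where $\hbox{mod}(t,q)=0$) the estimator is a plain average over $\cS_1$ i.i.d.\ samples, so by Assumption~\ref{assump-stoch}(i)--(ii) its conditional variance is at most $\sigma^2/\cS_1=\Upsilon$ per agent, giving $\EE\norm{\vR^{(t)}}_F^2\le m\Upsilon$ (and exactly $0$ in the finite-sum case, where $\cS_1=n$ takes all samples, so $\Upsilon=0$). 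At a non-restart iteration I would split the per-agent error into the fresh centered increment $G_i^{(t)}(\cB_i^{(t)})-G_i^{(t-1)}(\cB_i^{(t)})-\big(\nabla f_i(\vz_i^{(t)})-\nabla f_i(\vz_i^{(t-1)})\big)$ plus the previous error $\vR_i^{(t-1)}$. Since $\vz_i^{(t)},\vz_i^{(t-1)}$ are $\cF_{t-1}$-measurable and the minibatch is shared across the two evaluations, the first term has zero conditional mean, so the cross term drops and, by Assumption~\ref{assump-stoch}(iii), its conditional second moment is at most $\frac{L^2}{\cS_2}\norm{\vz_i^{(t)}-\vz_i^{(t-1)}}^2$. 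Summing over $i$ and taking expectations yields the one-step recursion $\EE\norm{\vR^{(t)}}_F^2\le \EE\norm{\vR^{(t-1)}}_F^2 + \frac{L^2}{\cS_2}\EE\norm{\vZ^{(t)}-\vZ^{(t-1)}}_F^2$; unrolling back to the last restart $r=n_t q$ and using the restart bound gives \eqref{lemma:gradient_error-spider:bound}.

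For \eqref{lemma:gradient_error-spider:v_perp} I would exploit the gradient-tracking update \eqref{algo:v_update}. Because Assumption~\ref{assump-W}(i) gives $\vW\vone=\vone$ and $\vW^\top\vone=\vone$, the projection $\vI-\frac1m\vone\vone^\top$ commutes with $\vW$, so the perp component satisfies $\vV_{\perp,\vx}^{(t+1)}=(\vW-\frac1m\vone\vone^\top)\big(\vV_{\perp,\vx}^{(t)}+\vD_{\perp,\vx}^{(t+1)}-\vD_{\perp,\vx}^{(t)}\big)$, where $\vD_{\perp,\vx}^{(t)}:=(\vI-\frac1m\vone\vone^\top)\vD_\vx^{(t)}$. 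Taking Frobenius norms and invoking the contraction $\rho$ from Assumption~\ref{assump-W}(ii) produces a $\rho^2$-contraction; a Young's-inequality split $\norm{\va+\vb}_F^2\le(1+\gamma)\norm{\va}_F^2+(1+\gamma^{-1})\norm{\vb}_F^2$ with $\gamma$ chosen so that $\rho^2(1+\gamma)=\rho$ converts this into $\rho\norm{\vV_{\perp,\vx}^{(t)}}_F^2+\frac{1}{1-\rho}\norm{\vD_\vx^{(t+1)}-\vD_\vx^{(t)}}_F^2$, using nonexpansiveness of the projection to drop the $\perp$ on the $\vD$-increment and $\rho^2\le1$. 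I would then decompose $\vD_\vx^{(t+1)}-\vD_\vx^{(t)}=\vR_\vx^{(t+1)}+\big(\nabla_\vx F(\vZ^{(t+1)})-\nabla_\vx F(\vZ^{(t)})\big)-\vR_\vx^{(t)}$, apply $\norm{\va+\vb+\vc}_F^2\le 3(\norm{\va}_F^2+\norm{\vb}_F^2+\norm{\vc}_F^2)$, bound the middle term by $L$-smoothness (Assumption~\ref{assump-stoch}), and substitute the just-proved bound \eqref{lemma:gradient_error-spider:bound} for $\EE\norm{\vR^{(t)}}_F^2$ and $\EE\norm{\vR^{(t+1)}}_F^2$.

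The main bookkeeping obstacle is combining the two error bounds with their different restart counters $n_t$ and $n_{t+1}$: if $t+1$ is itself a restart then $n_{t+1}=n_t+1$ and the corresponding sum is empty, while otherwise $n_{t+1}=n_t$; in either case both summation ranges lie inside $\{n_t q,\dots,t\}$, so the two sums merge into $\frac{2}{\cS_2}\sum_{r=n_t q}^{t}\norm{\vZ^{(r+1)}-\vZ^{(r)}}_F^2$ and the two $m\Upsilon$ terms into $6m\Upsilon$, yielding exactly the constants in \eqref{lemma:gradient_error-spider:v_perp}. I expect the conceptual crux to be the conditional-expectation argument for the SPIDER increment, namely verifying that the shared-minibatch difference is conditionally unbiased for $\nabla f_i(\vz_i^{(t)})-\nabla f_i(\vz_i^{(t-1)})$ so that the cross term vanishes, and that Assumption~\ref{assump-stoch}(iii) is precisely what controls its conditional variance by $\frac{L^2}{\cS_2}\norm{\vz_i^{(t)}-\vz_i^{(t-1)}}^2$.
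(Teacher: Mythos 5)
Your proposal is correct and follows essentially the same route as the paper: the same restart/non-restart case split and one-step recursion for $\EE\norm{\vR^{(t)}}_F^2$, the same $\rho$-contraction plus $\frac{1}{1-\rho}\norm{\vD_\vx^{(t+1)}-\vD_\vx^{(t)}}_F^2$ bound, the same three-term decomposition via $L$-smoothness, and the same merging of the two restart-indexed sums into $\frac{2}{\cS_2}\sum_{r=n_tq}^{t}$. The only difference is that you spell out the conditional-unbiasedness/variance argument for the SPIDER increment and the Young's-inequality derivation of the consensus contraction, where the paper instead cites Eqn.~(A.4) of \cite{fang18spider} and Lemma~C.7 of \cite{mancino2022proximal}.
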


\begin{remark}\label{remark:spider-bound}
	Notice that for any $t\ge0$, we have $n_{t}q\le t\le (n_{t}+1)q-1$. Therefore
  	 	\begin{equation}\label{eq:spider-sum-error}
  	 		\begin{split}
  	 			&\sum_{t=0}^{T-1}\sum_{r=n_{t}q}^{t} \EE\norm{\vZ^{(r+1)}-\vZ^{(r)}}_F^2
  	 			\le q\sum_{t=0}^{T-1}\EE\norm{\vZ^{(t+1)}-\vZ^{(t)}}_F^2.
  	 		\end{split}
  	 	\end{equation}
  	 The relation in~\eqref{eq:spider-sum-error} is standard in the analysis of SPIDER-type methods; e.g., see \cite[Eqn.~(85)]{xin2021proxgt}.
\end{remark}

In the rest of this subsection, we set 
\begin{equation}\label{lemma:parameters-spider:constants}
\textstyle			q=\cS_2,\ c_1=c_2=32\kappa^2, \ c_3 = 60,\ c_4=16\kappa^2 L, \ c_5=32\sqrt{m}\kappa^2,\ \eta_\vy = \frac{1}{4L}.
		\end{equation}

\begin{lemma}\label{lemma:y_upper_bounds-spider}
Let $\{(\vX^{(t)},\vLam^{(t)},\vY^{(t)})\}$ be generated from Algorithm~\ref{algo:}, $\widetilde{\vY}^{(t)}$ defined in~\eqref{eq:argmaxes}, and $\hat\delta_t$ be defined in \eqref{spider-delta-eqn1}.	Then for any integer $T \ge1$, it holds that
	\begin{align}\label{lemma:y_upper_bounds-spider:y-tilde-bound}
  		\textstyle		\sum_{t=0}^{T-1} &\textstyle	 \EE\norm{\widetilde{\vY}^{(t+1)}-\vY^{(t+1)}}_F^2
  				\le 
  				16\kappa^2(20\kappa^2+\kappa+2)\sum_{t=0}^{T-1}\EE\left(\norm{\vX^{(t+1)}-\widetilde{\vX}^{(t)}}_F^2+4\norm{\vX_{\perp}^{(t)}}_F^2\right)\\
  				 & \textstyle	 +{\textstyle(6\kappa-\frac{3}{2})}\norm{\widetilde{\vY}^{(0)}-\vY^{(0)}}_F^2+\frac{8 m \kappa^2}{L}  \hat\delta_{0}+\frac{8m T \Upsilon}{\mu^2} 	+8 m \kappa^2 (20\kappa^2+\kappa+1)\sum_{t=0}^{T-1}\EE\norm{\vLam^{(t+1)}-\vLam^{(t)}}_F^2 \nonumber
  		\end{align}
and		
  	 	\begin{align}\label{lemma:y_upper_bounds-spider:y-bound}
 				\textstyle	\sum_{t=0}^{T-1} & \textstyle	\EE\norm{\vY^{(t+1)}-\vY^{(t)}}_F^2\le  				 	2(24\kappa^2+2\kappa+3)\sum_{t=0}^{T-1}\EE\left(\norm{\vX^{(t+1)}-\widetilde{\vX}^{(t)}}_F^2+4\norm{\vX_{\perp}^{(t)}}_F^2\right)\\
 				 	&\textstyle	 +\frac{1}{2\kappa}\norm{\widetilde{\vY}^{(0)}-\vY^{(0)}}_F^2 + \frac{2m}{L}\hat\delta_{0}+ \frac{mT \Upsilon}{L^2} +2m(12\kappa^2+\kappa+1)\sum_{t=0}^{T-1}\EE\norm{\vLam^{(t+1)}-\vLam^{(t)}}_F^2. 
 				 	\nonumber
  	 	\end{align}
\end{lemma}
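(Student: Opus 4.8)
The plan is to derive the two estimates \eqref{lemma:y_upper_bounds-spider:y-tilde-bound} and \eqref{lemma:y_upper_bounds-spider:y-bound} \emph{simultaneously}, because once we sum over $t$ the two quantities $U:=\sum_{t=0}^{T-1}\EE\norm{\vY^{(t+1)}-\vY^{(t)}}_F^2$ and $V:=\sum_{t=0}^{T-1}\EE\norm{\widetilde{\vY}^{(t+1)}-\vY^{(t+1)}}_F^2$ become coupled through the gradient-estimator error $\vR^{(t)}$. The two basic inputs are the per-iteration contraction of the dual tracking error (Lemma~\ref{lemma:third_y_bound}) and the one-step $\vY$-movement bound with the telescoping potential $\hat\delta_t$ (Lemma~\ref{lemma:second_y_bound}); the glue between them is the SPIDER error bound \eqref{lemma:gradient_error-spider:bound} together with the summation identity of Remark~\ref{remark:spider-bound}.

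First I would produce an inequality of the form $V\le \beta\,U+(\text{benign terms})$. Summing Lemma~\ref{lemma:third_y_bound} over $t=0,\dots,T-1$ and running a standard geometric-sum/telescoping argument with ratio $1-\eta_\vy\mu=1-\tfrac{1}{4\kappa}$ (since $\eta_\vy=\tfrac1{4L}$), the left side collects $V$ while the right side acquires $\tfrac{1}{\eta_\vy\mu}=4\kappa$ times the sums of $\tfrac{4\eta_\vy}{\mu}\norm{\vR^{(t)}}_F^2$, $\tfrac{4\kappa^2}{\eta_\vy\mu}\norm{\vX^{(t+1)}-\vX^{(t)}}_F^2$, and $\tfrac{4\kappa^2 m}{\eta_\vy\mu}\norm{\vLam^{(t+1)}-\vLam^{(t)}}_F^2$, plus a boundary term in $\norm{\widetilde{\vY}^{(0)}-\vY^{(0)}}_F^2$. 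Inserting \eqref{lemma:gradient_error-spider:bound} and applying Remark~\ref{remark:spider-bound} with $q=\cS_2$ gives $\sum_t\tfrac{L^2}{\cS_2}\sum_{r=n_tq}^{t-1}\norm{\vZ^{(r+1)}-\vZ^{(r)}}_F^2\le L^2\sum_t\norm{\vZ^{(t+1)}-\vZ^{(t)}}_F^2$ with the residual contributing $mT\Upsilon$. Splitting $\norm{\vZ^{(t+1)}-\vZ^{(t)}}_F^2=\norm{\vX^{(t+1)}-\vX^{(t)}}_F^2+\norm{\vY^{(t+1)}-\vY^{(t)}}_F^2$ exposes the $U$-coupling, and converting each $\norm{\vX^{(t+1)}-\vX^{(t)}}_F^2$ through \eqref{lemma:lyapunov_function-spider:eqn6} into $2\big(\norm{\vX^{(t+1)}-\widetilde{\vX}^{(t)}}_F^2+4\norm{\vX_{\perp}^{(t)}}_F^2\big)$ produces exactly the aggregated $\vX$-form appearing in the statement.

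Next I would bound $U$ by summing Lemma~\ref{lemma:second_y_bound}. The $\hat\delta$ terms telescope to $\hat\delta_0-\hat\delta_T\le\hat\delta_0$ since $\hat\delta_t\ge0$ (as $Q$ in \eqref{spider-delta-eqn1} is a maximum), yielding the $\tfrac{2m}{L}\hat\delta_0$ contribution. The key cancellation is engineered by the choices $c_4=16\kappa^2L$, $c_5=32\sqrt m\kappa^2$ in \eqref{lemma:parameters-spider:constants}, which make the coefficient of $\norm{\widetilde{\vY}^{(t+1)}-\vY^{(t+1)}}_F^2$ equal to $4\eta_\vy\big(\tfrac{L^2}{2c_4}+\tfrac{L\sqrt m}{c_5}\big)=\tfrac{1}{16\kappa^2}$, i.e. $U\le\tfrac{1}{16\kappa^2}V+(\text{benign terms})$. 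Meanwhile the $\norm{\vR^{(t)}}_F^2$ term (again via \eqref{lemma:gradient_error-spider:bound} and Remark~\ref{remark:spider-bound}) contributes at most $\tfrac14 U$ to its own right-hand side, because $4\eta_\vy^2 L^2=\tfrac14$; this self-reference I would move to the left side.

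Finally I would close the coupling. With $V\le\beta U+g_V$ and $U\le\tfrac{1}{16\kappa^2}V+g_U$, where $\beta=\Theta(\kappa^2)$ arises from $\tfrac{1}{\eta_\vy\mu}\cdot\tfrac{4\eta_\vy}{\mu}\cdot L^2=4\kappa^2$, the cross-term product is $\tfrac{\beta}{16\kappa^2}=\tfrac14<1$, so substituting one inequality into the other and dividing by $1-\tfrac14=\tfrac34$ resolves both $U$ and $V$ purely in terms of the benign quantities $\norm{\vX^{(t+1)}-\widetilde{\vX}^{(t)}}_F^2+4\norm{\vX_{\perp}^{(t)}}_F^2$, $\norm{\vLam^{(t+1)}-\vLam^{(t)}}_F^2$, $\norm{\widetilde{\vY}^{(0)}-\vY^{(0)}}_F^2$, $\hat\delta_0$, and $T\Upsilon$; tracking the constants through this $2\times2$ inversion produces the stated coefficients. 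I expect the main obstacle to be precisely this simultaneous resolution: one must check that the constants in \eqref{lemma:parameters-spider:constants} keep \emph{both} the self-reference ($\tfrac14 U$) and the cross-reference ($\tfrac{\beta}{16\kappa^2}$) strictly below $1$, and that the geometric-sum bookkeeping does not inflate the $\kappa$-dependence beyond the claimed $\cO(\kappa^4)$ coefficients on the $\vX$-terms.
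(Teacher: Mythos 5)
Your proposal is correct and follows essentially the same route as the paper: sum Lemma~\ref{lemma:second_y_bound} (telescoping $\hat\delta_t$ with $\hat\delta_T\ge 0$) and Lemma~\ref{lemma:third_y_bound}, feed in \eqref{lemma:gradient_error-spider:bound} with Remark~\ref{remark:spider-bound} and \eqref{lemma:lyapunov_function-spider:eqn6}, and resolve the resulting coupled system in $U=\sum_t\EE\norm{\vY^{(t+1)}-\vY^{(t)}}_F^2$ and $V=\sum_t\EE\norm{\widetilde{\vY}^{(t+1)}-\vY^{(t+1)}}_F^2$, whose loop gain is strictly below $1$ exactly because $4\eta_\vy^2L^2=\tfrac14$ and $4\kappa^2\cdot\tfrac{1}{16\kappa^2}=\tfrac14$. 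The paper eliminates the $\tfrac14 U$ self-reference first (yielding $U\le\tfrac{1}{12\kappa^2}V+\cdots$) before substituting into the $V$-inequality, which is where the coefficient $6\kappa-\tfrac32=6\kappa(1-\tfrac{1}{4\kappa})$ comes from, but that is only a difference in the order of the same $2\times 2$ elimination you describe.
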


Below we show the square summability of the generated sequence, which is crucial to obtain our convergence and complexity results.

\begin{theorem}\label{lem:bound-on-all-terms-spider}
Under Assumptions~\ref{assump-func}-\ref{assump-stoch}, let $\{(\vX^{(t)}, \widetilde\vX^{(t)}, \vLam^{(t)},\vY^{(t)}, \vV^{(t)})\}_{t\ge0}$ be generated from Algorithm~\ref{algo:} with \texttt{VR}-tag == SPIDER, $q=\cS_2$, $\eta_\vy = \frac{1}{4L}$, and $\eta_\vx$ and $\eta_\vLam$ set to 
\begin{subequations}\label{eq:para-eta-x-lam-set-pspider}
			\begin{align}
			\eta_\vx =&\min\left\{\frac{(1-\rho)^2}{180 L_P},\ \frac{1}{20(L+1)(12\kappa^2+2\kappa+5)}\right\}, \label{eq:para-eta-x-set-pspider}\\
			\eta_\vLam =& \min\left\{ \frac{5 L_P (1-\rho)^2}{24 L^2 (12\kappa^2+\kappa+1) } , \ \frac{1}{2 L_P +128L\kappa^2 + \frac{(L+1)(20\kappa^2+\kappa+1)}{2} + \frac{4 L^2 (12\kappa^2+\kappa+1) }{30 L_P}}\right\}, \label{eq:para-eta-lam-set-pspider}
		\end{align}
	\end{subequations}
where $L_P = L\sqrt{4\kappa^2+1}$ is given in Proposition~\ref{prop:P-S}. Then it holds for any $T\ge1$,
\begin{align}\label{theorem:convergence-spider:base_bound2}
					&\textstyle \frac{1}{4m\eta_{\vx}}\sum_{t=0}^{T-1}\EE\norm{\vX^{(t+1)}-\bar{\vX}^{(t)}}_F^2+\frac{1}{2\eta_{\vLam}}\sum_{t=0}^{T-1}\EE\norm{\vLam^{(t+1)}-\vLam^{(t)}}_F^2+\frac{1}{4m\eta_{\vx}}\sum_{t=0}^{T-1}\EE\norm{\vX^{(t+1)}-\widetilde{\vX}^{(t)}}_F^2\nonumber\\
					&\textstyle +\frac{3}{4m\eta_{\vx}}\sum_{t=0}^{T-1}\EE\norm{\vX_{\perp}^{(t)}}_F^2+\frac{1}{60m L_P}\sum_{t=0}^{T-1}\EE\norm{\vV_{\perp,\vx}^{(t)}}_F^2 \nonumber \\
					\le& \textstyle C_{0}  +T \left(\frac{1}{30 L_P} + \frac{1}{L_P(1-\rho)^2} + \frac{L+1}{8 L^2} \right) \Upsilon,
			\end{align}
where $\Upsilon$ is defined in \eqref{eq:def-Upsilon}, and
\begin{equation*}
\begin{split}
C_{0} := &\textstyle \phi(\bar{\vx}^{(0)},\vLam^{(0)})- \phi^* + \left(\frac{1}{20 mL_P} + \frac{\rho}{15 m L_P(1-\rho)}\right) \EE\norm{\vV_{\perp,\vx}^{(0)}}^2  \nonumber \\
				& \textstyle + \left(\frac{6(L+1)}{64m\kappa} + \frac{L^2}{120 mL_P \kappa}+\frac{3 L^2}{10 m  L_P \kappa(1-\rho)^2}\right) \norm{\widetilde{\vY}^{(0)}-\vY^{(0)}}_F^2 + \left(\frac{1+1/L}{8} + \frac{L}{30 L_P}+\frac{6 L}{5 L_P(1-\rho)^2}\right)\hat\delta_0.
\end{split}
\end{equation*}			
\end{theorem}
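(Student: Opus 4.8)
The plan is to assemble a single telescoped Lyapunov inequality from the one-iteration progress estimate of Lemma~\ref{lemma:base_inequality_updated} together with the auxiliary bounds controlling the three coupled ``bad'' quantities in~\eqref{eq:terms_to_bound}. Summing~\eqref{lemma:base_inequality_updated:bound} over $t=0,\dots,T-1$ telescopes the left side to $\phi(\bar{\vx}^{(T)},\vLam^{(T)})-\phi(\bar{\vx}^{(0)},\vLam^{(0)})$, which is bounded below by $\phi^*-\phi(\bar{\vx}^{(0)},\vLam^{(0)})$ via Assumption~\ref{assump-func}. The resulting inequality carries, on one side, the negative progress terms $\norm{\vX^{(t+1)}-\bar{\vX}^{(t)}}_F^2$, $\norm{\vLam^{(t+1)}-\vLam^{(t)}}_F^2$, $\norm{\vX^{(t+1)}-\widetilde{\vX}^{(t)}}_F^2$, and on the other the positive terms $\sum_t\norm{\vX_{\perp}^{(t)}}_F^2$, $\sum_t\norm{\widetilde{\vY}^{(t)}-\vY^{(t)}}_F^2$, and the gradient-tracking error. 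The whole argument is an absorption: I want to bound every positive term by a small fraction of the negative progress terms so that only the five quantities in~\eqref{theorem:convergence-spider:base_bound2} survive on the left.

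I would bound the positive terms one by one. First, the gradient-tracking error is replaced by $2\norm{\vR^{(t)}}_F^2+2\norm{\vV_{\perp,\vx}^{(t)}}_F^2$ using Lemma~\ref{lemma:gradient_error_all}. Next, summing the geometric recursion of Lemma~\ref{lemma:consensus} and using that $\vX^{(0)}$ is consensual (so $\vX_{\perp}^{(0)}=\vzero$) yields $\sum_t\norm{\vX_{\perp}^{(t)}}_F^2\le\frac{\eta_{\vx}^2}{(1-\rho)^2}\sum_t\norm{\vV_{\perp,\vx}^{(t)}}_F^2$; an analogous summation of~\eqref{lemma:gradient_error-spider:v_perp} bounds $\sum_t\norm{\vV_{\perp,\vx}^{(t)}}_F^2$ in terms of $\sum_t\norm{\vZ^{(t+1)}-\vZ^{(t)}}_F^2$ plus a $T\Upsilon$ contribution. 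The dual quantities $\sum_t\norm{\widetilde{\vY}^{(t)}-\vY^{(t)}}_F^2$ and $\sum_t\norm{\vY^{(t+1)}-\vY^{(t)}}_F^2$ come directly from Lemma~\ref{lemma:y_upper_bounds-spider}, which already expresses them through the primal and multiplier progress, the initial dual gap, $\hat\delta_0$, and $T\Upsilon$.

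To close the recursion, every occurrence of $\norm{\vX^{(t+1)}-\vX^{(t)}}_F^2$ is rewritten via~\eqref{lemma:lyapunov_function-spider:eqn6} as $2\norm{\vX^{(t+1)}-\widetilde{\vX}^{(t)}}_F^2+8\norm{\vX_{\perp}^{(t)}}_F^2$, and $\norm{\vZ^{(t+1)}-\vZ^{(t)}}_F^2$ is split into its $\vX$- and $\vY$-difference pieces, so that all error bounds are ultimately written in the progress quantities already on the left. The SPIDER structure is absorbed through the telescoping estimate of Remark~\ref{remark:spider-bound}: the choice $q=\cS_2$ in~\eqref{lemma:parameters-spider:constants} converts the inner double sum $\sum_t\frac{1}{\cS_2}\sum_{r=n_tq}^{t}\norm{\vZ^{(r+1)}-\vZ^{(r)}}_F^2$ into $\sum_t\norm{\vZ^{(t+1)}-\vZ^{(t)}}_F^2$, while the sampling variance aggregates into the single $\Upsilon=\sigma^2/\cS_1$ term that vanishes in the finite-sum case.

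The crux, and the main obstacle, is the constant bookkeeping that makes the absorption valid. After all substitutions the right-hand side displays each progress quantity with a positive coefficient assembled from $\kappa,\rho,L,L_P$ and the weights $c_1=c_2=32\kappa^2$, $c_3=60$, $c_4=16\kappa^2L$, $c_5=32\sqrt{m}\kappa^2$ of~\eqref{lemma:parameters-spider:constants}. The step-size formulas~\eqref{eq:para-eta-x-set-pspider}--\eqref{eq:para-eta-lam-set-pspider} are engineered so that each such coefficient is dominated by roughly half of the matching negative coefficient on the left: the two competing upper bounds in the definition of $\eta_{\vx}$ guarantee that $\frac{1}{\eta_{\vx}}$ beats both the $L(\kappa+1+c_1)+L_P(1+c_3)$ from Lemma~\ref{lemma:base_inequality_updated} and the $(1-\rho)^{-2}$-type factors inherited from the consensus recursions, while the two bounds on $\eta_{\vLam}$ control the $\kappa^2$-weighted multiplier terms propagated through Lemma~\ref{lemma:y_upper_bounds-spider} together with the $\frac{L^2}{L_P}(1-\rho)^{-2}$ terms coming from the tracked-gradient recursion. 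Verifying these domination inequalities is routine but delicate; once they hold, the retained halves of the progress terms form exactly the left side of~\eqref{theorem:convergence-spider:base_bound2}, the boundary values $\norm{\vV_{\perp,\vx}^{(0)}}_F^2$, $\norm{\widetilde{\vY}^{(0)}-\vY^{(0)}}_F^2$, and $\hat\delta_0$ assemble into $C_0$, and the leftover $T\Upsilon$ pieces give the stated right-hand side.
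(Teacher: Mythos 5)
Your proposal is correct and follows essentially the same route as the paper's proof: telescope the one-iteration inequality of Lemma~\ref{lemma:base_inequality_updated}, control the error terms via Lemmas~\ref{lemma:consensus}, \ref{lemma:gradient_error_all}, \ref{lemma:gradient_error-spider}, and \ref{lemma:y_upper_bounds-spider} together with the SPIDER telescoping of Remark~\ref{remark:spider-bound} and the bound \eqref{lemma:lyapunov_function-spider:eqn6}, and then absorb everything into the progress terms through the stepsize choices \eqref{eq:para-eta-x-lam-set-pspider}. The only cosmetic difference is that you sum the consensus and gradient-tracking recursions directly and solve for $\sum_t\norm{\vX_{\perp}^{(t)}}_F^2$ and $\sum_t\norm{\vV_{\perp,\vx}^{(t)}}_F^2$ (using $\vX_\perp^{(0)}=\vzero$), whereas the paper adds the weighted terms $\gamma_1\norm{\vX_{\perp}^{(t+1)}}_F^2+\gamma_2\norm{\vV_{\perp,\vx}^{(t+1)}}_F^2$ to each iteration before summing; the two bookkeeping schemes are algebraically equivalent and yield the same retained left-hand side and the same $C_0$.
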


\begin{proof}
	 We first take the expectation of~\eqref{lemma:base_inequality_updated:bound}, apply~\eqref{lemma:gradient_error:average}, and plug in the values of $c_1,c_2$ and $c_3$ to have
 		\begin{equation}\label{lemma:base_inequality-spider:eqn1}
 			\begin{split}
 				& \textstyle \EE\left[\phi(\bar{\vx}^{(t+1)},\vLam^{(t+1)})-\phi(\bar{\vx}^{(t)},\vLam^{(t)})\right]
 				\le -\frac{1}{4m \eta_{\vx}}\EE\norm{\vX^{(t+1)}-\bar{\vX}^{(t)}}_F^2 -\frac{1}{2m\eta_{\vx}}\EE\norm{\vX^{(t+1)}-\widetilde{\vX}^{(t)}}_F^2\\
 				&\hspace{1.5cm} \textstyle  -\left(\frac{1}{\eta_{\vLam}}-\frac{L_P}{2}-32 L\kappa^2\right)\EE\norm{\vLam^{(t+1)}-\vLam^{(t)}}_F^2
 				+\frac{1}{2m}\left(L(\kappa+1)+\frac{1}{32}+\frac{\rho^2}{\eta_{\vx}}\right)\EE\norm{\vX_{\perp}^{(t)}}_F^2\\
				&\hspace{1.5cm} \textstyle  +\frac{L+1}{64m\kappa^2}\EE\norm{\widetilde{\vY}^{(t)}-\vY^{(t)}}_F^2
 				+\frac{1}{60 mL_P}\EE\left(\norm{\vR^{(t)}}_F^2+\norm{\vV_{\perp,\vx}^{(t)}}_F^2\right),
 			\end{split}
 		\end{equation}
where the coefficient of $\EE\norm{\vX^{(t+1)}-\bar{\vX}^{(t)}}_F^2$ is obtained by the arguments 
\begin{align*}
\textstyle L(\kappa+1+c_1)+L_P(1+c_3)=L(\kappa+1+32\kappa^2)+61L\sqrt{4\kappa^2+1} 
			\le \frac{L(\kappa+1) +1+20(L+1)(5+\kappa+12\kappa^2)}{2} \le \frac{1}{2\eta_\vx}.
\end{align*}			
				
 	Next, for any $\gamma_1>0$ and $\gamma_2>0$, we add $\gamma_1\EE\norm{\vX_{\perp}^{(t+1)}}_F^2,\gamma_2\EE\norm{\vV_{\perp,\vx}^{(t+1)}}_F^2$ to both sides of~\eqref{lemma:base_inequality-spider:eqn1} and use the results of Lemmas~\ref{lemma:consensus} and~\ref{lemma:gradient_error-spider} to obtain
  		\begin{align}\label{lemma:base_inequality-spider:eqn2}
  				&\EE\left[\phi(\bar{\vx}^{(t+1)},\vLam^{(t+1)})-\phi(\bar{\vx}^{(t)},\vLam^{(t)})\right]+\gamma_1\EE\norm{\vX_{\perp}^{(t+1)}}_F^2+\gamma_2\EE\norm{\vV_{\perp,\vx}^{(t+1)}}_F^2\\
  				\le&\textstyle -\frac{1}{4m \eta_{\vx}}\EE\norm{\vX^{(t+1)}-\bar{\vX}^{(t)}}_F^2-\frac{1}{2m\eta_{\vx}}\EE\norm{\vX^{(t+1)}-\widetilde{\vX}^{(t)}}_F^2
  				-\left(\frac{1}{\eta_{\vLam}}-\frac{L_P}{2}-32L\kappa^2\right)\EE\norm{\vLam^{(t+1)}-\vLam^{(t)}}_F^2 \nonumber\\
  				&\textstyle +\frac{1}{2m}\left(L(\kappa+1)+\frac{1}{32}+\frac{\rho^2}{\eta_{\vx}}\right)\EE\norm{\vX_{\perp}^{(t)}}_F^2+\frac{L+1}{64m\kappa^2}\EE\norm{\widetilde{\vY}^{(t)}-\vY^{(t)}}_F^2 \nonumber\\
  				&\textstyle + \frac{1}{60 mL_P}\EE\left(\frac{L^2}{\cS_2}\sum_{r=n_{t}q}^{t-1}\EE\norm{\vZ^{(r+1)}-\vZ^{(r)}}_F^2+ m \Upsilon+\norm{\vV_{\perp,\vx}^{(t)}}_F^2\right)+\gamma_1\rho\norm{\vX_\perp^{(t)}}_F^2+\frac{\gamma_1\eta_{\vx}^2}{1-\rho}\norm{\vV_{\perp,\vx}^{(t)}}_F^2 \nonumber \\
  				&\textstyle +\gamma_2 \left(\rho\EE\norm{\vV_{\perp,\vx}^{(t)}}_F^2+\frac{6m\Upsilon}{1-\rho}+\frac{3L^2}{1-\rho}\EE \Bigg[\norm{\vZ^{(t+1)}-\vZ^{(t)}}_F^2 +\frac{2}{\cS_2}\sum_{r=n_{t}q}^{t}\norm{\vZ^{(r+1)}-\vZ^{(r)}}_F^2\Bigg] \right). \nonumber
  		\end{align}
  	 Sum up \eqref{lemma:base_inequality-spider:eqn2} over $t=0$ to $T-1$, utilize~\eqref{eq:spider-sum-error}, and recall $\cS_2=q$ to have
	 \begin{align}\label{lemma:base_inequality-spider:eqn5}
  				&\textstyle \EE\left[\phi(\bar{\vx}^{(T)},\vLam^{(T)}) - \phi(\bar{\vx}^{(0)},\vLam^{(0)})\right]+\sum_{t=0}^{T-1}\EE\left(\gamma_1\norm{\vX_{\perp}^{(t+1)}}_F^2+\gamma_2\norm{\vV_{\perp,\vx}^{(t+1)}}_F^2 \right) \nonumber\\
  				\le&\textstyle \sum_{t=0}^{T-1}\EE \bigg[-\frac{1}{4m \eta_{\vx}}\EE\norm{\vX^{(t+1)}-\bar{\vX}^{(t)}}_F^2-\frac{1}{2m\eta_{\vx}}\EE\norm{\vX^{(t+1)}-\widetilde{\vX}^{(t)}}_F^2
  				\\
  				&\hspace{1cm} \textstyle -\left(\frac{1}{\eta_{\vLam}}-\frac{L_P}{2}-32L\kappa^2\right)\EE\norm{\vLam^{(t+1)}-\vLam^{(t)}}_F^2 +\frac{L+1}{64m\kappa^2}\EE\norm{\widetilde{\vY}^{(t)}-\vY^{(t)}}_F^2 \bigg]\nonumber\\
  				&\textstyle +\frac{1}{2m}\left(L(\kappa+1)+\frac{1}{32}+\frac{\rho^2}{\eta_{\vx}}+2m\gamma_1\rho\right) \sum_{t=0}^{T-1} \EE\norm{\vX_{\perp}^{(t)}}_F^2 + \left(\frac{L^2}{60 mL_P}+\frac{9\gamma_2 L^2}{1-\rho}\right)\sum_{t=0}^{T-1} \EE\norm{\vZ^{(t+1)}-\vZ^{(t)}}_F^2 \nonumber\\
				& \textstyle + \left(\frac{1}{60 mL_P}+\frac{\gamma_1\eta_{\vx}^2}{1-\rho} + \gamma_2\rho\right) \sum_{t=0}^{T-1} \EE\norm{\vV_{\perp,\vx}^{(t)}}_F^2 +T \left(\frac{1}{60 L_P} + \frac{6m \gamma_2}{1-\rho}\right) \Upsilon \nonumber.
  		\end{align}
Now plug \eqref{lemma:y_upper_bounds-spider:y-tilde-bound} and \eqref{lemma:y_upper_bounds-spider:y-bound} into \eqref{lemma:base_inequality-spider:eqn5} and also bound $\|\vX^{(t+1)}-\vX^{(t)}\|_F^2$ by \eqref{lemma:lyapunov_function-spider:eqn6} to have
\begin{align}\label{lemma:base_inequality-spider:eqn6}
  				&\EE\left[\phi(\bar{\vx}^{(T)},\vLam^{(T)}) - \phi(\bar{\vx}^{(0)},\vLam^{(0)})\right]+\sum_{t=0}^{T-1}\textstyle \EE\left(\gamma_1\norm{\vX_{\perp}^{(t+1)}}_F^2+\gamma_2\norm{\vV_{\perp,\vx}^{(t+1)}}_F^2 + \frac{1}{4m \eta_{\vx}}\EE\norm{\vX^{(t+1)}-\bar{\vX}^{(t)}}_F^2\right)\nonumber\\
  				\le&-\sum_{t=0}^{T-1} \textstyle \EE \Bigg[\frac{1}{2m}\left(\frac{1}{\eta_{\vx}} - \frac{(L+1)(20\kappa^2+\kappa+2)}{2} - 2(24\kappa^2+2\kappa+4)\left(\frac{L^2}{30 L_P}+\frac{18 m \gamma_2 L^2}{1-\rho}\right) \right)\EE\norm{\vX^{(t+1)}-\widetilde{\vX}^{(t)}}_F^2
  				\\
  				&\hspace{0.8cm} \textstyle + \left(\frac{1}{\eta_{\vLam}}-\frac{L_P}{2}-32L\kappa^2 - \frac{(L+1)(20\kappa^2+\kappa+1)}{8}- (12\kappa^2+\kappa+1)\left(\frac{L^2}{30 L_P}+\frac{18 m \gamma_2 L^2}{1-\rho}\right)\right)\EE\norm{\vLam^{(t+1)}-\vLam^{(t)}}_F^2  \Bigg]\nonumber\\
  				&\textstyle +\frac{1}{2m}\bigg(L(\kappa+1)+\frac{1}{32}+\frac{\rho^2}{\eta_{\vx}}+2m\gamma_1\rho + 2(L+1)(20\kappa^2+\kappa+2) \nonumber \\
				&\hspace{1.8cm} \textstyle + 8(24\kappa^2+2\kappa+4)\left(\frac{L^2}{30 L_P}+\frac{18 m \gamma_2 L^2}{1-\rho}\right)\bigg)  \sum_{t=0}^{T-1} \EE\norm{\vX_{\perp}^{(t)}}_F^2 \nonumber \\
				&\textstyle + \left(\frac{1}{60 mL_P}+\frac{\gamma_1\eta_{\vx}^2}{1-\rho} + \gamma_2\rho\right) \sum_{t=0}^{T-1} \EE\norm{\vV_{\perp,\vx}^{(t)}}_F^2 +T \left(\frac{1}{30 L_P} + \frac{15m \gamma_2}{1-\rho} + \frac{L+1}{8 L^2} \right) \Upsilon \nonumber. \\
				& \textstyle + \left(\frac{6(L+1)}{64m\kappa} + \frac{L^2}{120 mL_P \kappa}+\frac{9\gamma_2 L^2}{2\kappa(1-\rho)}\right) \norm{\widetilde{\vY}^{(0)}-\vY^{(0)}}_F^2 + \left(\frac{L+1}{8m} + \frac{L^2}{30 mL_P}+\frac{18\gamma_2 L^2}{1-\rho}\right)\frac{m}{L}\hat\delta_{0}. \nonumber
  		\end{align}
Set $\gamma_1$ and $\gamma_2$ to			 
\begin{align}
		&\textstyle	\gamma_1= \frac{3}{2m(1-\rho)}\left(L(\kappa+1)+\frac{1}{32}+\frac{1}{\eta_{\vx}}+ 2(L+1)(20\kappa^2+\kappa+2)   + 8(24\kappa^2+2\kappa+4)\left(\frac{L^2}{30 L_P}+\frac{3 L^2}{5 L_P(1-\rho)^2}\right)\right),  \label{lemma:parameters-spider:gammas}\\
		&\textstyle \gamma_2=\frac{2}{1-\rho}\left(\frac{1}{60 m L_P} + \frac{\gamma_1 \eta_\vx^2}{1-\rho}\right). \label{lemma:parameters-spider:gammas-2}
		\end{align}
By $\eta_\vx \le \frac{(1-\rho)^2}{180 L_P}$ and $L_P=L\sqrt{4\kappa^2+1}$, it is straightforward to have $\frac{144 \eta_\vx^2 L^2 (24\kappa^2+2\kappa+4)}{(1-\rho)^3} \le \frac{1-\rho}{6}$. Then we have from \eqref{lemma:parameters-spider:gammas} and \eqref{lemma:parameters-spider:gammas-2} that
\begin{align}\label{eq:bd-gamma1-diff-spider}
&\textstyle \frac{1}{2m}\left(L(\kappa+1)+\frac{1}{32}+\frac{\rho^2}{\eta_{\vx}}+ 2(L+1)(20\kappa^2+\kappa+2) + 8(24\kappa^2+2\kappa+4)\left(\frac{L^2}{30 L_P}+\frac{18 m \gamma_2 L^2}{1-\rho}\right)\right) \nonumber\\
\le &\textstyle \frac{1}{2m} \bigg(L(\kappa+1)+\frac{1}{32}+\frac{1}{\eta_{\vx}}+ 2(L+1)(20\kappa^2+\kappa+2) + 8(24\kappa^2+2\kappa+4)\left(\frac{L^2}{30 L_P}+\frac{3 L^2}{10 L_P(1-\rho)^2}\right) \nonumber\\
&\textstyle \hspace{1.5cm}+ 8(24\kappa^2+2\kappa+4) \frac{36 m \gamma_1\eta_x^2 L^2}{(1-\rho)^3} \bigg) 
\le  \frac{(1-\rho)\gamma_1}{3} + \frac{(1-\rho)\gamma_1}{6} = \frac{(1-\rho)\gamma_1}{2}.
\end{align}		
In addition, by the choice of $\gamma_2$ in \eqref{lemma:parameters-spider:gammas-2}, it follows 
\begin{equation}\label{eq:bd-gamma2-diff-spider}
\textstyle \gamma_2 - \left(\frac{1}{60 mL_P}+\frac{\gamma_1\eta_{\vx}^2}{1-\rho} + \gamma_2\rho\right) \ge \frac{(1-\rho)\gamma_2}{2}.
\end{equation}
By the choice of  $\gamma_1$ and $\eta_\vx\le \frac{(1-\rho)^2}{180 L_P}$, it follows that
\begin{align}\label{eq:bd-ratio-eta-gamma1}
\textstyle \frac{m \gamma_1 \eta_\vx^2}{1-\rho} \le \frac{\eta_\vx}{120 L_P} \left(L(\kappa+1)+\frac{1}{32}+\frac{1}{\eta_{\vx}}+ 2(L+1)(20\kappa^2+\kappa+2) + 8(24\kappa^2+2\kappa+4)\left(\frac{L^2}{30 L_P}+\frac{3 L^2}{5 L_P(1-\rho)^2}\right)\right).
\end{align}
Also, $ \frac{\eta_\vx \cdot 8(24\kappa^2+2\kappa+4) \cdot 3 L^2}{5 L_P(1-\rho)^2} \le  \frac{8(24\kappa^2+2\kappa+4) \cdot 3 L^2}{5\cdot 180 L_P^2} = \frac{8(24\kappa^2+2\kappa+4) \cdot 3 L^2}{5\cdot 180 L^2(4\kappa^2+1)} \le \frac{1}{5}$, and by $L_P \ge 2L\kappa$, it holds
\begin{align*}
 &~  \textstyle L(\kappa+1)+\frac{1}{32}+ 2(L+1)(20\kappa^2+\kappa+2) + \frac{8(24\kappa^2+2\kappa+4)L^2 }{30 L_P}\\
\le &~ \textstyle L(\kappa+1)+1+ 2(L+1)(20\kappa^2+\kappa+2) + \frac{8(24\kappa^2+2\kappa+4)L^2 }{60 L \kappa} \le 16(L+1)(12\kappa^2+2\kappa+5).
\end{align*}
Hence from \eqref{eq:bd-ratio-eta-gamma1}, we have $\frac{\gamma_1 \eta_\vx^2}{1-\rho}\le \frac{1}{60 m L_P}$. Thus $m\gamma_2 \le \frac{1}{15 L_P(1-\rho)}$ follows from \eqref{lemma:parameters-spider:gammas-2}, and we have
\begin{align}\label{eq:bd-eta-x-diff-spider}
&\textstyle \frac{1}{\eta_{\vx}} - \frac{(L+1)(20\kappa^2+\kappa+2)}{2} - 2(24\kappa^2+2\kappa+4)\left(\frac{L^2}{30 L_P}+\frac{18 m \gamma_2 L^2}{1-\rho}\right) \nonumber \\
\ge & \textstyle  \frac{1}{\eta_{\vx}} - \frac{(L+1)(20\kappa^2+\kappa+2)}{2} - \frac{2L^2(24\kappa^2+2\kappa+4)}{30 L_P}\left(1+\frac{36 }{(1-\rho)^2}\right) \ge \frac{1}{2\eta_\vx},
\end{align}
where the last inequality can be verified by plugging the value of $\eta_\vx$ given in \eqref{eq:para-eta-x-set-pspider}.
Similarly, by the choice of $\eta_\vLam$ in \eqref{eq:para-eta-lam-set-pspider}, it is straightforward to have
\begin{align}\label{eq:bd-eta-lam-diff-spider}
&\textstyle \frac{1}{\eta_{\vLam}}-\frac{L_P}{2}-32L\kappa^2 - \frac{(L+1)(20\kappa^2+\kappa+1)}{8}- (12\kappa^2+\kappa+1)\left(\frac{L^2}{30 L_P}+\frac{18 m \gamma_2 L^2}{1-\rho} \right) \ge \frac{1}{2\eta_\vLam}.
\end{align}
Moreover, by $m\gamma_2 \le \frac{1}{15 L_P(1-\rho)}$, it holds
\begin{align}\label{eq:bd-delta-0-T-diff-spider}
\textstyle \left(\frac{L+1}{8 m}+\frac{L^2}{30 mL_P}+\frac{18\gamma_2 L^2}{1-\rho}\right)\frac{m}{L}\hat\delta_{0} \le \left(\frac{1+1/L}{8} + \frac{L}{30 L_P}+\frac{6 L}{5 L_P(1-\rho)^2}\right)\hat\delta_0.
\end{align}
Therefore, we obtain  \eqref{theorem:convergence-spider:base_bound2} from \eqref{lemma:base_inequality-spider:eqn6} by using \eqref{eq:bd-gamma1-diff-spider}-\eqref{eq:bd-delta-0-T-diff-spider}, the lower bounds $\frac{(1-\rho)\gamma_1}{2} \ge \frac{3}{4m\eta_\vx}$ and $\frac{(1-\rho)\gamma_2}{2}\ge \frac{1}{60 m L_P}$, the upper bounds $\frac{\gamma_1 \eta_\vx^2}{1-\rho}\le \frac{1}{60 m L_P}$  and $\gamma_2 \le \frac{1}{15 m L_P(1-\rho)}$, and $\phi(\bar{\vx}^{(T)},\vLam^{(T)}) \ge \phi^*$.
\end{proof}

By Theorem~\ref{lem:bound-on-all-terms-spider}, we first show a last-iterate convergence in probability for the finite-sum case.
\begin{theorem}[Convergence in probability for finite-sum case]\label{thm:spider-convg-prob}
Under Assumptions~\ref{assump-func}-\ref{assump-stoch}, let $\{(\vX^{(t)},\vLam^{(t)},\vY^{(t)})\}_{t\ge 0}$ be generated from Algorithm~\ref{algo:} with \texttt{VR}-tag = SPIDER and $\eta_{\vx},\eta_{\vLam},\eta_{\vy}$ chosen as in \eqref{eq:para-eta-x-lam-set-pspider} and \eqref{lemma:parameters-spider:constants}. If \eqref{eq:finite-sum-scenario} holds and $\cS_0=\cS_1 = n$ in Algorithm~\ref{algo:}, then for any $\vareps>0$,
\begin{subequations}\label{eq:spider-convg-prob}
\begin{align}
&\lim_{t\to\infty}\Prob\left\{\textstyle \norm{\frac{1}{\eta_{\vx}}\left(\bar{\vx}^{(t)}-\prox_{\eta_{\vx}g}\big(\bar{\vx}^{(t)}-\eta_{\vx}\nabla_{\vx} P(\bar{\vx}^{(t)},\vLam^{(t)})\big)\right)}_2^2+\frac{L^2}{m}\norm{\vX_{\perp}^{(t)}}_F^2 \ge \vareps\right\} = 0, \\
&\lim_{t\to\infty}\Prob\left\{ \textstyle  \norm{\nabla_{\vLam}P(\bar{\vx}^{(t)},\vLam^{(t)})}_F^2 \ge \vareps\right\} = 0.
\end{align}
\end{subequations}
\end{theorem}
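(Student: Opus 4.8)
The plan is to convert the $T$-uniform bound of Theorem~\ref{lem:bound-on-all-terms-spider} into square-summability of the iterate differences, push that summability through the auxiliary lemmas to the full stationarity measure via Lemma~\ref{lemma:stationarity}, and finally invoke Markov's inequality to pass from convergence in expectation to convergence in probability. The crucial simplification in the finite-sum regime is that $\Upsilon = 0$ by \eqref{eq:def-Upsilon} (since $\cS_0 = \cS_1 = n$ and we take all samples), so the right-hand side of \eqref{theorem:convergence-spider:base_bound2} collapses to the constant $C_0$, independent of $T$. Letting $T \to \infty$ and using that every term on the left is nonnegative, I immediately obtain that each of the five series $\sum_t \EE\norm{\vX^{(t+1)}-\bar{\vX}^{(t)}}_F^2$, $\sum_t \EE\norm{\vLam^{(t+1)}-\vLam^{(t)}}_F^2$, $\sum_t \EE\norm{\vX^{(t+1)}-\widetilde{\vX}^{(t)}}_F^2$, $\sum_t \EE\norm{\vX_{\perp}^{(t)}}_F^2$, and $\sum_t \EE\norm{\vV_{\perp,\vx}^{(t)}}_F^2$ converges.

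Next I would propagate this summability to the remaining quantities appearing on the right-hand side of Lemma~\ref{lemma:stationarity}, namely $\EE\norm{\widetilde{\vY}^{(t)}-\vY^{(t)}}_F^2$ and $\EE\norm{\vR^{(t)}}_F^2$. For the primal-difference term I use \eqref{lemma:lyapunov_function-spider:eqn6} to bound $\norm{\vX^{(t+1)}-\vX^{(t)}}_F^2$ by the already-summable $\norm{\vX^{(t+1)}-\widetilde{\vX}^{(t)}}_F^2$ and $\norm{\vX_{\perp}^{(t)}}_F^2$; then \eqref{lemma:y_upper_bounds-spider:y-bound} (with $\Upsilon = 0$) shows $\sum_t\EE\norm{\vY^{(t+1)}-\vY^{(t)}}_F^2$ is finite, since its right-hand side is a fixed linear combination of summable series plus finite constants depending on $\hat\delta_0$ and $\norm{\widetilde{\vY}^{(0)}-\vY^{(0)}}_F^2$. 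Hence $\sum_t\EE\norm{\vZ^{(t+1)}-\vZ^{(t)}}_F^2 < \infty$. Feeding this into \eqref{lemma:gradient_error-spider:bound}, summing over $t$, and applying the SPIDER epoch-telescoping identity \eqref{eq:spider-sum-error} together with $\cS_2 = q$ yields $\sum_t\EE\norm{\vR^{(t)}}_F^2 \le L^2\sum_t\EE\norm{\vZ^{(t+1)}-\vZ^{(t)}}_F^2 < \infty$. Finally, \eqref{lemma:y_upper_bounds-spider:y-tilde-bound} (again with $\Upsilon = 0$) bounds $\sum_t\EE\norm{\widetilde{\vY}^{(t+1)}-\vY^{(t+1)}}_F^2$ by summable series and finite constants, so after including the $t=0$ term, $\sum_t\EE\norm{\widetilde{\vY}^{(t)}-\vY^{(t)}}_F^2 < \infty$.

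With every term on the right-hand sides of \eqref{lemma:stationarity:bound} and \eqref{lemma:stationarity:bound1} now summable (each carries a $t$-independent coefficient), summing those two inequalities over $t$ shows that both stationarity measures are summable in expectation; in particular, letting $A_t$ denote the nonnegative random quantity inside the first (resp.\ second) probability in \eqref{eq:spider-convg-prob}, I get $\sum_t \EE[A_t] < \infty$, hence $\EE[A_t] \to 0$ as $t \to \infty$. Markov's inequality then gives $\Prob\{A_t \ge \vareps\} \le \EE[A_t]/\vareps \to 0$ for every fixed $\vareps > 0$, which is exactly \eqref{eq:spider-convg-prob}. I expect the main obstacle to be the bookkeeping in the second step: one must verify that the couplings in Lemmas~\ref{lemma:y_upper_bounds-spider} and~\ref{lemma:gradient_error-spider} do not re-introduce non-summable contributions. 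This is precisely where the finite-sum hypothesis is essential, since the $T\Upsilon$ terms that appear there (and in Theorem~\ref{lem:bound-on-all-terms-spider}) would otherwise grow linearly in $T$, destroy summability, and limit one to an averaged/ergodic rate rather than a genuine last-iterate-in-probability statement.
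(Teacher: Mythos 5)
Your proposal is correct and follows essentially the same route as the paper: with $\Upsilon=0$ in the finite-sum case, Theorem~\ref{lem:bound-on-all-terms-spider} yields square-summability of the iterate differences, which is then propagated through Lemma~\ref{lemma:y_upper_bounds-spider} to the dual error and fed into Lemma~\ref{lemma:stationarity} before applying Markov's inequality. You are in fact slightly more explicit than the paper in tracking the summability of $\EE\norm{\vR^{(t)}}_F^2$ via \eqref{lemma:gradient_error-spider:bound} and \eqref{eq:spider-sum-error}, a step the paper's proof leaves implicit but which is needed for \eqref{lemma:stationarity:bound}.
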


\begin{proof}
Recall that $\Upsilon=0$ when \eqref{eq:finite-sum-scenario} holds and $\cS_0=\cS_1 = n$. Hence, Theorem~\ref{lem:bound-on-all-terms-spider} indicates 
$$\textstyle \sum_{t=0}^{\infty} \EE \left[\norm{\vX^{(t+1)}-\bar{\vX}^{(t)}}_F^2+\norm{\vLam^{(t+1)}-\vLam^{(t)}}_F^2+\norm{\vX^{(t+1)}-\widetilde{\vX}^{(t)}}_F^2+\norm{\vX_{\perp}^{(t)}}_F^2+\norm{\vV_{\perp,\vx}^{(t)}}_F^2 \right] < \infty,$$
which together with \eqref{lemma:y_upper_bounds-spider:y-tilde-bound} further implies $\sum_{t=0}^{\infty}  \EE\norm{\widetilde{\vY}^{(t+1)}-\vY^{(t+1)}}_F^2 < \infty$. Therefore, each of the terms $\norm{\vX^{(t+1)}-\bar{\vX}^{(t)}}_F^2$, $\norm{\vLam^{(t+1)}-\vLam^{(t)}}_F^2$, $\norm{\vX^{(t+1)}-\widetilde{\vX}^{(t)}}_F^2$, $\norm{\vX_{\perp}^{(t)}}_F^2$, $\norm{\vV_{\perp,\vx}^{(t)}}_F^2$, and $\norm{\widetilde{\vY}^{(t+1)}-\vY^{(t+1)}}_F^2$ approaches 0 in expectation as $t\to\infty$.
Now the desired results follow immediately from Lemma~\ref{lemma:stationarity} and the Markov inequality.
\end{proof}

\begin{remark}\label{rm:spider-convg-prob}
In order to show the last-iterate convergence in probability for the general stochastic case, we need to increase $\cS_1$ periodically such that the cumulated variance is finite. This requires us to make $\cS_1$ dependent on the number of periods, which will cause confusion on the notation. We do not extend the analysis here. In addition, Theorem~\ref{thm:spider-convg-prob} together with Remark~\ref{rm:stationary} implies that $\{\vX^{(t)}\}_{t\ge0}$ satisfies the convergence in probability for \eqref{eq:min-max-prob-dec} when \eqref{eq:finite-sum-scenario} holds and $\cS_0=\cS_1 = n$ in Algorithm~\ref{algo:}, i.e., 
$$\lim_{t\to\infty}\Prob\left\{\textstyle \norm{\frac{1}{\eta_{\vx}}\left(\bar{\vx}^{(t)}-\prox_{\eta_{\vx}g}\big(\bar{\vx}^{(t)}-\eta_{\vx}\nabla p(\bar{\vx}^{(t)})\big)\right)}_2^2+\frac{L^2}{m}\norm{\vX_{\perp}^{(t)}}_F^2 \ge \vareps\right\} = 0,$$
where $p(\cdot)$ is defined in \eqref{eq:def-p}.
\end{remark}

Moreover, by Theorem~\ref{lem:bound-on-all-terms-spider}, we have the expected convergence rate result below.
\begin{theorem}\label{theorem:convergence-spider}
	Under Assumptions~\ref{assump-func}-\ref{assump-stoch}, let $\{(\vX^{(t)},\vLam^{(t)},\vY^{(t)})\}_{t\ge0}$ be generated from Algorithm~\ref{algo:} with \texttt{VR}-tag = SPIDER and $\eta_{\vx},\eta_{\vLam},\eta_{\vy}$ chosen as in \eqref{eq:para-eta-x-lam-set-pspider} and \eqref{lemma:parameters-spider:constants}. For any integer $T\ge1$, select $\tau$ uniformly at random from $\{0,1,\ldots, T-1\}$. Then
	\begin{equation}\label{eq:tau-bd-prox-map-x-spider}
					\begin{split}
						&\textstyle \EE\norm{\frac{1}{\eta_{\vx}}\left(\bar{\vx}^{(\tau)}-\prox_{\eta_{\vx}g}\left(\bar{\vx}^{(\tau)}-\eta_{\vx}\nabla_{\vx} P(\bar{\vx}^{(\tau)},\vLam^{(\tau)})\right)\right)}_2^2+\frac{L^2}{m}\EE\norm{\vX_{\perp}^{(\tau)}}_F^2\\
\le & \textstyle \frac{L^2(60\kappa+3)}{mT} \norm{\widetilde{\vY}^{(0)}-\vY^{(0)}}_F^2	 +(80\kappa^2+10) \Upsilon  +\frac{(80\kappa^2+10) L \hat \delta_0}{T } \\
& \textstyle  + \left(\frac{C_0}{T}+\left(\frac{1}{30 L_P} + \frac{1}{L_P(1-\rho)^2} + \frac{L+1}{8 L^2} \right)\Upsilon \right)
  \cdot\bigg[10L^2 \Big(150\eta_\vx\kappa^2(20\kappa^2+\kappa+4) \\
  &\hspace{2cm} \textstyle + 16 \eta_\vLam \kappa^2 (20\kappa^2+\kappa+3) \Big) + \frac{20}{\eta_\vx} + \left(\textstyle 3L^2\eta_\vx(3+5\kappa^2)+\frac{7}{\eta_{\vx}}\right) + 300 L_P\bigg]
 				 						\end{split}
				\end{equation}
		and
			\begin{equation}\label{eq:tau-bd-prox-map-lam-spider}
		\begin{split}	
				&\textstyle \EE\norm{\nabla_{\vLam}P(\bar{\vx}^{(\tau)},\vLam^{(\tau)})}_F^2
\le  \frac{24\kappa L^2}{mT} \norm{\widetilde{\vY}^{(0)}-\vY^{(0)}}_F^2	 +32 \kappa^2 \Upsilon  +\frac{32 L\kappa^2 \hat \delta_0}{T } \\
& \hspace{1cm} \textstyle + \left(\frac{C_0}{T}+\left(\frac{1}{30 L_P} + \frac{1}{L_P(1-\rho)^2} + \frac{L+1}{8 L^2} \right)\Upsilon \right)
  \cdot\bigg[4L^2 \Big(150\eta_\vx\kappa^2(20\kappa^2+\kappa+2) \\
  & \hspace{5.5cm} \textstyle + 16  \eta_\vLam \kappa^2 (20\kappa^2+\kappa+1) \Big) + \frac{4}{\eta_\vLam} + 6 L^2\kappa^2 \eta_\vx\bigg],		\end{split}		
			\end{equation}
	\end{theorem}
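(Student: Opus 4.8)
The plan is to start from the per-iteration stationarity bounds in Lemma~\ref{lemma:stationarity} and average them over the uniformly chosen index $\tau\in\{0,\dots,T-1\}$, so that the expectation of each stationarity measure at $\tau$ equals $\frac{1}{T}\sum_{t=0}^{T-1}$ of the corresponding per-iteration expectation. This reduces proving \eqref{eq:tau-bd-prox-map-x-spider} and \eqref{eq:tau-bd-prox-map-lam-spider} to bounding the time-summed right-hand sides of \eqref{lemma:stationarity:bound} and \eqref{lemma:stationarity:bound1}. Among the terms appearing there, $\sum_{t}\EE\norm{\vX^{(t+1)}-\bar{\vX}^{(t)}}_F^2$, $\sum_{t}\EE\norm{\vLam^{(t+1)}-\vLam^{(t)}}_F^2$, $\sum_{t}\EE\norm{\vX_{\perp}^{(t)}}_F^2$, and $\sum_{t}\EE\norm{\vV_{\perp,\vx}^{(t)}}_F^2$ are exactly the square-summable quantities controlled by the master estimate \eqref{theorem:convergence-spider:base_bound2} of Theorem~\ref{lem:bound-on-all-terms-spider}, which bounds a positive combination of them by $C_0 + T(\cdots)\Upsilon$. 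Thus the core of the argument is to re-express the two remaining terms, the dual error $\sum_{t}\EE\norm{\widetilde{\vY}^{(t)}-\vY^{(t)}}_F^2$ and the gradient-estimation error $\sum_{t}\EE\norm{\vR^{(t)}}_F^2$, in terms of these square-summable quantities together with the initial and variance contributions.

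For the dual error I would invoke \eqref{lemma:y_upper_bounds-spider:y-tilde-bound} of Lemma~\ref{lemma:y_upper_bounds-spider}, after the harmless index shift $\sum_{t=0}^{T-1}\norm{\widetilde{\vY}^{(t)}-\vY^{(t)}}_F^2 \le \norm{\widetilde{\vY}^{(0)}-\vY^{(0)}}_F^2 + \sum_{t=0}^{T-1}\norm{\widetilde{\vY}^{(t+1)}-\vY^{(t+1)}}_F^2$. This expresses the dual error through $\sum_{t}(\norm{\vX^{(t+1)}-\widetilde{\vX}^{(t)}}_F^2 + 4\norm{\vX_{\perp}^{(t)}}_F^2)$ and $\sum_{t}\norm{\vLam^{(t+1)}-\vLam^{(t)}}_F^2$, plus $\hat\delta_0$, $\norm{\widetilde{\vY}^{(0)}-\vY^{(0)}}_F^2$, and the variance term $\frac{8mT\Upsilon}{\mu^2}$; carrying the weight $\frac{10L^2}{m}$ from \eqref{lemma:stationarity:bound} then produces precisely the $80\kappa^2\Upsilon$ component of the $\Upsilon$-coefficient in \eqref{eq:tau-bd-prox-map-x-spider}. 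For the gradient error I would use \eqref{lemma:gradient_error-spider:bound} together with the SPIDER summation identity \eqref{eq:spider-sum-error} from Remark~\ref{remark:spider-bound}; since $q=\cS_2$ this gives $\sum_{t}\EE\norm{\vR^{(t)}}_F^2 \le L^2\sum_{t}\EE\norm{\vZ^{(t+1)}-\vZ^{(t)}}_F^2 + mT\Upsilon$. I would then split $\norm{\vZ^{(t+1)}-\vZ^{(t)}}_F^2 = \norm{\vX^{(t+1)}-\vX^{(t)}}_F^2 + \norm{\vY^{(t+1)}-\vY^{(t)}}_F^2$, bounding the primal increment via \eqref{lemma:lyapunov_function-spider:eqn6} and the dual increment via \eqref{lemma:y_upper_bounds-spider:y-bound}, so both are again reduced to the square-summable quantities.

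Substituting these two reductions back into the averaged \eqref{lemma:stationarity:bound} and \eqref{lemma:stationarity:bound1} collapses every term into one of three types: the weighted sum on the left of \eqref{theorem:convergence-spider:base_bound2}, the initial terms $\norm{\widetilde{\vY}^{(0)}-\vY^{(0)}}_F^2$ and $\hat\delta_0$ (yielding the $\frac{1}{T}$-decaying leading terms), and the $T\Upsilon$ variance terms (yielding the non-vanishing coefficients $80\kappa^2+10$ and $32\kappa^2$). Applying Theorem~\ref{lem:bound-on-all-terms-spider} to bound the first type by $C_0 + T(\cdots)\Upsilon$, and then dividing by $T$, produces the stated right-hand sides; the large bracketed factors multiplying $\bigl(\frac{C_0}{T} + (\cdots)\Upsilon\bigr)$ are the accumulated weights from Lemma~\ref{lemma:stationarity}, the $\eta_\vx$- and $\eta_\vLam$-scaled contributions coming from the dual-error and $\vZ$-increment reductions, and the inverse weight $60mL_P$ attached to $\sum_t\EE\norm{\vV_{\perp,\vx}^{(t)}}_F^2$ in \eqref{theorem:convergence-spider:base_bound2}.

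I expect the main obstacle to be the coefficient bookkeeping in this final collapse rather than any conceptual step: the gradient error $\vR^{(t)}$ feeds on the $\vZ$-increments whose $\vY$-part is itself bounded, through Lemma~\ref{lemma:y_upper_bounds-spider}, by the $\vX$- and $\vLam$-increments, so one must check that this chain of substitutions closes without creating an uncontrolled feedback term and that the emerging numerical factors (for instance $150\eta_\vx\kappa^2(20\kappa^2+\kappa+4)$ and $16\eta_\vLam\kappa^2(20\kappa^2+\kappa+3)$) agree with those in \eqref{eq:tau-bd-prox-map-x-spider}--\eqref{eq:tau-bd-prox-map-lam-spider}. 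The stepsize choices \eqref{eq:para-eta-x-lam-set-pspider} and the constants \eqref{lemma:parameters-spider:constants} are exactly what keep the combined weight on each square-summable term dominated by its coefficient in the master estimate, which is what makes the whole telescoping valid.
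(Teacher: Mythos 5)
Your proposal follows essentially the same route as the paper's proof: average the stationarity bounds of Lemma~\ref{lemma:stationarity} over the uniformly chosen $\tau$, control the dual error via \eqref{lemma:y_upper_bounds-spider:y-tilde-bound}, control $\sum_t\EE\norm{\vR^{(t)}}_F^2$ via \eqref{lemma:gradient_error-spider:bound} and \eqref{eq:spider-sum-error} with the $\vZ$-increment split into \eqref{lemma:lyapunov_function-spider:eqn6} and \eqref{lemma:y_upper_bounds-spider:y-bound}, and then collapse everything with the master estimate \eqref{theorem:convergence-spider:base_bound2}. The argument and the identified bookkeeping issues match the paper's proof, so the proposal is correct.
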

where $\Upsilon$ is given in \eqref{eq:def-Upsilon}, $L_P = L\sqrt{4\kappa^2+1}$, and $C_0$ is defined in Theorem~\ref{lem:bound-on-all-terms-spider}.	
	
\begin{proof}
By the selection of $\tau$, we have $\EE\norm{\widetilde{\vY}^{(\tau)}-\vY^{(\tau)}}_F^2  	= \frac{1}{T}\sum_{t=0}^{T-1}\EE\norm{\widetilde{\vY}^{(t)}-\vY^{(t)}}_F^2$. Hence, it follows from \eqref{lemma:y_upper_bounds-spider:y-tilde-bound} and \eqref{theorem:convergence-spider:base_bound2} that
\begin{align}\label{eq:tau-y-tilde-bound-spider-1}
 &\textstyle \EE\norm{\widetilde{\vY}^{(\tau)}-\vY^{(\tau)}}_F^2 
\le  \frac{6\kappa}{T} \norm{\widetilde{\vY}^{(0)}-\vY^{(0)}}_F^2	 +\frac{8 m \Upsilon }{\mu^2} +\frac{8m\kappa^2 \hat \delta_0}{T L} \\
& \textstyle + \left(\frac{C_0}{T}+\left(\frac{1}{30 L_P} + \frac{1}{L_P(1-\rho)^2} + \frac{L+1}{8 L^2} \right)\Upsilon \right)
  \cdot\left(150m\eta_\vx\kappa^2(20\kappa^2+\kappa+2) + 16 m \eta_\vLam \kappa^2 (20\kappa^2+\kappa+1) \right).\nonumber
\end{align}	
Similarly, we have from \eqref{lemma:y_upper_bounds-spider:y-bound} and \eqref{theorem:convergence-spider:base_bound2} that 
\begin{equation}\label{eq:tau-y-term-bound-spider}
  	 		\begin{split}
 &\textstyle  \EE\norm{\vY^{(\tau+1)}-\vY^{(\tau)}}_F^2	= \frac{1}{T}\sum_{t=0}^{T-1} \EE\norm{\vY^{(t+1)}-\vY^{(t)}}_F^2
 \le  \frac{2m}{TL}\hat\delta_0+ \frac{1}{2 \kappa T} \norm{\widetilde{\vY}^{(0)}-\vY^{(0)}}_F^2  +\frac{m \Upsilon }{L^2}\\
 				 	& \textstyle + \left(\frac{C_0}{T}+\left(\frac{1}{30 L_P} + \frac{1}{L_P(1-\rho)^2} + \frac{L+1}{8 L^2} \right)\Upsilon \right)
  \cdot\left( 20 m\eta_\vx (24\kappa^2+2\kappa+3) + 4m \eta_\vLam (12\kappa^2+\kappa+1)\right).				 
  	 		\end{split}
  	 	\end{equation}

 In addition, summing up  \eqref{lemma:gradient_error-spider:bound}	over $t=0,\dots,T-1$ and using \eqref{eq:spider-sum-error} gives
 \begin{equation}\label{theorem:convergence-spider:eqn3}
		\textstyle 	\sum_{t=0}^{T-1}\EE\norm{\vR^{(t)}}_F^2\le L^2\sum_{t=0}^{T-1}\left(\EE\norm{\vX^{(t+1)}-\vX^{(t)}}_F^2+\EE\norm{\vY^{(t+1)}-\vY^{(t)}}_F^2\right)+Tm \Upsilon .
		\end{equation}
Hence, by the choice of $\tau$ and \eqref{lemma:lyapunov_function-spider:eqn6},  we obtain	
\begin{align}\label{eq:tau-R-term-bound-spider}
&\textstyle \EE\norm{\vR^{(\tau)}}_F^2 \le \frac{L^2}{T}\sum_{t=0}^{T-1} \EE\left(2\norm{\vX^{(t+1)}-\widetilde{\vX}^{(t)}}_F^2+8\norm{\vX_{\perp}^{(t)}}_F^2 + \norm{\vY^{(t+1)}-\vY^{(t)}}_F^2\right)+m\Upsilon  \nonumber\\
\overset{\eqref{theorem:convergence-spider:base_bound2}, \eqref{eq:tau-y-term-bound-spider}}\le & \textstyle \frac{2m L}{T}\hat\delta_0+ \frac{L^2}{2 \kappa T} \norm{\widetilde{\vY}^{(0)}-\vY^{(0)}}_F^2  + 2m \Upsilon \\
 				 	& \hspace{-0.8cm}\textstyle + L^2\left(\frac{C_0}{T}+\left(\frac{1}{30 L_P} + \frac{1}{L_P(1-\rho)^2} + \frac{L+1}{8 L^2} \right)\Upsilon \right)
  \cdot \left( 20 m\eta_\vx (24\kappa^2+2\kappa+4) + 4m \eta_\vLam (12\kappa^2+\kappa+1)\right). \nonumber
\end{align}

Therefore, plugging \eqref{eq:tau-y-tilde-bound-spider-1} and \eqref{eq:tau-R-term-bound-spider} into \eqref{lemma:stationarity:bound} and \eqref{lemma:stationarity:bound1} with $t=\tau$, we obtain the desired results from \eqref{theorem:convergence-spider:base_bound2} and by combining like terms.		
\end{proof}

Below we give the total sample and communication complexity result. The proof is given in Appendix~\ref{sec:appendix:sec:analysis:spider}.

	\begin{corollary}\label{corollary:complexity-spider}
		Let $\varepsilon>0$ be given and assume $L\ge1$. Under the same assumptions as in Theorem~\ref{theorem:convergence-spider}, suppose 
		$$\textstyle \norm{\vV_{\perp,\vx}^{(0)}}_F^2= \cO \big(m L_P(1-\rho)\big),\quad \norm{\widetilde{\vY}^{(0)}-\vY^{(0)}}_F^2=\cO\left(\min\left\{\frac{m\kappa}{L},\ \frac{m L_P\kappa (1-\rho)^2}{L^2}\right\}\right).$$ Then Algorithm~\ref{algo:} with \texttt{VR}-tag = SPIDER, $\cS_1 = \Theta\left(\frac{\sigma^2 \cdot\max\left\{\kappa^2,(1-\rho)^{-4}\right\}}{\vareps^2}\right)$ for the general stochastic case and $\cS_0=\cS_1=n$ for the special finite-sum case, and $\cS_2=q = \left\lceil \sqrt{\cS_1}\right\rceil$ can find an $\varepsilon$-stationary point in expectation of~\eqref{eq:min-max-Phi} by $T_s$ stochastic gradients and $T_c$ local neighbor communications,
where
$$\textstyle T_c=\Theta\left(\frac{L\kappa^2}{\vareps^2 \cdot\min\{1,\kappa(1-\rho)^2\}} \left(\phi(\bar{\vx}^{(0)},\vLam^{(0)})- \phi^*  +\frac{\hat\delta_0}{\min\{1, \kappa(1-\rho)^2\} } + 1\right)\right)$$
and $T_s = n + \sqrt{n} T_c$ for the finite-sum case and $T_s = \frac{\sigma}{\vareps}\cdot \max\{\kappa, (1-\rho)^{-2}\} T_c$ for the general stochastic case.
	\end{corollary}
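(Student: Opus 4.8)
The plan is to specialize the two bounds \eqref{eq:tau-bd-prox-map-x-spider} and \eqref{eq:tau-bd-prox-map-lam-spider} of Theorem~\ref{theorem:convergence-spider} to the prescribed stepsizes and batch sizes, and then read off the number of iterations $T$ needed to drive both right-hand sides below $\vareps^2$. The first task is pure bookkeeping: since $L_P = L\sqrt{4\kappa^2+1} = \Theta(L\kappa)$, the choices in \eqref{eq:para-eta-x-set-pspider}--\eqref{eq:para-eta-lam-set-pspider} give
\[
\eta_\vx = \Theta\!\left(\frac{\min\{1,\kappa(1-\rho)^2\}}{L\kappa^2}\right),\qquad \eta_\vLam = \Theta\!\left(\frac{\min\{1,\kappa(1-\rho)^2\}}{L\kappa^2}\right),
\]
so both stepsizes are of the same order. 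It is then natural to split the analysis into the two regimes $\kappa(1-\rho)^2\ge 1$ and $\kappa(1-\rho)^2<1$, according to which argument of the minimum (respectively of the maximum defining $\cS_1$) is active.

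Next I would control the non-vanishing $\Upsilon$-terms. With $\cS_1=\Theta\!\big(\sigma^2\max\{\kappa^2,(1-\rho)^{-4}\}/\vareps^2\big)$ we have $\Upsilon=\sigma^2/\cS_1=\Theta\!\big(\vareps^2/\max\{\kappa^2,(1-\rho)^{-4}\}\big)$ by \eqref{eq:def-Upsilon}. After substituting the stepsize orders, the prefactors multiplying $\Upsilon$ in \eqref{eq:tau-bd-prox-map-x-spider} and \eqref{eq:tau-bd-prox-map-lam-spider} are of order at most $\max\{\kappa^2,(1-\rho)^{-4}\}$; a short case check (using $(1-\rho)^{-4}\le\kappa^2$ when $\kappa(1-\rho)^2\ge1$, and the reverse inequality otherwise) then shows that each $\Upsilon$-term is $\cO(\vareps^2)$. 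In the finite-sum setting this step is vacuous, since $\Upsilon=0$ when $\cS_0=\cS_1=n$.

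It then remains to bound the $1/T$ terms. Here the dominant factor is the bracketed quantity in \eqref{eq:tau-bd-prox-map-x-spider}, which is governed by its $1/\eta_\vx$ contributions and is therefore $\Theta\!\big(L\kappa^2/\min\{1,\kappa(1-\rho)^2\}\big)$; the analogous bracket in \eqref{eq:tau-bd-prox-map-lam-spider} has the same order. Using the assumed smallness of $\norm{\vV_{\perp,\vx}^{(0)}}_F^2$ and $\norm{\widetilde{\vY}^{(0)}-\vY^{(0)}}_F^2$, every initialization-dependent coefficient inside $C_0$ (from Theorem~\ref{lem:bound-on-all-terms-spider}) collapses to $\cO(1)$ except the $\hat\delta_0$ term, whose coefficient is $\Theta\!\big(1/\min\{1,\kappa(1-\rho)^2\}\big)$; thus $C_0=\Theta\!\big(\phi(\bar{\vx}^{(0)},\vLam^{(0)})-\phi^*+\hat\delta_0/\min\{1,\kappa(1-\rho)^2\}+1\big)$. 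Multiplying $C_0/T$ by the bracket and requiring the product to be $\cO(\vareps^2)$ forces precisely the stated iteration count $T=T_c$.

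Finally I would translate iterations into complexities. Since \ourmethod performs a single neighbor communication per iteration, $T_c=\Theta(T)$. For the samples, over $T$ iterations the SPIDER schedule draws $\cS_1$ at the $\Theta(T/q)$ checkpoint steps and $\cS_2$ at the remaining steps, plus the initial batch $\cS_0$; with $q=\cS_2=\lceil\sqrt{\cS_1}\,\rceil$ this totals $\cS_0+\Theta(T\sqrt{\cS_1})$. Substituting $\cS_0=\cS_1=n$ gives $T_s=n+\sqrt{n}\,T_c$, while substituting $\cS_1=\Theta\!\big(\sigma^2\max\{\kappa^2,(1-\rho)^{-4}\}/\vareps^2\big)$ and noting that the initial batch is absorbed, i.e.\ $\cS_0=\cO(T\sqrt{\cS_1})$ for small $\vareps$, gives $T_s=\frac{\sigma}{\vareps}\max\{\kappa,(1-\rho)^{-2}\}\,T_c$. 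The main obstacle throughout is the disciplined two-regime case analysis: one must verify in both $\kappa(1-\rho)^2\ge1$ and $\kappa(1-\rho)^2<1$ that the $\Upsilon$-prefactor cancels against $\cS_1$ and that the bracket factor collapses to $L\kappa^2/\min\{1,\kappa(1-\rho)^2\}$, since it is exactly the interaction of the $\min$ appearing in the stepsizes with the $\max$ appearing in $\cS_1$ that produces the final complexity.
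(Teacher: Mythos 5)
Your proposal is correct and follows essentially the same route as the paper's own proof: substitute the stepsize orders $\eta_\vx,\eta_\vLam=\Theta\big(\min\{1,\kappa(1-\rho)^2\}/(L\kappa^2)\big)$ into Theorem~\ref{theorem:convergence-spider}, use the initialization assumptions to collapse $C_0$ to $\Theta\big(\phi(\bar{\vx}^{(0)},\vLam^{(0)})-\phi^*+\hat\delta_0/\min\{1,\kappa(1-\rho)^2\}+1\big)$, choose $\cS_1$ so the $\Upsilon$-terms are $\cO(\vareps^2)$, and count $T_c=T$ and $T_s=\cS_0+\Theta(T\sqrt{\cS_1})$. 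The only cosmetic difference is that you organize the verification as an explicit two-regime case split on $\kappa(1-\rho)^2\gtrless 1$, whereas the paper carries the $\min$ symbolically; the conclusions are identical.
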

\begin{remark}\label{rm:multi-comm}
The assumption on $\norm{\vV_{\perp,\vx}^{(0)}}_F^2$ and $\norm{\widetilde{\vY}^{(0)}-\vY^{(0)}}_F^2$ is mild and can easily hold if $\kappa (1-\rho)$ is not small. Otherwise, multiple communications can be performed at the \emph{initial} step to satisfy the condition. In addition, when $\kappa(1-\rho)^2 \ge 1$, the complexity results will be independent of the graph topology. In this case, our stepsize $\eta_\vx$ and $\eta_\vLam$ in \eqref{eq:para-eta-x-lam-set-pspider} are both in the order of $\frac{1}{L \kappa^2}$, matching to that used by a centralized method for NCSC minimax problems, e.g., the GDA in \cite{lin2020gradient}. This can be significantly larger than the stepsize in the order of $\frac{1}{L \kappa^3}$ taken by state-of-the-art decentralized methods for minimax problems, e.g., PRECISION \cite{liu2023precision} and DSGDA \cite{Gao2022}. When $\kappa(1-\rho)^2 \ll 1$, then $T_c$ linearly depends on $(1-\rho)^{-4}$, which is worse than existing results with a dependence of $(1-\rho)^{-2}$ for decentralized methods with a \emph{single} communication per iteration on solving composite nonconvex problems; see \cite{scutari2019distributed} for example. To improve the dependence, we can again perform multiple communications in the \emph{initial} step to have $\hat\delta_0 = \cO( \kappa(1-\rho)^2)$. Moreover, if the multi-communication trick, which needs more coordinations between agents, is applied for every update, we can change the mixing matrix $\vW$ in our analysis to a polynomial in $\vW$, denoted by $q(\vW)$. If $\vW$ is symmetric, we can apply the Chebyshev polynomial (e.g., see \cite{mancino2023decentralized}) to have an accelerated averaging. This way, the sample complexity will be independent of $\rho$ and the communication complexity will linearly depend on  $(1-\rho)^{-\frac{1}{2}}$.
\end{remark}


\subsection{Convergence results by STORM-type variance reduction}\label{sec:analysis:storm}

In this subsection, we set \texttt{VR}-tag = STORM in Algorithm~\ref{algo:}. The general proof structure mimics that of Section~\ref{sec:analysis:spider}. The proofs of all lemmas are given in Appendix~\ref{sec:appendix:sec:analysis:storm}.

\begin{lemma}\label{lemma:gradient_error}
	Let $\{(\vX^{(t)},\vY^{(t)},\vV^{(t)})\}$ be generated from Algorithm~\ref{algo:} and $\vR^{(t)}$ defined in~\eqref{eq:error_defs}. Then
		\begin{align}
			&\textstyle \EE\norm{\vV_{\perp,\vx}^{(t+1)}}_F^2\le\rho\EE\norm{\vV_{\perp,\vx}^{(t)}}_F^2+\frac{1}{1-\rho}\left(3L^2\EE\norm{\vZ^{(t+1)}-\vZ^{(t)}}_F^2+3\beta^2\EE\norm{\vR^{(t)}}_F^2+3m\beta^2\Upsilon_{t+1}\right),\label{lemma:gradient_error:v_perp}\\
			&\EE\norm{\vR^{(t+1)}}_F^2\le2(1-\beta)^2L^2\EE\norm{\vZ^{(t+1)}-\vZ^{(t)}}_F^2+(1-\beta)^2\EE\norm{\vR^{(t)}}_F^2+2m\beta^2\Upsilon_{t+1},\label{lemma:gradient_error:error_term}
		\end{align}
	where $\vZ^{(t)} = (\vX^{(t)}, \vY^{(t)})$ by the notation in \eqref{eq:xyz} and $\Upsilon_t:=\frac{\sigma^2}{\cS_{t}}$ for any $t\ge0$.
\end{lemma}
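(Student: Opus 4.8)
The plan is to establish the error recursion \eqref{lemma:gradient_error:error_term} first and then feed it into the consensus recursion \eqref{lemma:gradient_error:v_perp}. Throughout I would condition on the filtration $\cF_{t+1}$ generated by all random samples drawn up to and including iteration $t$. By the update order in Algorithm~\ref{algo:}, the iterates $\vZ^{(t)},\vZ^{(t+1)}$ and the error $\vR^{(t)}$ are $\cF_{t+1}$-measurable, whereas the fresh batch $\cB_i^{(t+1)}$ used to form $\vd_i^{(t+1)}$ is independent of $\cF_{t+1}$. The key bookkeeping device is to introduce the two conditionally zero-mean noise terms $A_i := G_i^{(t+1)}(\cB_i^{(t+1)}) - \nabla f_i(\vz_i^{(t+1)})$ and $B_i := G_i^{(t)}(\cB_i^{(t+1)}) - \nabla f_i(\vz_i^{(t)})$, both satisfying $\EE[\cdot\mid\cF_{t+1}] = \vzero$ by Assumption~\ref{assump-stoch}(i). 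Writing $\vr_i^{(t)} := \vd_i^{(t)} - \nabla f_i(\vz_i^{(t)})$, so that $\vR^{(t)}$ stacks the $\vr_i^{(t)}$, substituting the STORM update \eqref{algo:grad_update} gives the exact identity $\vr_i^{(t+1)} = (1-\beta)\vr_i^{(t)} + \big((1-\beta)(A_i - B_i) + \beta A_i\big)$.

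For \eqref{lemma:gradient_error:error_term}, since $\vr_i^{(t)}$ is $\cF_{t+1}$-measurable and the parenthesized term is conditionally zero-mean, taking $\EE[\cdot\mid\cF_{t+1}]$ annihilates the cross term and leaves $\EE[\norm{\vr_i^{(t+1)}}_2^2\mid\cF_{t+1}] = (1-\beta)^2\norm{\vr_i^{(t)}}_2^2 + \EE[\norm{(1-\beta)(A_i-B_i)+\beta A_i}_2^2\mid\cF_{t+1}]$. I would bound the last term by $2(1-\beta)^2\EE[\norm{A_i-B_i}_2^2\mid\cF_{t+1}] + 2\beta^2\EE[\norm{A_i}_2^2\mid\cF_{t+1}]$. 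The second piece is at most $2\beta^2\sigma^2/\cS_{t+1} = 2\beta^2\Upsilon_{t+1}$ by Assumption~\ref{assump-stoch}(ii). For the first piece, $A_i - B_i$ is the fluctuation of the batch-averaged finite difference $\tfrac{1}{\cS_{t+1}}\sum_{\xi}\big(\tilde{\nabla} f_i(\vz_i^{(t+1)};\xi) - \tilde{\nabla} f_i(\vz_i^{(t)};\xi)\big)$ about its conditional mean, so its conditional second moment is at most $\tfrac{1}{\cS_{t+1}}\EE_\xi\norm{\tilde{\nabla} f_i(\vz_i^{(t+1)};\xi) - \tilde{\nabla} f_i(\vz_i^{(t)};\xi)}_2^2 \le L^2\norm{\vz_i^{(t+1)}-\vz_i^{(t)}}_2^2$ by the mean-square smoothness in Assumption~\ref{assump-stoch}(iii). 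Summing over $i\in[m]$ and taking total expectation then yields \eqref{lemma:gradient_error:error_term}.

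For the consensus recursion \eqref{lemma:gradient_error:v_perp}, I would proceed as in the gradient-tracking argument behind Lemma~\ref{lemma:consensus}. Setting $\vJ := \tfrac1m\vone\vone^\top$ and using that Assumption~\ref{assump-W}(i) makes $\vW$ doubly stochastic (so $\vW\vJ = \vJ\vW = \vJ$ and $(\vI-\vJ)\vW = \vW - \vJ$), the update \eqref{algo:v_update} gives $\vV_{\perp,\vx}^{(t+1)} = (\vW-\vJ)\vV_{\perp,\vx}^{(t)} + (\vW-\vJ)(\vD_\vx^{(t+1)} - \vD_\vx^{(t)})$. Applying Young's inequality with the weight tuned to turn $\rho^2 = \norm{\vW-\vJ}_2^2$ into $\rho$ on the leading term, together with $\rho \le 1$, yields $\EE\norm{\vV_{\perp,\vx}^{(t+1)}}_F^2 \le \rho\,\EE\norm{\vV_{\perp,\vx}^{(t)}}_F^2 + \tfrac{1}{1-\rho}\EE\norm{\vD_\vx^{(t+1)}-\vD_\vx^{(t)}}_F^2$. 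It remains to control the one-step change of the estimator. Here I would use the three-term decomposition $\vd_i^{(t+1)} - \vd_i^{(t)} = \big(G_i^{(t+1)}(\cB_i^{(t+1)}) - G_i^{(t)}(\cB_i^{(t+1)})\big) + \beta B_i - \beta\vr_i^{(t)}$, apply $\norm{a+b+c}_2^2 \le 3(\norm{a}_2^2+\norm{b}_2^2+\norm{c}_2^2)$, and bound the three conditional second moments by $L^2\norm{\vz_i^{(t+1)}-\vz_i^{(t)}}_2^2$ (again via Assumption~\ref{assump-stoch}(iii)), $\beta^2\Upsilon_{t+1}$, and $\beta^2\norm{\vr_i^{(t)}}_2^2$, respectively. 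Summing over $i$ and using $\norm{\vD_\vx^{(t+1)}-\vD_\vx^{(t)}}_F^2 \le \norm{\vD^{(t+1)}-\vD^{(t)}}_F^2$ delivers \eqref{lemma:gradient_error:v_perp}.

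The main obstacle is the second-moment analysis of the STORM recursion, not the consensus contraction, which is routine. Two points require care: first, correctly isolating the conditionally zero-mean, martingale-difference part of $\vr_i^{(t+1)}$ so that its cross term with the $\cF_{t+1}$-measurable $(1-\beta)\vr_i^{(t)}$ vanishes under $\EE[\cdot\mid\cF_{t+1}]$; and second, the observation that the batch-averaged per-sample difference $A_i-B_i$ (and likewise $G_i^{(t+1)}(\cB_i^{(t+1)})-G_i^{(t)}(\cB_i^{(t+1)})$) has second moment controlled by $L^2\norm{\vz_i^{(t+1)}-\vz_i^{(t)}}_2^2$ with no additional factor, since averaging over $\cS_{t+1}$ independent samples can only reduce the second moment. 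This lets the per-sample bound of Assumption~\ref{assump-stoch}(iii) pass through unchanged and produces the exact constants $2(1-\beta)^2 L^2$ and $3L^2$ in the two inequalities.
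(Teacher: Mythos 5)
Your proposal is correct and follows essentially the same route as the paper: the consensus recursion is obtained from the same gradient-tracking contraction combined with the identical three-term decomposition of $\vd_i^{(t+1)}-\vd_i^{(t)}$, and your self-contained martingale argument for \eqref{lemma:gradient_error:error_term} (isolating the conditionally zero-mean part so the cross term vanishes, then using the batch-averaged mean-square smoothness) is precisely the standard STORM recursion that the paper invokes by reference to \cite[Lemma C.9]{mancino2022proximal}. The identity $\vr_i^{(t+1)}=(1-\beta)\vr_i^{(t)}+(1-\beta)(A_i-B_i)+\beta A_i$ and all the stated constants check out.
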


In the rest of this subsection, we set
	\begin{equation}\label{lemma:parameters:constants}
	\textstyle	c_1=c_2=\frac{32\kappa^2}{\sqrt\beta}, \ c_3 = \frac{60}{\sqrt\beta},\ c_4=\frac{30\sqrt{2}\kappa^2 L}{\sqrt\beta}, \ c_5=\frac{60\sqrt{2m}\kappa^2}{\sqrt\beta},\ \eta_\vy = \frac{\sqrt\beta}{4\sqrt{2} L}.
	\end{equation}
With Lemma~\ref{lemma:gradient_error}, we can use \eqref{lemma:base_inequality_updated:bound} to show a result similar to Theorem~\ref{lem:bound-on-all-terms-spider}. 

\begin{lemma}\label{lemma:bound-on-all-terms-storm}
Under Assumptions~\ref{assump-func}-\ref{assump-stoch}, let $\{(\vX^{(t)}, \widetilde\vX^{(t)}, \vLam^{(t)},\vY^{(t)}, \vV^{(t)})\}_{t\ge0}$ be generated from Algorithm~\ref{algo:} with \texttt{VR}-tag = STORM, $\beta\in (0,1)$, $\eta_\vy = \frac{\sqrt\beta}{4\sqrt{2} L}$, and $\eta_{\vx}$ and $\eta_{\vLam}$ set to 
		\begin{subequations}\label{eq:para-eta-x-lam-set-storm}
					\begin{align}
					\eta_\vx =&\min\left\{\frac{\kappa(1-\rho)^2}{40L(24\kappa^2+8\kappa+5)},\ \frac{\sqrt{\beta}}{48(L+1)(24\kappa^2+7\kappa+4)}\right\}, \label{eq:para-eta-x-set-storm}\\
					\eta_\vLam =& \min\left\{ \frac{(1-\rho)^2}{4L(20\kappa+3)} , \ \frac{\sqrt{\beta}}{4 (L+1)(52\kappa^2+\kappa+1)}\right\}. \label{eq:para-eta-lam-set-storm}
				\end{align}
		\end{subequations}
	Then it holds for any integer $T\ge1$,
			\begin{equation}\label{lemma:bound-on-all-terms-storm:bound}
				\begin{split}
				&\textstyle \frac{1}{4m\eta_{\vx}}\sum_{t=0}^{T-1}\EE\norm{\vX^{(t+1)}-\bar{\vX}^{(t)}}_F^2+\frac{1}{6m\eta_{\vx}}\sum_{t=0}^{T-1}\EE\norm{\vX_{\perp}^{(t)}}_F^2+\frac{1}{2\eta_{\vLam}}\sum_{t=0}^{T-1}\EE\norm{\vLam^{(t+1)}-\vLam^{(t)}}_F^2\\
				&\textstyle +\frac{\sqrt{\beta}(L+1)}{160m\kappa^2}\sum_{t=0}^{T-1}\EE\norm{\widetilde{\vY}^{(t)}-\vY^{(t)}}_F^2+\frac{\eta_{\vx}}{m(1-\rho)^2}\sum_{t=0}^{T-1}\EE\norm{\vV_{\perp,\vx}^{(t)}}_F^2+\frac{\sqrt{\beta}}{16mL}\sum_{t=0}^{T-1}\EE\norm{\vR^{(t)}}_F^2\\
				\le&\textstyle C_0+\left(\frac{1}{(1-\rho)^2\kappa L }+\frac{(1+1/L)}{\sqrt\beta L}\right)\beta^2\sum_{t=0}^{T-1}\Upsilon_{t+1},
				\end{split}
			\end{equation}
		where $\vR^{(t)}$ is defined in~\eqref{eq:error_defs}, $\Upsilon_t$ is defined in Lemma~\ref{lemma:gradient_error}, and 
			\begin{equation}\label{lemma:bound-on-all-terms-storm:C_0}
				\begin{split}
					C_0:=&\textstyle \phi(\bar{\vx}^{(0)},\vLam^{(0)})-\phi^{*}+\frac{1}{40m(1-\rho)\kappa L}\norm{\vV_{\perp,\vx}^{(0)}}_F^2+\left(\frac{(1+1/L)}{2\sqrt\beta  m L} + \frac{1}{4 m\kappa L(1-\rho)^2}\right)\norm{\vR^{(0)}}_F^2\\
						&\textstyle +\left(\frac{L+1 }{5m\kappa} + \frac{\sqrt\beta L}{10m\kappa^2(1-\rho)^2}\right)\norm{\widetilde{\vY}^{(0)}-\vY^{(0)}}_F^2 +\frac{\sqrt{\beta}}{\sqrt{2}}\left(\frac{1}{(1-\rho)^2\kappa }+\frac{(1+1/L)}{\sqrt\beta }\right)\hat{\delta}_{0}.
				\end{split}
			\end{equation}
\end{lemma}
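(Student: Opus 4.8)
The plan is to follow the same Lyapunov-descent strategy used to prove Theorem~\ref{lem:bound-on-all-terms-spider}, but to accommodate the two coupled STORM recursions of Lemma~\ref{lemma:gradient_error}. First I would take expectation in the one-iteration inequality \eqref{lemma:base_inequality_updated:bound}, bound its last term by \eqref{lemma:gradient_error:average}, and substitute $c_1,c_2,c_3$ from \eqref{lemma:parameters:constants} (so the coefficient multiplying $\norm{\vR^{(t)}}_F^2+\norm{\vV_{\perp,\vx}^{(t)}}_F^2$ becomes $\Theta(\sqrt\beta/(mL_P))$ with $L_P=L\sqrt{4\kappa^2+1}$). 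This yields a descent inequality for $\phi(\bar{\vx}^{(t)},\vLam^{(t)})$ whose right-hand side carries exactly the three error quantities $\norm{\vX_\perp^{(t)}}_F^2$, $\norm{\widetilde{\vY}^{(t)}-\vY^{(t)}}_F^2$, and $\norm{\vR^{(t)}}_F^2+\norm{\vV_{\perp,\vx}^{(t)}}_F^2$, as in the SPIDER case.

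The decisive structural difference is that under STORM neither $\vV_{\perp,\vx}^{(t)}$ nor $\vR^{(t)}$ telescopes on its own: by \eqref{lemma:gradient_error:v_perp}--\eqref{lemma:gradient_error:error_term} they obey coupled contractive recursions, with $\vR^{(t)}$ contracting at rate $(1-\beta)^2$ and feeding into the $\vV_{\perp,\vx}$ recursion through a $3\beta^2\norm{\vR^{(t)}}_F^2$ term. I would therefore augment the potential with \emph{three} auxiliary terms, $\gamma_1\norm{\vX_\perp^{(t)}}_F^2+\gamma_2\norm{\vV_{\perp,\vx}^{(t)}}_F^2+\gamma_3\norm{\vR^{(t)}}_F^2$ (versus the two used for SPIDER), adding $\gamma_1\EE\norm{\vX_\perp^{(t+1)}}_F^2+\gamma_2\EE\norm{\vV_{\perp,\vx}^{(t+1)}}_F^2+\gamma_3\EE\norm{\vR^{(t+1)}}_F^2$ to both sides. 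Invoking Lemma~\ref{lemma:consensus} for $\vX_\perp$ and Lemma~\ref{lemma:gradient_error} for the other two, summing over $t=0,\dots,T-1$, and telescoping converts the left side into the claimed potential drop while leaving residual coefficients on each of $\norm{\vX_\perp^{(t)}}_F^2$, $\norm{\vV_{\perp,\vx}^{(t)}}_F^2$, $\norm{\vR^{(t)}}_F^2$, plus the step-difference term $\norm{\vZ^{(t+1)}-\vZ^{(t)}}_F^2$ generated by the recursions.

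Next I would control the remaining terms. The step difference splits as $\norm{\vZ^{(t+1)}-\vZ^{(t)}}_F^2=\norm{\vX^{(t+1)}-\vX^{(t)}}_F^2+\norm{\vY^{(t+1)}-\vY^{(t)}}_F^2$; the primal part is handled by \eqref{lemma:lyapunov_function-spider:eqn6}, while the dual part together with $\sum_t\norm{\widetilde{\vY}^{(t)}-\vY^{(t)}}_F^2$ is bounded by summing the one-step dual inequalities, Lemmas~\ref{lemma:second_y_bound} and~\ref{lemma:third_y_bound} (applicable since $\eta_\vy=\tfrac{\sqrt\beta}{4\sqrt2 L}\le\tfrac{1}{4L}$), with the STORM constants $c_4,c_5$ from \eqref{lemma:parameters:constants}. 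This is the STORM analogue of Lemma~\ref{lemma:y_upper_bounds-spider} and is where $\hat\delta_0$ and $\norm{\widetilde{\vY}^{(0)}-\vY^{(0)}}_F^2$ enter $C_0$. After substituting these bounds, every surviving coefficient is a concrete expression in $\eta_\vx,\eta_\vLam,\beta,\rho,\kappa,\gamma_1,\gamma_2,\gamma_3$, and it remains to fix the $\gamma$'s and verify, using the stepsizes \eqref{eq:para-eta-x-lam-set-storm}, that each ``bad'' coefficient is nonpositive while the retained descent coefficients dominate the prescribed fractions on the left of \eqref{lemma:bound-on-all-terms-storm:bound}.

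The hard part will be choosing $\gamma_2,\gamma_3$ so that the two coupled STORM recursions close simultaneously. Because $\vR^{(t)}$ contracts only at rate $(1-\beta)^2\approx 1-2\beta$, telescoping it injects a factor $\Theta(1/\beta)$; hence $\gamma_3$ must be large enough to dominate both the base coefficient on $\norm{\vR^{(t)}}_F^2$ from the descent step and the retained fraction $\tfrac{\sqrt\beta}{16mL}$, yet small enough that the backfed $3\beta^2\gamma_3\norm{\vR^{(t)}}_F^2$ in the $\vV_{\perp,\vx}$ recursion and the $L^2(\gamma_2+\gamma_3)$-multiples of $\norm{\vZ^{(t+1)}-\vZ^{(t)}}_F^2$ do not exhaust the primal-dual descent budget. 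Reconciling these competing demands is precisely what forces the $\sqrt\beta$-scaling baked into $\eta_\vy,c_4,c_5$ and into the stepsizes, and the bulk of the remaining effort is the bookkeeping confirming that the net coefficients collapse to the stated form of $C_0$ and to the $\beta^2\sum_t\Upsilon_{t+1}$ noise term.
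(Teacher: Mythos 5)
Your plan is correct and follows essentially the same route as the paper's proof: the same descent inequality with the constants from \eqref{lemma:parameters:constants}, the same augmented potential telescoping the coupled STORM recursions of Lemma~\ref{lemma:gradient_error}, and the same use of Lemmas~\ref{lemma:second_y_bound} and~\ref{lemma:third_y_bound} for the dual errors. The only (immaterial) organizational difference is that the paper carries a fourth Lyapunov weight $\gamma_4$ on $\norm{\widetilde{\vY}^{(t)}-\vY^{(t)}}_F^2$ and contracts it per iteration via Lemma~\ref{lemma:third_y_bound}, whereas you sum that recursion and solve for $\sum_t\norm{\widetilde{\vY}^{(t)}-\vY^{(t)}}_F^2$ as in the SPIDER analysis; the two bookkeeping schemes are algebraically equivalent and lead to the same coefficient constraints.
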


We have the following convergence rate result by Lemma~\ref{lemma:bound-on-all-terms-storm}.
\begin{theorem}\label{theorem:convergence}
	Under Assumptions~\ref{assump-func}-\ref{assump-stoch}, let $\{(\vX^{(t)},\vLam^{(t)},\vY^{(t)})\}_{t\ge0}$ be generated from Algorithm~\ref{algo:} with \texttt{VR}-tag = STORM and $\eta_{\vx},\eta_{\vLam},\eta_{\vy}$ chosen as in Lemma~\ref{lemma:bound-on-all-terms-storm}. For any integer $T\ge1$, select $\tau$ uniformly at random from $\{0,\dots,T-1\}$. Then it holds that
		\begin{align}\label{theorem:convergence:bound}
				&\textstyle \EE\norm{\frac{1}{\eta_{\vx}}\left(\bar{\vx}^{(\tau)}-\prox_{\eta_{\vx}g}\left(\bar{\vx}^{(\tau)}-\eta_{\vx}\nabla P(\bar{\vx}^{(\tau)},\vLam^{(\tau)})\right)\right)}_2^2+\frac{L^2}{m}\EE\norm{\vX_{\perp}^{(\tau)}}_F^2\\
				\le&\textstyle \left(\frac{C_0}{T}+\left(\frac{1}{(1-\rho)^2\kappa L }+\frac{(1+1/L)}{\sqrt\beta L}\right)\beta^2\sum_{t=0}^{T-1}\frac{\Upsilon_{t+1}}{T}\right)\cdot\left[\frac{20}{\eta_{\vx}}+6\eta_{\vx}\left(2L^2(3+5\kappa^2)+\frac{5}{\eta_{\vx}^2}\right)
				+\frac{1600\kappa^2L}{\sqrt{\beta}}+\frac{80L}{\sqrt{\beta}}+\frac{5(1-\rho)^2}{\eta_{\vx}}\right] \nonumber
		\end{align}
	and
		\begin{equation}\label{theorem:convergence:bound1}
			\textstyle  \EE\norm{\nabla_{\vLam}P(\bar{\vx}^{(\tau)},\vLam^{(\tau)})}_F^2\le\left(\frac{C_0}{T}+\left(\frac{1}{(1-\rho)^2\kappa L }+\frac{(1+1/L)}{\sqrt\beta L}\right)\beta^2\sum_{t=0}^{T-1}\frac{\Upsilon_{t+1}}{T}\right)\cdot\left[\frac{4}{\eta_{\vLam}}+24\kappa^2L^2\eta_{\vx}+\frac{640\kappa^2L}{\sqrt{\beta}}\right],
		\end{equation}
where $\Upsilon_t$ is defined in Lemma~\ref{lemma:gradient_error}, and $C_0$ is defined in \eqref{lemma:bound-on-all-terms-storm:C_0}.		
\end{theorem}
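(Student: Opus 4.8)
The plan is to obtain both inequalities of Theorem~\ref{theorem:convergence} by combining the single-iterate stationarity estimate of Lemma~\ref{lemma:stationarity} with the aggregate bound of Lemma~\ref{lemma:bound-on-all-terms-storm}. First I would instantiate \eqref{lemma:stationarity:bound} and \eqref{lemma:stationarity:bound1} at $t=\tau$ and take expectation over the random index. Since $\tau$ is drawn uniformly from $\{0,\dots,T-1\}$, every term $\EE\norm{\cdot^{(\tau)}}_F^2$ becomes the time average $\frac{1}{T}\sum_{t=0}^{T-1}\EE\norm{\cdot^{(t)}}_F^2$. This reduces the primal bound to controlling the averages of $\norm{\vX^{(t+1)}-\bar{\vX}^{(t)}}_F^2$, $\norm{\vX_\perp^{(t)}}_F^2$, $\norm{\widetilde{\vY}^{(t)}-\vY^{(t)}}_F^2$, $\norm{\vR^{(t)}}_F^2$, and $\norm{\vV_{\perp,\vx}^{(t)}}_F^2$, and the Lagrangian bound to controlling the averages of $\norm{\vLam^{(t+1)}-\vLam^{(t)}}_F^2$, $\norm{\vX_\perp^{(t)}}_F^2$, and $\norm{\widetilde{\vY}^{(t)}-\vY^{(t)}}_F^2$.

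The key observation, which makes the STORM argument considerably shorter than the SPIDER counterpart in Theorem~\ref{theorem:convergence-spider}, is that each of these six quantities already occurs with a \emph{positive} coefficient on the left-hand side of the master inequality \eqref{lemma:bound-on-all-terms-storm:bound}. Consequently I do not need any separate dual-error or gradient-error summation (recall the SPIDER proof had to invoke Lemma~\ref{lemma:y_upper_bounds-spider} and re-sum the gradient-error recursion to handle $\norm{\widetilde{\vY}-\vY}_F^2$ and $\norm{\vR}_F^2$). Instead, for each quantity I divide through by its left-hand coefficient in \eqref{lemma:bound-on-all-terms-storm:bound} to express its time average as a multiple of $\frac{1}{T}\big(C_0+(\cdots)\beta^2\sum_t\Upsilon_{t+1}\big)$, the master right-hand side: the coefficients $\frac{1}{4m\eta_\vx}$, $\frac{1}{6m\eta_\vx}$, $\frac{\sqrt\beta(L+1)}{160m\kappa^2}$, $\frac{\sqrt\beta}{16mL}$, $\frac{\eta_\vx}{m(1-\rho)^2}$ govern, respectively, the averages of $\norm{\vX^{(t+1)}-\bar{\vX}^{(t)}}_F^2$, $\norm{\vX_\perp^{(t)}}_F^2$, $\norm{\widetilde{\vY}^{(t)}-\vY^{(t)}}_F^2$, $\norm{\vR^{(t)}}_F^2$, $\norm{\vV_{\perp,\vx}^{(t)}}_F^2$, while $\frac{1}{2\eta_\vLam}$ governs the average of $\norm{\vLam^{(t+1)}-\vLam^{(t)}}_F^2$.

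Multiplying each resulting estimate by the corresponding coefficient from Lemma~\ref{lemma:stationarity} and collecting then yields the bracketed factors verbatim. For the primal bound \eqref{theorem:convergence:bound}, the term weighted by $\frac{5}{m\eta_\vx^2}$ contributes $\frac{20}{\eta_\vx}$; the $\norm{\vX_\perp^{(t)}}_F^2$ term contributes $6\eta_\vx(2L^2(3+5\kappa^2)+\frac{5}{\eta_\vx^2})$; the $\norm{\vR^{(t)}}_F^2$ term contributes $\frac{80L}{\sqrt\beta}$; the $\norm{\vV_{\perp,\vx}^{(t)}}_F^2$ term contributes $\frac{5(1-\rho)^2}{\eta_\vx}$; and the $\norm{\widetilde{\vY}^{(t)}-\vY^{(t)}}_F^2$ term contributes $\frac{1600\kappa^2 L^2}{\sqrt\beta(L+1)}$, which I bound by $\frac{1600\kappa^2 L}{\sqrt\beta}$ via the elementary inequality $\frac{L^2}{L+1}\le L$. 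The Lagrangian bound \eqref{theorem:convergence:bound1} follows identically from \eqref{lemma:stationarity:bound1}, giving $\frac{4}{\eta_\vLam}$, $24\kappa^2 L^2\eta_\vx$, and (after the same simplification) $\frac{640\kappa^2 L}{\sqrt\beta}$.

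I expect the only real effort to be coefficient bookkeeping, namely matching each stationarity weight against the correct left-hand coefficient of \eqref{lemma:bound-on-all-terms-storm:bound} and checking the $\frac{L^2}{L+1}\le L$ reduction; no new analytic estimate is required, since the structural descent work has already been absorbed into Lemma~\ref{lemma:bound-on-all-terms-storm}. The one point demanding care is that the stationarity left-hand side in \eqref{lemma:stationarity:bound} already includes the consensus term $\frac{L^2}{m}\norm{\vX_\perp^{(t)}}_F^2$, so I must keep it on the left and bound only the genuinely right-hand contributions, rather than double-counting the $\norm{\vX_\perp^{(t)}}_F^2$ average.
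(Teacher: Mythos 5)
Your proposal is correct and is essentially the paper's own proof: the paper likewise obtains the result by taking $\frac{1}{T}$ times the master inequality \eqref{lemma:bound-on-all-terms-storm:bound} and substituting the resulting time averages into \eqref{lemma:stationarity:bound} and \eqref{lemma:stationarity:bound1} at $t=\tau$. Your coefficient bookkeeping (including the $\frac{L^2}{L+1}\le L$ reduction producing $\frac{1600\kappa^2L}{\sqrt\beta}$ and $\frac{640\kappa^2L}{\sqrt\beta}$) checks out exactly against the stated bounds.
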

\begin{proof}
	The results follow directly from taking $\frac{1}{T}$ times of~\eqref{lemma:bound-on-all-terms-storm:bound} and utilizing~\eqref{lemma:stationarity:bound} and~\eqref{lemma:stationarity:bound1} with $t=\tau$.
\end{proof}

Based on Theorem~\ref{theorem:convergence}, we give, in the following corollary, the complexity results of \ourmethod-STORM. For simplicity, we focus on the high-accuracy regime, i.e., $\vareps$ is sufficiently small.

\begin{corollary}\label{corollary:storm-convergence}
	Let $\varepsilon\in\bigg(0,\sigma(1-\rho)^2\bigg]$ be given and assume $L\ge1$. Under the same conditions as in Theorem~\ref{theorem:convergence}, assume $\norm{\vV_{\perp,\vx}^{(0)}}_F^2 \le 40 m(1-\rho)\kappa L$, $\norm{\widetilde{\vY}^{(0)}-\vY^{(0)}}_F^2 = \cO\left(\frac{m\kappa}{L}\right)$, and $\EE\norm{\vR^{(0)}}_F^2 \le \frac{m\sigma^2}{\cS_0}$ with $\cS_0=\left\lceil\left(\frac{1}{\sqrt{\beta}L}+\frac{1}{4(1-\rho)^2\kappa L}\right)\sigma^2\right\rceil$ and $\cS_{t}=\bigO{1}$ for all $t\ge1$. Then, Algorithm~\ref{algo:} with \texttt{VR}-tag = STORM and 
		\begin{equation}\label{corollary:storm-convergence:beta}
			\beta=\frac{\varepsilon^2}{1440\sigma^2 (24\kappa^2+7\kappa+4)},
		\end{equation}
	 can find an $\varepsilon$-stationary point in expectation of~\eqref{eq:min-max-Phi} by $T_s=\Theta\left(\frac{\sigma\kappa^{3} L}{\varepsilon^3} + \frac{\sigma^3 \kappa}{\vareps L}\right)$ stochastic gradients and $T_c=\Theta\left(\frac{\sigma\kappa^{3} L}{\varepsilon^3}\right)$ local neighbor communications.
\end{corollary}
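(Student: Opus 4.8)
The plan is to specialize the two convergence-rate bounds \eqref{theorem:convergence:bound} and \eqref{theorem:convergence:bound1} of Theorem~\ref{theorem:convergence} to the STORM parameter choices and the stated initial conditions, and then to balance the resulting terms against $\varepsilon^2$. First I would dispose of the variance term: since $\cS_t=\bigO{1}$ for $t\ge1$, we have $\Upsilon_{t+1}=\sigma^2/\cS_{t+1}=\Theta(\sigma^2)$ for every $t\ge0$, so $\frac1T\sum_{t=0}^{T-1}\Upsilon_{t+1}=\Theta(\sigma^2)$, and the factor multiplying the brackets in both bounds reduces to a transient part $C_0/T$ plus a ``noise floor'' of order $\big(\frac{1}{(1-\rho)^2\kappa L}+\frac{1}{\sqrt\beta L}\big)\beta^2\sigma^2$. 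Next I would pin down the stepsize magnitudes. Because $\beta=\Theta(\varepsilon^2/(\sigma^2\kappa^2))$ gives $\sqrt\beta=\Theta(\varepsilon/(\sigma\kappa))$, the hypothesis $\varepsilon\le\sigma(1-\rho)^2$ yields $\frac{\sqrt\beta}{\kappa(1-\rho)^2}=\Theta\big(\frac{\varepsilon}{\sigma\kappa^2(1-\rho)^2}\big)\le\frac{1}{\kappa^2}\le1$, so the second branch of each minimum in \eqref{eq:para-eta-x-set-storm} and \eqref{eq:para-eta-lam-set-storm} is active; hence $\eta_\vx,\eta_\vLam=\Theta(\sqrt\beta/(L\kappa^2))=\Theta(\varepsilon/(\sigma L\kappa^3))$ and $1/\eta_\vx,1/\eta_\vLam=\Theta(L\kappa^2/\sqrt\beta)$.

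With these estimates I would bound the brackets. Using the value of $\eta_\vx$, each summand inside the bracket of \eqref{theorem:convergence:bound} (the terms $\frac{1}{\eta_\vx}$, $\eta_\vx L^2\kappa^2$, $\frac{\kappa^2L}{\sqrt\beta}$, $\frac{(1-\rho)^2}{\eta_\vx}$) is $\bigO{L\kappa^2/\sqrt\beta}$, so the whole bracket is $\Theta(L\kappa^2/\sqrt\beta)$; the same scaling holds for the bracket $\frac{4}{\eta_\vLam}+24\kappa^2L^2\eta_\vx+\frac{640\kappa^2L}{\sqrt\beta}$ in \eqref{theorem:convergence:bound1}. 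Multiplying the noise floor by the bracket, the dominant piece is $\frac{\beta^{3/2}\sigma^2}{L}\cdot\frac{L\kappa^2}{\sqrt\beta}=\beta\kappa^2\sigma^2=\Theta(\varepsilon^2)$, and it is precisely the explicit constant $1440$ in \eqref{corollary:storm-convergence:beta} that forces this contribution strictly below a fixed fraction of $\varepsilon^2$. The remaining noise piece $\frac{\beta^2\sigma^2}{(1-\rho)^2\kappa L}\cdot\frac{L\kappa^2}{\sqrt\beta}=\Theta\big(\frac{\varepsilon^3}{\sigma\kappa^2(1-\rho)^2}\big)$ is again $\le\varepsilon^2$ thanks to $\varepsilon\le\sigma(1-\rho)^2$.

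I would then show $C_0$ is $\bigO{1}$ in the key parameters. Under the three initial-condition hypotheses, each additive term of $C_0$ in \eqref{lemma:bound-on-all-terms-storm:C_0} reduces to a constant multiple of $\phi(\bar{\vx}^{(0)},\vLam^{(0)})-\phi^*$, $\hat\delta_0$, or $1$: for instance $\frac{1}{40m(1-\rho)\kappa L}\norm{\vV_{\perp,\vx}^{(0)}}_F^2\le1$, and $\EE\norm{\vR^{(0)}}_F^2\le\frac{m\sigma^2}{\cS_0}$ with the prescribed $\cS_0$ makes the coefficient of the $\vR^{(0)}$ term collapse to $\bigO{1}$, while the $\widetilde{\vY}^{(0)}-\vY^{(0)}$ and $\hat\delta_0$ coefficients are controlled using $\sqrt\beta/(\kappa(1-\rho)^2)\le1$; thus $C_0=\bigO{\phi(\bar{\vx}^{(0)},\vLam^{(0)})-\phi^*+\hat\delta_0+1}$. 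Requiring $\frac{C_0}{T}\cdot\frac{L\kappa^2}{\sqrt\beta}\le\frac{\varepsilon^2}{2}$ then forces $T=\Theta\big(\frac{C_0L\kappa^2}{\sqrt\beta\,\varepsilon^2}\big)=\Theta\big(\frac{\sigma\kappa^3L}{\varepsilon^3}\big)$ after inserting $1/\sqrt\beta=\Theta(\sigma\kappa/\varepsilon)$, and the $\vLam$ bound \eqref{theorem:convergence:bound1} yields the same $T$. Since Algorithm~\ref{algo:} uses exactly one neighbor communication per iteration, $T_c=\Theta(T)=\Theta\big(\frac{\sigma\kappa^3L}{\varepsilon^3}\big)$, and the sample count is $T_s=\cS_0+\bigO{T}$; with $\cS_0=\Theta\big((\frac{1}{\sqrt\beta L}+\frac{1}{(1-\rho)^2\kappa L})\sigma^2\big)=\Theta(\frac{\sigma^3\kappa}{\varepsilon L})$ (the second branch being absorbed by $\varepsilon\le\sigma(1-\rho)^2$), we obtain $T_s=\Theta\big(\frac{\sigma\kappa^3L}{\varepsilon^3}+\frac{\sigma^3\kappa}{\varepsilon L}\big)$, as claimed.

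The main obstacle is the balancing in the second paragraph: one must carry explicit constants through the product of the noise floor and the bracket and verify that the $\Theta(\varepsilon^2)$-order contribution emerges strictly below $\varepsilon^2$ for the specific constant embedded in $\beta$, while simultaneously checking, using only $\varepsilon\le\sigma(1-\rho)^2$ and $L,\kappa\ge1$, that every other cross term (the $(1-\rho)^{-2}$ noise piece and each $C_0$ sub-term) is of strictly lower order. Everything else is bookkeeping once the active branch of each stepsize minimum and the $\Theta(L\kappa^2/\sqrt\beta)$ size of the brackets are established.
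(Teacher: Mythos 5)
Your proposal is correct and follows essentially the same route as the paper's proof: specialize \eqref{theorem:convergence:bound}--\eqref{theorem:convergence:bound1}, use the initialization hypotheses and $\sqrt{\beta}\le(1-\rho)^2$ to reduce $C_0$ to $\bigO{\phi(\bar{\vx}^{(0)},\vLam^{(0)})-\phi^*+\hat\delta_0+1}$, identify the second branch of each stepsize minimum so the brackets are $\Theta(L\kappa^2/\sqrt{\beta})=\Theta(\sigma\kappa^3L/\varepsilon)$, check the noise-floor product is $\Theta(\varepsilon^2)$, and solve for $T$ with $T_c=T$ and $T_s=\Theta(T+\cS_0)$. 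The only cosmetic difference is that you track the $(1-\rho)^{-2}$ noise piece separately rather than absorbing it into the $1/\sqrt{\beta}$ term via $\sqrt{\beta}\le(1-\rho)^2$, which changes nothing.
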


A few remarks are in order regarding the above corollary.

\begin{remark}\label{rm:storm-epsilon}
First,	similar to other recent works~\cite{Xin2021HSGD,mancino2022proximal}, our complexity results are stated for the high-accuracy regime (i.e. small enough $\varepsilon$). Technically, this accuracy requirement removes the final dependence of \ourmethod-STORM on the spectrum of the graph. 
Second, the initialization requirements in Corollary~\ref{corollary:storm-convergence} are akin to those in~\cite{mancino2022proximal}. If necessary, e.g., when $1-\rho$ is much smaller than $\frac{1}{\kappa L}$, we can perform multiple communications at the initial step to have  $\norm{\vV_{\perp,\vx}^{(0)}}_F^2 \le 40m(1-\rho)\kappa L$. 
Third, the recent Acc-MDA~\cite{huang2022accelerated} method can attain a convergence rate of $\tilde{\cO}(\kappa^{4.5}\varepsilon^{-3})$, however, \emph{this is only in the single-agent setting} where $m=1$ and further, this result only holds when $g\equiv h\equiv0.$ When $m=1$, we have $\vW=1$, which eliminates the variable $\vLam$, as well as the consensus constraint on $\{\vx_{i}\}_{i\in\cV}$. Hence,~\eqref{eq:min-max-Phi} is actually identical to~\eqref{eq:min-max-prob} which indicates our analysis provides a better dependence on $\kappa$, as well proves convergence on a broader class of problems (i.e., $g\not\equiv0$ and/or $h\not\equiv0$).
\end{remark}


\section{Numerical experiments}\label{sec:numerical}

In this section, we empirically validate our proposed methods on two benchmark problems which fit~\eqref{eq:min-max-prob-dec}. We compare our methods to three methods: DPSOG~\cite{Mingrui2020}, DM-HSGD~\cite{Wenhan2021}, and GT-SRVR~\cite{Xin2021}; since these methods are only presented for the case of~\eqref{eq:min-max-prob-dec} with $g\equiv0$, we simply wrap their $\vx_{i}$ updates with $\prox_{\eta_{\vx}g}(\cdot).$ For our experiments, we use $m=8$ agents, where each agent is an NVIDIA Tesla V100 GPU. The agents are connected via a ring structured graph, where self-weighting and neighbor weighting are $\frac{1}{3}$. Our code is made available at~\href{https://github.com/RPI-OPT/VRLM}{https://github.com/RPI-OPT/VRLM}.

\subsection{Sparse distributionally robust optimization}\label{sec:numerical:dro}

We test our proposed method on the decentralized distributionally robust optimization problem~\cite{xu2023dgdm} using both the MNIST~\cite{lecun98} and Fashion-MNIST~\cite{xiao17fashion} datasets. Each agent maintains a local dataset, $\{\va_{ij},b_{ij}\}_{j=1}^n$, where $\va_{ij}\in\mathbb{R}^{28\times 28}$ is the $j$-th image on the $i$-th agent and $b_{ij}\in\{1,\dots,C\}$ is the corresponding label among $C$ classes. The total number of data points is given by $N=mn$ and we let $\cJ_{i}\subseteq\{1,\dots,N\}$ be an index set which contains the indices of data points on agent $i$. The agents' local objective functions are given by
	\begin{equation}\label{eq:numerical:dro}
		\textstyle f_{i}(\vx_{i},\vy_{i})=m\sum_{j\in\cJ_{i}}(\vy_{i,j})\ell\left(Z_{\vx_{i}}(\va_{ij}),b_{ij}\right)-\frac{\mu}{2}\norm{\vy_{i}-\frac{\vone}{N}}_2^2,\enskip h(\vy_i)\equiv\mathbb{I}_{\Delta_{N}}(\vy_{i}),\enskip g(\vx_{i})\equiv\lambda\norm{\vx_{i}}_{1}.
	\end{equation}
Here, $\vy_{i,j}$ denotes the $j$-th component of $\vy_{i}$, $\ell$ is the cross-entropy loss function taken over each class, $Z_{\vx_i}$ is a neural network governed by the parameter $\vx_{i}$, and $\mu$ is a parameter controling the deviation from the uniform distribution. The set $\Delta_{N}:=\{\vy:\vy^\top\mathbf{1}=1,\,\vy\ge \vzero\}$ is the standard probability simplex. The regularizer $g$ induces sparsity on the $\vx$ variable. We choose $Z_{\vx_i}$ to be a two-layer neural network with 200 hidden units and Tanh activation function. Further, we let $\mu=10.0$ and $\lambda=5\times 10^{-4}$.

The dataset is split uniformly at random among the agents, hence each agent receives $n=7{,}500$ local data points. We let the mini-batch size for all methods be 100 and tune $\eta_{\vy}\in\{0.1,0.01,0.001,0.0001\}$ and $\frac{\eta_{\vx}}{\eta_{\vy}}\in\{1,0.1,0.01,0.001\}$. For DM-HSGD, we tune $\beta_{\vx}=\beta_{\vy}=0.01$ following the paper guidelines and set the initial batch-size to 3000. For GT-SRVR, we tune $q\in\{100,300\}$ and the large mini-batch size  from $\{3000,7500\}$. For \ourmethod-STORM we let $\beta=0.01$, $L=\frac{2}{\sqrt m}$, and $\eta_{\vLam}=0.001$. For \ourmethod-SPIDER,  we tune $q\in\{100,300\}$ and the large mini-batch size  from $\{3000,7500\}$ and let $L=\frac{2}{\sqrt m}$ and $\eta_{\vLam}=0.001$. We run all methods to $50{,}000$ iterations and compare the training loss value (as if performing centralized training; note this is not the objective value), testing accuracy, average number of non-zeros, and stationarity violation, where the stationarity violation is computed as
	\begin{equation}\label{eq:numerical:stationarity:dro}
		\textstyle\norm{\bar{\vx}-\prox_{g}\left(\bar{\vx}-\frac{1}{m}\sum_{i=1}^{m}\nabla_{\vx}f_{i}(\bar{\vx},\vy^{(*)})\right)}_2^2+\norm{\vX_{\perp}}_F^2+\norm{\vY_{\perp}}_F^2
	\end{equation}
where $\vy^{(*)}:=\argmax_{\vy}\frac{1}{m}\sum_{i=1}^{m}f_i(\bar{\vx},\vy)$ for $\bar{\vx}=\frac{1}{m}\sum_{i=1}^{m}\vx_{i}$. We run each experiment with 5 different initial seeds and report the average results across both iterations and epochs.

\begin{figure*}[h!]
  \setlength\tabcolsep{1pt}
  \begin{center}
  \begin{tabular}{cccc}

	\includegraphics[width=0.24\textwidth]{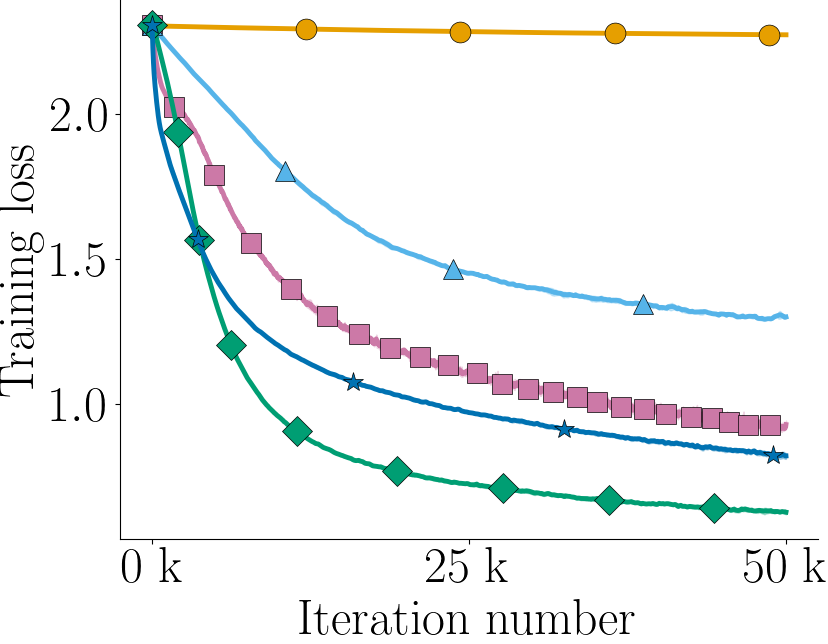} &

    \includegraphics[width=0.24\textwidth]{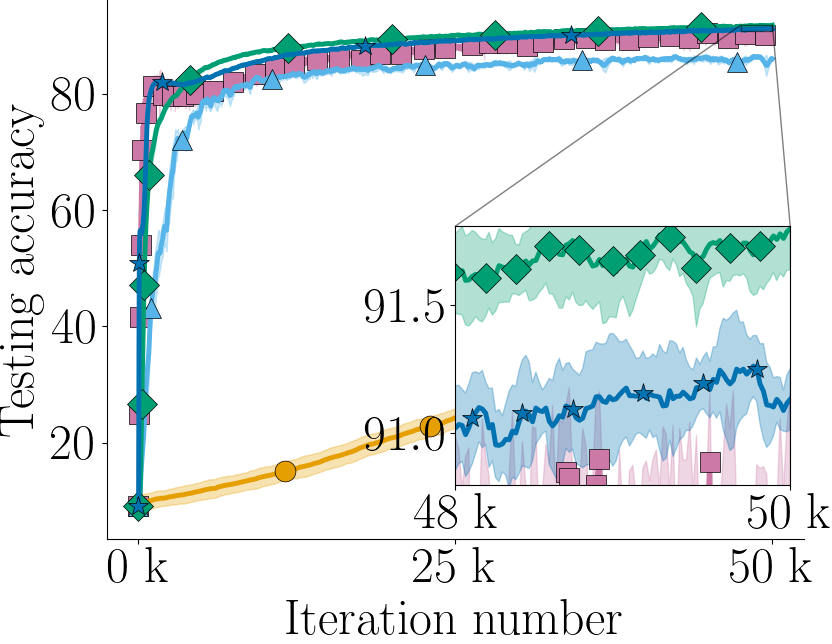} &

	\includegraphics[width=0.24\textwidth]{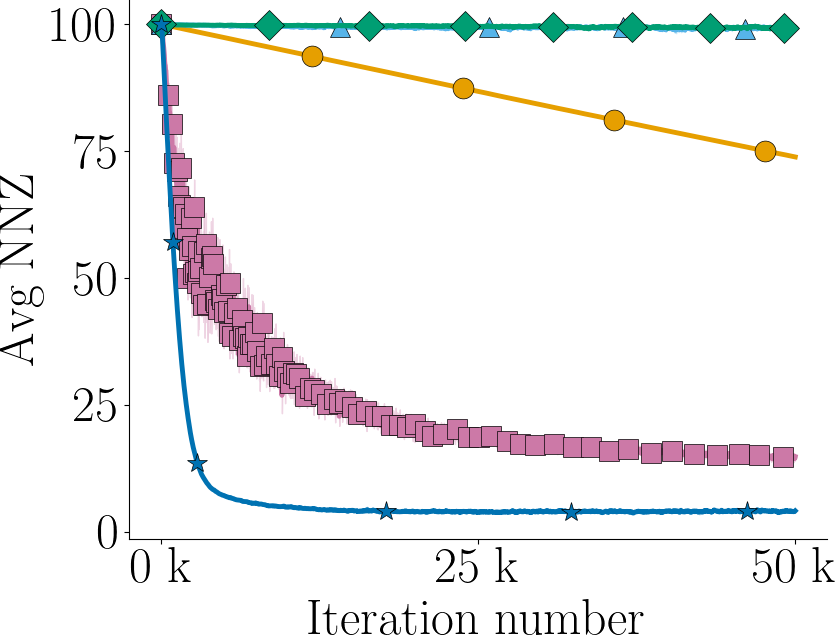} &
	
	\includegraphics[width=0.24\textwidth]{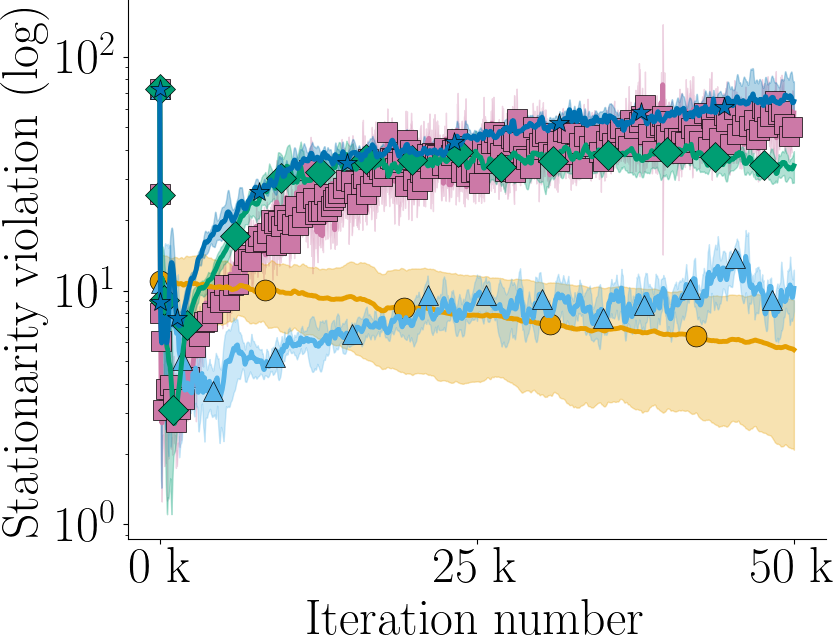} \\
	
		\includegraphics[width=0.24\textwidth]{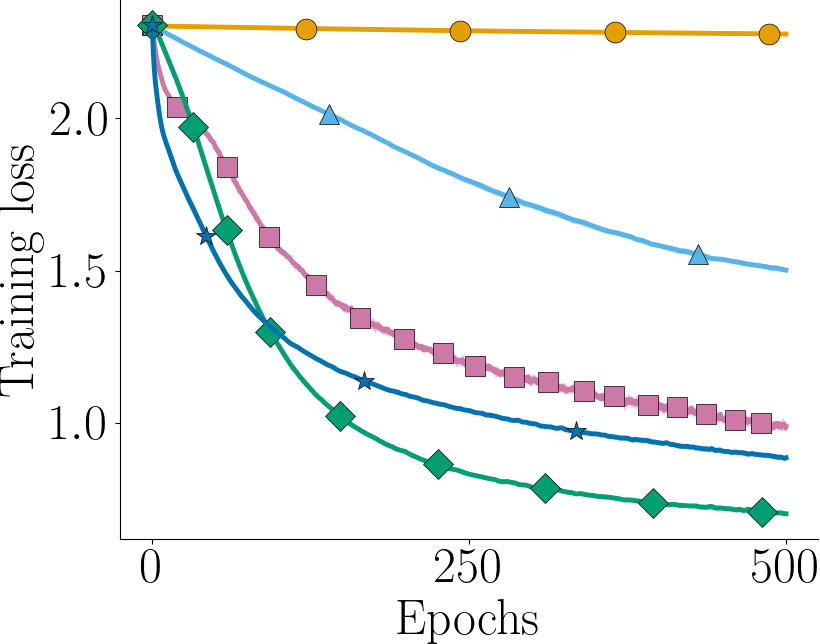} &
	
	    \includegraphics[width=0.24\textwidth]{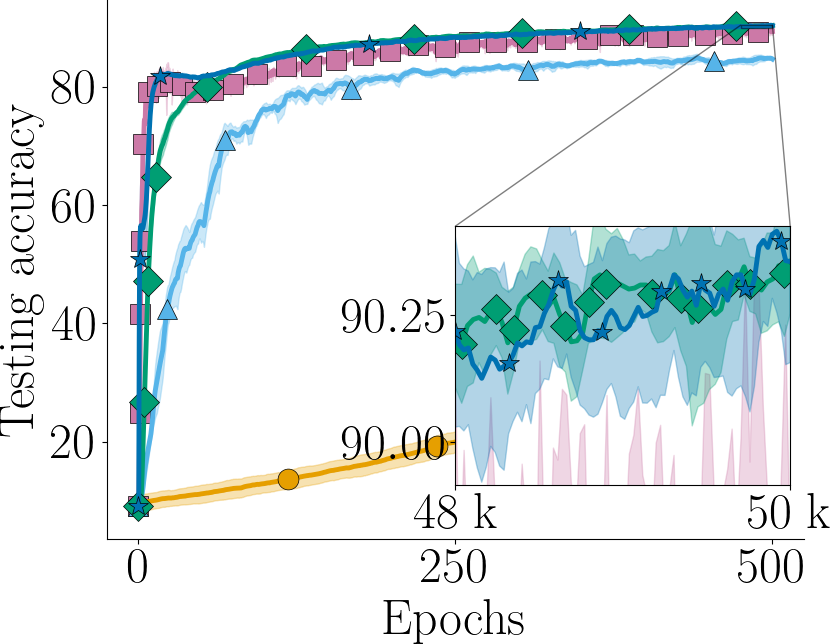} &
	
		\includegraphics[width=0.24\textwidth]{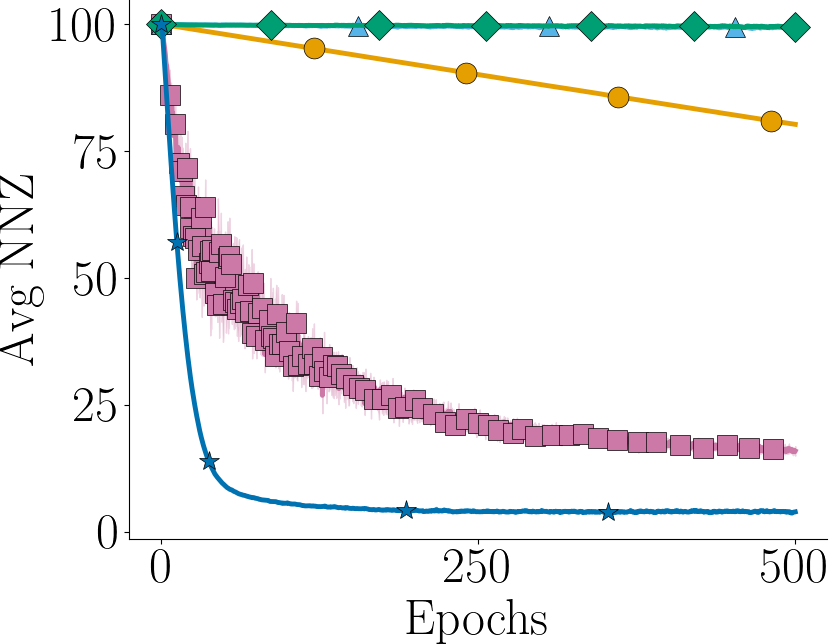} &
		
		\includegraphics[width=0.24\textwidth]{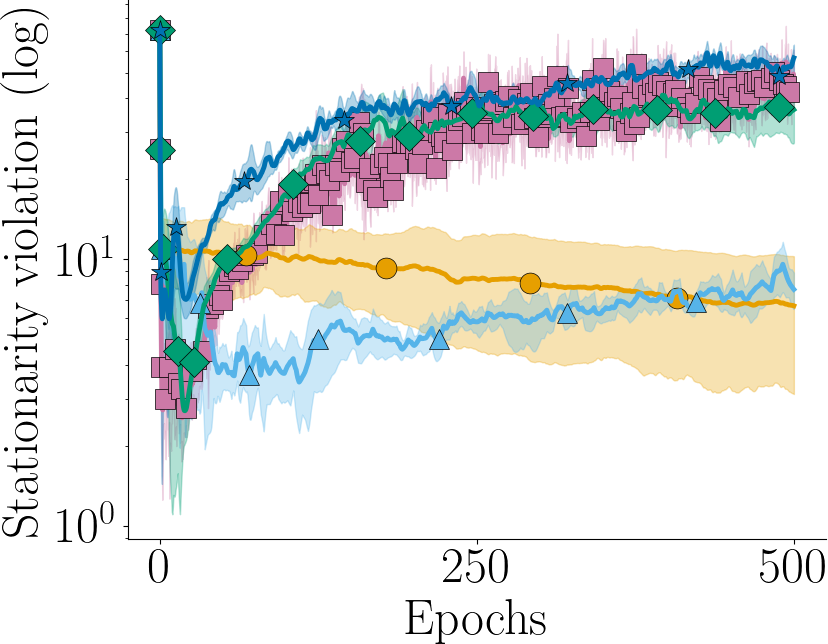} \\

	\includegraphics[width=0.24\textwidth]{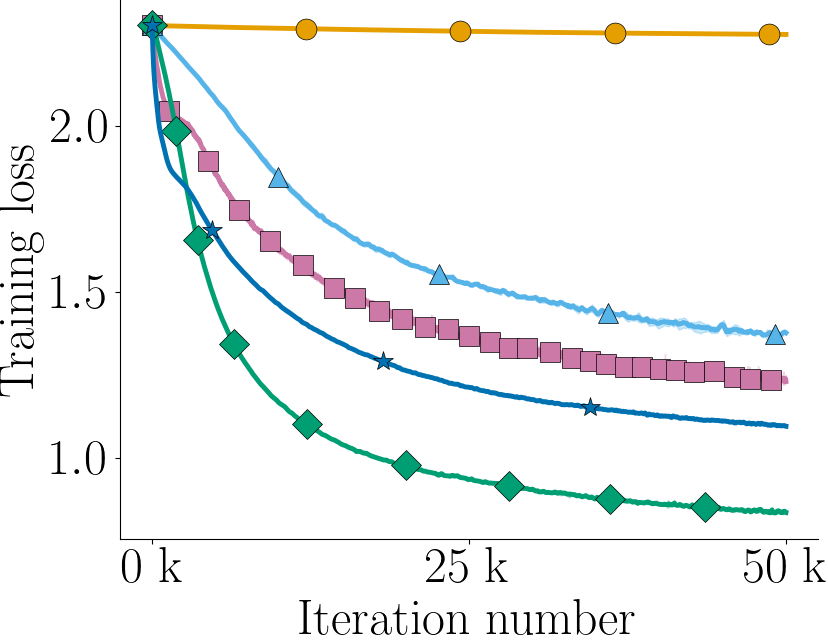} &

    \includegraphics[width=0.24\textwidth]{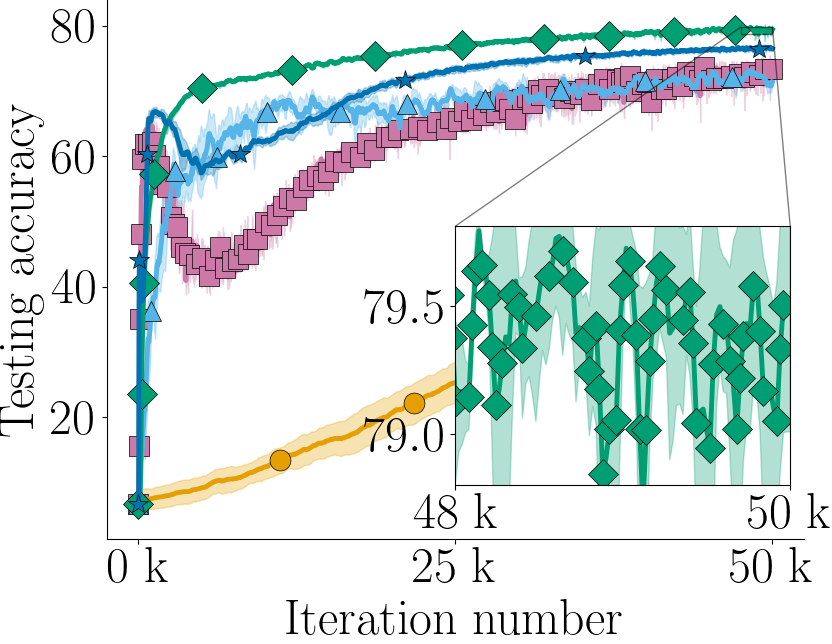} &

	\includegraphics[width=0.24\textwidth]{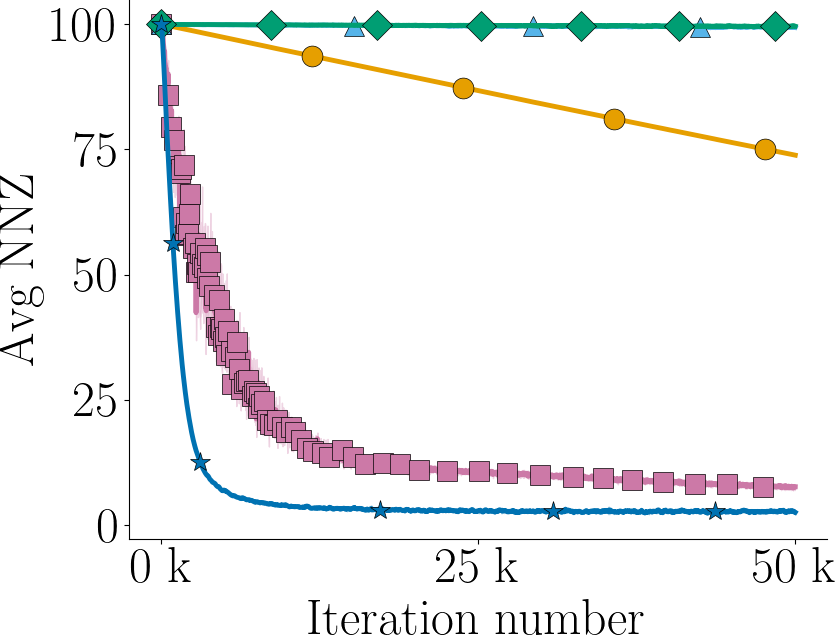} &
	
	\includegraphics[width=0.24\textwidth]{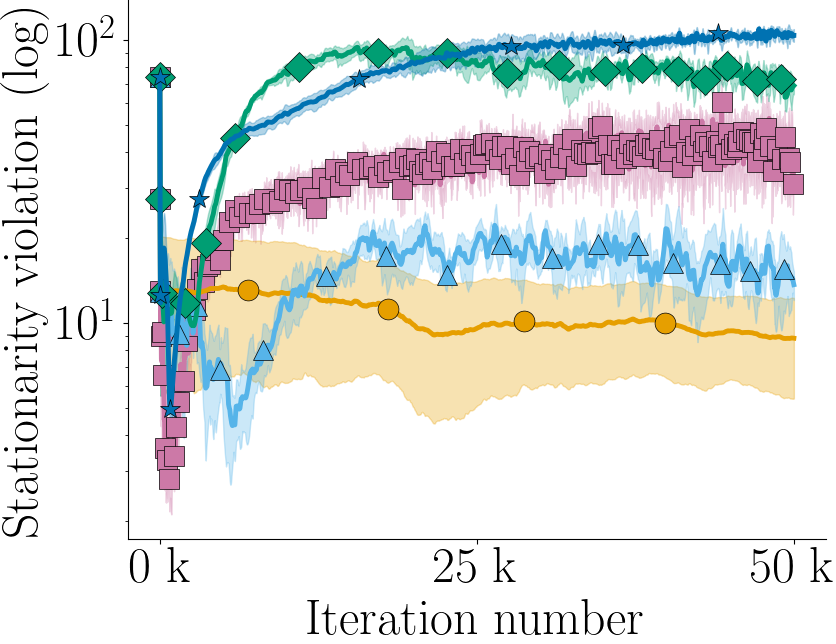} \\
	
		\includegraphics[width=0.24\textwidth]{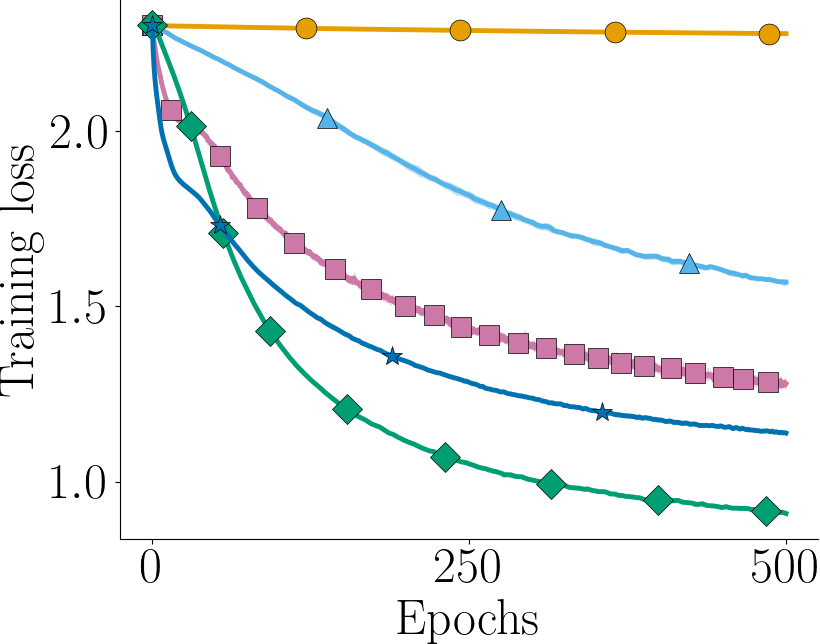} &
	
	    \includegraphics[width=0.24\textwidth]{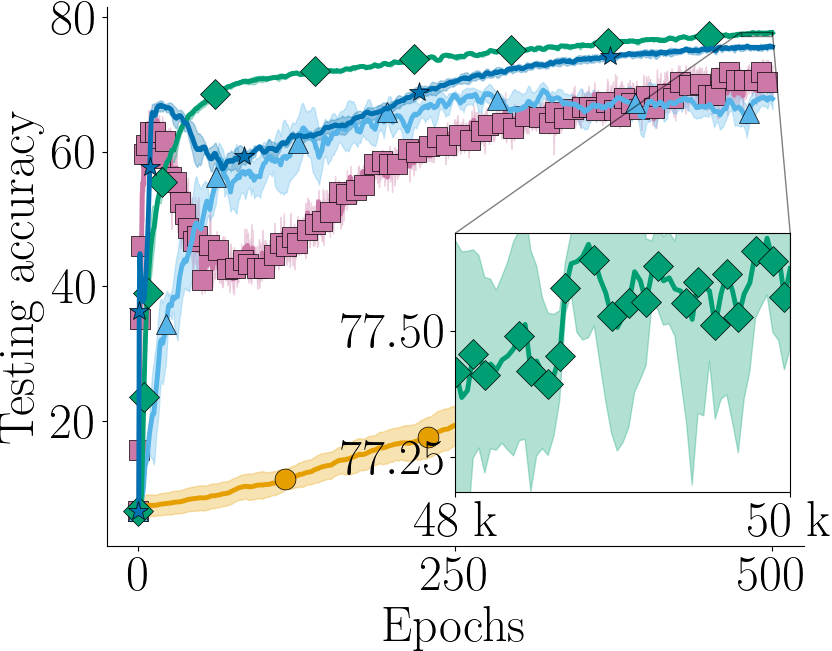} &
	
		\includegraphics[width=0.24\textwidth]{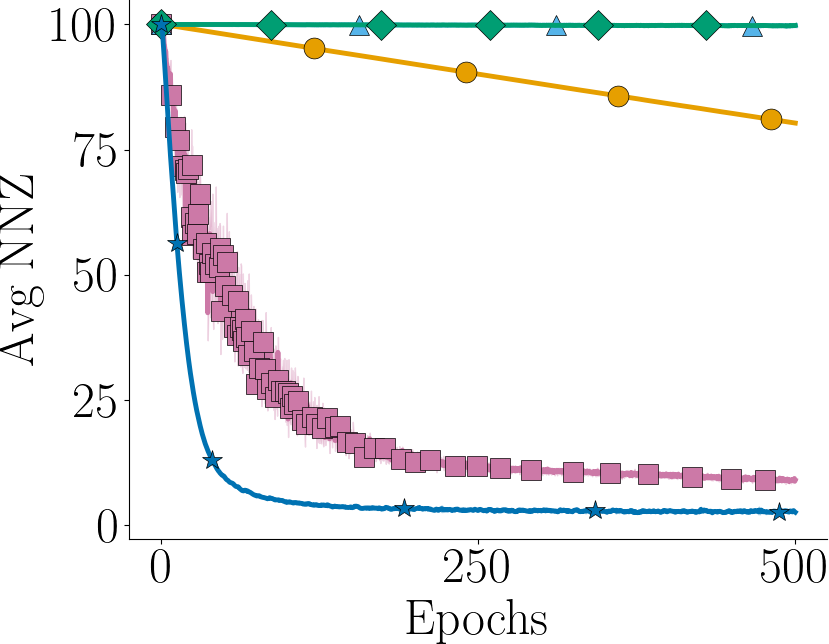} &
		
		\includegraphics[width=0.24\textwidth]{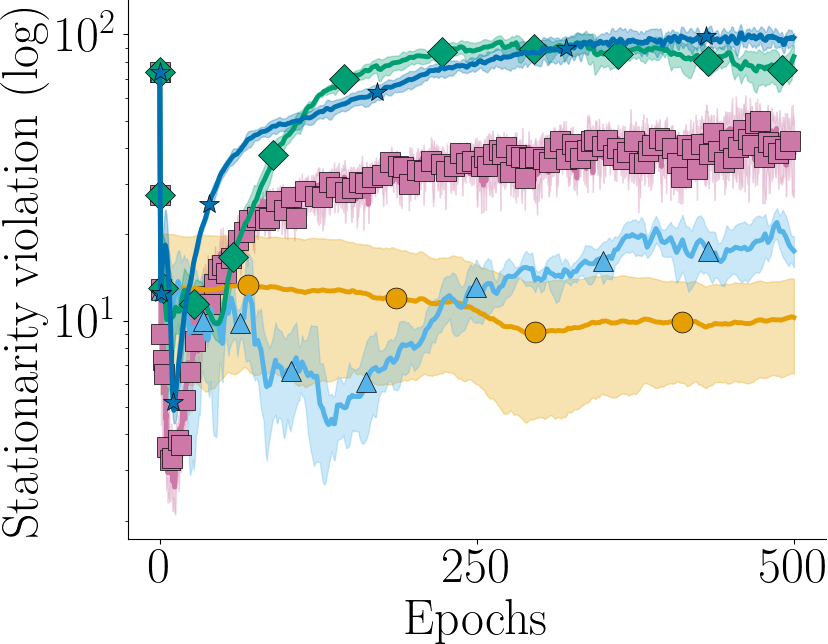} \\

    \multicolumn{4}{c}{\includegraphics[width=0.9\textwidth]{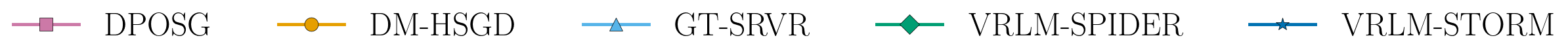}}
	
  \end{tabular}
  \end{center}
  \caption{Experimental results for the sparse distributionally robust optimization problem~\eqref{eq:numerical:dro} using the MNIST and Fashion-MNIST datasets. The top two row depict results for the MNIST dataset in terms of iteration number and epochs, respectively. The bottom two rows depict the same results for the Fashion-MNIST dataset. Shaded regions represent 95\% confidence intervals computed over 5 trials.}
  
  \label{fig:dro}
  
\end{figure*}

From Figure~\ref{fig:dro}, we can see that \ourmethod (both variants) outperform all competing methods in terms of training loss and testing accuracy, while \ourmethod-STORM can additionally find sparser solutions. For this problem, DM-HSGD is not competitive when $\lambda>0$. All methods appear to struggle with reducing~\eqref{eq:numerical:stationarity:dro} for both datasets, but we remark that this does not appear to affect each method's ability to minimize the training loss and obtain reasonable testing accuracy.

\subsection{Fair Classification}\label{sec:numerical:fair}

We also test our method on the Fair Classification problem~\cite{Mohri2019} using the CIFAR-10~\cite{krizhevsky09} dataset. Each agent maintains a local dataset, $\{\va_{ij},b_{ij}\}_{j=1}^n$, where $\va_{ij}\in\mathbb{R}^{32\times 32}$ is the $j$-th image on the $i$-th agent and $b_{ij}\in\{1,\dots,C\}$ is the corresponding label. The agents' local objective functions are given by
	\begin{equation}\label{eq:numerical:fair_class}
		\textstyle f_{i}(\vx_{i},\vy_{i})=\sum_{c=1}^{C}(\vy_{i,c})\ell\left(Z_{\vx_{i}}(\{\va_{ij}\}_{b_{ij}=c}),\{b_{ij}\}_{b_{ij}=c}\right)-\frac{\mu}{2}\norm{\vy_{i}}_2^2,\enskip h(\vy_i)\equiv\mathbb{I}_{\Delta_{C}}(\vy_{i}),\enskip g(\vx_{i})\equiv0,
	\end{equation}
where $\vy_{i,c}$ denotes the $c$-th component of $\vy_{i}$, $\ell$ is the cross-entropy loss function which computes the average loss over each class, $Z_{\vx_i}$ is a neural network governed by the parameter $\vx_{i}$,  $\mu$ is a parameter to tune, and $\Delta_{C}$ is the standard probability simplex. We choose $Z_{\vx_i}$ to be the All-CNN network~\cite{springenberg15} and let $\mu=0.1$.

The dataset is split uniformly at random among the agents, hence each agent receives $n=6{,}250$ local data points. We let the mini-batch size for all methods be 100 and tune $\eta_{\vy}\in\{0.1,0.2,0.3,0.4,0.5\}$ and $\frac{\eta_{\vx}}{\eta_{\vy}}\in\{1,0.1,0.01,0.001\}$. For DM-HSGD, we tune $\beta_{\vx}=\beta_{\vy}\in\{0.01,0.1,0.5,0.8,0.9,0.99\}$ and set the initial batch-size to 3000. For GT-SRVR, we tune $q\in\{100,300\}$ and the large mini-batch size  from $\{2000,3000\}$. For \ourmethod-STORM, we tune $\beta\in\{0.01,0.1,0.5,0.8,0.9,0.99\}$ and let $L=\frac{2}{\sqrt{m}}$ and $\eta_{\vLam}=0.1$. We found \ourmethod-SPIDER noncompetitive on this instance, and the results are omitted. We run all methods to $20{,}000$ iterations and compare the training loss value (as if performing centralized training; note this is not the objective value), testing accuracy, and stationarity violation, where the stationarity violation is computed as
	\begin{equation}\label{eq:numerical:stationarity}
		\textstyle \norm{\frac{1}{m}\sum_{i=1}^{m}\nabla_{\vx}f_{i}(\bar{\vx},\vy_{i}^{(*)})}_2^2+\norm{\vX_{\perp}}_F^2+\norm{\vY_{\perp}}_F^2
	\end{equation}
where $\vy_{i}^{(*)}:=\argmax_{\vy_i}f_i(\bar{\vx},\vy_{i})$ for $\bar{\vx}=\frac{1}{m}\sum_{i=1}^{m}\vx_{i}$. We use~\eqref{eq:numerical:stationarity} instead of~\eqref{eq:numerical:stationarity:dro} due to the memory constraint of our computing environment which prohibits us from putting all the data on one machine. Again, we run each experiment with 5 different initial seeds and report the average results across both iterations and epochs.

\begin{figure*}[h!]
  \setlength\tabcolsep{1pt}
  \begin{center}
  \begin{tabular}{ccc}

	\includegraphics[width=0.28\textwidth]{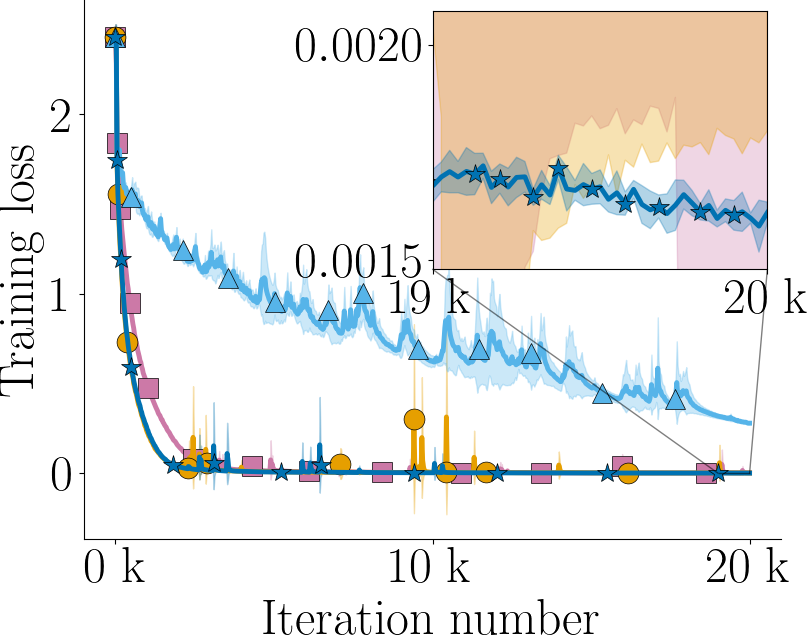} &

    \includegraphics[width=0.28\textwidth]{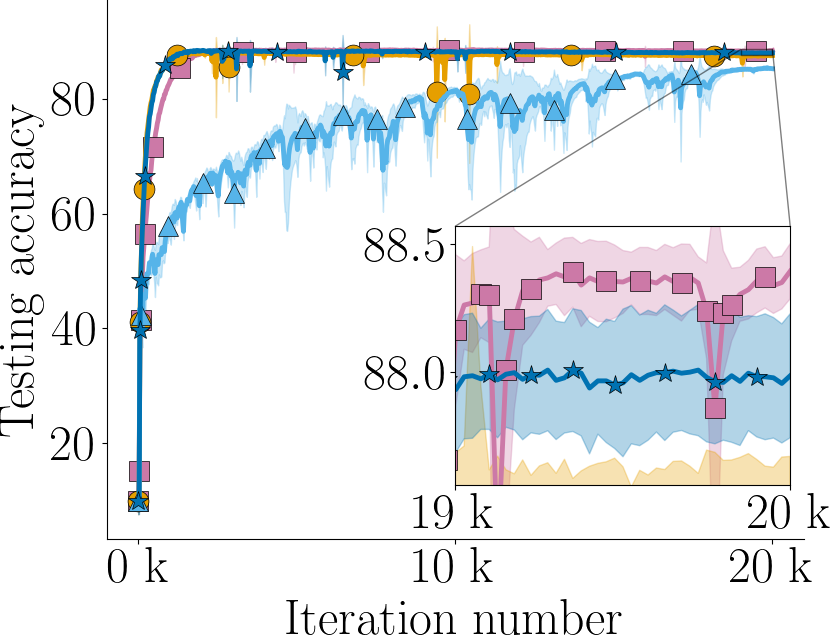} &

	\includegraphics[width=0.28\textwidth]{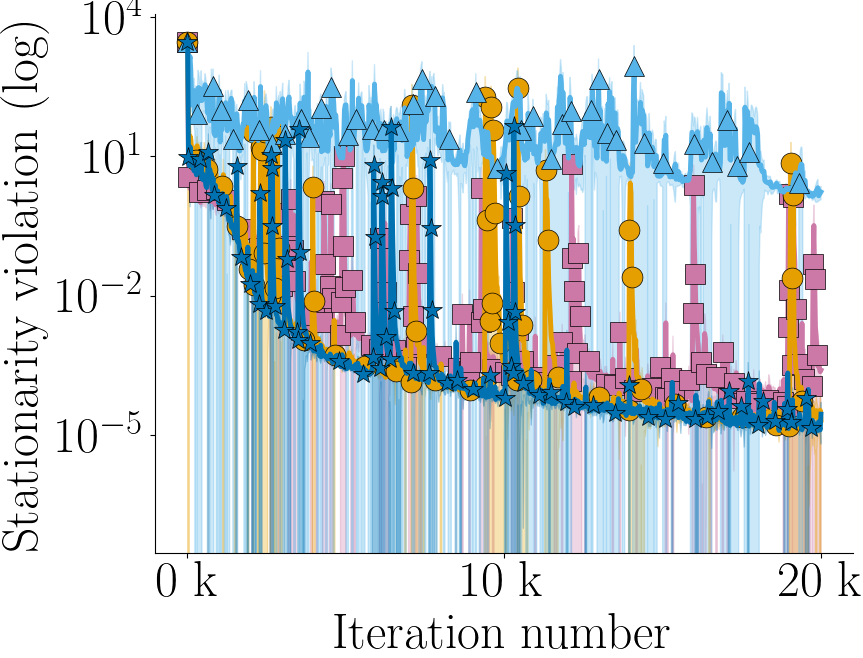} \\
	
		\includegraphics[width=0.28\textwidth]{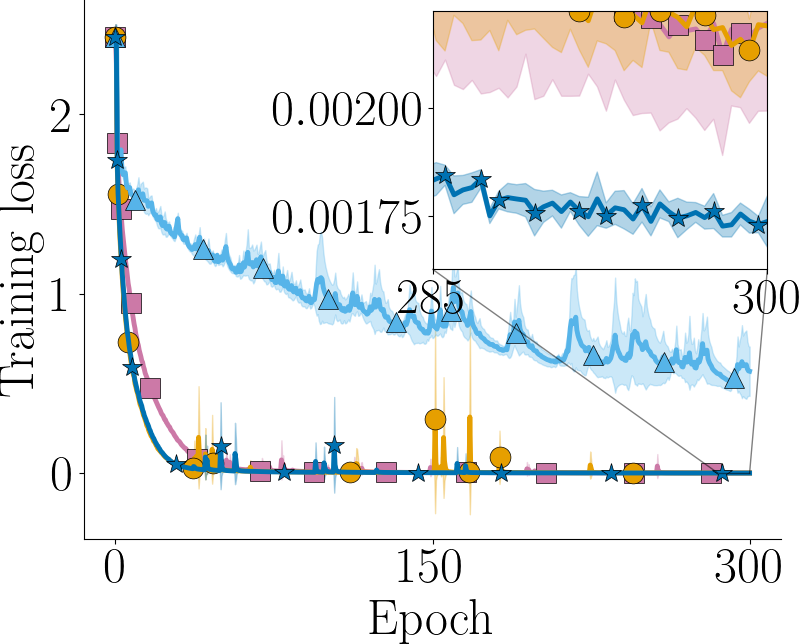} &
	
	    \includegraphics[width=0.28\textwidth]{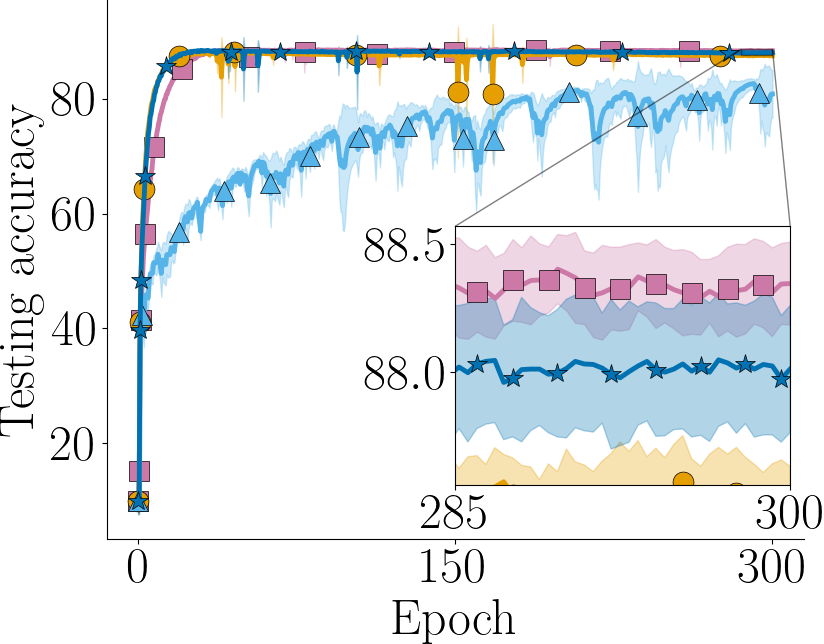} &
	
		\includegraphics[width=0.28\textwidth]{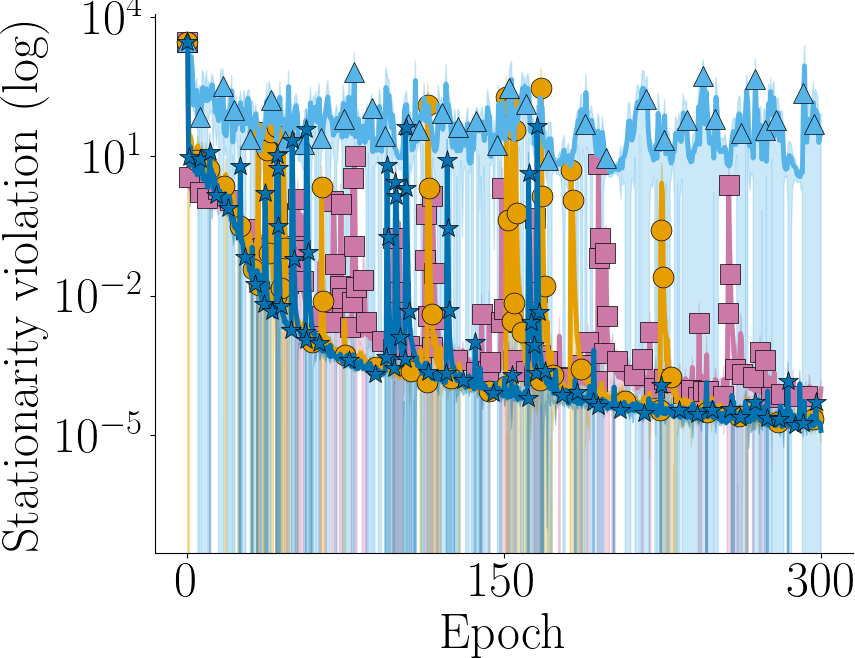} \\

    \multicolumn{3}{c}{\includegraphics[width=0.7\textwidth]{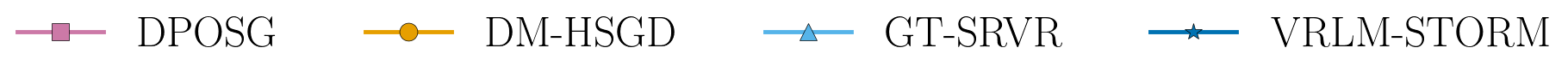}}
	
  \end{tabular}
  \end{center}
  \caption{Experimental results for the Fair Classification problem~\eqref{eq:numerical:fair_class} using the All-CNN network with the CIFAR-10 dataset. The top row depicts results in terms of iteration number, while the bottom is in terms of epochs. Shaded regions represent 95\% confidence intervals computed over 5 trials.}
  
  \label{fig:fair_class}
  
\end{figure*}

From Figure~\ref{fig:fair_class}, we can see that DM-HSGD and our proposed method perform similarly in terms of stationarity violation. However, our method can achieve higher testing accuracy and lower training loss. DPOSG yields high testing accuracy, but its convergence for the double-regularized problem~\eqref{eq:min-max-Phi} has not been studied. The GT-SRVR method is not competitive on this problem, which could be due to the lack of theoretical guarantees for the double-regularized problem~\eqref{eq:min-max-Phi}, or due to the periodic large-batch stochastic gradient computation. Overall, we find our proposed method to be competitive against other state-of-the-art methods, while providing improved theoretical guarantees.


\section{Conclusions}\label{sec:conclusion}

In this work, we have presented the \texttt{V}ariance \texttt{R}educed \texttt{L}agrangian \texttt{M}ultiplier (\ourmethod) based method for solving the decentralized, double-regularized, stochastic nonconvex strongly-concave minimax problem. We analyzed \ourmethod with both  big-batch and small-batch variance-reduction techniques. Under mild assumptions, both versions are able to achieve the best-known complexity results that are achieved by existing methods for solving special cases of the problem we consider. Finally, we demonstrated the effectiveness of our proposed methods in a real decentralized computing environment on two benchmark machine learning problems.


\appendix

\section{Proof of Lemma~\ref{lemma:stationarity}}\label{sec:appendix:lemma:stationarity}
		By the definition of $\widehat{\vY}^{(t)}$ in~\eqref{eq:argmaxes} and~\eqref{eq:grad-P}, we have
				\begin{align}\label{lemma:stationarity:stationary1}
						& \textstyle \EE\norm{\bar{\vx}^{(t)}-\prox_{\eta_{\vx}g}\left(\bar{\vx}^{(t)}-\eta_{\vx}\nabla_{\vx} P(\bar{\vx}^{(t)},\vLam^{(t)})\right)}_2
						=\EE\norm{\bar{\vx}^{(t)}-\prox_{\eta_{\vx}g}\left(\bar{\vx}^{(t)}-\frac{\eta_{\vx}}{m}\sum_{j=1}^{m}\nabla_{\vx}f_j(\bar{\vx}^{(t)},\hat{\vy}_{j}^{(t)})\right)}_2 \nonumber\\
						\le& \textstyle  \EE\norm{\bar{\vx}^{(t)}-\prox_{\eta_{\vx}g}\left(\tilde{\vx}_{i}^{(t)}-\eta_\vx\vv_{\vx,i}^{(t)}\right)}_2+\EE\norm{\bar{\vx}^{(t)}-\frac{\eta_{\vx}}{m}\sum_{j=1}^{m}\nabla_{\vx}f_j(\bar{\vx}^{(t)},\hat{\vy}_{j}^{(t)})-\left(\tilde{\vx}_{i}^{(t)}-\eta_\vx\vv_{\vx,i}^{(t)}\right)}_2 \nonumber\\
						\le& \textstyle  \EE\norm{\bar{\vx}^{(t)}-\vx_{i}^{(t+1)}}_2+\EE\norm{\bar{\vx}^{(t)}-\tilde{\vx}_{i}^{(t)}}_2+\EE\norm{\frac{\eta_{\vx}}{m}\sum_{j=1}^{m}\nabla_{\vx}f_j(\vx_{j}^{(t)},\vy_{j}^{(t)})-\frac{\eta_{\vx}}{m}\sum_{j=1}^{m}\nabla_{\vx}f_j(\bar{\vx}^{(t)},\hat{\vy}_{j}^{(t)})}_2 \nonumber\\
						&\textstyle  +\norm{\frac{\eta_{\vx}}{m}\sum_{j=1}^{m}\nabla_{\vx}f_j(\vx_{j}^{(t)},\vy_{j}^{(t)})-\eta_\vx\vv_{\vx,i}^{(t)}}_2 \nonumber\\
					\le&\textstyle  \EE\norm{\bar{\vx}^{(t)}-\vx_{i}^{(t+1)}}_2+\EE\norm{\bar{\vx}^{(t)}-\tilde{\vx}_{i}^{(t)}}_2+\EE\norm{\frac{\eta_{\vx}}{m}\sum_{j=1}^{m}\nabla_{\vx}f_j(\vx_{j}^{(t)},\vy_{j}^{(t)})-\frac{\eta_{\vx}}{m}\sum_{j=1}^{m}\nabla_{\vx}f_j(\bar{\vx}^{(t)},\hat{\vy}_{j}^{(t)})}_2\\
							&\textstyle  +\norm{\frac{\eta_{\vx}}{m}\sum_{j=1}^{m}\nabla_{\vx}f_j(\vx_{j}^{(t)},\vy_{j}^{(t)})-\eta_\vx\bar{\vd}_{\vx}^{(t)}}_2+\norm{\eta_{\vx}\bar{\vv}_{\vx}^{(t)}-\eta_\vx\vv_{\vx,i}^{(t)}}_2, \nonumber
				\end{align}
			where the first inequality follows from the triangle inequality and the nonexpansiveness of $\prox_{\eta_\vx g}$, the second inequality uses the update of $\vx_{i}^{(t+1)}$ and the triangle inequality, and the last inequality holds by $\bar{\vd}_{\vx}^{(t)}=\bar{\vv}_{\vx}^{(t)}$ for all $t\ge0$. Squaring both sides of~\eqref{lemma:stationarity:stationary1}, using Young's inequality, and summing over $i=1,\dots,m$ with the definition of $\vR_{\vx}$ in~\eqref{eq:error_defs} yields
				\begin{equation}\label{lemma:stationarity:stationary2}
					\begin{split}
						&m\EE\norm{\bar{\vx}^{(t)}-\prox_{\eta_{\vx}g}\left(\bar{\vx}^{(t)}-\eta_{\vx}\nabla_{\vx} P(\bar{\vx}^{(t)},\vLam^{(t)})\right)}_2^2\\
						\le&5\EE\norm{\vX^{(t+1)}-\bar{\vX}^{(t)}}_F^2+5\EE\norm{\bar{\vX}^{(t)}-\widetilde{\vX}^{(t)}}_F^2+5\eta_{\vx}^2L^2\left(\EE\norm{\vX_{\perp}^{(t)}}_F^2+\EE\norm{\widehat{\vY}^{(t)}-\vY^{(t)}}_F^2\right)\\
						&+5\eta_{\vx}^2\norm{\vR_{\vx}^{(t)}}_F^2+5\eta_{\vx}^2\EE\norm{\vV_{\perp,\vx}^{(t)}}_F^2\\
						\stack{\eqref{eq:y_hat-y_tilde-bound}}{\le}&\enskip5\EE\norm{\vX^{(t+1)}-\bar{\vX}^{(t)}}_F^2+\left(5\eta_{\vx}^2L^2+5+10\eta_\vx^2L^2\kappa^2\right)\EE\norm{\vX_{\perp}^{(t)}}_F^2+10\eta_\vx^2L^2\EE\norm{\widetilde{\vY}^{(t)}-\vY^{(t)}}_F^2\\
							&+5\eta_{\vx}^2\norm{\vR_{\vx}^{(t)}}_F^2+5\eta_{\vx}^2\EE\norm{\vV_{\perp,\vx}^{(t)}}_F^2\\
					\end{split}
				\end{equation}
			where we have used $\vW-\frac{1}{m}\vone\vone^\top=(\vW-\frac{1}{m}\vone\vone^\top)(\vI-\frac{1}{m}\vone\vone^\top)$ to have $\norm{\bar{\vX}^{(t)}-\widetilde{\vX}^{(t)}}_F^2\le\norm{\vX_{\perp}^{(t)}}_F^2$. By multiplying  $\frac{1}{m\eta_{\vx}^2}$ to \eqref{lemma:stationarity:stationary2}, adding $\frac{L^2}{m}\EE\norm{\vX_{\perp}^{(t)}}_F^2$, and using $\norm{\vR_{\vx}^{(t)}}_F^2\le\norm{\vR^{(t)}}_F^2$, we obtain
				\begin{equation}\label{lemma:stationarity:stationary3}
					\begin{split}
						&\textstyle  \EE\norm{\frac{1}{\eta_{\vx}}\left(\bar{\vx}^{(t)}-\prox_{\eta_{\vx}g}\left(\bar{\vx}^{(t)}-\eta_{\vx}\nabla_{\vx} P(\bar{\vx}^{(t)},\vLam^{(t)})\right)\right)}_2^2+\frac{L^2}{m}\EE\norm{\vX_{\perp}^{(t)}}_F^2\\
						\le&\textstyle  \frac{5}{m\eta_{\vx}^2}\EE\norm{\vX^{(t+1)}-\bar{\vX}^{(t)}}_F^2+\left(\frac{2L^2(3+5\kappa^2)}{m}+\frac{5}{m\eta_{\vx}^2}\right)\EE\norm{\vX_{\perp}^{(t)}}_F^2+\frac{10L^2}{m}\EE\norm{\widetilde{\vY}^{(t)}-\vY^{(t)}}_F^2\\
							&\textstyle  +\frac{5}{m}\norm{\vR^{(t)}}_F^2+\frac{5}{m}\EE\norm{\vV_{\perp,\vx}^{(t)}}_F^2.
					\end{split}
				\end{equation}
		This completes the proof of~\eqref{lemma:stationarity:bound}. Again, by the definition of $\widehat{\vY}^{(t)}$ in~\eqref{eq:argmaxes}, in conjunction with~\eqref{eq:grad-P} and Young's inequality, we have
						\begin{equation}\label{lemma:stationarity:lam-grad}
							\begin{split}
								\EE\norm{\nabla_{\vLam}P(\bar{\vx}^{(t)},\vLam^{(t)})}_F^2\le& \textstyle 2\EE\norm{\frac{L}{2\sqrt{m}}(\vI-\vW)\vY^{(t)}}_F^2+\frac{L^2}{2m}\EE\norm{(\vI-\vW)(\widehat{\vY}^{(t)}-\vY^{(t)})}_F^2\\
								\le&\textstyle \frac{2}{\eta_{\vLam}^2}\EE\norm{\vLam^{(t+1)}-\vLam^{(t)}}_F^2+\frac{2L^2}{m}\EE\norm{\widehat{\vY}^{(t)}-\vY^{(t)}}_F^2\\
								\le&\textstyle \frac{2}{\eta_{\vLam}^2}\EE\norm{\vLam^{(t+1)}-\vLam^{(t)}}_F^2+\frac{4L^2\kappa^2}{m}\EE\norm{\vX_{\perp}^{(t)}}_F^2+\frac{4L^2}{m}\EE\norm{\widetilde{\vY}^{(t)}-\vY^{(t)}}_F^2,
							\end{split}
						\end{equation}
					where we have used~\eqref{algo:lam_update}, $\norm{\vI-\vW}_2^2\le4$ by Assumption~\ref{assump-W},  and~\eqref{eq:y_hat-y_tilde-bound}. This completes the proof.

\section{Proof of Lemma~\ref{lemma:base_inequality_updated}}\label{sec:appendix:inequality}

To prove Lemma~\ref{lemma:base_inequality_updated}, we first state a few supporting lemmas. The following lemma can be proved in the same way as the proof of \cite[Lemma C.3]{mancino2022proximal}.

\begin{lemma}\label{lemma:x_regularizer}
	For all $t\ge0$ and for all $i=1,\dots,m$,
				\begin{equation}\label{lemma:x_regularizer:bound}
				\begin{split}
					g(\vx_i^{(t+1)})-g(\bar{\vx}^{(t)})\le-\frac{1}{2\eta_{\vx}}\left(\norm{\vx_i^{(t+1)}-\bar{\vx}^{(t)}}_2^2+\norm{\vx_i^{(t+1)}-\tilde{\vx}^{(t)}}_2^2-\norm{\bar{\vx}^{(t)}-\tilde{\vx}^{(t)}}_2^2\right)-\ip{\vx_i^{(t+1)}-\bar{\vx}^{(t)}}{\vv_{\vx,i}^{(t)}}.
				\end{split}
				\end{equation}
\end{lemma}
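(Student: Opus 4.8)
The plan is to recognize \eqref{lemma:x_regularizer:bound} as the standard three-point (prox) inequality applied to the $\vx$-update of Algorithm~\ref{algo:}. Writing $\tilde{\vx}_i^{(t)}:=\sum_{j=1}^m w_{ij}\vx_j^{(t)}$ for the $i$-th row of $\widetilde{\vX}^{(t)}=\vW\vX^{(t)}$, the update reads $\vx_i^{(t+1)}=\prox_{\eta_\vx g}\big(\tilde{\vx}_i^{(t)}-\eta_\vx \vv_{\vx,i}^{(t)}\big)$. By definition of the proximal mapping, $\vx_i^{(t+1)}$ is the unique minimizer of
\[
\psi(\vz):= g(\vz)+\frac{1}{2\eta_\vx}\big\|\vz-\tilde{\vx}_i^{(t)}+\eta_\vx\vv_{\vx,i}^{(t)}\big\|_2^2 .
\]
First I would expand the quadratic and discard the additive constant $\frac{\eta_\vx}{2}\|\vv_{\vx,i}^{(t)}\|_2^2$, so that $\vx_i^{(t+1)}$ equivalently minimizes $\hat\psi(\vz):= g(\vz)+\frac{1}{2\eta_\vx}\|\vz-\tilde{\vx}_i^{(t)}\|_2^2+\ip{\vz-\tilde{\vx}_i^{(t)}}{\vv_{\vx,i}^{(t)}}$.

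Since $g$ is convex, $\hat\psi$ is $\frac{1}{\eta_\vx}$-strongly convex, and $\vzero\in\partial\hat\psi(\vx_i^{(t+1)})$, which gives the strong-convexity lower bound
\[
\hat\psi(\vz)\ \ge\ \hat\psi(\vx_i^{(t+1)})+\frac{1}{2\eta_\vx}\big\|\vz-\vx_i^{(t+1)}\big\|_2^2,\qquad \forall\,\vz.
\]
Next I would instantiate this at $\vz=\bar{\vx}^{(t)}$. After moving the $g$-terms to the left, the quadratic terms yield exactly $\frac{1}{2\eta_\vx}\big(\|\bar{\vx}^{(t)}-\tilde{\vx}_i^{(t)}\|_2^2-\|\vx_i^{(t+1)}-\tilde{\vx}_i^{(t)}\|_2^2-\|\bar{\vx}^{(t)}-\vx_i^{(t+1)}\|_2^2\big)$, i.e.\ the three-point bracket in \eqref{lemma:x_regularizer:bound}, while the two linear terms collapse via $\ip{\bar{\vx}^{(t)}-\tilde{\vx}_i^{(t)}}{\vv_{\vx,i}^{(t)}}-\ip{\vx_i^{(t+1)}-\tilde{\vx}_i^{(t)}}{\vv_{\vx,i}^{(t)}}=-\ip{\vx_i^{(t+1)}-\bar{\vx}^{(t)}}{\vv_{\vx,i}^{(t)}}$, producing the final inner-product term. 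Collecting everything gives \eqref{lemma:x_regularizer:bound}.

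There is essentially no substantive obstacle here: the statement is a direct consequence of the strong convexity of the proximal subproblem, exactly as in \cite[Lemma~C.3]{mancino2022proximal}. The only points requiring care are notational and bookkeeping: the symbol $\tilde{\vx}^{(t)}$ in the statement should be read as the agent-dependent mixed iterate $\tilde{\vx}_i^{(t)}=\sum_{j}w_{ij}\vx_j^{(t)}$ (the $i$-th row of $\widetilde{\vX}^{(t)}$), and the two linear inner-product contributions must be combined with the correct sign so that the constant term $\frac{\eta_\vx}{2}\|\vv_{\vx,i}^{(t)}\|_2^2$ indeed cancels between the two sides.
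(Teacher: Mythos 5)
Your proof is correct and is exactly the standard three-point/strong-convexity argument for the proximal subproblem that the paper itself invokes (it defers to \cite[Lemma~C.3]{mancino2022proximal} rather than writing out the details). The algebra checks out: instantiating the $\frac{1}{\eta_\vx}$-strong-convexity bound at $\vz=\bar{\vx}^{(t)}$ and combining the two linear terms yields precisely \eqref{lemma:x_regularizer:bound}, with $\tilde{\vx}^{(t)}$ correctly read as the agent-dependent mixed iterate $\tilde{\vx}_i^{(t)}=\sum_{j}w_{ij}\vx_j^{(t)}$.
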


\begin{lemma}\label{lemma:base_inequality}
	For all $t\ge0$ and arbitrary constants $c_1,c_2>0$, the following inequality holds
	\begin{equation}\label{lemma:base_inequality:bound}
		\begin{split}
			&\phi(\bar{\vx}^{(t+1)},\vLam^{(t+1)})-\phi(\bar{\vx}^{(t)},\vLam^{(t)})\\
			\le&\frac{1}{m}\sum_{i=1}^{m} \left\langle {\nabla_\vx f_i(\vx_{i}^{(t)},\vy_{i}^{(t)})}, {\bar{\vx}^{(t+1)}-\bar{\vx}^{(t)}} \right\rangle-\frac{1}{m}\sum_{i=1}^{m}\ip{\vx_{i}^{(t+1)}-\bar{\vx}^{(t)}}{\vv_{\vx,i}^{(t)}}\\
				&-\frac{1}{2m\eta_{\vx}}\left(\norm{\vX^{(t+1)}-\bar{\vX}^{(t)}}_F^2+\norm{\vX^{(t+1)}-\widetilde{\vX}^{(t)}}_F^2-\norm{\widetilde{\vX}^{(t)}-\bar{\vX}^{(t)}}_F^2\right)\\
				&+\left(\frac{L(\kappa+1)}{2m}+\frac{\kappa^2}{2mc_2}\right)\norm{\vX_{\perp}^{(t)}}_F^2
				+\frac{L(\kappa+1+c_1)+L_P}{2}\norm{\bar{\vx}^{(t+1)}-\bar{\vx}^{(t)}}_2^2+\left(\frac{L}{2mc_1}+\frac{1}{2mc_2}\right)\norm{\widetilde{\vY}^{(t)}-\vY^{(t)}}_F^2\\
				&+\frac{Lc_2}{4}\norm{\left(\vW-\vI\right)^\top\left(\vLam^{(t+1)}-\vLam^{(t)}\right)}_F^2-\left(\frac{1}{\eta_{\vLam}}-\frac{L_P}{2}\right)\norm{\vLam^{(t+1)}-\vLam^{(t)}}_F^2.
		\end{split}
	\end{equation}
\end{lemma}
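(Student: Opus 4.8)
The plan is to split $\phi=P+g$ and bound the one-step change of each piece separately, then reassemble. For the regularizer, I would average Lemma~\ref{lemma:x_regularizer} over $i\in[m]$ and use convexity of $g$ in the form $g(\bar{\vx}^{(t+1)})=g\big(\frac{1}{m}\sum_i\vx_i^{(t+1)}\big)\le\frac{1}{m}\sum_i g(\vx_i^{(t+1)})$, so that $g(\bar{\vx}^{(t+1)})-g(\bar{\vx}^{(t)})\le\frac{1}{m}\sum_i\big[g(\vx_i^{(t+1)})-g(\bar{\vx}^{(t)})\big]$. Summing the per-agent bound of Lemma~\ref{lemma:x_regularizer} and recognizing $\sum_i\|\vx_i^{(t+1)}-\bar{\vx}^{(t)}\|^2=\|\vX^{(t+1)}-\bar{\vX}^{(t)}\|_F^2$, $\sum_i\|\vx_i^{(t+1)}-\tilde{\vx}_i^{(t)}\|^2=\|\vX^{(t+1)}-\widetilde{\vX}^{(t)}\|_F^2$, and $\sum_i\|\bar{\vx}^{(t)}-\tilde{\vx}_i^{(t)}\|^2=\|\widetilde{\vX}^{(t)}-\bar{\vX}^{(t)}\|_F^2$ reproduces exactly the third line of the claim together with the inner-product term $-\frac{1}{m}\sum_i\langle\vx_i^{(t+1)}-\bar{\vx}^{(t)},\vv_{\vx,i}^{(t)}\rangle$.

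For the smooth part I would apply the $L_P$-smoothness of $P$ from \eqref{eq:smooth-P}, i.e.
\[
P(\bar{\vx}^{(t+1)},\vLam^{(t+1)})-P(\bar{\vx}^{(t)},\vLam^{(t)})\le\big\langle\nabla P(\bar{\vx}^{(t)},\vLam^{(t)}),(\bar{\vx}^{(t+1)}-\bar{\vx}^{(t)},\vLam^{(t+1)}-\vLam^{(t)})\big\rangle+\tfrac{L_P}{2}\big(\|\bar{\vx}^{(t+1)}-\bar{\vx}^{(t)}\|^2+\|\vLam^{(t+1)}-\vLam^{(t)}\|_F^2\big),
\]
and substitute the Danskin gradient \eqref{eq:grad-P}, namely $\nabla_\vx P=\frac{1}{m}\sum_i\nabla_\vx f_i(\bar{\vx}^{(t)},\hat{\vy}_i^{(t)})$ and $\nabla_\vLam P=-\frac{L}{2\sqrt{m}}(\vW-\vI)\widehat{\vY}^{(t)}$, with $\widehat{\vY}^{(t)}$ as in \eqref{eq:argmaxes}. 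The $\frac{L_P}{2}$ on the two squared increments then supplies the $\frac{L_P}{2}$ inside the coefficient of $\|\bar{\vx}^{(t+1)}-\bar{\vx}^{(t)}\|^2$ and the $\frac{L_P}{2}$ in the factor $-\big(\frac{1}{\eta_\vLam}-\frac{L_P}{2}\big)$.

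The core of the argument is rewriting the two linear terms. For the primal term I would add and subtract the target's leading term $\frac{1}{m}\sum_i\langle\nabla_\vx f_i(\vx_i^{(t)},\vy_i^{(t)}),\bar{\vx}^{(t+1)}-\bar{\vx}^{(t)}\rangle$ and bound the residual by Cauchy--Schwarz after the three-way split $\nabla_\vx f_i(\bar{\vx}^{(t)},\hat{\vy}_i^{(t)})-\nabla_\vx f_i(\vx_i^{(t)},\vy_i^{(t)})$ into an $\vx$-gap $\|\bar{\vx}^{(t)}-\vx_i^{(t)}\|$, a gap $\|\hat{\vy}_i^{(t)}-\tilde{\vy}_i^{(t)}\|$, and a gap $\|\tilde{\vy}_i^{(t)}-\vy_i^{(t)}\|$, each controlled by the $L$-smoothness of $f_i$ (Assumption~\ref{assump-stoch}). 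Applying Young's inequality with weights $1,\kappa,c_1$ respectively, and converting the middle gap via $\|\widehat{\vY}^{(t)}-\widetilde{\vY}^{(t)}\|_F^2\le\kappa^2\|\vX_\perp^{(t)}\|_F^2$ from \eqref{eq:y_hat-y_tilde-bound}, yields precisely $\frac{L(\kappa+1)}{2m}\|\vX_\perp^{(t)}\|_F^2$, $\frac{L}{2mc_1}\|\widetilde{\vY}^{(t)}-\vY^{(t)}\|_F^2$, and $\frac{L(\kappa+1+c_1)}{2}\|\bar{\vx}^{(t+1)}-\bar{\vx}^{(t)}\|^2$. For the dual term I would invoke the multiplier update \eqref{algo:lam_update}, which gives $(\vW-\vI)\vY^{(t)}=\frac{2\sqrt{m}}{L\eta_\vLam}(\vLam^{(t+1)}-\vLam^{(t)})$, so the portion of $\langle\nabla_\vLam P,\vLam^{(t+1)}-\vLam^{(t)}\rangle$ involving $\vY^{(t)}$ collapses cleanly to $-\frac{1}{\eta_\vLam}\|\vLam^{(t+1)}-\vLam^{(t)}\|_F^2$, and the leftover inner product against $\widehat{\vY}^{(t)}-\vY^{(t)}$ is split by Young's inequality with parameter $c_2$, producing the $\frac{Lc_2}{4}\|(\vW-\vI)^\top(\vLam^{(t+1)}-\vLam^{(t)})\|_F^2$ term and, after the same $\widehat{\vY}\to\widetilde{\vY}$ conversion through \eqref{eq:y_hat-y_tilde-bound}, the $c_2$-dependent contributions to $\|\vX_\perp^{(t)}\|_F^2$ and $\|\widetilde{\vY}^{(t)}-\vY^{(t)}\|_F^2$.

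Summing the $g$-bound and the $P$-bound and collecting like terms gives \eqref{lemma:base_inequality:bound}. The main obstacle is the constant bookkeeping in this final assembly: one must route every $\vy$-discrepancy through $\widehat{\vY}^{(t)}-\widetilde{\vY}^{(t)}$ and $\widetilde{\vY}^{(t)}-\vY^{(t)}$ so that it is ultimately expressed only via $\|\vX_\perp^{(t)}\|_F^2$ (carrying the $\kappa,\kappa^2$ factors) and $\|\widetilde{\vY}^{(t)}-\vY^{(t)}\|_F^2$, and choose the Young weights so that the primal residual aggregates exactly into the $\frac{L(\kappa+1+c_1)}{2}$, $\frac{L(\kappa+1)}{2m}$, $\frac{L}{2mc_1}$ combinations. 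The dual cross-term needs particular care, since the weight must be tuned so that the surplus is deposited onto $\|(\vW-\vI)^\top(\vLam^{(t+1)}-\vLam^{(t)})\|_F^2$ (to be absorbed later against $-\frac{1}{\eta_\vLam}\|\vLam^{(t+1)}-\vLam^{(t)}\|_F^2$ using $\|\vW-\vI\|_2\le2$) rather than inflating the $\|\widetilde{\vY}^{(t)}-\vY^{(t)}\|_F^2$ and $\|\vX_\perp^{(t)}\|_F^2$ coefficients.
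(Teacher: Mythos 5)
Your proposal is correct and follows essentially the same route as the paper's proof: the quadratic upper bound from the $L_P$-smoothness of $P$ combined with the averaged per-agent prox bound (Lemma~\ref{lemma:x_regularizer}) and convexity of $g$, the Danskin substitution \eqref{eq:grad-P}, the three-way Peter--Paul split of the primal gradient gap with weights $1,\kappa,c_1$ routed through \eqref{eq:y_hat-y_tilde-bound}, and the collapse of the dual linear term via the update \eqref{algo:lam_update} followed by a $c_2$-weighted Young step. The only difference is presentational (you bound $P$ and $g$ separately and then add, whereas the paper writes a single combined inequality), which does not change the argument.
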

\begin{proof}
	By the $L_P$-smoothness of $P$ defined in~\eqref{eq:def-P}, it holds that
		\begin{align}\label{eq:base_inequality:eqn1}
				&\phi(\bar{\vx}^{(t+1)},\vLam^{(t+1)})-\phi(\bar{\vx}^{(t)},\vLam^{(t)}) \nonumber\\
				\le&\frac{L_P}{2}\left(\norm{\bar{\vx}^{(t+1)}-\bar{\vx}^{(t)}}_2^2+\norm{\vLam^{(t+1)}-\vLam^{(t)}}_F^2\right)+\left\langle{\nabla P(\bar{\vx}^{(t)},\vLam^{(t)})}, {\big(\bar{\vx}^{(t+1)}-\bar{\vx}^{(t)},\vLam^{(t+1)}-\vLam^{(t)}\big)}\right\rangle
				+g(\bar{\vx}^{(t+1)})-g(\bar{\vx}^{(t)})\nonumber \\
				\le&\frac{L_P}{2}\left(\norm{\bar{\vx}^{(t+1)}-\bar{\vx}^{(t)}}_2^2+\norm{\vLam^{(t+1)}-\vLam^{(t)}}_F^2\right)+\left\langle{\nabla P(\bar{\vx}^{(t)},\vLam^{(t)})}, {\big(\bar{\vx}^{(t+1)}-\bar{\vx}^{(t)},\vLam^{(t+1)}-\vLam^{(t)}\big)}\right\rangle\\
				&-\frac{1}{2m\eta_{\vx}}\left(\norm{\vX^{(t+1)}-\bar{\vX}^{(t)}}_F^2+\norm{\vX^{(t+1)}-\widetilde{\vX}^{(t)}}_F^2-\norm{\widetilde{\vX}^{(t)}-\bar{\vX}^{(t)}}_F^2\right)-\frac{1}{m}\sum_{i=1}^{m}\ip{\vx_{i}^{(t+1)}-\bar{\vx}^{(t)}}{\vv_{\vx,i}^{(t)}} \nonumber
		\end{align}
	where the second inequality uses the convexity of $g$ to have $g(\bar{\vx}^{(t+1)})\le\frac{1}{m}\sum_{i=1}^{m}g(\vx_{i}^{(t+1)})$ and~\eqref{lemma:x_regularizer:bound}. By the definition of $\nabla P(\bar{\vx}^{(t)},\vLam^{(t)})$ in~\eqref{eq:grad-P} and the definition of $\widehat{\vY}^{(t)}$ in~\eqref{eq:argmaxes}, we have
		\begin{equation}\label{eq:base_inequality:eqn2}
			\begin{split}
				&\ip{\nabla P(\bar{\vx}^{(t)},\vLam^{(t)})}{(\bar{\vx}^{(t+1)}-\bar{\vx}^{(t)},\vLam^{(t+1)}-\vLam^{(t)})}\\
				=&\frac{1}{m}\sum_{i=1}^{m} \ip{\nabla_\vx f_i(\bar{\vx}^{(t)},\hat{\vy}_i^{(t)})}{\bar{\vx}^{(t+1)}-\bar{\vx}^{(t)}}-\frac{L}{2\sqrt{m}}\ip{(\vW-\vI)\widehat{\vY}^{(t)}}{\vLam^{(t+1)}-\vLam^{(t)}}.
			\end{split}
		\end{equation}
	By the $L$-smoothness of $\{f_i\}$ and the Peter-Paul inequality, we further have
		\begin{align}\label{eq:base_inequality:eqn3}
			&\frac{1}{m}\sum_{i=1}^{m}\ip{\nabla_\vx f_i(\bar{\vx}^{(t)},\hat{\vy}_i^{(t)})}{\bar{\vx}^{(t+1)}-\bar{\vx}^{(t)}}\nonumber\\
			=&\frac{1}{m}\sum_{i=1}^{m} \left(\ip{\nabla_\vx f_i(\vx_{i}^{(t)},\vy_{i}^{(t)})}{\bar{\vx}^{(t+1)}-\bar{\vx}^{(t)}}
			+\ip{ \nabla_\vx f_i(\bar{\vx}^{(t)},\hat{\vy}_i^{(t)})-\nabla_\vx f_i(\vx_{i}^{(t)},\hat{\vy}_i^{(t)}) }{\bar{\vx}^{(t+1)}-\bar{\vx}^{(t)}}\right)\nonumber\\
			+&\frac{1}{m}\sum_{i=1}^{m}\left(\ip{\nabla_\vx f_i(\vx_{i}^{(t)},\hat{\vy}_i^{(t)})-\nabla_\vx f_i(\vx_{i}^{(t)},\tilde{\vy}_i^{(t)})}{\bar{\vx}^{(t+1)}-\bar{\vx}^{(t)}}
			+\ip{\nabla_\vx f_i(\vx_{i}^{(t)},\tilde{\vy}_i^{(t)})-\nabla_\vx f_i(\vx_{i}^{(t)},\vy_{i}^{(t)})}{\bar{\vx}^{(t+1)}-\bar{\vx}^{(t)}}\right)\nonumber\\
			\le&\frac{1}{m}\sum_{i=1}^{m}\ip{\nabla_\vx f_i(\vx_{i}^{(t)},\vy_{i}^{(t)})}{\bar{\vx}^{(t+1)}-\bar{\vx}^{(t)}}
			+\frac{L}{2m}\sum_{i=1}^{m}\left(\norm{\bar{\vx}^{(t)}-\vx_{i}^{(t)}}_2^2+\norm{\bar{\vx}^{(t+1)}-\bar{\vx}^{(t)}}_2^2\right)\nonumber\\
			&+\frac{L}{2m}\sum_{i=1}^{m}\left(\frac{1}{\kappa}\norm{\hat{\vy}^{(t)}-\tilde{\vy}^{(t)}}_2^2+\kappa\norm{\bar{\vx}^{(t+1)}-\bar{\vx}^{(t)}}_2^2\right)
			+\frac{L}{2m}\sum_{i=1}^{m}\left(\frac{1}{c_1}\norm{\tilde{\vy}^{(t)}-\vy_{i}^{(t)}}_2^2+c_1\norm{\bar{\vx}^{(t+1)}-\bar{\vx}^{(t)}}_2^2\right)\nonumber\\
			\le&\frac{1}{m}\sum_{i=1}^{m}\ip{\nabla_\vx f_i(\vx_{i}^{(t)},\vy_{i}^{(t)})}{\bar{\vx}^{(t+1)}-\bar{\vx}^{(t)}}+\frac{L(\kappa+1)}{2m}\norm{\vX_{\perp}^{(t)}}_F^2\\
			&+\frac{L(\kappa+1+c_1)}{2m}\norm{\bar{\vX}^{(t+1)}-\bar{\vX}^{(t)}}_F^2+\frac{L}{2mc_1}\norm{\widetilde{\vY}^{(t)}-\vY^{(t)}}_F^2 \nonumber
			\end{align}
	where the last inequality uses~\eqref{eq:lip-S}. Additionally, by the Peter-Paul inequality,
		\begin{align}\label{eq:base_inequality:eqn4}
				&-\frac{L}{2\sqrt{m}}\ip{(\vW-\vI)\widehat{\vY}^{(t)}}{\vLam^{(t+1)}-\vLam^{(t)}} \nonumber \\
				=&-\frac{L}{2\sqrt{m}}\ip{(\vW-\vI)(\widehat{\vY}^{(t)}-\vY^{(t)})+(\vW-\vI)\vY^{(t)}}{\vLam^{(t+1)}-\vLam^{(t)}} \nonumber \\
				\stack{\eqref{algo:lam_update}}{=}&\enskip-\frac{L}{2\sqrt{m}}\ip{\widehat{\vY}^{(t)}-\vY^{(t)}}{(\vW-\vI)^\top(\vLam^{(t+1)}-\vLam^{(t)})}-\frac{1}{\eta_{\vLam}}\norm{\vLam^{(t+1)}-\vLam^{(t)}}_F^2\nonumber \\
				\le&\frac{L}{4mc_2}\norm{\widehat{\vY}^{(t)}-\vY^{(t)}}_F^2+\frac{Lc_2}{4}\norm{\left(\vW-\vI\right)^\top\left(\vLam^{(t+1)}-\vLam^{(t)}\right)}_F^2-\frac{1}{\eta_{\vLam}}\norm{\vLam^{(t+1)}-\vLam^{(t)}}_F^2\nonumber \\
				\le&\frac{1}{2mc_2}\left(\norm{\widehat{\vY}^{(t)}-\widetilde{\vY}^{(t)}}_F^2+\norm{\widetilde{\vY}^{(t)}-\vY^{(t)}}_F^2\right)
				+\frac{Lc_2}{4}\norm{\left(\vW-\vI\right)^\top\left(\vLam^{(t+1)}-\vLam^{(t)}\right)}_F^2-\frac{1}{\eta_{\vLam}}\norm{\vLam^{(t+1)}-\vLam^{(t)}}_F^2\nonumber \\
				\le&\frac{1}{2mc_2}\left(\kappa^2\norm{\vX_{\perp}^{(t)}}_F^2+\norm{\widetilde{\vY}^{(t)}-\vY^{(t)}}_F^2\right)
				+\frac{Lc_2}{4}\norm{\left(\vW-\vI\right)^\top\left(\vLam^{(t+1)}-\vLam^{(t)}\right)}_F^2-\frac{1}{\eta_{\vLam}}\norm{\vLam^{(t+1)}-\vLam^{(t)}}_F^2
		\end{align}
	where the last inequality uses~\eqref{eq:lip-S}. Plugging \eqref{eq:base_inequality:eqn3} and \eqref{eq:base_inequality:eqn4} into~\eqref{eq:base_inequality:eqn2} results in
		\begin{equation}\label{eq:base_inequality:eqn5}
			\begin{split}
				&\ip{\nabla P(\bar{\vx}^{(t)},\vLam^{(t)})}{\left(\bar{\vx}^{(t+1)}-\bar{\vx}^{(t)},\vLam^{(t+1)}-\vLam^{(t)}\right)}\\
				\le&\frac{1}{m}\sum_{i=1}^{m}\ip{\nabla_\vx f_i(\vx_{i}^{(t)},\vy_{i}^{(t)})}{\bar{\vx}^{(t+1)}-\bar{\vx}^{(t)}}+\left(\frac{L(\kappa+1)}{2m}+\frac{\kappa^2}{2mc_2}\right)\norm{\vX_{\perp}^{(t)}}_F^2\\
					&+\frac{L(\kappa+1+c_1)}{2m}\norm{\bar{\vX}^{(t+1)}-\bar{\vX}^{(t)}}_F^2+\left(\frac{L}{2mc_1}+\frac{1}{2mc_2}\right)\norm{\widetilde{\vY}^{(t)}-\vY^{(t)}}_F^2\\
					&+\frac{Lc_2}{4}\norm{\left(\vW-\vI\right)^\top\left(\vLam^{(t+1)}-\vLam^{(t)}\right)}_F^2-\frac{1}{\eta_{\vLam}}\norm{\vLam^{(t+1)}-\vLam^{(t)}}_F^2.
			\end{split}
		\end{equation}
	Utilizing~\eqref{eq:base_inequality:eqn5} in~\eqref{eq:base_inequality:eqn1} and noting $	\frac{1}{m}\norm{\bar{\vX}^{(t+1)}-\bar{\vX}^{(t)}}_F^2=\norm{\bar{\vx}^{(t+1)}-\bar{\vx}^{(t)}}_2^2$ completes the proof.
\end{proof}

\begin{lemma}\label{lemma:inner_product}
	For all $t\ge0$, it holds that
		\begin{equation}\label{lemma:inner_product:bound}
			\begin{split}
				&\frac{1}{m}\sum_{i=1}^{m}\ip{\nabla_\vx f_i(\vx_{i}^{(t)},\vy_{i}^{(t)})}{\bar{\vx}^{(t+1)}-\bar{\vx}^{(t)}}-\frac{1}{m}\sum_{i=1}^{m}\ip{\vx_{i}^{(t+1)}-\bar{\vx}^{(t)}}{\vv_{\vx,i}^{(t)}}\\
				\le&\frac{1}{2m}\sum_{i=1}^m\left(L_Pc_3\norm{\vx_i^{(t+1)}-\bar{\vx}^{(t)}}_2^2+\frac{1}{L_Pc_3}\norm{\frac{1}{m}\sum_{j=1}^{m}\nabla_\vx f_j(\vx_{j}^{(t)},\vy_{j}^{(t)})-\vv_{\vx,i}^{(t)}}_2^2\right),
			\end{split}
		\end{equation}
	where $c_3>0$ is an arbitrary constant.
\end{lemma}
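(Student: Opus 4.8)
The plan is to collapse the left-hand side into a single sum of inner products and then apply the Peter--Paul inequality term-by-term. First I would exploit that the displacement $\bar{\vx}^{(t+1)}-\bar{\vx}^{(t)}$ is independent of the summation index $i$, so the first sum factors as
\[
\frac{1}{m}\sum_{i=1}^{m}\ip{\nabla_\vx f_i(\vx_{i}^{(t)},\vy_{i}^{(t)})}{\bar{\vx}^{(t+1)}-\bar{\vx}^{(t)}}=\ip{\frac{1}{m}\sum_{j=1}^{m}\nabla_\vx f_j(\vx_{j}^{(t)},\vy_{j}^{(t)})}{\bar{\vx}^{(t+1)}-\bar{\vx}^{(t)}},
\]
an inner product against the averaged gradient.

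The key algebraic step is then to write $\bar{\vx}^{(t+1)}-\bar{\vx}^{(t)}=\frac{1}{m}\sum_{i=1}^{m}\big(\vx_i^{(t+1)}-\bar{\vx}^{(t)}\big)$, which redistributes the averaged gradient across the per-agent deviations $\vx_i^{(t+1)}-\bar{\vx}^{(t)}$. Pulling the averaged gradient back inside the sum and subtracting the second sum, the entire left-hand side becomes the compact expression
\[
\frac{1}{m}\sum_{i=1}^{m}\ip{\frac{1}{m}\sum_{j=1}^{m}\nabla_\vx f_j(\vx_{j}^{(t)},\vy_{j}^{(t)})-\vv_{\vx,i}^{(t)}}{\vx_i^{(t+1)}-\bar{\vx}^{(t)}}.
\]

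Finally, I would bound each summand using the Peter--Paul inequality $\ip{\va}{\vb}\le\frac{L_Pc_3}{2}\norm{\vb}_2^2+\frac{1}{2L_Pc_3}\norm{\va}_2^2$ with the splitting parameter chosen as $L_Pc_3$, taking $\va=\frac{1}{m}\sum_{j}\nabla_\vx f_j(\vx_{j}^{(t)},\vy_{j}^{(t)})-\vv_{\vx,i}^{(t)}$ and $\vb=\vx_i^{(t+1)}-\bar{\vx}^{(t)}$, and then summing over $i$ and dividing by $m$. This reproduces exactly the two terms on the right-hand side of \eqref{lemma:inner_product:bound}.

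The main (and essentially only) obstacle is the bookkeeping in the redistribution step: recognizing that pairing the averaged gradient against the single global-average deviation $\bar{\vx}^{(t+1)}-\bar{\vx}^{(t)}$ is identical to averaging the pairings against each local deviation $\vx_i^{(t+1)}-\bar{\vx}^{(t)}$. Once the left-hand side is rewritten as a single per-agent inner-product sum, everything that follows is a routine application of Young's inequality, so I expect no analytical difficulty beyond this identity.
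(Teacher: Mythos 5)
Your proposal is correct and follows essentially the same route as the paper: rewrite $\bar{\vx}^{(t+1)}-\bar{\vx}^{(t)}$ as $\frac{1}{m}\sum_{i}(\vx_i^{(t+1)}-\bar{\vx}^{(t)})$ to collapse the left-hand side into $\frac{1}{m}\sum_{i}\ip{\frac{1}{m}\sum_{j}\nabla_\vx f_j(\vx_{j}^{(t)},\vy_{j}^{(t)})-\vv_{\vx,i}^{(t)}}{\vx_i^{(t+1)}-\bar{\vx}^{(t)}}$, then apply the Peter--Paul inequality with parameter $L_Pc_3$. No gaps.
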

\begin{proof}
Notice	
		\begin{align*}
			&\left\langle{\frac{1}{m}\sum_{i=1}^{m}\nabla_\vx f_i(\vx_{i}^{(t)},\vy_{i}^{(t)})}, {\bar{\vx}^{(t+1)}-\bar{\vx}^{(t)}} \right\rangle -\frac{1}{m}\sum_{i=1}^{m}\ip{\vx_{i}^{(t+1)}-\bar{\vx}^{(t)}}{\vv_{\vx,i}^{(t)}}\\
			=&\frac{1}{m}\sum_{i=1}^{m}\left\langle{\frac{1}{m}\sum_{j=1}^{m}\nabla_\vx f_j(\vx_{j}^{(t)},\vy_{j}^{(t)})}, {\vx_{i}^{(t+1)}-\bar{\vx}^{(t)}} \right\rangle -\frac{1}{m}\sum_{i=1}^{m}\ip{\vx_{i}^{(t+1)}-\bar{\vx}^{(t)}}{\vv_{\vx,i}^{(t)}}\\
			=&\frac{1}{m}\sum_{i=1}^{m}\left\langle{\frac{1}{m}\sum_{j=1}^{m}\nabla_\vx f_j(\vx_{j}^{(t)},\vy_{j}^{(t)})-\vv_{\vx,i}^{(t)}}, {\vx_{i}^{(t+1)}-\bar{\vx}^{(t)}} \right\rangle.
		\end{align*}
Then the desired result follows from the Peter-Paul inequality.		
\end{proof}

\begin{proof} (Of Lemma~\ref{lemma:base_inequality_updated})
	The proof follows from applying~\eqref{lemma:inner_product:bound} to~\eqref{lemma:base_inequality:bound} to have
		\begin{equation}\label{lemma:base_inequality_updated:eqn0}
			\begin{split}
			\phi(\bar{\vx}^{(t+1)},&\vLam^{(t+1)})  -\phi(\bar{\vx}^{(t)},\vLam^{(t)})
			\le\frac{1}{2m}\sum_{i=1}^m\left(L_Pc_3\norm{\vx_i^{(t+1)}-\bar{\vx}^{(t)}}_2^2+\frac{1}{L_Pc_3}\norm{\frac{1}{m}\sum_{j=1}^{m}\nabla_\vx f_j(\vx_{j}^{(t)},\vy_{j}^{(t)})-\vv_{\vx,i}^{(t)}}_2^2\right)\\
					&-\frac{1}{2m\eta_{\vx}}\left(\norm{\vX^{(t+1)}-\bar{\vX}^{(t)}}_F^2+\norm{\vX^{(t+1)}-\widetilde{\vX}^{(t)}}_F^2-\norm{\widetilde{\vX}^{(t)}-\bar{\vX}^{(t)}}_F^2\right)
				+\left(\frac{L(\kappa+1)}{2m}+\frac{\kappa^2}{2mc_2}\right)\norm{\vX_{\perp}^{(t)}}_F^2\\
				&+\frac{L(\kappa+1+c_1)+L_P}{2}\norm{\bar{\vx}^{(t+1)}-\bar{\vx}^{(t)}}_2^2+\left(\frac{L}{2mc_1}+\frac{1}{2mc_2}\right)\norm{\widetilde{\vY}^{(t)}-\vY^{(t)}}_F^2\\
			&+\frac{Lc_2}{4}\norm{\left(\vW-\vI\right)^\top\left(\vLam^{(t+1)}-\vLam^{(t)}\right)}_F^2-\left(\frac{1}{\eta_{\vLam}}-\frac{L_P}{2}\right)\norm{\vLam^{(t+1)}-\vLam^{(t)}}_F^2
			\end{split}
		\end{equation}
	and using the inequalities $\norm{\widetilde{\vX}^{(t)}-\bar{\vX}^{(t)}}_F^2=\norm{\left(\vW-\frac{1}{m}\vone\vone^\top\right)\vX_{\perp}^{(t)}}_F^2\le\rho^2\norm{\vX_{\perp}^{(t)}}_F^2$, $\norm{\bar{\vx}^{(t+1)}-\bar{\vx}^{(t)}}_2^2\le\frac{1}{m}\norm{\vX^{(t+1)}-\bar{\vX}^{(t)}}_F^2$, and $\norm{\vW-\vI}_2^2\le4$ to further upper bound the right-hand side of~\eqref{lemma:base_inequality_updated:eqn0}.
\end{proof}

\section{Proofs of Lemmas in Section~\ref{sec:analysis:common}}\label{sec:appendix:sec:analysis:common}
\begin{proof} [Of Lemma~\ref{lemma:gradient_error_all}]
	By Young's inequality, it holds that
		\begin{equation*}
			\begin{split}
				&\textstyle \sum_{i=1}^{m}\norm{\frac{1}{m}\sum_{j=1}^{m}\nabla_\vx f_j(\vx_{j}^{(t)},\vy_{j}^{(t)})-\vv_{\vx,i}^{(t)}}_2^2\\
				\le&\textstyle  2\sum_{i=1}^{m}\left(\norm{\frac{1}{m}\sum_{j=1}^{m}\nabla_\vx f_j(\vx_{j}^{(t)},\vy_{j}^{(t)})-\bar{\vd}_{\vx}^{(t)}}_2^2 + \norm{\bar{\vd}_{\vx}^{(t)}-\vv_{\vx,i}^{(t)}}_2^2\right)\\
				\le&2\norm{\nabla_{\vx}F(\vX^{(t)},\vY^{(t)})-\vD_{\vx}^{(t)}}_F^2+2\norm{\vV_{\perp,\vx}^{(t)}}_F^2
				\le 2\norm{\nabla F(\vX^{(t)},\vY^{(t)})-\vD^{(t)}}_F^2+2\norm{\vV_{\perp,\vx}^{(t)}}_F^2,
			\end{split}
		\end{equation*}
	where the second inequality follows from Jensen's inequality and $\bar{\vd}_{\vx}^{(t)}=\bar{\vv}_{\vx}^{(t)}, \forall\,t\ge0$. Taking the expectation yields~\eqref{lemma:gradient_error:average}. 
\end{proof}

\begin{proof} [Of Lemma~\ref{lemma:second_y_bound}]
By the $\vY$ update defined in~\eqref{algo:updates}, it holds for all $i=1,\dots,m$ that
		\begin{equation}\label{lemma:first_y_bound:eqn1}
			\mathbf{0}\in\partial h(\vy_{i}^{(t+1)})+\frac{1}{\eta_{\vy}}\left(\vy_{i}^{(t+1)}-\left(\vy_{i}^{(t)}+\eta_{\vy}\vv_{\vy,i}^{(t)}\right)\right)
		\end{equation}
	and hence for some $\tilde{\nabla}h(\vy_{i}^{(t+1)})\in\partial h(\vy_{i}^{(t+1)})$ and any $\vy_{i}\in\dom(h)$, it holds that
		\begin{equation}\label{lemma:first_y_bound:eqn2}
			0=\ip{\vy_{i}^{(t+1)}-\vy_{i}}{\tilde{\nabla}h(\vy_{i}^{(t+1)})+\frac{1}{\eta_{\vy}}\left(\vy_{i}^{(t+1)}-\big(\vy_{i}^{(t)}+\eta_{\vy}\vv_{\vy,i}^{(t)}\big)\right)}.
		\end{equation}
	By the convexity of $h$, we further have
		\begin{equation}\label{lemma:first_y_bound:eqn3}
			\begin{split}
				h(\vy_{i})\ge& h(\vy_{i}^{(t+1)})+\ip{\vy_{i}-\vy_{i}^{(t+1)}}{\tilde{\nabla}h(\vy_{i}^{(t+1)})}\ 
				\stack{\eqref{lemma:first_y_bound:eqn2}}{=}\enskip h(\vy_{i}^{(t+1)})+\left\langle{\vy_{i}^{(t+1)}-\vy_{i}}, \textstyle {\frac{1}{\eta_{\vy}}\left(\vy_{i}^{(t+1)}-\big(\vy_{i}^{(t)}+\eta_{\vy}\vv_{\vy,i}^{(t)}\big)\right)}\right\rangle.
			\end{split}
		\end{equation}
	Summing~\eqref{lemma:first_y_bound:eqn3} over $i=1,\dots,m$ and taking  $\vy_{i}=\vy_{i}^{(t)}$ for all $i=1,\dots,m$ gives
	\begin{equation}\label{lemma:first_y_bound:bound}
		\textstyle	\frac{1}{\eta_{\vy}}\norm{\vY^{(t+1)}-\vY^{(t)}}_F^2-\ip{\vY^{(t+1)}-\vY^{(t)}}{\vV_{\vy}^{(t)}}\le\sum_{i=1}^{m}\left(h(\vy_{i}^{(t)})-h(\vy_{i}^{(t+1)})\right).
		\end{equation}
		
		Now by the definition of $\Gamma_t(\vY)$ from~\eqref{eq:gamma_function} and the update for $\vV_{\vy}$ from~\eqref{algo:v_update}, it holds that
		\begin{equation}\label{lemma:second_y_bound:eqn1}
			m\nabla\Gamma_t(\vY^{(t)})-\vV_{\vy}^{(t)}=\nabla_{\vy}F(\vX^{(t)},\vY^{(t)})-\vD_{\vy}^{(t)}.
		\end{equation}
	By Assumption~\ref{assump-stoch}, $-\Gamma_t(\cdot)$ is $\frac{L}{m}$-smooth for all $t\ge0$ and hence
		\begin{align}\label{lemma:second_y_bound:eqn2}
				-\ip{\vY^{(t+1)}-\vY^{(t)}}{\vV_{\vy}^{(t)}}
				=&-m\ip{\vY^{(t+1)}-\vY^{(t)}}{\nabla\Gamma_t(\vY^{(t)})}-\ip{\vY^{(t+1)}-\vY^{(t)}}{\vD_{\vy}^{(t)}-\nabla_{\vy}F(\vX^{(t)},\vY^{(t)})}\nonumber\\
				\ge&m\left(\Gamma_t(\vY^{(t)})-\Gamma_t(\vY^{(t+1)})\right)-\frac{L}{2}\norm{\vY^{(t+1)}-\vY^{(t)}}_F^2-\ip{\vY^{(t+1)}-\vY^{(t)}}{\vR_{\vy}^{(t)}},
		\end{align}
	where we have used the definition of $\vR_{\vy}$ from~\eqref{eq:error_defs}. Next, we compute
		\begin{align}\label{lemma:second_y_bound:eqn4}
				&-\Gamma_t(\vY^{(t+1)})+\Gamma_{t+1}(\vY^{(t+1)}) \nonumber \\
				=&-\frac{1}{m}\sum_{i=1}^{m}f_{i}(\vx_{i}^{(t)},\vy_{i}^{(t+1)})+\frac{L}{2\sqrt{m}}\ip{(\vW-\vI)\vY^{(t+1)}}{\vLam^{(t)}}
				+\frac{1}{m}\sum_{i=1}^{m}f_{i}(\vx_{i}^{(t+1)},\vy_{i}^{(t+1)})-\frac{L}{2\sqrt{m}}\ip{(\vW-\vI)\vY^{(t+1)}}{\vLam^{(t+1)}} \nonumber \\
				\ge&\frac{1}{m}\sum_{i=1}^{m}\left(\ip{\nabla_{\vx}f_{i}(\vx_{i}^{(t+1)},\vy_{i}^{(t+1)})}{\vx_{i}^{(t+1)}-\vx_{i}^{(t)}}-\frac{L}{2}\norm{\vx_{i}^{(t+1)}-\vx_{i}^{(t)}}_2^2\right)
				+\frac{L}{2\sqrt{m}}\ip{(\vW-\vI)\vY^{(t+1)}}{\vLam^{(t)}-\vLam^{(t+1)}} \nonumber \\
				=&\frac{1}{m}\sum_{i=1}^{m}\left(\ip{\nabla_{\vx}f_{i}(\vx_{i}^{(t+1)},\tilde{\vy}_{i}^{(t+1)})}{\vx_{i}^{(t+1)}-\vx_{i}^{(t)}}-\frac{L}{2}\norm{\vx_{i}^{(t+1)}-\vx_{i}^{(t)}}_2^2\right)\\
				&+\frac{1}{m}\ip{\nabla_{\vx}F(\vX^{(t+1)},\vY^{(t+1)})-\nabla_{\vx}F(\vX^{(t+1)},\widetilde{\vY}^{(t+1)})}{\vX^{(t+1)}-\vX^{(t)}} \nonumber \\
				&+\frac{L}{2\sqrt{m}}\ip{(\vW-\vI)\widetilde{\vY}^{(t+1)}}{\vLam^{(t)}-\vLam^{(t+1)}}+\frac{L}{2\sqrt{m}}\ip{(\vW-\vI)(\vY^{(t+1)}-\widetilde{\vY}^{(t+1)})}{\vLam^{(t)}-\vLam^{(t+1)}} \nonumber 
		\end{align}
	where the inequality uses Assumption~\ref{assump-stoch} and $\widetilde{\vY}^{(t)}$ is defined in~\eqref{eq:argmaxes} for all $t\ge0.$ By
		\begin{equation*}
\textstyle			\nabla Q(\vX^{(t+1)}, \vLam^{(t+1)}) = \left(\frac{1}{m}\nabla F(\vX^{(t+1)},\widetilde{\vY}^{(t+1)})^\top,\ -\frac{L}{2\sqrt{m}}(\vW-\vI)\widetilde{\vY}^{(t+1)}\right),
		\end{equation*}
	we obtain from \eqref{eq:ineq-Q-smooth} that
		\begin{equation}\label{lemma:second_y_bound:eqn5}
			\begin{split}
				&\frac{1}{m}\sum_{i=1}^{m}\ip{\nabla_{\vx}f_{i}(\vx_{i}^{(t+1)},\tilde{\vy}_{i}^{(t+1)})}{\vx_{i}^{(t+1)}-\vx_{i}^{(t)}}+\frac{L}{2\sqrt{m}}\ip{(\vW-\vI)\widetilde{\vY}^{(t+1)}}{\vLam^{(t)}-\vLam^{(t+1)}}\\
				\ge&-Q(\vX^{(t)},\vLam^{(t)})+Q(\vX^{(t+1)},\vLam^{(t+1)})-\frac{L_Q}{2m}\norm{\vX^{(t+1)}-\vX^{(t)}}_F^2-\frac{L_Q}{2}\norm{\vLam^{(t+1)}-\vLam^{(t)}}_F^2.
			\end{split}
		\end{equation}
	Applying~\eqref{lemma:second_y_bound:eqn5} to~\eqref{lemma:second_y_bound:eqn4} and rearranging results in
		\begin{equation}\label{lemma:second_y_bound:eqn6}
			\begin{split}
				&-\Gamma_t(\vY^{(t+1)})+Q(\vX^{(t)},\vLam^{(t)})+\Gamma_{t+1}(\vY^{(t+1)})-Q(\vX^{(t+1)},\vLam^{(t+1)})\\
				\ge&\frac{1}{m}\ip{\nabla_{\vx}F(\vX^{(t+1)},\vY^{(t+1)})-\nabla_{\vx}F(\vX^{(t+1)},\widetilde{\vY}^{(t+1)})}{\vX^{(t+1)}-\vX^{(t)}}\\
				&+\frac{L}{2\sqrt{m}}\ip{(\vW-\vI)(\vY^{(t+1)}-\widetilde{\vY}^{(t+1)})}{\vLam^{(t)}-\vLam^{(t+1)}}
				-\frac{L_Q+L}{2m}\norm{\vX^{(t+1)}-\vX^{(t)}}_F^2-\frac{L_Q}{2}\norm{\vLam^{(t+1)}-\vLam^{(t)}}_F^2.
			\end{split}
		\end{equation}
	Adding $m$ times of \eqref{lemma:second_y_bound:eqn6} to \eqref{lemma:second_y_bound:eqn2} results in
		\begin{equation}\label{lemma:second_y_bound_eqn7}
			\begin{split}
				&-\ip{\vY^{(t+1)}-\vY^{(t)}}{\vV_{\vy}^{(t)}}+m\left(\Gamma_{t+1}(\vY^{(t+1)})-Q(\vX^{(t+1)},\vLam^{(t+1)})-\Gamma_{t}(\vY^{(t)})+Q(\vX^{(t)},\vLam^{(t)})\right)\\
				\ge&-\frac{L}{2}\norm{\vY^{(t+1)}-\vY^{(t)}}_F^2-\ip{\vY^{(t+1)}-\vY^{(t)}}{\vR_{\vy}^{(t)}}
				+\ip{\nabla_{\vx}F(\vX^{(t+1)},\vY^{(t+1)})-\nabla_{\vx}F(\vX^{(t+1)},\widetilde{\vY}^{(t+1)})}{\vX^{(t+1)}-\vX^{(t)}}\\
				&+\frac{L\sqrt{m}}{2}\ip{(\vW-\vI)(\vY^{(t+1)}-\widetilde{\vY}^{(t+1)})}{\vLam^{(t)}-\vLam^{(t+1)}}
				-\frac{L_Q+L}{2}\norm{\vX^{(t+1)}-\vX^{(t)}}_F^2-\frac{mL_Q}{2}\norm{\vLam^{(t+1)}-\vLam^{(t)}}_F^2.
			\end{split}
		\end{equation}
	Applying~\eqref{lemma:second_y_bound_eqn7} to~\eqref{lemma:first_y_bound:bound}, using the definition of $\hat\delta_t$ from \eqref{spider-delta-eqn1}, and rearranging terms results in
		\begin{align}\label{lemma:second_y_bound_eqn8}
				&m\big(\hat\delta_t - \hat\delta_{t+1}\big) \\
				\ge& \textstyle \left(\frac{1}{\eta_{\vy}}-\frac{L}{2}\right)\norm{\vY^{(t+1)}-\vY^{(t)}}_F^2-\ip{\vY^{(t+1)}-\vY^{(t)}}{\vR_{\vy}^{(t)}}
				+\ip{\nabla_{\vx}F(\vX^{(t+1)},\vY^{(t+1)})-\nabla_{\vx}F(\vX^{(t+1)},\widetilde{\vY}^{(t+1)})}{\vX^{(t+1)}-\vX^{(t)}} \nonumber \\
				&+\frac{L\sqrt{m}}{2}\ip{(\vW-\vI)(\vY^{(t+1)}-\widetilde{\vY}^{(t+1)})}{\vLam^{(t)}-\vLam^{(t+1)}}
				-\frac{L_Q+L}{2}\norm{\vX^{(t+1)}-\vX^{(t)}}_F^2-\frac{mL_Q}{2}\norm{\vLam^{(t+1)}-\vLam^{(t)}}_F^2. \nonumber 
			\end{align}
	By the Peter-Paul inequality, we have that for any $c_4,c_5>0$,
			\begin{align}
				&\ip{\vY^{(t)}-\vY^{(t+1)}}{\vR_{\vy}^{(t)}}\le\frac{1}{4\eta_{\vy}}\norm{\vY^{(t+1)}-\vY^{(t)}}_F^2+\eta_{\vy}\norm{\vR_{\vy}^{(t)}}_F^2,\label{lemma:second_y_bound:pp1}\\
				&\ip{\nabla_{\vx}F(\vX^{(t+1)},\vY^{(t+1)})-\nabla_{\vx}F(\vX^{(t+1)},\widetilde{\vY}^{(t+1)})}{\vX^{(t)}-\vX^{(t+1)}}
				\le\frac{L^2}{2c_4}\norm{\widetilde{\vY}^{(t+1)}-\vY^{(t+1)}}_F^2+\frac{c_4}{2}\norm{\vX^{(t+1)}-\vX^{(t)}}_F^2,\label{lemma:second_y_bound:pp2}\\
				&\frac{L\sqrt{m}}{2}\ip{(\vW-\vI)(\vY^{(t+1)}-\widetilde{\vY}^{(t+1)})}{\vLam^{(t+1)}-\vLam^{(t)}}
				\le\frac{L\sqrt{m}}{c_5}\norm{\widetilde{\vY}^{(t+1)}-\vY^{(t+1)}}_F^2+\frac{c_5L\sqrt{m}}{4}\norm{\vLam^{(t+1)}-\vLam^{(t)}}_F^2,\label{lemma:second_y_bound:pp3}
			\end{align}
	 where we have used Assumption~\ref{assump-stoch} in~\eqref{lemma:second_y_bound:pp2} and Assumption~\ref{assump-W} in~\eqref{lemma:second_y_bound:pp3}. Applying~\eqref{lemma:second_y_bound:pp1}-\eqref{lemma:second_y_bound:pp3} to~\eqref{lemma:second_y_bound_eqn8} results in
		\begin{equation}\label{lemma:second_y_bound:eqn9}
			\begin{split}
		\textstyle		\left(\frac{1}{\eta_{\vy}}-\frac{L}{2}-\frac{1}{4\eta_{\vy}}\right)\norm{\vY^{(t+1)}-\vY^{(t)}}_F^2
				\le m\big(\hat\delta_t - \hat\delta_{t+1}\big)
				+\eta_{\vy}\norm{\vR_{\vy}^{(t)}}_F^2+\left(\frac{L^2}{2c_4}+\frac{L\sqrt{m}}{c_5}\right)\norm{\widetilde{\vY}^{(t+1)}-\vY^{(t+1)}}_F^2\\
		\textstyle			+\left(\frac{L_Q+L}{2}+\frac{c_4}{2}\right)\norm{\vX^{(t+1)}-\vX^{(t)}}_F^2
				+\left(\frac{mL_Q}{2}+\frac{c_5L\sqrt{m}}{4}\right)\norm{\vLam^{(t+1)}-\vLam^{(t)}}_F^2.
			\end{split}
		\end{equation}
	By $\eta_{\vy}\le\frac{1}{4L}$, it holds that $\frac{1}{4\eta_{\vy}}\le\frac{1}{\eta_{\vy}}-\frac{L}{2}-\frac{1}{4\eta_{\vy}}$ and hence
		\begin{equation}\label{lemma:second_y_bound:eqn10}
			\begin{split}
			\textstyle	\norm{\vY^{(t+1)}-\vY^{(t)}}_F^2
				\le 4 m \eta_\vy \big(\hat\delta_t - \hat\delta_{t+1}\big)
				+4\eta_{\vy}^2\norm{\vR_{\vy}^{(t)}}_F^2+4\eta_{\vy}\left(\frac{L^2}{2c_4}+\frac{L\sqrt{m}}{c_5}\right)\norm{\widetilde{\vY}^{(t+1)}-\vY^{(t+1)}}_F^2\\
			\textstyle	+4\eta_{\vy}\left(\frac{L_Q+L}{2}+\frac{c_4}{2}\right)\norm{\vX^{(t+1)}-\vX^{(t)}}_F^2+4\eta_{\vy}\left(\frac{mL_Q}{2}+\frac{c_5L\sqrt{m}}{4}\right)\norm{\vLam^{(t+1)}-\vLam^{(t)}}_F^2.
			\end{split}
		\end{equation}
	Applying $\norm{\vR_{\vy}^{(t)}}_F^2\le\norm{\vR^{(t)}}_F^2$ completes the proof. 
\end{proof}

\begin{proof} [Of Lemma~\ref{lemma:third_y_bound}] 
	By the definition of $\widetilde{\vY}^{(t)}$ in~\eqref{eq:argmaxes}, it holds that
		\begin{equation}\label{lemma:third_y_bound:eqn1}
			\mathbf{0}\in\frac{1}{m}\Big[\nabla_{\vy}f_{1}(\vx_{1}^{(t)},\tilde{\vy}_{1}^{(t)})-\partial h(\tilde{\vy}_{1}^{(t)}),\dots,\nabla_{\vy}f_{m}(\vx_{m}^{(t)},\tilde{\vy}_{m}^{(t)})-\partial h(\tilde{\vy}_{m}^{(t)})\Big]^\top-\frac{L}{2\sqrt{m}}\left(\vW-\vI\right)^\top\vLam^{(t)}.
		\end{equation}
	Defining $\tilde{\vLam}^{(t)}:=\left(\vW-\vI\right)^\top\vLam^{(t)}$, we have for all $i=1,\dots,m$,
		\begin{equation}\label{lemma:third_y_bound:eqn2}
			\tilde{\vy}_{i}^{(t)}=\argmax_{\vy_{i}}\left\{\ip{\vy_{i}}{\nabla_{\vy}f_{i}(\vx_{i}^{(t)},\tilde{\vy}_{i}^{(t)})-\frac{L\sqrt{m}}{2}\tilde{\vlam}_{i}^{(t)}}-\frac{1}{2\eta_{\vy}}\norm{\vy_{i}-\tilde{\vy}_{i}^{(t)}}_2^2-h(\vy_{i})\right\}
		\end{equation}
	where $(\tilde{\vlam}_{i}^{(t)})^\top$ denotes the $i$-th row of $\tilde{\vLam}^{(t)}$. Hence,
		\begin{equation}\label{lemma:third_y_bound:eqn3}
			\tilde{\vy}_{i}^{(t)}=\prox_{\eta_{\vy}h}\left(\tilde{\vy}_{i}^{(t)}+\eta_{\vy}\left(\nabla_{\vy}f_{i}(\vx_{i}^{(t)},\tilde{\vy}_{i}^{(t)})-\frac{L\sqrt{m}}{2}\tilde{\vlam}_{i}^{(t)}\right)\right).
		\end{equation}
	By the non-expansiveness of the proximal operator,
		\begin{align}\label{lemma:third_y_bound:eqn4}
				\norm{\tilde{\vy}_{i}^{(t)}-\vy_{i}^{(t+1)}}_2^2
				=&\norm{\prox_{\eta_{\vy}h}\left(\tilde{\vy}_{i}^{(t)}+\eta_{\vy}\left(\nabla_{\vy}f_{i}(\vx_{i}^{(t)},\tilde{\vy}_{i}^{(t)})-\frac{L\sqrt{m}}{2}\tilde{\vlam}_{i}^{(t)}\right)\right)-\prox_{\eta_{\vy}h}\left(\vy_{i}^{(t)}+\eta_{\vy}\vv_{\vy,i}^{(t)}\right)}_2^2 \nonumber \\
				\le&\norm{\tilde{\vy}_{i}^{(t)}+\eta_{\vy}\left(\nabla_{\vy}f_{i}(\vx_{i}^{(t)},\tilde{\vy}_{i}^{(t)})-\frac{L\sqrt{m}}{2}\tilde{\vlam}_{i}^{(t)}\right)-\vy_{i}^{(t)}-\eta_{\vy}\vv_{\vy,i}^{(t)}}_2^2.
		\end{align}
	Utilizing the $\vV_{\vy}$-update~\eqref{algo:v_update} in~\eqref{lemma:third_y_bound:eqn4} and the Peter-Paul inequality we obtain
		\begin{equation}\label{lemma:third_y_bound:eqn5}
			\begin{split}
			\norm{\tilde{\vy}_{i}^{(t)}-\vy_{i}^{(t+1)}}_F^2
			\le&(1+b)\norm{\tilde{\vy}_{i}^{(t)}-\vy_{i}^{(t)}+\eta_{\vy}\left(\nabla_{\vy}f_{i}(\vx_{i}^{(t)},\tilde{\vy}_{i}^{(t)})-\nabla_{\vy}f_{i}(\vx_{i}^{(t)},\vy_{i}^{(t)})\right)}_2^2\\
			&\textstyle +(1+\frac{1}{b})\eta_{\vy}^2\norm{\nabla_{\vy}f_{i}(\vx_{i}^{(t)},\vy_{i}^{(t)})-\vd_{\vy,i}^{(t)}}_2^2
			\end{split}
		\end{equation}
	where $b>0$ is an arbitrary constant. By the $L$-smoothness and $\mu$-strong convexity of $-f_i(\vx,\cdot)$, it holds for all $i\in[m]$ that
		\begin{align}\label{lemma:third_y_bound:eqn6}
				&\norm{\tilde{\vy}_{i}^{(t)}-\vy_{i}^{(t)}+\eta_{\vy}\left(\nabla_{\vy}f_{i}(\vx_{i}^{(t)},\tilde{\vy}_{i}^{(t)})-\nabla_{\vy}f_{i}(\vx_{i}^{(t)},\vy_{i}^{(t)})\right)}_2^2 \nonumber \\
				=&\norm{\tilde{\vy}_{i}^{(t)}-\vy_{i}^{(t)}}_2^2+2\eta_{\vy}\ip{\tilde{\vy}_{i}^{(t)}-\vy_{i}^{(t)}}{\nabla_{\vy}f_{i}(\vx_{i}^{(t)},\tilde{\vy}_{i}^{(t)})-\nabla_{\vy}f_{i}(\vx_{i}^{(t)},\vy_{i}^{(t)})}
				+\eta_{\vy}^2\norm{\nabla_{\vy}f_{i}(\vx_{i}^{(t)},\tilde{\vy}_{i}^{(t)})-\nabla_{\vy}f_{i}(\vx_{i}^{(t)},\vy_{i}^{(t)})}_2^2 \nonumber \\
				\le&\norm{\tilde{\vy}_{i}^{(t)}-\vy_{i}^{(t)}}_2^2+\left(2\eta_{\vy}-\eta_{\vy}^2L\right)\ip{\tilde{\vy}_{i}^{(t)}-\vy_{i}^{(t)}}{\nabla_{\vy}f_{i}(\vx_{i}^{(t)},\tilde{\vy}_{i}^{(t)})-\nabla_{\vy}f_{i}(\vx_{i}^{(t)},\vy_{i}^{(t)})}\nonumber \\
				\le&\left(1-2\eta_{\vy}\mu+\eta_{\vy}^2\mu L\right)\norm{\tilde{\vy}_{i}^{(t)}-\vy_{i}^{(t)}}_2^2
				\le\left(1-\frac{7}{4}\eta_{\vy}\mu\right)\norm{\tilde{\vy}_{i}^{(t)}-\vy_{i}^{(t)}}_2^2
			\end{align}
	where the second to last inequality uses the strong concavity of $f_i(\vx,\cdot)$ and the last uses $\eta_{\vy}\le\frac{1}{4L}$ to have $1-2\eta_{\vy}\mu+\eta_{\vy}^2\mu L\le1-\frac{7}{4}\eta_{\vy}\mu$. Hence, by the Peter-Paul inequality and utilizing~\eqref{lemma:third_y_bound:eqn6} within~\eqref{lemma:third_y_bound:eqn5}, we have
		\begin{align}\label{lemma:third_y_bound:eqn7}
				\norm{\widetilde{\vY}^{(t+1)}-\vY^{(t+1)}}_F^2\nonumber 
				\le&\textstyle (1+a)\norm{\widetilde{\vY}^{(t)}-\vY^{(t+1)}}_F^2+(1+\frac{1}{a})\norm{\widetilde{\vY}^{(t+1)}-\widetilde{\vY}^{(t)}}_F^2\nonumber \\
				\le&\textstyle (1+a)(1+b)\left(1-\frac{7}{4}\eta_{\vy}\mu\right)\norm{\widetilde{\vY}^{(t)}-\vY^{(t)}}_F^2+(1+a)(1+\frac{1}{b})\eta_{\vy}^2\norm{\vR_{\vy}^{(t)}}_F^2
				+(1+\frac{1}{a})\norm{\widetilde{\vY}^{(t+1)}-\widetilde{\vY}^{(t)}}_F^2\nonumber \\
				\stack{\eqref{eq:lip-S2}}{\le}&\textstyle \enskip(1+a)(1+b)\left(1-\frac{7}{4}\eta_{\vy}\mu\right)\norm{\widetilde{\vY}^{(t)}-\vY^{(t)}}_F^2+(1+a)(1+\frac{1}{b})\eta_{\vy}^2\norm{\vR_{\vy}^{(t)}}_F^2\\
				&\textstyle +2(1+\frac{1}{a})\kappa^2\left(\norm{\vX^{(t+1)}-\vX^{(t)}}_F^2+m\norm{\vLam^{(t+1)}-\vLam^{(t)}}_F^2\right)\nonumber 
			\end{align}
	where $a>0$ is an arbitrary constant. Setting $a=\frac{\eta_{\vy}\mu}{2-3\eta_{\vy}\mu}=\frac{1-\eta_{\vy}\mu}{1-\frac{3}{2}\eta_{\vy}\mu}-1$ and $b=\frac{\frac{1}{4}\eta_{\vy}\mu}{1-\frac{7}{4}\eta_{\vy}\mu}=\frac{1-\frac{3}{2}\eta_{\vy}\mu}{1-\frac{7}{4}\eta_{\vy}\mu}-1$ in~\eqref{lemma:third_y_bound:eqn7} and utilizing $\eta_{\vy}\le\frac{1}{4L}$, it holds that
		\begin{align}
			&(1+a)(1+b)\left(1-\frac{7}{4}\eta_{\vy}\mu\right)=1-\eta_{\vy}\mu,\label{lemma:third_y_bound:eqn8}\\
			&(1+a)(1+\frac{1}{b})\eta_{\vy}^2=\left(\frac{1-\eta_{\vy}\mu}{1-\frac{3}{2}\eta_{\vy}\mu}\right)\left(\frac{1-\frac{3}{2}\eta_{\vy}\mu}{\frac{1}{4}\eta_{\vy}\mu}\right)\eta_{\vy}^2=\frac{4\eta_{\vy}-4\eta_{\vy}^2\mu}{\mu}\le\frac{4\eta_{\vy}}{\mu},\label{lemma:third_bound:eqn9}\\
			&1+\frac{1}{a}=\frac{2(1-\eta_{\vy}\mu)}{\eta_{\vy}\mu}\le\frac{2}{\eta_{\vy}\mu}.\label{lemma:third_bound:eqn10}
		\end{align}
	Applying the bounds in~\eqref{lemma:third_y_bound:eqn8}-\eqref{lemma:third_bound:eqn10} to~\eqref{lemma:third_y_bound:eqn7} and utilizing $\norm{\vR_{\vy}^{(t)}}_F^2\le\norm{\vR^{(t)}}_F^2$ completes the proof.
\end{proof}

\section{Proofs of Lemmas and Corollary in Section~\ref{sec:analysis:spider}}\label{sec:appendix:sec:analysis:spider}
\begin{proof} [Of Lemma~\ref{lemma:gradient_error-spider}]
We first prove~\eqref{lemma:gradient_error-spider:bound}, for which we break the proof into two cases. For the first case, we assume $t=n_{t}q$, i.e., $t$ is divisible by $q$. Then for all $i\in[m]$, we have by the definition of $\Upsilon$ that
		\begin{equation}\label{lemma:gradient_error-spider:eqn1}
				\EE\norm{\vr_{i}^{(t)}}_2^2=\EE\norm{\nabla f_{i}(\vx_{i}^{(t)},\vy_{i}^{(t)})-\vd_{i}^{(t)}}_2^2\enskip\stack{\eqref{algo:grad_update-spider}}{=}\enskip\EE\left[\EE_{\tilde{\cB}_{i}^{(t)}}\norm{\nabla f_{i}(\vx_{i}^{(t)},\vy_{i}^{(t)})-G_{i}^{(t)}(\tilde{\cB}_{i}^{(t)})}_2^2\right]\le \Upsilon
		\end{equation}
	where the second equality uses the law of total expectation and the inequality uses Assumption~\ref{assump-stoch}(ii) and $\lvert\tilde{\cB}_{i}^{(t)}\rvert=\cS_1$. Next, we assume $n_{t}q<t<(n_{t}+1)q$ and compute
		\begin{equation}\label{lemma:gradient_error-spider:eqn2}
			\begin{split}
				\EE\norm{\vr_{i}^{(t)}}_2^2=&\EE\norm{\nabla f_{i}(\vx_{i}^{(t)},\vy_{i}^{(t)})-\vd_{i}^{(t)}}_2^2
				\stack{\eqref{algo:grad_update-spider}}{=}\enskip\EE\norm{\nabla f_{i}(\vx_{i}^{(t)},\vy_{i}^{(t)})-G_{i}^{(t)}(\cB_{i}^{(t)})+G_{i}^{(t-1)}(\cB_{i}^{(t)})-\vd_{i}^{(t-1)}}_2^2\\
				\le&\frac{L^2}{\cS_2}\left(\EE\norm{\vx_{i}^{(t)}-\vx_{i}^{(t-1)}}_2^2+\EE\norm{\vy_{i}^{(t)}-\vy_{i}^{(t-1)}}_2^2\right)+\EE\norm{\vr_{i}^{(t-1)}}_2^2,
			\end{split}
		\end{equation}
	where the inequality comes from Eqn.~(A.4) in \cite{fang18spider}. Recursively utilizing~\eqref{lemma:gradient_error-spider:eqn2} for $n_{t}q< t<(n_{t}+1)q$ and using \eqref{lemma:gradient_error-spider:eqn1} with $t=n_t q$ yields
		\begin{equation}\label{lemma:gradient_error-spider:eqn6}
			\begin{split}
				\EE\norm{\vr_{i}^{(t)}}_2^2\le&\frac{L^2}{\cS_2}\sum_{r=n_{t}q}^{t-1}\left(\EE\norm{\vx_{i}^{(r+1)}-\vx_{i}^{(r)}}_2^2+\EE\norm{\vy_{i}^{(r+1)}-\vy_{i}^{(r)}}_2^2\right)+\Upsilon .
			\end{split}
		\end{equation}
	Summing up~\eqref{lemma:gradient_error-spider:eqn6} over $i=1,\dots,m$ proves~\eqref{lemma:gradient_error-spider:bound}. 
	
	For~\eqref{lemma:gradient_error-spider:v_perp}, we utilize the technique from Lemma C.7 in~\cite{mancino2022proximal} to have
			\begin{equation}\label{lemma:gradient_error:eqn2}
				\norm{\vV_{\perp,\vx}^{(t+1)}}_F^2\le\rho\norm{\vV_{\perp,\vx}^{(t)}}_F^2+\frac{1}{1-\rho}\norm{\vD_{\vx}^{(t+1)}-\vD_{\vx}^{(t)}}_F^2.
			\end{equation}
	By $\norm{\vd_{\vx,i}^{(t+1)}-\vd_{\vx,i}^{(t)}}_2^2\le\norm{\vd_{i}^{(t+1)}-\vd_{i}^{(t)}}_2^2, \forall\, i\in [m]$, we use	Young's inequality to have
			\begin{align}\label{lemma:gradient_error-spider:eqn8}
				&\norm{\vD_{\vx}^{(t+1)}-\vD_{\vx}^{(t)}}_F^2
				\le\norm{\vD^{(t+1)}-\vD^{(t)}}_F^2 \nonumber \\
				\le&3\norm{\vD^{(t+1)}-\nabla F(\vZ^{(t+1)})}_F^2+3\norm{\nabla F(\vZ^{(t+1)})-\nabla F(\vZ^{(t)})}_F^2
				+3\norm{\nabla F(\vZ^{(t)})-\vD^{(t)}}_F^2 \nonumber  \\
				\le&3\norm{\vR^{(t+1)}}_F^2+3L^2\norm{\vZ^{(t+1)}-\vZ^{(t)}}_F^2+3\norm{\vR^{(t)}}_F^2,
			\end{align}
	where the last inequality further uses Assumption~\ref{assump-stoch}. Take the expectation of~\eqref{lemma:gradient_error-spider:eqn8} and utilize~\eqref{lemma:gradient_error-spider:bound}
	to have
			\begin{equation}\label{lemma:gradient_error-spider:eqn9}
				\begin{split}
				&\EE\norm{\vD_{\vx}^{(t+1)}-\vD_{\vx}^{(t)}}_F^2\\
				\le&3L^2\EE\norm{\vZ^{(t+1)}-\vZ^{(t)}}_F^2
				+\frac{3L^2}{\cS_2}\sum_{r=(n_{t+1})q}^{t}\EE\norm{\vZ^{(r+1)}-\vZ^{(r)}}_F^2+ 3m \Upsilon
				+\frac{3L^2}{\cS_2}\sum_{r=n_{t}q}^{t-1}\EE\norm{\vZ^{(r+1)}-\vZ^{(r)}}_F^2+3m \Upsilon.
				\end{split}
			\end{equation}
Applying~\eqref{lemma:gradient_error-spider:eqn9} to the expectation of~\eqref{lemma:gradient_error:eqn2} and further using the non-negativity of the 2-norm completes the proof.
\end{proof}

\begin{proof} [Of Lemma~\ref{lemma:y_upper_bounds-spider}]
With constants in \eqref{lemma:parameters-spider:constants} and $L_Q = L\sqrt{4\kappa^2+1} \le L(2\kappa+1)$, \eqref{lemma:second_y_bound:bound} implies
  	 	\begin{align*}
 				&\sum_{t=0}^{T-1}  \EE\norm{\vY^{(t+1)}-\vY^{(t)}}_F^2
 				\le\EE\left[\frac{m}{L}(\hat\delta_0 - \hat\delta_T)\right]+ \frac{1}{4L^2}\sum_{t=0}^{T-1}\EE\norm{\vR^{(t)}}_F^2+\frac{1}{16\kappa^2}\sum_{t=0}^{T-1}\EE\norm{\widetilde{\vY}^{(t+1)}-\vY^{(t+1)}}_F^2 \nonumber\\
 				& \hspace{2cm}+(8\kappa^2+\kappa+1)\sum_{t=0}^{T-1}\EE\norm{\vX^{(t+1)}-\vX^{(t)}}_F^2
 				+m(8\kappa^2+\kappa+1)\sum_{t=0}^{T-1}\EE\norm{\vLam^{(t+1)}-\vLam^{(t)}}_F^2 \nonumber \\
 				\stack{\eqref{lemma:gradient_error-spider:bound},\eqref{eq:spider-sum-error}}{\le}&\quad\ \ \  \frac{m}{L} \hat\delta_{0}+\frac{1}{16\kappa^2}\sum_{t=0}^{T-1}\EE\norm{\widetilde{\vY}^{(t+1)}-\vY^{(t+1)}}_F^2
 				 	+(8\kappa^2+\kappa+1)\sum_{t=0}^{T-1}\EE\norm{\vX^{(t+1)}-\vX^{(t)}}_F^2\\
 				 	&+m(8\kappa^2+\kappa+1)\sum_{t=0}^{T-1}\EE\norm{\vLam^{(t+1)}-\vLam^{(t)}}_F^2 
 				 	+\frac{1}{4}\sum_{t=0}^{T-1}\EE\norm{\vZ^{(t+1)}-\vZ^{(t)}}_F^2+\frac{m \Upsilon T}{4L^2}, \nonumber
  	 	\end{align*}
where we have used the fact $\hat\delta_T\ge0$. 		
Since $\vZ^{(t)} = (\vX^{(t)}, \vY^{(t)})$, the inequality above clearly indicates		
\begin{align}\label{lemma:y_upper_bounds-spider:y-bound-v0}
 				\sum_{t=0}^{T-1}&\EE\norm{\vY^{(t+1)}-\vY^{(t)}}_F^2\le 
 				 	\left(\frac{4}{3}(8\kappa^2+\kappa+1)+\frac{1}{3}\right)\sum_{t=0}^{T-1}\EE\norm{\vX^{(t+1)}-\vX^{(t)}}_F^2\\
 				 	&+\frac{1}{12\kappa^2}\sum_{t=0}^{T-1}\EE\norm{\widetilde{\vY}^{(t+1)}-\vY^{(t+1)}}_F^2 +\frac{4}{3}m(8\kappa^2+\kappa+1)\sum_{t=0}^{T-1}\EE\norm{\vLam^{(t+1)}-\vLam^{(t)}}_F^2 
 				 	+ \frac{4 m }{3 L}\hat\delta_{0}+ \frac{m\Upsilon T}{3L^2}. \nonumber
  	 	\end{align}		
		
 On the other hand, summing up \eqref{lemma:third_y_bound:bound}  and using \eqref{lemma:gradient_error-spider:bound} and \eqref{eq:spider-sum-error}, we have
  		\begin{align}\label{lemma:y_upper_bounds-spider:eqn2}
  				\frac{1}{4\kappa}\sum_{t=0}^{T-1} & \EE\norm{\widetilde{\vY}^{(t+1)}-\vY^{(t+1)}}_F^2
  				\le\left(1-\frac{1}{4\kappa}\right)\norm{\widetilde{\vY}^{(0)}-\vY^{(0)}}_F^2+\frac{Tm \Upsilon }{L\mu}\\
  				&+(16\kappa^3+\kappa)\sum_{t=0}^{T-1}\EE\norm{\vX^{(t+1)}-\vX^{(t)}}_F^2+16m\kappa^3 \sum_{t=0}^{T-1}\EE\norm{\vLam^{(t+1)}-\vLam^{(t)}}_F^2+\kappa \sum_{t=0}^{T-1}\EE\norm{\vY^{(t+1)}-\vY^{(t)}}_F^2. \nonumber
  		\end{align}
  	Utilize~\eqref{lemma:y_upper_bounds-spider:y-bound-v0} within~\eqref{lemma:y_upper_bounds-spider:eqn2}, solve it for $\sum_{t=0}^{T-1}  \EE\|\widetilde{\vY}^{(t+1)}-\vY^{(t+1)}\|_F^2$, and bound $\|\vX^{(t+1)}-\vX^{(t)}\|_F^2$ by \eqref{lemma:lyapunov_function-spider:eqn6}. We obtain \eqref{lemma:y_upper_bounds-spider:y-tilde-bound}. Now substitute \eqref{lemma:y_upper_bounds-spider:y-tilde-bound} into \eqref{lemma:y_upper_bounds-spider:y-bound-v0}, use \eqref{lemma:lyapunov_function-spider:eqn6} again, and combine like terms to have \eqref{lemma:y_upper_bounds-spider:y-bound} and complete the proof.
\end{proof}

\begin{proof} [Of Corollary~\ref{corollary:complexity-spider}] 
  By the choice of initialization and the definition of $C_0$ from Theorem~\ref{lem:bound-on-all-terms-spider}, we have 
		$$\textstyle \frac{L^2(6\kappa+3)}{mT} \norm{\widetilde{\vY}^{(0)}-\vY^{(0)}}_F^2 = \cO\left(\frac{L\kappa^2}{T}\right),\quad C_0=\bigO{\phi(\bar{\vx}^{(0)},\vLam^{(0)})- \phi^*  +\frac{L \hat\delta_0}{L_P(1-\rho)^2} +1}.$$ 
		Additionally by~\eqref{eq:para-eta-x-lam-set-pspider} and $L_P=L\sqrt{4\kappa^2+1}$, the stepsizes in \eqref{eq:para-eta-x-lam-set-pspider} are $\eta_{\vx}=\Theta\left(\frac{\min\{1,\kappa(1-\rho)^2\}}{L\kappa^2}\right)$ and $\eta_{\vLam}=\Theta\left(\frac{\min\{1,\kappa(1-\rho)^2\}}{\kappa^2L}\right)$. Thus we have from \eqref{eq:tau-bd-prox-map-x-spider} that
			\begin{equation*}
				\begin{split}
							&\EE\norm{\frac{1}{\eta_{\vx}}\left(\bar{\vx}^{(\tau)}-\prox_{\eta_{\vx}g}\left(\bar{\vx}^{(\tau)}-\eta_{\vx}\nabla_{\vx} P(\bar{\vx}^{(\tau)},\vLam^{(\tau)})\right)\right)}_2^2+\frac{L^2}{m}\EE\norm{\vX_{\perp}^{(\tau)}}_F^2\\
							= & \cO\left( \kappa^2\Upsilon + \frac{L\kappa^2 \hat\delta_0}{T} + \left(\frac{1}{T}{\textstyle\left(\phi(\bar{\vx}^{(0)},\vLam^{(0)})- \phi^*  +\frac{L \hat\delta_0}{L_P(1-\rho)^2} + 1\right)} + \frac{\Upsilon}{L\cdot \min\{1,\kappa(1-\rho)^2\} }\right) \cdot \frac{L\kappa^2}{\min\{1,\kappa(1-\rho)^2\}}\right)\\
= & \cO\left(\left(\frac{1}{T}{\textstyle\left(\phi(\bar{\vx}^{(0)},\vLam^{(0)})- \phi^*  +\frac{\hat\delta_0}{\min\{1, \kappa(1-\rho)^2\} } + 1\right)} + \frac{\Upsilon}{L\cdot \min\{1,\kappa(1-\rho)^2\} }\right) \cdot \frac{L\kappa^2}{\min\{1,\kappa(1-\rho)^2\}}\right)							
				\end{split}
			\end{equation*}
		and similarly
			\begin{equation*}
				\begin{split}
							&\EE\norm{\nabla_{\vLam}P(\bar{\vx}^{(\tau)},\vLam^{(\tau)})}_F^2 \\
							= & \cO\left(\left(\frac{1}{T}{\textstyle\left(\phi(\bar{\vx}^{(0)},\vLam^{(0)})- \phi^*  +\frac{\hat\delta_0}{\min\{1, \kappa(1-\rho)^2\} } + 1\right)} + \frac{\Upsilon}{L\cdot \min\{1,\kappa(1-\rho)^2\} }\right) \cdot \frac{L\kappa^2}{\min\{1,\kappa(1-\rho)^2\}}\right).
				\end{split}
			\end{equation*}

		Thus for the finite-sum setting, since $\Upsilon = 0$, we can produce an $\varepsilon$-stationary point as defined in Definition~\ref{def:stationarity} by $T$ iterations, with 
		\begin{equation}\label{eq:total-T-spider}
		T=\Theta\left(\frac{L\kappa^2}{\vareps^2 \cdot\min\{1,\kappa(1-\rho)^2\}} \left(\phi(\bar{\vx}^{(0)},\vLam^{(0)})- \phi^*  +\frac{\hat\delta_0}{\min\{1, \kappa(1-\rho)^2\} } + 1\right)\right).
		\end{equation} 
		For the general case, $\Upsilon = \frac{\sigma^2}{\cS_1}$, and an $\varepsilon$-stationary point can be produced with $\cS_1 = \Theta\left(\frac{\sigma^2}{\vareps^2} \cdot\max\{\kappa^2, (1-\rho)^{-4}\}\right)$ and $T$ in the same order as that in \eqref{eq:total-T-spider}.
				For both general case and finite-sum case, we set $\cS_2=q = \left\lceil \sqrt{\cS_1}\right\rceil$. Noticing the total number of communication is $T_c= T$ and the total number of sample gradients $T_s = (T-\lfloor\frac{T}{q}\rfloor)\cS_2 + \lceil\frac{T}{q} \rceil \cS_1$, we complete the proof.
	\end{proof}

\section{Proofs of Lemmas and Corollary in Section~\ref{sec:analysis:storm}}\label{sec:appendix:sec:analysis:storm}

\begin{proof} [Of Lemma~\ref{lemma:gradient_error}] 
	For~\eqref{lemma:gradient_error:v_perp}, we use \eqref{lemma:gradient_error:eqn2}.	When \texttt{VR}-tag == STORM in Algorithm~\ref{algo:}, it holds
		\begin{equation}\label{lemma:gradient_error:eqn3}
			\begin{split}
				&\norm{\vd_{\vx,i}^{(t+1)}-\vd_{\vx,i}^{(t)}}_2^2
				\le\norm{\vd_{i}^{(t+1)}-\vd_{i}^{(t)}}_2^2\\
				=&\norm{G_{i}^{t+1}(\cB_{i}^{(t+1)})+(1-\beta)\left(\vd_{i}^{(t)}-G_{i}^{(t)}(\cB_{i}^{(t+1)})\right)-\vd_{i}^{(t)}}_2^2\\
				=&\norm{G_{i}^{t+1}(\cB_{i}^{(t+1)})-G_{i}^{(t)}(\cB_{i}^{(t+1)})+\beta\left(G_{i}^{(t)}(\cB_{i}^{(t+1)})-\nabla f_{i}(\vx_{i}^{(t)},\vy_{i}^{(t)})\right)+\beta\left(\nabla f_{i}(\vx_{i}^{(t)},\vy_{i}^{(t)})-\vd_{i}^{(t)}\right)}_2^2.
			\end{split}
		\end{equation}
	Using Young's inequality and taking the expectation with respect to the samples $\cB_{i}^{(t+1)}$ and then the full expectation yields
		\begin{equation}\label{lemma:gradient_error:eqn4}
				\EE\norm{\vd_{\vx,i}^{(t+1)}-\vd_{\vx,i}^{(t)}}_2^2\le3 L^2\EE\norm{\vx_{i}^{(t+1)}-\vx_{i}^{(t)}}_2^2+3L^2\EE\norm{\vy_{i}^{(t+1)}-\vy_{i}^{(t)}}_2^2+3\beta^2\Upsilon_{t+1}+3\beta^2\EE\norm{\vr_{i}^{(t)}}_2^2,
		\end{equation}
	where $(\vr_{i}^{(t)})^\top$ is defined as the $i$-th row of $\vR^{(t)}$ for all $t\ge0$ and we have additionally used Assumption~\ref{assump-stoch}(ii). Taking the full expectation of~\eqref{lemma:gradient_error:eqn2} and applying~\eqref{lemma:gradient_error:eqn3} with~\eqref{lemma:gradient_error:eqn4} summed over $i=1,\dots,m$ yields~\eqref{lemma:gradient_error:v_perp}. The proof of~\eqref{lemma:gradient_error:error_term} follows from the same arguments of the proof of \cite[Lemma C.9]{mancino2022proximal}.
\end{proof}

\begin{proof} [Of Lemma~\ref{lemma:bound-on-all-terms-storm}]
	We first take the expectation of~\eqref{lemma:base_inequality_updated:bound}, apply~\eqref{lemma:gradient_error:average}, and plug in the values of $c_1, c_2,$ and $c_3$ specified in \eqref{lemma:parameters:constants} to have
		\begin{equation}\label{lemma:bound-on-all-terms-storm:eqn1}
			\begin{split}
				\EE\left[\phi(\bar{\vx}^{(t+1)},\vLam^{(t+1)})-\phi(\bar{\vx}^{(t)},\vLam^{(t)})\right]\le&-\frac{1}{4m\eta_{\vx}}\EE\norm{\vX^{(t+1)}-\bar{\vX}^{(t)}}_F^2-\frac{1}{2m\eta_{\vx}}\EE\norm{\vX^{(t+1)}-\widetilde{\vX}^{(t)}}_F^2\\
							&\hspace{-4.5cm}-\left(\frac{1}{\eta_{\vLam}}-\frac{L_P}{2}-\frac{32\kappa^2L}{\sqrt{\beta}}\right)\EE\norm{\vLam^{(t+1)}-\vLam^{(t)}}_F^2+\frac{1}{2m}\left(L(\kappa+1)+\frac{\sqrt{\beta}}{32}+\frac{\rho^2}{\eta_{\vx}}\right)\EE\norm{\vX_{\perp}^{(t)}}_F^2\\
							&\hspace{-4.5cm}+\frac{\sqrt{\beta}(L+1)}{64m\kappa^2}\EE\norm{\widetilde{\vY}^{(t)}-\vY^{(t)}}_F^2+\frac{\sqrt{\beta}}{60 mL_P}\EE\left(\norm{\vR^{(t)}}_F^2+\norm{\vV_{\perp,\vx}^{(t)}}_F^2\right),
			\end{split}
		\end{equation}
	where the coefficient of $\EE\norm{\vX^{(t+1)}-\bar{\vX}^{(t)}}_F^2$ is obtained by using $L_P=L\sqrt{4\kappa^2+1}\le L(2\kappa+1)$ and $\sqrt{\beta}<1$ to have $$L\left(\kappa+1+\frac{32\kappa^2}{\sqrt{\beta}}\right)+L_P\left(1+\frac{60}{\sqrt{\beta}}\right)\le\frac{32\kappa^2L+123\kappa L+62L}{\sqrt{\beta}}\le\frac{1}{2\eta_{\vx}}.$$
	Next, for any $\gamma_1,\gamma_2,\gamma_3,\gamma_4>0$, we add $\gamma_1\EE\norm{\vX_{\perp}^{(t+1)}}_F^2,\gamma_2\EE\norm{\vV_{\perp,\vx}^{(t+1)}}_F^2,\gamma_3\EE\norm{\vR^{(t+1)}}_F^2,\gamma_4\EE\norm{\widetilde{\vY}^{(t+1)}-\vY^{(t+1)}}_F^2$ to both sides of~\eqref{lemma:bound-on-all-terms-storm:eqn1} and upper bound the right-hand side using Lemmas~\ref{lemma:consensus},~\ref{lemma:third_y_bound}, and~\ref{lemma:gradient_error} to obtain
		\begin{align}\label{lemma:bound-on-all-terms-storm:eqn2}
				&\EE\left[\phi(\bar{\vx}^{(t+1)},\vLam^{(t+1)})-\phi(\bar{\vx}^{(t)},\vLam^{(t)})\right]+\gamma_1\EE\norm{\vX_{\perp}^{(t+1)}}_F^2+\gamma_2\EE\norm{\vV_{\perp,\vx}^{(t+1)}}_F^2
				+\gamma_3\EE\norm{\vR^{(t+1)}}_F^2+\gamma_4\EE\norm{\widetilde{\vY}^{(t+1)}-\vY^{(t+1)}}_F^2 \nonumber \\
				\le&-\frac{1}{4m\eta_{\vx}}\EE\norm{\vX^{(t+1)}-\bar{\vX}^{(t)}}_F^2-\frac{1}{2m\eta_{\vx}}\EE\norm{\vX^{(t+1)}-\widetilde{\vX}^{(t)}}_F^2+\left(\frac{3\gamma_2}{1-\rho}+2\gamma_3\right)m\beta^2\Upsilon_{t+1}\\
				&-\left(\frac{1}{\eta_{\vLam}}-\frac{L_P}{2}-\frac{32\kappa^2L}{\sqrt{\beta}}-\frac{16\sqrt{2}\kappa^3m\gamma_4}{\sqrt{\beta}}\right)\EE\norm{\vLam^{(t+1)}-\vLam^{(t)}}_F^2 \nonumber \\
				&+\frac{1}{2m}\left(L(\kappa+1)+\frac{\sqrt{\beta}}{32}+\frac{\rho^2}{\eta_{\vx}}+2m\rho\gamma_1\right)\EE\norm{\vX_{\perp}^{(t)}}_F^2+\left(\rho\gamma_2+\frac{\gamma_1\eta_{\vx}^2}{1-\rho}+\frac{\sqrt{\beta}}{60 mL_P}\right)\EE\norm{\vV_{\perp,\vx}^{(t)}}_F^2 \nonumber  \\
							&+\left(\gamma_4\big(1-\frac{\sqrt{\beta}}{4\sqrt{2}\kappa}\big)+\frac{\sqrt{\beta}(L+1)}{64m\kappa^2}\right)\EE\norm{\widetilde{\vY}^{(t)}-\vY^{(t)}}_F^2+\frac{16\sqrt{2}\kappa^3\gamma_4}{\sqrt{\beta}}\EE\norm{\vX^{(t+1)}-\vX^{(t)}}_F^2 \nonumber \\
							&+\left(\frac{3L^2\gamma_2}{1-\rho}+2L^2(1-\beta)^2\gamma_3\right)\EE\norm{\vZ^{(t+1)}-\vZ^{(t)}}_F^2
							+\left(\gamma_3(1-\beta)^2+\frac{3\beta^2\gamma_2}{1-\rho}+\frac{\sqrt{\beta}}{60mL_P}+\frac{\sqrt{\beta}\gamma_4}{\sqrt{2}L\mu}\right)\EE\norm{\vR^{(t)}}_F^2. \nonumber 
		\end{align}
	We apply $\norm{\vZ^{(t+1)}-\vZ^{(t)}}_F^2=\norm{\vX^{(t+1)}-\vX^{(t)}}_F^2+\norm{\vY^{(t+1)}-\vY^{(t)}}_F^2$ and~\eqref{lemma:second_y_bound:bound} to~\eqref{lemma:bound-on-all-terms-storm:eqn2} to have
		\begin{align}\label{lemma:bound-on-all-terms-storm:eqn3}
				&\textstyle\EE\left[\phi(\bar{\vx}^{(t+1)},\vLam^{(t+1)})-\phi(\bar{\vx}^{(t)},\vLam^{(t)})\right]+\gamma_1\EE\norm{\vX_{\perp}^{(t+1)}}_F^2+\gamma_2\EE\norm{\vV_{\perp,\vx}^{(t+1)}}_F^2 \nonumber\\
				&\textstyle+\gamma_3\EE\norm{\vR^{(t+1)}}_F^2+\left(\gamma_4-\frac{\beta}{60\kappa^2}\left(\frac{3L^2\gamma_2}{1-\rho}+2L^2(1-\beta)^2\gamma_3\right)\right)\EE\norm{\widetilde{\vY}^{(t+1)}-\vY^{(t+1)}}_F^2 \nonumber\\
				\le&\textstyle-\frac{1}{4m\eta_{\vx}}\EE\norm{\vX^{(t+1)}-\bar{\vX}^{(t)}}_F^2-\frac{1}{2m\eta_{\vx}}\EE\norm{\vX^{(t+1)}-\widetilde{\vX}^{(t)}}_F^2+\left(\frac{3\gamma_2}{1-\rho}+2\gamma_3\right)m\beta^2\Upsilon_{t+1}\\
				&\textstyle-\Bigg(\frac{1}{\eta_{\vLam}}-\frac{L_P}{2}-\frac{32\kappa^2L}{\sqrt{\beta}}-\frac{16\sqrt{2}\kappa^3m\gamma_4}{\sqrt{\beta}}
						-\frac{m}{2}\left(\frac{\sqrt{\beta}}{\sqrt{2}}\sqrt{4\kappa^2+1}+30\kappa^2\right)\left(\frac{3L^2\gamma_2}{1-\rho}+2L^2(1-\beta)^2\gamma_3\right)\Bigg)\EE\norm{\vLam^{(t+1)}-\vLam^{(t)}}_F^2 \nonumber\\
				&\textstyle+\frac{1}{2m}\left(L(\kappa+1)+\frac{\sqrt{\beta}}{32}+\frac{\rho^2}{\eta_{\vx}}+2m\rho\gamma_1\right)\EE\norm{\vX_{\perp}^{(t)}}_F^2+\left(\rho\gamma_2+\frac{\gamma_1\eta_{\vx}^2}{1-\rho}+\frac{\sqrt{\beta}}{60 mL_P}\right)\EE\norm{\vV_{\perp,\vx}^{(t)}}_F^2 \nonumber\\
							&\textstyle+\left(\gamma_4\big(1-\frac{\sqrt{\beta}}{4\sqrt{2}\kappa}\big)+\frac{\sqrt{\beta}(L+1)}{64m\kappa^2}\right)\EE\norm{\widetilde{\vY}^{(t)}-\vY^{(t)}}_F^2+\frac{m {\sqrt\beta}}{\sqrt{2}L}\left(\frac{3L^2\gamma_2}{1-\rho}+2L^2(1-\beta)^2\gamma_3\right)\EE\left[\hat{\delta}_{t}-\hat{\delta}_{t+1}\right] \nonumber\\
							&\textstyle+\left(\frac{16\sqrt{2}\kappa^3\gamma_4}{\sqrt{\beta}}+\left(\frac{\sqrt{\beta}}{2\sqrt{2}}(\sqrt{4\kappa^2+1}+1)+15\kappa^2+1\right)\left(\frac{3L^2\gamma_2}{1-\rho}+2L^2(1-\beta)^2\gamma_3\right)\right)\EE\norm{\vX^{(t+1)}-\vX^{(t)}}_F^2 \nonumber\\
							&\textstyle+\left(\gamma_3(1-\beta)^2+\frac{3\beta^2\gamma_2}{1-\rho}+\frac{\sqrt{\beta}}{60mL_P}+\frac{\sqrt{\beta}\gamma_4}{\sqrt{2}L\mu}+\frac{\beta}{8L^2}\left(\frac{3L^2\gamma_2}{1-\rho}+2L^2(1-\beta)^2\gamma_3\right)\right)\EE\norm{\vR^{(t)}}_F^2. \nonumber
			\end{align}
	Next, we utilize~\eqref{lemma:lyapunov_function-spider:eqn6} to bound $\EE\norm{\vX^{(t+1)}-\vX^{(t)}}_F^2$. Then we subtract $\gamma_1\EE\norm{\vX_{\perp}^{(t)}}_F^2,\gamma_2\EE\norm{\vV_{\perp,\vx}^{(t)}}_F^2,\gamma_3\EE\norm{\vR^{(t)}}_F^2$ from both sides of~\eqref{lemma:bound-on-all-terms-storm:eqn3} with 
		\begin{equation}\label{lemma:bound-on-all-terms-storm:cx}
				\textstyle c_\vx:=\frac{16\sqrt{2}\kappa^3\gamma_4}{\sqrt{\beta}}+\left(\frac{\sqrt{\beta}}{2\sqrt{2}}(\sqrt{4\kappa^2+1}+1)+15\kappa^2+1\right)\left(\frac{3L^2\gamma_2}{1-\rho}+2L^2(1-\beta)^2\gamma_3\right)
		\end{equation}
	to have 
		\begin{equation}\label{lemma:bound-on-all-terms-storm:eqn4}
			\begin{split}
				&\textstyle\EE\left[\phi(\bar{\vx}^{(t+1)},\vLam^{(t+1)})-\phi(\bar{\vx}^{(t)},\vLam^{(t)})\right]+\gamma_1\EE\norm{\vX_{\perp}^{(t+1)}}_F^2-\gamma_1\EE\norm{\vX_{\perp}^{(t)}}_F^2+\gamma_2\EE\norm{\vV_{\perp,\vx}^{(t+1)}}_F^2-\gamma_2\EE\norm{\vV_{\perp,\vx}^{(t)}}_F^2\\
				&\textstyle+\gamma_3\EE\norm{\vR^{(t+1)}}_F^2-\gamma_3\EE\norm{\vR^{(t)}}_F^2+\left(\gamma_4-\frac{\beta}{60\kappa^2}\left(\frac{3L^2\gamma_2}{1-\rho}+2L^2(1-\beta)^2\gamma_3\right)\right)\EE\norm{\widetilde{\vY}^{(t+1)}-\vY^{(t+1)}}_F^2\\
				\le&\textstyle-\frac{1}{4m\eta_{\vx}}\EE\norm{\vX^{(t+1)}-\bar{\vX}^{(t)}}_F^2-\left(\frac{1}{2m\eta_{\vx}}-2c_{\vx}\right)\EE\norm{\vX^{(t+1)}-\widetilde{\vX}^{(t)}}_F^2+\left(\frac{3\gamma_2}{1-\rho}+2\gamma_3\right)m\beta^2\Upsilon_{t+1}\\
				&\textstyle-\Bigg(\frac{1}{\eta_{\vLam}}-\frac{L_P}{2}-\frac{32\kappa^2L}{\sqrt{\beta}}-\frac{16\sqrt{2}\kappa^3m\gamma_4}{\sqrt{\beta}}-\frac{m}{2}\left(\frac{\sqrt{\beta}}{\sqrt{2}}\sqrt{4\kappa^2+1}+30\kappa^2\right)\left(\frac{3L^2\gamma_2}{1-\rho}+2L^2(1-\beta)^2\gamma_3\right)\Bigg)\EE\norm{\vLam^{(t+1)}-\vLam^{(t)}}_F^2\\
				&\textstyle-\frac{1}{2m}\left(2m(1-\rho)\gamma_1-L(\kappa+1)-\frac{\sqrt{\beta}}{32}-\frac{\rho^2}{\eta_{\vx}}-16mc_\vx\right)\EE\norm{\vX_{\perp}^{(t)}}_F^2-\left((1-\rho)\gamma_2-\frac{\gamma_1\eta_{\vx}^2}{1-\rho}-\frac{\sqrt{\beta}}{60 mL_P}\right)\EE\norm{\vV_{\perp,\vx}^{(t)}}_F^2\\
							&\textstyle+\left(\gamma_4\big(1-\frac{\sqrt{\beta}}{4\sqrt{2}\kappa}\big)+\frac{\sqrt{\beta}(L+1)}{64m\kappa^2}\right)\EE\norm{\widetilde{\vY}^{(t)}-\vY^{(t)}}_F^2+\frac{m\sqrt{\beta}}{\sqrt{2}L}\left(\frac{3L^2\gamma_2}{1-\rho}+2L^2(1-\beta)^2\gamma_3\right)\EE\left[\hat{\delta}_{t}-\hat{\delta}_{t+1}\right]\\
							&\textstyle-\left((1-(1-\beta)^2)\gamma_3-\frac{3\beta^2\gamma_2}{1-\rho}-\frac{\sqrt{\beta}}{60mL_P}-\frac{\sqrt{\beta}\gamma_4}{\sqrt{2}L\mu}-\frac{\beta}{8L^2}\left(\frac{3L^2\gamma_2}{1-\rho}+2L^2(1-\beta)^2\gamma_3\right)\right)\EE\norm{\vR^{(t)}}_F^2.
			\end{split}
		\end{equation}
		
	Set
		\begin{subequations}\label{lemma:parameters:gammas}
			\begin{align}
				&\gamma_1=\frac{2}{m(1-\rho)}\left((24\kappa^2+7\kappa+4)\left(\frac{16(L+1)}{\sqrt{\beta}}+\frac{8L}{{\kappa}(1-\rho)^2}\right)+L(\kappa+1)+\frac{\sqrt{\beta}}{32}+\frac{1}{\eta_{\vx}}\right),\label{lemma:parameters:gammas1}\\
				&\gamma_2=\frac{2}{1-\rho}\left(\frac{\sqrt{\beta}}{60m L_P} + \frac{\gamma_1 \eta_\vx^2}{1-\rho}\right), \label{lemma:parameters:gammas2}\\
				& \gamma_3 = \frac{2}{\beta}\left(\frac{\sqrt{\beta}}{60 m L_P} + \frac{\sqrt{\beta}\gamma_4}{\sqrt{2}L\mu} + \frac{3\gamma_2\beta(\beta+1/8)}{1-\rho}\right), \label{lemma:parameters:gammas-3}\\
				& \gamma_4 = \frac{\sqrt{2}(L+1)}{8m\kappa}+\frac{4L^2\sqrt{\beta}}{15\sqrt{2}\kappa}\left(\frac{9\gamma_2}{2(1-\rho)}+\frac{12\gamma_2\beta}{1-\rho}+\frac{1}{15\sqrt{\beta}mL_P}\right).\label{lemma:parameters:gammas-4}
			\end{align}
		\end{subequations}		
By the definition of $\eta_{\vx}$ and $\beta<1$, we can easily have $(24\kappa^2+7\kappa+4)\left(\frac{16(L+1)}{\sqrt{\beta}}+\frac{8L}{{\kappa}(1-\rho)^2}\right)+L(\kappa+1)+\frac{\sqrt{\beta}}{32}\le \frac{1}{\eta_{\vx}}$ and thus $\gamma_1\le\frac{4}{m(1-\rho)\eta_{\vx}}$. Then by the choice of $\gamma_2$, $L_P\ge2\kappa L$, and $\beta<1$, it follows that
		\begin{equation}\label{lemma:parameters:gamma2-upper}
			\gamma_2\le\frac{1}{60m(1-\rho)\kappa L}+\frac{8\eta_{\vx}}{m(1-\rho)^3}\le\frac{1}{40m(1-\rho)\kappa L},
		\end{equation}
	where the last inequality uses $\eta_{\vx}\le\frac{(1-\rho)^2}{960\kappa L}.$ Hence by the choice of $\gamma_4$, $L_P\ge2\kappa L$, $\beta<1$,  and~\eqref{lemma:parameters:gamma2-upper}, we additionally have
		\begin{equation}\label{lemma:parameters:gamma4-upper}
			\gamma_4\le\frac{L+1 }{5m\kappa} + \frac{\sqrt\beta L}{10m\kappa^2(1-\rho)^2}.
		\end{equation}
Moreover, applying $\beta<1$, $L_P\ge2\kappa L$,~\eqref{lemma:parameters:gamma2-upper}, and~\eqref{lemma:parameters:gamma4-upper} to~\eqref{lemma:parameters:gammas-3} yields $\gamma_3\le\frac{1+1/L}{2\sqrt\beta  m L} + \frac{2}{5 m\kappa L(1-\rho)^2}.$ Thus by the upper bounds of $\gamma_2$ and $\gamma_3$ together with $1-\beta<1$ yields
		\begin{equation}\label{lemma:bound-on-all-terms-storm:gamma2-gamma3-bound}
			\frac{3L^2\gamma_2}{1-\rho}+2L^2(1-\beta)^2\gamma_3\le\frac{L}{m(1-\rho)^2\kappa }+\frac{L+1}{\sqrt\beta  m }.
		\end{equation}
Hence, from the definition of $c_\vx$ in \eqref{lemma:bound-on-all-terms-storm:cx} and \eqref{lemma:parameters:gamma4-upper}, \eqref{lemma:bound-on-all-terms-storm:gamma2-gamma3-bound}, it follows		
		\begin{align}\label{lemma:parameters:cx-upper}
			c_\vx\le & ~ \textstyle \frac{16\sqrt{2}\kappa^3}{\sqrt{\beta}} \left(\frac{L+1 }{5m\kappa} + \frac{\sqrt\beta L}{10m\kappa^2(1-\rho)^2}\right)+\left(\frac{\sqrt{\beta}}{2\sqrt{2}}(\sqrt{4\kappa^2+1}+1)+15\kappa^2+1\right) \left(\frac{L}{m(1-\rho)^2\kappa }+\frac{L+1}{\sqrt\beta  m }\right) \nonumber \\
		\le &~ \textstyle \frac{L+1}{m\sqrt\beta} \left(\frac{16\sqrt{2}\kappa^2}{5} + 15\kappa^2+1 + \frac{\kappa+1}{\sqrt{2}}\right) + \frac{L}{m(1-\rho)^2\kappa } \left( \frac{16\sqrt{2}\kappa^2}{10} + 15\kappa^2+1 + \frac{\kappa+1}{\sqrt{2}}\right)	\le\frac{1}{8m\eta_{\vx}},
		\end{align}
	where the last inequality follows from the definition of $\eta_{\vx}.$ 
In addition, by $\beta<1$, $\sqrt{4\kappa^2+1}\le2\kappa+1$,~\eqref{lemma:parameters:gamma4-upper}, and~\eqref{lemma:bound-on-all-terms-storm:gamma2-gamma3-bound}, we have
		\begin{align*}
			&\textstyle \frac{16\sqrt{2}\kappa^3m\gamma_4}{\sqrt{\beta}}+\frac{m}{2}\left(\frac{\sqrt{\beta}}{\sqrt{2}}\sqrt{4\kappa^2+1}+30\kappa^2\right)\left(\frac{3L^2\gamma_2}{1-\rho}+2L^2(1-\beta)^2\gamma_3\right) \\
			\le & \textstyle \frac{16\sqrt{2}\kappa^2(L+1) }{5\sqrt{\beta}} + \frac{8\sqrt{2}\kappa L}{5(1-\rho)^2}+({15\kappa^2+\kappa+1})\left( \frac{L}{(1-\rho)^2\kappa }+\frac{L+1}{\sqrt\beta  }\right)\le\frac{(L+1)(20\kappa^2+\kappa+1)}{\sqrt{\beta}}+\frac{L(18\kappa+2)}{(1-\rho)^2}.
		\end{align*}
	By the above equation and the choice of $\eta_{\vLam}$, we have
		\begin{equation}\label{lemma:parameters:lam}
			\frac{1}{\eta_{\vLam}}-\frac{L_P}{2}-\frac{32\kappa^2L}{\sqrt{\beta}}-\frac{16\sqrt{2}\kappa^3m\gamma_4}{\sqrt{\beta}}-\frac{m}{2}\left(\frac{\sqrt{\beta}}{\sqrt{2}}\sqrt{4\kappa^2+1}+30\kappa^2\right)\left(\frac{3L^2\gamma_2}{1-\rho}+2L^2(1-\beta)^2\gamma_3\right)\ge\frac{1}{2\eta_{\vLam}}.
		\end{equation}

On the other hand, from~\eqref{lemma:parameters:cx-upper} and the definition of $\gamma_1$, it follows
		\begin{equation}\label{lemma:parameters:x_perp}
			\frac{1}{2m}\left(2m(1-\rho)\gamma_1-L(\kappa+1)-\frac{\sqrt{\beta}}{32}-\frac{\rho^2}{\eta_{\vx}}-16mc_\vx\right) \ge \frac{1}{2m}\left( 2m(1-\rho)\gamma_1 - \frac{m(1-\rho)\gamma_1}{2} - \frac{2}{\eta_\vx}\right)\ge\frac{1}{6m\eta_{\vx}}.
		\end{equation}
	Additionally, by the definition of $\gamma_2$ and $\beta<1$,~\eqref{eq:bd-gamma2-diff-spider} holds, and thus 
	\begin{equation}\label{eq:coff-for-v}
	\frac{(1-\rho)\gamma_2}{2}\ge \frac{\gamma_1 \eta_\vx^2}{1-\rho} \ge \frac{\eta_{\vx}}{m(1-\rho)^2}.
	\end{equation}
	 Also, by the choice of $\gamma_3$ and $\gamma_4$ and $(1-\beta)^2\le 1$, it holds that
\begin{align}\label{lemma:parameters:y_tilde}
&\gamma_4-\frac{\beta}{60\kappa^2}\left(\frac{3L^2\gamma_2}{1-\rho}+2L^2(1-\beta)^2\gamma_3\right)-\gamma_4\big(1-\frac{\sqrt{\beta}}{4\sqrt{2}\kappa}\big)-\frac{\sqrt{\beta}(L+1)}{64m\kappa^2} \nonumber \\
\ge & \frac{\sqrt{\beta}}{4\sqrt{2}\kappa} \gamma_4 - \frac{2\beta L^2}{60\kappa^2} \frac{2}{\beta}\left(\frac{\sqrt{\beta}}{60 m L_P} + \frac{\sqrt{\beta}\gamma_4}{\sqrt{2}L\mu} + \frac{3\gamma_2\beta(\beta+1/8)}{1-\rho}\right) - \frac{3 \beta L^2\gamma_2}{60\kappa^2(1-\rho)} -\frac{\sqrt{\beta}(L+1)}{64m\kappa^2} \nonumber \\
= & \frac{11\sqrt{\beta}}{60\sqrt{2}\kappa}\left(\frac{\sqrt{2}(L+1)}{8m\kappa}+\frac{4L^2\sqrt{\beta}}{15\sqrt{2}\kappa}\left(\frac{9\gamma_2}{2(1-\rho)}+\frac{12\gamma_2\beta}{1-\rho}+\frac{1}{15\sqrt{\beta}mL_P}\right) \right) \nonumber \\ 
& - \frac{2\beta L^2}{60\kappa^2} \frac{2}{\beta}\left(\frac{\sqrt{\beta}}{60 m L_P} + \frac{3\gamma_2\beta(\beta+1/8)}{1-\rho}\right) - \frac{3 \beta L^2\gamma_2}{60\kappa^2(1-\rho)} -\frac{\sqrt{\beta}(L+1)}{64m\kappa^2} \nonumber \\
\ge & \frac{\sqrt{\beta}(L+1)}{160m\kappa^2}.
\end{align}	
	Furthermore, from $1-(1-\beta)^2\ge\beta$ and the formula of $\gamma_3$, it holds that
		\begin{align}\label{lemma:parameters:r}
				&(1-(1-\beta)^2)\gamma_3-\frac{3\beta^2\gamma_2}{1-\rho}-\frac{\sqrt{\beta}}{60mL_P}-\frac{\sqrt{\beta}\gamma_4}{\sqrt{2}L\mu}-\frac{\beta}{8L^2}\left(\frac{3L^2\gamma_2}{1-\rho}+2L^2(1-\beta)^2\gamma_3\right) \nonumber \\
				\ge&\frac{\sqrt{\beta}}{60 m L_P} + \frac{\sqrt{\beta}\gamma_4}{\sqrt{2}L\mu} + \frac{6\gamma_2\beta(\beta+1/8)}{1-\rho}-\frac{\beta\gamma_3}{4}-\frac{3\gamma_2\beta(\beta+1/8)}{1-\rho} \nonumber  \\
				= & { \frac{\beta\gamma_3}{4} \ge \frac{\sqrt\beta \gamma_4}{2\sqrt{2} L \mu} \ge \frac{\sqrt\beta(L+1)}{16 m \kappa L\mu}}\ge\frac{\sqrt{\beta}}{16mL}.
		\end{align}
	Applying~\eqref{lemma:parameters:lam}-\eqref{lemma:parameters:r} to~\eqref{lemma:bound-on-all-terms-storm:eqn4}, summing over $t=0$ to $T-1$, and finally using $\hat{\delta}_{T}\ge 0$ and the upper bounds on $\gamma_2,\gamma_3,\gamma_4$ completes the proof.
\end{proof}

\begin{proof} [Of Corollary~\ref{corollary:storm-convergence}] 
	By the choice of $\beta$ in~\eqref{corollary:storm-convergence:beta} and the upper bound of $\varepsilon \le \sigma(1-\rho)^2$, it holds that $\sqrt{\beta}\le(1-\rho)^2$. Thus a straight-forward comparison of the two fractions in~\eqref{eq:para-eta-x-set-storm} and \eqref{eq:para-eta-lam-set-storm} yields 
	$$\eta_{\vx}=\Theta\left(\frac{\sqrt{\beta}}{\kappa^2 L}\right)=\Theta\left(\frac{\varepsilon}{\sigma\kappa^{3}L}\right), \quad \eta_\vLam=\Theta\left(\frac{\varepsilon}{\sigma\kappa^{3}L}\right).$$ 
	
	Next we upper bound the right-hand side of~\eqref{theorem:convergence:bound}. First, by the initialization assumption and $\sqrt{\beta}\le(1-\rho)^2$, we have $\frac{1}{40m(1-\rho)\kappa L}\EE\norm{\vV_{\perp,\vx}^{(0)}}_F^2 \le 1$, $\left(\frac{L+1 }{5m\kappa} + \frac{\sqrt\beta L}{10m\kappa^2(1-\rho)^2}\right)\norm{\widetilde{\vY}^{(0)}-\vY^{(0)}}_F^2=\cO(1)$, and 
		\begin{align*}
			\left(\frac{1}{\sqrt{\beta}mL}+\frac{1}{4m(1-\rho)^2\kappa L}\right)\EE\norm{\vR^{(0)}}_F^2
			\le\left(\frac{1}{\sqrt{\beta}mL}+\frac{1}{4m(1-\rho)^2\kappa L}\right)\frac{m\sigma^2}{\cS_0} \le 1,
		\end{align*}
	where we have used the definition of $\cS_0$. Second, by $\sqrt{\beta}\le\frac{(1-\rho)^2}{8\sqrt{6}}$ and $L\ge1$, it holds that $C_0=\Theta\left(\phi(\bar{\vx}^{(0)},\vLam^{(0)})-\phi^{*}-\hat{\delta}_{0}+1\right)$. Also by the choice of $\cS_{t}=\bigO{1}$ for all $t\ge1$, it holds that $\Upsilon_{t}=\bigO{\sigma^2}$ for all $t\ge1$. Hence by~\eqref{theorem:convergence:bound}, we have
		\begin{equation}\label{corollary:storm-convergence:eqn1}
			\begin{split}
				&\EE\norm{\frac{1}{\eta_{\vx}}\left(\bar{\vx}^{(\tau)}-\prox_{\eta_{\vx}g}\left(\bar{\vx}^{(\tau)}-\eta_{\vx}\nabla P(\bar{\vx}^{(\tau)},\vLam^{(\tau)})\right)\right)}_2^2+\frac{L^2}{m}\EE\norm{\vX_{\perp}^{(\tau)}}_F^2\\
				=&\cO\left( \left[ \textstyle \frac{\phi(\bar{\vx}^{(0)},\vLam^{(0)})-\phi^{*}-\hat{\delta}_{0} + 1}{T}+\frac{1}{\sqrt\beta  L}\beta^2\sigma^2\right] \cdot \frac{\sigma\kappa^3 L}{\varepsilon}\right)
			\end{split}
		\end{equation}
	and similarly
		\begin{equation}\label{corollary:storm-convergence:eqn1a}
			\begin{split}
			&\EE\norm{\nabla_{\vLam}P(\bar{\vx}^{(\tau)},\vLam^{(\tau)})}_F^2
			=\cO\left( \left[ \textstyle \frac{\phi(\bar{\vx}^{(0)},\vLam^{(0)})-\phi^{*}+\hat{\delta}_{0}+1}{T}+\frac{1}{\sqrt\beta  L}\beta^2\sigma^2\right] \cdot \frac{\sigma\kappa^3 L}{\varepsilon}\right).
			\end{split}
		\end{equation}
		
	 Now by $\beta=\Theta\left(\frac{\varepsilon^2}{\sigma^2\kappa^2 } \right)$, we have $\frac{\beta^2\sigma^2}{\sqrt\beta  L}\cdot \frac{\sigma\kappa^3 L}{\varepsilon} = \Theta(\vareps^2)$, and the right-hand sides in	\eqref{corollary:storm-convergence:eqn1} and~\eqref{corollary:storm-convergence:eqn1a} become $\cO (T^{-1}\varepsilon^{-1}\sigma\kappa^3 L +\varepsilon^2)$. Hence $(\vX^{(\tau)}, \vLam^{(\tau)})$ is an $\varepsilon$-stationary point when $T=\Theta\left(\frac{\sigma\kappa^3 L}{\varepsilon^3}\right)$. This completes the proof by noticing $T_c = T$ and $T_s = \Theta(T + \cS_0)$.
\end{proof}


\bibliographystyle{abbrv}
\bibliography{optim, myPub, minimax, decentralizedOpt}

\end{document}